\documentclass[letterpaper,11pt]{article}

\usepackage[margin=1in]{geometry}  
\usepackage{xspace,exscale,relsize}
\usepackage{fancybox,shadow}
\usepackage{graphicx}
\usepackage{color}
\usepackage{amsthm,amsfonts}
\usepackage{amssymb}
\usepackage{authblk}
\usepackage[title]{appendix}

\usepackage{extarrows}
\usepackage{comment}

\usepackage[noadjust]{cite}

\usepackage{amsmath}
	\makeatletter
	\let\over=\@@over \let\overwithdelims=\@@overwithdelims
	\let\atop=\@@atop \let\atopwithdelims=\@@atopwithdelims
  	\let\above=\@@above \let\abovewithdelims=\@@abovewithdelims
  	\makeatother
\usepackage{rotating}

\usepackage{ifpdf}

\usepackage{subfigure}
\usepackage{psfrag}

\usepackage{prettyref}
\usepackage{enumitem}

\usepackage{tikz}
\usetikzlibrary{arrows}
\tikzstyle{int}=[draw, fill=blue!20, minimum size=2em]
\tikzstyle{dot}=[circle, draw, fill=blue!20, minimum size=2em]
\tikzstyle{init} = [pin edge={to-,thin,black}]

\usepackage[
            CJKbookmarks=true,
            bookmarksnumbered=true,
            bookmarksopen=true,
            colorlinks=true,
            citecolor=red,
            linkcolor=blue,
            anchorcolor=red,
            urlcolor=blue,
            ]{hyperref}

\usepackage[all]{xy}

\usepackage{mathtools}

\usepackage{ifthen}
\newboolean{aos}
\setboolean{aos}{FALSE}

\ifx\eqref\undefined
	\newcommand{\eqref}[1]{~(\ref{#1})}
\fi
\ifx\mod\undefined
	\def\mod{\mathop{\rm mod}}
\fi

\usepackage{bm}

\newcommand{\norm}[1]{{\left\Vert #1 \right\Vert}}

\def\dim{\mathop{\rm dim}}

\def\exp{\mathop{\rm exp}}

\def\tr{\mathop{\rm tr}}

\def\Var{\mathrm{Var}}
\def\Cov{\mathrm{Cov}}

\def\eqdef{\triangleq}

\newcommand{\Unif}{\mathrm{Unif}}

\newcommand{\reals}{\mathbb{R}}

\newcommand{\Expect}{\mathbb{E}}
\newcommand{\expect}[1]{\mathbb{E}\left[#1\right]}

\newcommand{\prob}[1]{\mathbb{P}\left[#1\right]}
\newcommand{\pprob}[1]{\mathbb{P}[#1]}

\newcommand{\TV}{{\rm TV}}

\newcommand{\expects}[2]{\mathbb{E}_{#2}\left[ #1 \right]}
\newcommand{\diff}{{\rm d}}

\newcommand{\iid}{iid\xspace}
\newcommand{\Law}{\text{Law}}

\newcommand{\Fnorm}[1]{\|#1\|_{\rm F}}
\newcommand{\fnorm}[1]{\|#1\|_{\rm F}}
\newcommand{\pth}[1]{\left( #1 \right)}
\newcommand{\qth}[1]{\left[ #1 \right]}
\newcommand{\sth}[1]{\left\{ #1 \right\}}

\newcommand{\iiddistr}{{\stackrel{\text{\iid}}{\sim}}}

\newcommand{\var}{\Var}
\newcommand\indep{\protect\mathpalette{\protect\independenT}{\perp}}
\def\independenT#1#2{\mathrel{\rlap{$#1#2$}\mkern2mu{#1#2}}}
\newcommand{\Bern}{\text{Bern}}

\newcommand{\iprod}[2]{\langle #1, #2 \rangle}
\newcommand{\Iprod}[2]{\langle #1, #2 \rangle}
\newcommand{\indc}[1]{{\mathbf{1}\left\{{#1}\right\}}}

\definecolor{myblue}{rgb}{.8, .8, 1}
\definecolor{mathblue}{rgb}{0.2472, 0.24, 0.6} 
\definecolor{mathred}{rgb}{0.6, 0.24, 0.442893}
\definecolor{mathyellow}{rgb}{0.6, 0.547014, 0.24}

\newcommand{\blue}{\color{blue}}
\newcommand{\nb}[1]{{\sf\blue[#1]}}

\newcommand{\nbb}[1]{#1}

\newcommand{\E}{\Expect}

\newcommand{\calE}{{\mathcal{E}}}
\newcommand{\calF}{{\mathcal{F}}}

\newcommand{\calH}{{\mathcal{H}}}

\newcommand{\calN}{{\mathcal{N}}}

\newcommand{\diverge}{\to \infty}
\newcommand{\mmse}{\mathsf{mmse}}

\newrefformat{eq}{(\ref{#1})}
\newrefformat{thm}{Theorem~\ref{#1}}
\newrefformat{th}{Theorem~\ref{#1}}
\newrefformat{chap}{Chapter~\ref{#1}}
\newrefformat{sec}{Section~\ref{#1}}
\newrefformat{seca}{Section~\ref{#1}}
\newrefformat{algo}{Algorithm~\ref{#1}}
\newrefformat{fig}{Fig.~\ref{#1}}
\newrefformat{tab}{Table~\ref{#1}}
\newrefformat{rmk}{Remark~\ref{#1}}
\newrefformat{clm}{Claim~\ref{#1}}
\newrefformat{def}{Definition~\ref{#1}}
\newrefformat{cor}{Corollary~\ref{#1}}
\newrefformat{lmm}{Lemma~\ref{#1}}
\newrefformat{prop}{Proposition~\ref{#1}}
\newrefformat{pr}{Proposition~\ref{#1}}
\newrefformat{app}{Appendix~\ref{#1}}
\newrefformat{apx}{Appendix~\ref{#1}}
\newrefformat{ex}{Example~\ref{#1}}
\newrefformat{conj}{Conjecture~\ref{#1}}

\def\unifto{\mathop{{\mskip 3mu plus 2mu minus 1mu%
	\setbox0=\hbox{$\mathchar"3221$}%
	\raise.6ex\copy0\kern-\wd0%
	\lower0.5ex\hbox{$\mathchar"3221$}}\mskip 3mu plus 2mu minus 1mu}}

\ifx\lesssim\undefined
\def\simleq{{{\mskip 3mu plus 2mu minus 1mu%
	\setbox0=\hbox{$\mathchar"013C$}%
	\raise.2ex\copy0\kern-\wd0%
	\lower0.9ex\hbox{$\mathchar"0218$}}\mskip 3mu plus 2mu minus 1mu}}
\else
\def\simleq{\lesssim}
\fi

\ifx\gtrsim\undefined
\def\simgeq{{{\mskip 3mu plus 2mu minus 1mu%
	\setbox0=\hbox{$\mathchar"013E$}%
	\raise.2ex\copy0\kern-\wd0%
	\lower0.9ex\hbox{$\mathchar"0218$}}\mskip 3mu plus 2mu minus 1mu}}
\else
\def\simgeq{\gtrsim}
\fi

\newtheorem{theorem}{Theorem}
\newtheorem{lemma}{Lemma}
\newtheorem{corollary}{Corollary}

\newtheorem{proposition}{Proposition}

\newtheorem{conjecture}{Conjecture}

\theoremstyle{definition}

\newtheorem{remark}{Remark}

\newif\ifmapx
{\catcode`/=0 \catcode`\\=12/gdef/mkillslash\#1{#1}}
\edef\jobnametmp{\expandafter\string\csname embayes2_apx\endcsname}
\edef\jobnameapx{\expandafter\mkillslash\jobnametmp}
\edef\jobnameexpand{\jobname}
\ifx\jobnameexpand\jobnameapx
\mapxtrue
\else
\mapxfalse
\fi

\newcommand{\ER}{Erd\H{o}s--R\'enyi\xspace}

\newcommand{\polylog}{\mathsf{polylog}}

\newcommand{\RGG}{\textsf{RGG}}

\renewcommand{\hat}{\widehat}
\renewcommand{\tilde}{\widetilde}

\def\mmse{\mathrm{mmse}}

\newcommand{\dtest}{d_{\sf test}^*}
\newcommand{\dest}{d_{\sf est}^*}
\newcommand{\KL}{\mathrm{KL}}
\newcommand{\dKS}{d_\mathrm{KS}}

\begin{document}
\ifpdf
\DeclareGraphicsExtensions{.pgf}
\graphicspath{{figures/}{plots/}}
\fi

\title{Random geometric graphs with smooth kernels: sharp detection threshold and a spectral conjecture}
 \author{Cheng Mao, Yihong Wu,  and Jiaming Xu\thanks{
 C.\ Mao is with the School of Mathematics, Georgia Institute of Technology, Atlanta, Georgia, USA
\texttt{cheng.mao@math.gatech.edu}.
 Y.\ Wu is  with the Department of Statistics and Data Science, Yale University, New Haven CT, USA, 
 \texttt{yihong.wu@yale.edu}.
J.\ Xu is with The Fuqua School of Business, Duke University, Durham NC, USA, \texttt{jx77@duke.edu}.
 }}
\maketitle

\begin{abstract}

A random geometric graph (RGG) with kernel $K$ is constructed by first sampling latent points $x_1,\ldots,x_n$ independently and uniformly from the $d$-dimensional unit sphere, then connecting each pair $(i,j)$ with probability $K(\langle x_i,x_j\rangle)$.
We study the sharp detection threshold, namely the highest dimension at which an RGG can be distinguished from its Erd\H{o}s--R\'enyi counterpart with the same edge density.

For dense graphs, we show that for smooth kernels the critical scaling is  $d = n^{3/4}$, substantially lower than the threshold $d = n^3$ known for the hard RGG with step-function kernels \cite{bubeck2016testing}. We further extend our results to kernels whose signal-to-noise ratio scales with $n$, and formulate a unifying conjecture that 
the critical dimension is determined by  $n^3 \mathop{\rm tr}^2(\kappa^3) = 1$, where $\kappa$ is the standardized kernel operator on the sphere.

Departing from the prevailing approach of bounding the Kullback-Leibler divergence by successively exposing latent points, which breaks down in the sublinear regime of $d=o(n)$, our key technical contribution is a careful analysis of the posterior distribution of the latent points given the observed graph, in particular, the overlap between two independent posterior samples. As a by-product, we establish that $d=\sqrt{n}$
 is the critical dimension for non-trivial estimation of the 
 latent vectors up to a global rotation.

\end{abstract}

\tableofcontents

\section{Introduction}
\subsection{Problem setup}

\paragraph{Random geometric graph}
Given a function $K: [-1,1]\to [0,1]$, the $d$-dimensional \emph{random geometric graph} (RGG) on $n$ vertices with kernel $K$ refers to the following random graph ensemble, denoted by $\RGG(n,d,K)$.
Let $x_1,\ldots,x_n$ be independently and uniformly distributed on the unit sphere $S^{d-1}$ in $\reals^d$. Conditioned on these latent points, a graph $G$ on the set of vertices $[n]$ is generated such that each pair $(i,j)$ is connected independently with probability $K(\iprod{x_i}{x_j})$.

In the statistics literature, this model is also known as the \textit{latent space model} with link function $K$ \cite{hoff2002latent,KaurRastelliFrielRaftery2023LPM}, a widely used model for analyzing social networks and relational data. Other variants postulate that the connection probability is a function of the Euclidean distance between points as opposed to their inner product. These are equivalent in the present paper since we assume the latent space is the unit sphere.

\paragraph{Detection problem}
The primary feature of the RGG model is that, unlike the \ER graph, 
the edges of RGG are dependent. 
Since this dependency weakens as the latent dimension $d$ increases, it motivates the question of when it is possible to distinguish an RGG from its \ER counterpart $G(n,p)$, where 
\begin{equation}
p=\expect{K(\Iprod{x_1}{x_2})}
    \label{eq:edgedensity}
\end{equation}
is the edge density. The key quantity governing this hypothesis testing problem is the total variation
\[
\TV \equiv \TV(\RGG(n,d,K),G(n,p)).
\]
We are interested in the \textit{critical dimension} $d_*=d_*(n,K)$ such that 
this total variation converges to one if $d\ll d_*$ (detectable) and zero if $d\gg d_*$ (undetectable), as  $n \to \infty$.
In other words, $d_*$ represents the highest dimension at which the geometry in the RGG can be detected.

The prior literature (e.g.~\cite{bubeck2016testing,brennan2020phase,liu2022testing,bangachev2024fourier,bangachev2025sandwiching}) focuses on the so-called \textit{hard RGG} given rise by the step-function kernel $K(t) = \indc{t \geq \tau}$. Originally introduced to study wireless communication networks (known as the Gilbert disc model \cite{Gilbert61}), this model postulates that two points are connected if and only if their distance is below a given threshold. In high dimensions, the seminal work  of \cite{bubeck2016testing} shows that in the dense regime of $p=\Theta(1)$,
the critical dimension for detection is $d_*=n^3$, 
achieved by the test statistic of signed triangle count. Subsequent works~\cite{brennan2020phase,liu2022testing} derived sharper bounds for sparser graphs with $p=o(1)$. In particular, the  results of~\cite{liu2022testing} establish the negative result that  $d_*\le n^3p^2\polylog(n)$ for general $p$ and that $d_* \le \polylog(n)$ for
$p=\Theta(1/n)$. 
Moreover, it is shown that $d_*\ge (n p \log \frac{1}{p})^3$ for general $p$ again by counting signed triangles.
Despite this progress, a tight characterization of the critical dimension $d_*$ remains open in the intermediate regime $1/n \ll p \ll 1$ (see \cite[Sec.~3]{duchemin2023random} for a detailed account of this literature).


In comparison, the statistics literature typically works with \textit{soft RGG} models with smooth kernels, such as the logistic link function in the original paper \cite{hoff2002latent}.
More general kernels, including Gaussian kernels, are considered in \cite{dettmann2016random,ma2020universal,duchemin2023random}. A particularly important special case is given by linear kernels, which give rise to the \textit{Random Dot Product Graph} (RDPG) model \cite{young2007random,athreya2018statistical}. Despite the widespread use of soft RGGs, their detection problem was only recently initiated by Liu and R\'acz \cite{liu2023probabilistic}, who established upper and lower bounds on the detection threshold.


In this paper, we determine the critical dimension of detection for soft RGG with smooth kernels. Our main contributions are as follows.
\begin{itemize}
    \item 
    For dense graphs with a fixed kernel $K$ bounded away from 0 and 1 and satisfying $K'(0) \neq 0$, we show that the critical dimension for detection is $d_*(n,K)= n^{3/4}$.
    \item For scaled kernels of the form $K_r(\cdot)= K(r \cdot)$, we extend the analysis to both low-SNR ($r\to 0$) and high-SNR ($r\to\infty$) regimes, where the critical dimension is shown to be $d_* = n^{3/4} r^{3/2}$ under appropriate conditions.


    \item 
    To reconcile these results with existing and conjectured thresholds for the hard RGG, we formulate a general spectral conjecture that the optimal detection threshold is determined by 
    $n^3 \tr^2(\kappa^3) = 1$, where $\kappa$ is the standardized kernel operator $\kappa$ defined in~\prettyref{eq:operator}.   
\end{itemize}

As an interesting by-product of our proof, we show that estimating the latent points $(x_1, \ldots, x_n)$ with non-trivial correlation is impossible when $d \gg \sqrt{n}$. Conversely, when $d \ll \sqrt{n}$, consistent estimation of the latent points (up to a common rotation) can be achieved by a simple spectral method. These results establish that the critical dimension for estimation is $\sqrt{n}$, substantially lower than that for detection.


\subsection{Notation}
\label{sec:noation}
Throughout the paper, we denote by $\kappa$ the standardized version of the kernel $K$:
\begin{equation}
\kappa(t)
\eqdef
\frac{K(t)-p}{\sqrt{p(1-p)}},
\quad t \in [-1,1],
    \label{eq:kappa} 
\end{equation}
where $p$ is the average edge density in \prettyref{eq:edgedensity}.
That is, for $y\sim \Unif(S^{d-1})$ and any fixed $x \in S^{d-1}$, we have $p = \Expect[K(\Iprod{x}{y}))]$, so
$\Expect[\kappa(\Iprod{x}{y})]=0$.

For any $L_2$-function $\kappa$ satisfying $\Expect[\kappa(\Iprod{x}{y})^2]<\infty$, we may identify it as an operator 
on the sphere by
\begin{equation}
(\kappa f)(x) = \Expect[\kappa(\Iprod{x}{y}) f(y)],
\quad x\in S^{d-1}.
    \label{eq:operator}
\end{equation}
It is well-known (cf.~e.g.~\cite{dai2013approximation}) that any such operator is diagonalized by the spherical harmonics. For $k\geq 0$, let $\lambda_k$ denote the $k$th eigenvalue of $\kappa$.
Then the corresponding eigenfunctions are homogeneous harmonic polynomials of degree $k$ in $\reals^d$, restricted to $S^{d-1}$. The multiplicity of $\lambda_k$ is given by the dimension of the space of all such polynomials.
Furthermore, the eigenvalues can be found by expanding $\kappa$ under the Gegenbauer polynomial basis. See \prettyref{app:Gegenbauer} for details.

We denote as a shorthand $\kappa^m \equiv \kappa \circ \cdots \circ \kappa$ for $\kappa$ applied $m$ times, whose trace is given by
\begin{equation}
\tr(\kappa^m) 
= \Expect[\kappa(\Iprod{y_1}{y_2}) \cdots \kappa(\Iprod{y_m}{y_1})]
    \label{eq:tr}
\end{equation}
where $y_1,\ldots,y_m \iiddistr \Unif(S^{d-1})$.

\section{Main results}
\label{sec:main}

\subsection{Fixed kernel}
\label{sec:fixed}

We start by considering a fixed kernel $K$.
In this case, it turns out that strong detection is possible in any constant dimension $d$.\footnote{
By assumption the standardized kernel $\kappa$ is not zero and $\tr(\kappa^4)$ is a positive constant. Analogous to the signed triangle count, 
consider the signed 4-cycle count $S = \tr((A-p)^4)$.
Then $|\Expect_P[S]-\Expect_Q[S]|=\Theta(n^4)$, far exceeding the standard deviation under either $P$ or $Q$ which is at most $O(n^{7/2})$.}
Therefore, throughout the rest of this section we shall assume that $d\diverge$.
Our first main result shows that for smooth kernels the critical dimension is always $n^{3/4}$.

\begin{theorem}
Let $K: [-1,1]\to [0,1]$ be a fixed $C^\infty$ function that is bounded away from 0 and 1 and satisfies $K'(0) \neq 0$.
Then $d_*(n,K)=n^{3/4}$. In particular, when $d \ll d_*$,
the signed triangle count $\tr((A-p)^3)$
achieves detection. 
\label{thm:fixed}
\end{theorem}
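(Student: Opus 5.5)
The proof has two halves: an upper bound via the signed triangle count when $d \ll n^{3/4}$, and a matching lower bound showing $\TV \to 0$ when $d \gg n^{3/4}$. Both halves rest on the spherical-harmonic expansion of $\kappa$. Write $\kappa(t)=\sum_k \lambda_k N_{k} P_k(t)$ in Gegenbauer form, where $N_k=\binom{d+k-1}{k}-\binom{d+k-3}{k-2}$ is the multiplicity of $\lambda_k$. For smooth $K$, Taylor expand at $0$; since $\iprod{x}{y}$ concentrates at scale $d^{-1/2}$, we get $\lambda_1 \asymp K'(0)/d$ with multiplicity $d$, and $|\lambda_k| \lesssim d^{-k}$ with multiplicity $\asymp d^k/k!$. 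Consequently
\[
\tr(\kappa^3)=\sum_k N_k\lambda_k^3 \asymp d\cdot d^{-3}=d^{-2},
\qquad
\tr(\kappa^2)=\Theta(1)\ \text{(dominated by } k=0\text{-type terms)}.
\]
Note that $\tr(\kappa^2)$ is of order a constant and need not be small, but $\tr(\kappa^4)\asymp d^{-3}$ from the degree-one part. For the test, let $T=\sum_{i<j<k}(A_{ij}-p)(A_{jk}-p)(A_{ki}-p)$. Then $\Expect_Q T=0$ and $\Expect_P T=\binom n3 (p(1-p))^{3/2}\tr(\kappa^3)\asymp n^3/d^2$, while $\Var_Q T\asymp n^3$. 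Under $P$, expand $\Var_P T$ over pairs of triangles sharing $0,1,2,3$ vertices. Pairs sharing one vertex contribute $n^5$ times a covariance of order $\tr(\kappa^6)$-type or $\tr(\kappa^3)^2$-type terms, and pairs sharing an edge contribute $n^4\tr(\kappa^4)\approx n^4d^{-3}$. All of these are $o((n^3/d^2)^2)$ precisely when $d\ll n^{3/4}$. Chebyshev's inequality then gives $\TV\to1$. This half is routine bookkeeping with the eigenvalue estimates.

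For the lower bound, the abstract indicates that the latent-point exposure argument fails when $d=o(n)$, so I would instead bound the $\chi^2$ divergence, or, if needed, KL via a second-moment argument on a truncated/conditioned likelihood. Conditionally on latent points $X$, the likelihood ratio of a single edge is $1+\sqrt{p/(1-p)}\dots$; more precisely,
\[
1+\chi^2(P\|Q)=\Expect_{X,X'}\prod_{i<j}\bigl(1+\kappa(\iprod{x_i}{x_j})\kappa(\iprod{x_i'}{x_j'})\bigr),
\]
with $X,X'$ independent. Expanding the product over edge subgraphs $H$ gives a sum of $\Expect[\prod_{e\in H}\kappa\kappa']$, which vanishes unless every vertex of $H$ has degree $\ge2$. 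The cycle terms contribute $\sum_m n^m\tr(\kappa^m)^2/(2m)$. The triangle term is $n^3d^{-4}=o(1)$ iff $d\gg n^{3/4}$, and the longer cycles are controlled by $\tr(\kappa^m)\lesssim d^{-(m-1)}$ from the linear part (note $n^m d^{-2(m-1)}\to0$ once $d\gg n^{3/4}\ge\sqrt n$). The degree-one component behaves like a Wishart-type overlap $\fnorm{X^\top X'}^2/d^2$, and the plan is to show that the full expectation is $1+o(1)$ by decomposing $\kappa=\lambda_1 d\,t+\text{(higher harmonics)}$ and treating the linear part exactly through the matrix $X^\top X'$.

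The main obstacle is that the naive $\chi^2$ blows up from large deviations of the overlap $X^\top X'$, and also from nonlinear contributions such as the constant shift $\Expect\kappa(\iprod{x_i}{x_j})^2$ on many edges. Following the abstract, the first fix I would try is to work with KL and the posterior: write $\KL=\sum$ of mutual-information increments edge-by-edge or vertex-by-vertex, and bound each increment by the posterior overlap of two independent posterior samples. The needed input is that for $d\gg\sqrt n$ the posterior overlap $\fnorm{X^\top \tilde X}^2$ stays of the prior order $nd\cdot n/d$, i.e. there is no estimation (the estimation by-product). Once that is established, the remaining contribution is dominated by the triangle term $n^3\tr^2(\kappa^3)\asymp n^3/d^4$, which vanishes for $d\gg n^{3/4}$. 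Establishing this posterior overlap control in the window $\sqrt n\ll d\ll n$ is the step I expect to be hardest.
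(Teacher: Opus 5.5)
Your detection half is essentially the paper's argument (Theorem~\ref{thm:ub} with $\tr(\kappa^3)=\Theta(d^{-2})$ and $\tr(\kappa^4)=O(d^{-3})$). The slips in it are harmless:
\begin{itemize}
\item Since $\kappa$ is centered, $\lambda_0=0$ and $\tr(\kappa^2)=\Expect[\kappa(\iprod{x_1}{x_2})^2]=\Theta(1/d)$, not $\Theta(1)$.
\item Pairs of triangles sharing exactly one vertex have zero covariance, by rotation invariance.
\item Taylor expansion controls $\lambda_k$ only for fixed $k$. The tail $k\ge5$ needs the $L_2$ bound $|\lambda_k|\le\|\kappa\|_{L_2(\mu)}/\sqrt{N_k}$.
\end{itemize}

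The impossibility half has a genuine gap. You correctly note that the second moment diverges for $d\ll n$, but you then defer everything to a posterior-overlap estimate that you neither formulate correctly nor know how to prove. Take $X_{ij}=\iprod{x_i}{x_j}$, $X_{ii}=0$, and let $\tilde X$ be a posterior replica. Your target is off in both readings:
\begin{itemize}
\item The overlap cannot stay at the independent-replica level $n/d$. The single edges $A_{ij}$ already force $\Expect\iprod{X}{\tilde X}\gtrsim n^2/d^2$.
\item Mere ``no estimation'', $\iprod{X}{\tilde X}=o(n^2/d)$, is far too weak. It gives $g(2)=o(d^{-3})$, so $n^3g(2)\to0$ only for $d\gtrsim n$, no better than revealing the latent points.
\end{itemize}
What is needed is that the whole graph says essentially nothing more about $\iprod{x_1}{x_2}$ than $A_{12}$ does. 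Precisely, $\iprod{X}{\tilde X}\le\delta n^2/d$ with $\delta\asymp\max\{1/d,\sqrt{\log n}/n\}$, with probability $1-n^{-D}$, so that all bounded-order moments of the overlap are controlled.

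The mechanism is also missing. The paper uses the change of measure in Lemma~\ref{lmm:com}:
$\prob{(X,\tilde X)\in\calF}\le 2\sqrt{\Expect_{X\indep X^*}[\prod_{i<j}(1+\kappa(X_{ij})\kappa(X^*_{ij}))\indc{(X,X^*)\in\calF}]}$.
This is the very weight that ruins $\chi^2$, but now truncated to $\calF=\{\iprod{X}{X^*}\ge\delta n^2/d\}$ intersected with typical events. Those events (bounds on $\fnorm{X}$, $\|X\|$, $\fnorm{\kappa(X)-b_1X}$) exclude the clustered-point configurations. Hanson--Wright applied to $b_1^2\iprod{X}{X^*}$ conditionally on $X^*$ then makes this truncated moment polynomially small.

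Even granting the overlap bound, your claim that ``the remaining contribution is dominated by the triangle term'' skips the actual reduction. The vertex-wise chain rule with $\KL\le\chi^2$ gives $\KL\le\sum_{k\ge2}\binom{n}{k+1}g(k)$, where $g(k)=\Expect_A(\Expect[\prod_{i\le k}\kappa(\iprod{x_i}{x_{n+1}})\mid A])^2$. This involves posterior correlations of every order, not just a pairwise overlap. Three further steps are needed:
\begin{itemize}
\item To link $g(k)$ to overlaps, you must integrate out the fresh point $x_{n+1}$ by Wick's formula. This is why $K$ is first replaced by a polynomial $\tilde K$ with $\|K-\tilde K\|_{L_2(\mu)}=o(n^{-2})$, cf.~\eqref{eq:kernel-approx-twosided}.
\item The resulting posterior moments are then bounded by symmetry, $M_{\boldsymbol{\ell}}\le\binom{n}{v}^{-1}\Expect\iprod{X}{\tilde X}^{\ell}$.
\item The overlap estimate controls only moments of bounded order. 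Terms with $k$ beyond a large constant therefore need a separate crude bound, $g(k)\le(Ck/d^{3/2})^k$, obtained by dropping the conditioning. This is summable against $\binom{n}{k+1}$ only because $d\gg n^{2/3+\epsilon}$.
\end{itemize}
None of these steps is in your plan, so as written it does not establish $\TV\to0$ for $d\gg n^{3/4}$.
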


To the best of our knowledge, \prettyref{thm:fixed} is the first work that pinpoints a sharp detection threshold beyond the dense hard RGG studied in~\cite{bubeck2016testing}. Interestingly, in this case the critical dimension scales as $n^{3/4}$, far below the $n^{3}$ threshold for dense hard RGG, indicating that detection is substantially harder in soft RGGs with a fixed kernel.

A key assumption is that $K'(0)\neq 0$, which ensures that the first non-trivial eigenvalue of the associated kernel operator is nonzero (see~\prettyref{lmm:rodrigues} for details). This condition is crucial for the $n^{3/4}$ scaling. When $K'(0)=0$, the detection threshold is conjectured to follow a different scaling, as we discuss in~\prettyref{sec:spectral-conjecture}.

\begin{remark}\label{rmk:remark_comparison}
The related work~\cite{liu2023probabilistic} also considers soft RGGs with smooth kernels but assumes Gaussian latent points.
In their model, the latent points $x_i$ are i.i.d.\ $\calN(0,I_d)$ and the kernel $K$ is the CDF of a random variable with mean $\mu$ and variance $r^2 d$ (with $r\ge 1$), satisfying $K'(t)>0$. Under additional technical conditions, they show that the total variation distance tends to one if $d \ll n^3/r^6$ and tends to zero if $d \gg n^3/r^4$. Taking $r=\sqrt d$ is analogous to our model with a fixed kernel and spherical latent points; under this correspondence, their bounds translate to $n^{3/4} \le d_* \le n$. 
In contrast, we show that the critical dimension is exactly $d_* =n^{3/4}$.
\end{remark}

\subsection{Low-SNR and high-SNR regimes}
\label{sec:lowhighsnr}
Given a fixed $K$, let us consider an RGG model with a scaled kernel 
\[
K_r(t) \equiv K(rt), \quad r>0. 
\]
Thus, the connection probability between nodes $i$ and $j$ equals $K(r \Iprod{x_i}{x_j})$.
As $r$ increases, the latent geometry becomes more pronounced and easier to detect.
Accordingly, we refer to the case of $r \ll 1 $ and $r\gg 1$ as the \emph{low-SNR} and the 
\emph{high-SNR} regime, respectively.

\paragraph{Low-SNR regime}

The extension of \prettyref{thm:fixed} to the low-SNR regime turns out to be relatively straightforward. First of all, it is easy to see that if $r \ll n^{-1/2}$, it is impossible to distinguish the RGG from \ER even in one dimension.\footnote{For $d=1$, we have $x_i \in \{\pm 1\}$, so $i$ and $j$
are connected with probability $K(r)$ if $x_ix_j=1$ and $K(-r)$ if $x_ix_j=-1$. This is a stochastic block model (SBM) with two communities and the conclusion then follows from existing results on the dense SBM \cite{banerjee2018contiguity}. }
Assuming a slightly stronger condition of $r \gg n^{-1/2+ \epsilon}$, 
the next theorem determines the critical dimension for detection.

\begin{theorem}
Let $n^{-1/2+ \epsilon} \ll r \ll 1$ for some constant $\epsilon > 0$.
Under the same condition on the kernel $K$ as in \prettyref{thm:fixed},
we have $d_*(n,K_r)=n^{3/4} r^{3/2}$.
\label{thm:lowsnr}
\end{theorem}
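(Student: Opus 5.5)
We prove both directions separately, reusing the machinery developed for \prettyref{thm:fixed}. The organizing quantity is the first non-trivial eigenvalue of the standardized kernel $\kappa_r$ associated with $K_r$. Taylor-expanding $K(rt)=K(0)+K'(0)rt+O(r^2t^2)$ and using $\langle x,y\rangle\approx N(0,1/d)$ gives
\[
\lambda_1 \asymp \frac{rK'(0)}{\sqrt{p(1-p)}\,d}
\]
with multiplicity $d$. The higher eigenvalues satisfy $\lambda_k = O(r^k d^{-k})$, with multiplicities $\asymp d^k/k!$. Hence
\[
\tr(\kappa_r^3)\asymp d\lambda_1^3 \asymp r^3/d^2,
\]
since the $k\ge2$ contributions are smaller by factors $r^{3}/d^{k}$ or better. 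The conjectured criterion $n^3\tr^2(\kappa_r^3)=1$ then reads $n^3r^6/d^4=1$, i.e.\ $d=n^{3/4}r^{3/2}$. The plan is to make this heuristic rigorous on both sides.

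\emph{Detection for $d\ll n^{3/4}r^{3/2}$.} Use the signed triangle count $T=\tr((A-p)^3)$. Under the RGG, $\Expect T = 6\binom n3 (p(1-p))^{3/2}\tr(\kappa_r^3)\asymp n^3 r^3/d^2$, while under $G(n,p)$ we have $\Expect T=0$ and $\Var T\asymp n^3$. Under the RGG the variance consists of the diagonal $n^3$ term plus terms from pairs of triangles sharing one or two vertices. Bounding these covariances by products of traces and by moments of $\kappa_r$ evaluated at the latent points (e.g.\ $n^4\,\Expect[\kappa_r\kappa_r]$-type quantities, of order $n^4 r^4/d^2$ and below), each is $o(n^6r^6/d^4)$ in the stated range. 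The requirement $r\gg n^{-1/2+\epsilon}$ guarantees that $n^{3/4}r^{3/2}\to\infty$, so this range is non-empty. Chebyshev then gives $\TV\to1$.

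\emph{Impossibility for $d\gg n^{3/4}r^{3/2}$.} We rerun the posterior-overlap argument from \prettyref{thm:fixed}. Bound the divergence, e.g.\ via the second moment or the KL route through the posterior, by an expansion in the overlap matrix $X^\top X'$ of two independent posterior samples. The leading linear-kernel term contributes roughly $n^2\lambda_1^2\,\|X^\top X'\|_F^2$-type quantities. In the fixed-kernel proof the estimate $\|X^\top X'\|_F^2\lesssim n^2/d$ (coming from non-estimability when $d\gg\sqrt n$) closes the bound. The estimation threshold now moves to $d\asymp \sqrt n\, r$, and one checks $n^{3/4}r^{3/2}\ge\sqrt n\, r$ exactly when $r\gtrsim n^{-1/2}$. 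Consequently, in the regime $d\gg n^{3/4}r^{3/2}$ the posterior is still essentially uninformative. The same bookkeeping, with each factor of $\lambda_1$ carrying an extra $r$, turns the final condition $d^4\gg n^3$ into $d^4\gg n^3r^6$.

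\emph{Main obstacle.} The hardest step is controlling the higher-order terms in the kernel expansion uniformly in $r$. Degree-$k$ harmonics contribute $r^k$ factors, but their multiplicities grow like $d^k$, and the overlap bounds for the posterior must be re-derived with $r$-dependent constants. I would first try to show that after rescaling the signal-to-noise ratio by $r$, every estimate in the fixed-kernel proof holds with $d$ replaced by the effective parameter $d/r$ for the linear part, with all remainders being $O(r^2)$ smaller. The margin $n^{\epsilon}$ in the hypothesis on $r$ is what should absorb the polylogarithmic and concentration losses incurred in this step.
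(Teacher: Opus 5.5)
Your detection half is the paper's argument: signed triangles, $\tr(\kappa_r^3)\asymp r^3/d^2$ from $\alpha_1=(r\kappa'(0)+o(1))/d$, and $\Var_P(T)=O(n^3+n^4r^4/d^3)$. Your cruder $n^4r^4/d^2$ also suffices, since $d\ll n^{3/4}r^{3/2}\le nr$. One detail: summing the Gegenbauer tail $k\ge 5$ needs the $L_2$ bound of \prettyref{lmm:rodrigues} with $\|\kappa_r\|_{L_2(\mu)}=O(r/\sqrt d)$, because the Rodrigues bound $\lambda_k=O((r/d)^k)$ has constants $\sup|K^{(k)}|$ that are not uniform in $k$.

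The impossibility half has a genuine gap: the quantitative input you say closes the argument does not close it. The bound $\iprod{\tilde X}{X}\lesssim n^2/d$ (your $\|X^\top X'\|_F^2\lesssim n^2/d$) is just Cauchy--Schwarz with $\Fnorm{X}^2\lesssim n^2/d$. Even qualitative non-estimability, i.e.\ $o(n^2/d)$, is too weak. For the linear part, $g(2)=b_1^4d^{-2}\,\Expect[(\Expect[X_{12}\mid A])^2]=b_1^4d^{-2}\,\Expect\iprod{\tilde X}{X}/(n(n-1))$. So $\Expect\iprod{\tilde X}{X}\le\delta n^2/d$ gives $n^3g(2)\lesssim n^3r^4\delta/d^3$, and with $\delta$ of constant order this only yields $d\gg nr^{4/3}$, the same barrier as revealing the latent points. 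What is needed is \prettyref{prop:posterior-concentration} and \prettyref{cor:posterior-moments} with $\delta\asymp r^2/d+\sqrt{\log n}/n$: the whole graph carries no more information about $\iprod{x_1}{x_2}$ than the single edge $A_{12}$. The two extra powers of $r$ in $d^4\gg n^3r^6$ come from this $\delta$, not from attaching an $r$ to each $\lambda_1$. The paper proves this bound by the change-of-measure lemma \prettyref{lmm:com}, then Hanson--Wright on the truncated exponential moment $\Expect[e^{\iprod{\kappa(X)}{\kappa(X^*)}}\indc{\iprod{X}{X^*}\ge\delta n^2/d}]$ over two independent prior draws, with $b_1\asymp r$ entering the variance proxy. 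It then feeds this into the sharp bounds on $M_{\boldsymbol{\ell}}$ for $k\le k_0$; crude bounds suffice for $k>k_0$. The second-moment route you also mention fails here: the rare-event computation of \prettyref{sec:outline_proof} makes $\chi^2$ diverge whenever $d\ll nr^2$, and $nr^2\gg n^{3/4}r^{3/2}$ because $r\gg n^{-1/2}$.

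You also omit the reduction to polynomial kernels, which is where $r\gg n^{-1/2+\epsilon}$ is actually used. The Wick-formula expansion of $g(k)$ (Lemma~\ref{lem:g(k)-a-ell}) requires a polynomial kernel. The paper therefore approximates the fixed $K$ on $[-1,1]$ by a fixed-degree polynomial $\tilde K$ with $|K-\tilde K|\le\frac c2|t|^{10/\epsilon}$ (Lemma~\ref{lem:polynomial-approximation}). By \prettyref{eq:kernel-approx-twosided} this changes $\TV$ by $O(n^2(r^2/d)^{5/\epsilon})$, which vanishes because $d\gg n^{3/4}r^{3/2}\ge n^{3\epsilon/2}$. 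The standardized coefficients of $\tilde K(r\,\cdot)$ are $b_\ell\propto a_\ell r^\ell$ for $\ell\ge1$, with $b_0$ controlled by \prettyref{eq:b0-auto}. So \prettyref{eq:coeffs-assumption} holds with a constant $B$, and \prettyref{thm:poly} applies directly, since it is already proved with explicit $r$-dependence for $n^{-1/2+\epsilon}\le r\le d^{1/3-\epsilon}$. No harmonic-by-harmonic control with multiplicities $d^k$ is needed, which removes the obstacle you flag. Inside \prettyref{thm:poly}, the $n^\epsilon$ margin pays for the $n^{1/k_0}$ losses from truncating the sum over $k$: the condition $d\gg n^{2/3+\epsilon}r^{4/3}L^{2/3}$ must be dominated by $n^{3/4}r^{3/2}$. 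That is a polynomial loss, not a polylogarithmic one. Your check that the estimation threshold $\sqrt n\,r$ lies below $n^{3/4}r^{3/2}$ does match the paper's requirement $d\gtrsim n^{1/2}r$ in \prettyref{prop:posterior-concentration}.
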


Next, for the special case of linear kernels (RDPG), the following results determine the sharp threshold even for \textit{sparse graphs} with average degree $np \gtrsim (\log n)^2$.

\begin{theorem}[Linear kernel]
\label{thm:linear}
    Consider $K(t) = p + r t$, where $p \in (0,\frac{1}{2})$ and $0<r\leq p$.
    Assume that 
    $r \gtrsim \sqrt{\frac{p}{n} }\log n$.
    Then $d_*(n,K) = (\frac{nr^2}{p})^{3/4}$.
    
\end{theorem}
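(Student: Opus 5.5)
For the linear kernel $K(t)=p+rt$, the standardized kernel is $\kappa(t)=\frac{r}{\sqrt{p(1-p)}}\,t$. Its only nonzero eigenvalue is $\lambda_1=\frac{r}{d\sqrt{p(1-p)}}$, with multiplicity $d$. Hence $\tr(\kappa^3)=d\lambda_1^3\asymp \frac{r^3}{d^2p^{3/2}}$, and the conjectured criterion $n^3\tr^2(\kappa^3)=1$ gives exactly $d_*=(nr^2/p)^{3/4}$. The proof therefore splits into a detection part, for $d\ll (nr^2/p)^{3/4}$, and an impossibility part, for $d\gg (nr^2/p)^{3/4}$.

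\textbf{Upper bound (detection).} We use the signed triangle count $T=\sum_{i<j<k}(A_{ij}-p)(A_{jk}-p)(A_{ki}-p)$.

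Under $G(n,p)$, $\E T=0$ and $\Var T\asymp n^3p^3$.

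Under the RGG, $\E T=\binom n3 r^3\,\E[\iprod{x_1}{x_2}\iprod{x_2}{x_3}\iprod{x_3}{x_1}]=\binom n3 r^3/d^2$.

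The variance under the RGG has several contributions:
\begin{itemize}
\item pairs of triangles sharing an edge: $n^4 p^2 r^2/d$;
\item disjoint triangles: covariance through the shared latent points, of order $n^5 r^6 d^{-4}\cdot O(1/d)$ or smaller;
\item the pure noise term $n^3p^3$.
\end{itemize}
Chebyshev then gives $\TV\to1$ whenever $n^6r^6/d^4\gg n^3p^3$, i.e.\ $d\ll (nr^2/p)^{3/4}$. One still has to check that the shared-edge and shared-vertex terms are dominated. Using $r\le p$ and $d\gtrsim 1$, we expect each cross term to be at most $n^3p^3+(\E T)^2\cdot o(1)$. The sparse condition $r\gtrsim\sqrt{p/n}\log n$ guarantees $(nr^2/p)\gg 1$, so that the threshold $d_*$ diverges.

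\textbf{Lower bound (impossibility).} Following the approach announced in the abstract, we bound $\chi^2$ or KL rather than expose points one at a time. Write $P/Q$ for the likelihood ratio. Then $1+\chi^2(P\|Q)=\E_{X,X'}\prod_{i<j}\big(1+\frac{r^2\iprod{x_i}{x_j}\iprod{x_i'}{x_j'}}{p(1-p)}\big)$, where $X,X'$ are independent copies of the latent points. This is the exponential moment of $\frac{r^2}{p(1-p)}\sum_{i<j}\iprod{x_i}{x_j}\iprod{x'_i}{x'_j}$, roughly $\frac{r^2}{2p}\big(\fnorm{X^\top X'}^2-\text{diag}\big)$. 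Because of the quadratic (Wishart-like) structure, the naive $\chi^2$ blows up once $d\ll n$, so the second moment cannot be used directly. Instead I would bound the KL divergence via the mutual-information / posterior route, using $\KL(P\|Q)\le$ a sum over edges of $\frac{r^2}{p}\,\E\big[\big(\iprod{x_i}{x_j}-\E[\iprod{x_i}{x_j}\mid A]\big)\cdots\big]$. Equivalently, via an I-MMSE type identity, it suffices to show that the posterior overlap $\E\fnorm{X^\top X'}^2$ (two posterior samples) stays close to its null value $n^2/d\cdot(1+o(1))$ plus a correction. The third-order (triangle) term in the KL expansion then contributes $\asymp n^3\tr^2(\kappa^3)= o(1)$. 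The key step is controlling the overlap: show that for $d\gg\sqrt{nr^2/p}$ the posterior Gram overlap $\fnorm{X^\top X'}^2-n^2/d$ has no macroscopic component, via a truncated-second-moment or conditional $\chi^2$ argument restricted to a typical event where $\|X^\top X'\|_{op}$ is small. The lower-order terms are then bounded by $n^3 r^6/(p^3 d^4)\to0$ when $d\gg d_*$.

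The main obstacle is this posterior overlap control in the sparse regime: whenever $d\ll n$, the overlap concentration must be proved without the naive second moment. I would first attempt a conditional second moment on the event $\{\|X^\top X'\|_{op}\lesssim \sqrt{n/d}+\dots\}$, together with Gaussian comparison for $\iprod{x_i}{x_j}$.
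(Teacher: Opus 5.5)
\textbf{Lower bound: this is where the genuine gap is.} What you describe is a list of intentions rather than an argument, and the quantitative target it names is the wrong one.

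First, nothing in the proposal converts an overlap estimate into a bound on $\KL(P_A\|Q_A)$. The ``sum over edges'' inequality is never specified, and I-MMSE is an identity for Gaussian channels. You also say the triangle term is $n^3\tr^2(\kappa^3)$ while the ``lower-order terms'' are $O(n^3r^6/(p^3d^4))$, but these are the same quantity, so the rest of the expansion, with its combinatorial weights, is left uncontrolled.

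The paper gets this half from the linear case of Theorem~\ref{thm:poly}, with $\kappa(t)=b_1t$ and $b_1=r/\sqrt{p(1-p)}$. It exposes vertices one at a time and bounds each increment by the $\chi^2$-divergence of a posterior mixture of product Bernoullis from $\Bern(p)^{\otimes(t-1)}$. This gives $\KL(P_A\|Q_A)\le\sum_{k\ge2}\binom{n}{k+1}g(k)$ with $g(k)=\E_A[(\E[\prod_{i\le k}\kappa(\iprod{x_i}{x_{n+1}})\mid A])^2]$ (Lemma~\ref{lem:kl-expansion}). Wick's formula in $x_{n+1}$ plus Cauchy--Schwarz reduces $g(k)$ to squared posterior means of $\prod_{\{i,j\}\in\pi}\iprod{x_i}{x_j}$ over perfect matchings $\pi$ of $[k]$. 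For $k\le k_0$ these are controlled, by exchangeability, through $\binom{n}{k}^{-1}\E\iprod{X}{\tilde X}^{k/2}$. For $k>k_0$, Jensen gives $g(k)\le(b_1^2k/d^{3/2})^k$.

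Second, the overlap that matters is $\iprod{X}{\tilde X}=\sum_{i\ne j}X_{ij}\tilde X_{ij}$. It is of order $n/d$ for independent draws and $\approx n^2/d$ only at perfect recovery, so comparing it to a ``null value $n^2/d$'' is not meaningful. ``No macroscopic component'', i.e.\ $\iprod{X}{\tilde X}\le\delta n^2/d$ with $\delta=o(1)$, is far too weak. Since $g(2)=b_1^4d^{-2}\E\iprod{X}{\tilde X}/(n(n-1))$, you get $n^3g(2)\lesssim n^3b_1^4\delta/d^3$. This yields $d\gg n^{3/4}b_1^{3/2}\asymp(nr^2/p)^{3/4}$ only if $\delta=O(b_1^2/d)$, that is, only if the whole graph says essentially nothing more about $\iprod{x_1}{x_2}$ than the single edge $A_{12}$.

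Proposition~\ref{prop:posterior-concentration} proves exactly this, with $\delta\asymp\max\{b_1^2/d,\sqrt{\log n}/n\}$. It uses the planting trick of Lemma~\ref{lmm:com}: the probability of a large truth--posterior overlap is at most $2\sqrt{\E[g(X,X^*)\indc{\cdot}]}$ for two \emph{independent prior} draws, where $g(X,X^*)=\prod_{i<j}(1+b_1^2X_{ij}X^*_{ij})$. Hanson--Wright is then applied to the exponential moment \emph{truncated to the large-overlap event}. Your fallback, a conditional second moment on a typicality event, does not substitute for this. Once the product is bounded by $\exp(b_1^2\sum_{i<j}X_{ij}X^*_{ij})$, the $X$-average for typical $X^*$ is $\exp(\Theta(b_1^4n^2/d^2))$, which is $1+o(1)$ only for $d\gg nb_1^2$.

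\textbf{Upper bound.} This half follows the paper (signed triangles, Theorem~\ref{thm:ub}), and your mean and null variance are right. Your variance under $P$ is wrong, however, and as stated it would not give the claimed threshold.

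For two triangles sharing the edge $ij$, $\E[(A_{ij}-p)^2\mid X]=p(1-p)+(1-2p)r\iprod{x_i}{x_j}$, and the other four factors average to a $4$-cycle. So each such term is $p(1-p)r^4/d^3$; the $(1-2p)r^5$ part is proportional to $\E\iprod{x_1}{x_2}^3=0$. The total is of order $n^4pr^4/d^3$. Your value $n^4p^2r^2/d$ would confine the test to $d\ll(nr^2/p)^{2/3}$.

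Triangles sharing a single vertex have zero covariance by rotation invariance. You also omitted the diagonal contribution $\binom{n}{3}(1-2p)^3r^3/d^2$. The resulting side conditions $d\ll n^2r^2/p$ and $d\ll(nr)^{3/2}$ follow from $d\ll(nr^2/p)^{3/4}$, because $p\ge r\gtrsim\sqrt{p/n}\log n$ forces $p\gtrsim(\log n)^2/n$.

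The same hypothesis, equivalently $b_1\gtrsim n^{-1/2}\log n$, also neutralizes the $\sqrt{\log n}/n$ floor of $\delta$ in the lower bound, which produces the requirement $d\gg b_1^{4/3}n^{2/3}(\log n)^{1/6}$. So the hypothesis does more than make $d_*$ diverge.
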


\paragraph{High-SNR regime}

Following \cite{liu2023probabilistic}, we consider a kernel $K$ given by the CDF of a fixed probability distribution. As such, the connection probability $K(r\Iprod{x_i}{x_j})$ is monotonically increasing in terms of the overlap $\Iprod{x_i}{x_j}$. 
This effect is further amplified by the scaling factor $r\to\infty$ in the high-SNR regime.

As noted in \cite{liu2023probabilistic}, the interesting regime for soft RGG is $r \lesssim \sqrt{d}$, because if $r \gg \sqrt{d}$, 
$K(r\Iprod{x_i}{x_j})$ tends to $\indc{ \Iprod{x_i}{x_j} >0}$, which is the hard RGG kernel, and the detection threshold is always given by $d \gg n^3$ independent of $r$ and the kernel. In other words, the soft RGG model is only meaningful for $r \ll \sqrt{d}$. 
The next theorem determines the detection threshold under the stronger assumption of $r \lesssim d^{1/12-\epsilon}$
and other conditions on the kernel.

\begin{theorem}
Let $K(x) = \int_{-\infty}^x dt f(t)$ where $f$ is a probability density function.
Suppose that there are constants $c,C>0$ such that
(a) $\min\{K(-x),1-K(x)\} \geq c \exp(-C x^2)$  for all $x>0$;
(b) the characteristic function $\phi(t) \equiv \int dx f(x) e^{i xt}$ satisfies
$|\phi(\omega)| \leq C \exp(-c \omega^2)$  for all $\omega$.
Assume that $r \leq d^{1/12 - \epsilon}$ for some constant $\epsilon > 0$. 
Then $d_*(n,K_r)=n^{3/4} r^{3/2}$.
    \label{thm:highsnr}
\end{theorem}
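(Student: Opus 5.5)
The plan has two halves: a signed triangle count for detection when $d \ll n^{3/4}r^{3/2}$, and a second-moment/posterior-overlap argument for impossibility when $d \gg n^{3/4}r^{3/2}$. Both reduce to spectral estimates for the standardized kernel $\kappa_r$ associated with $K_r(t)=K(rt)$. The key spectral estimate is the following. Under (a), $p$ and $1-p$ are bounded away from zero, since $r\langle x,y\rangle$ has variance $r^2/d \to 0$. Writing $t=\langle x,y\rangle \approx \mathcal N(0,1/d)$, I would Fourier-expand $K_r$ through $\phi$; the Gaussian decay in (b) makes the Gegenbauer coefficients of $K(r\cdot)$ decay like $(r^2/d)^{k/2}$ up to constants. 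The target is
\[
\lambda_1 \asymp \frac{r}{d}\, K'(0), \qquad \lambda_k^2 \dim_k \lesssim \Big(\frac{C r^2}{d}\Big)^{k} \quad (k\ge 2),
\]
where $\dim_k \asymp d^k/k!$. Smoothness of $K$ near $0$ at scale $r/\sqrt d \ll 1$ (density $f$ continuous with $f(0)>0$) gives $K'(0)\neq 0$, so
\[
\tr(\kappa^3) = \sum_k \lambda_k^3 \dim_k \asymp d\,(r/d)^3 = r^3/d^2,
\]
and the higher harmonics contribute at most $\sum_{k\ge2} (r^2/d)^{3k/2} d^{-k/2}$, which is negligible.

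For detection, let $T=\tr((A-p)^3)$. Then $\mathbb E_P T - \mathbb E_Q T \asymp n^3 p^{3/2}\tr(\kappa^3) \asymp n^3 r^3/d^2$, while $\Var_Q T \asymp n^3$. Under $P$ the variance picks up extra terms from overlapping triangles sharing an edge, of order $n^4 \tr(\kappa^4)\approx n^4 r^4/d^3$, plus analogous lower-order pieces. Each of these is $o\big((n^3r^3/d^2)^2\big)$ whenever $n^3r^6/d^4 \to \infty$, i.e.\ $d \ll n^{3/4}r^{3/2}$. Chebyshev then finishes this half, and it is routine bookkeeping once the spectral bounds are in hand.

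For impossibility, I would follow the posterior-overlap route advertised in the abstract rather than sequential exposure. First bound $\KL(P\|Q)$ (or $\chi^2$) by an interpolation/I-MMSE type formula, which gives the derivative in the SNR as the expected squared overlap between the planted kernel matrix and a posterior sample,
\[
\mathbb E\Big[\sum_{i<j}\kappa_r(\langle x_i,x_j\rangle)\,\kappa_r(\langle \tilde x_i,\tilde x_j\rangle)\Big].
\]
Expanding in harmonics, the degree-one part equals $\lambda_1^2 d^2\,\|X^\top\tilde X\|_F^2/d^2$ up to normalization. It then suffices to show that the posterior overlap matrix $X^\top \tilde X/n$ is small when $d\gg\sqrt n$ (the estimation by-product) and to control its Frobenius norm at the level that sums to $o(1)$ precisely when $n^3\tr^2(\kappa^3) \to 0$. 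The cubic contribution arises from the first correction to the null overlap, $\mathbb E\|X^\top\tilde X\|_F^2 \approx n^2 d\cdot(\text{signal}) + \cdots$. The higher harmonics $k\ge2$ I would bound crudely by $n^2\sum_{k\ge2}\lambda_k^2\dim_k \cdot o(1)$, which is small because $r \le d^{1/12-\epsilon}$ forces $(r^2/d)^2 n^2 \cdot d^{-?}$ terms to vanish in the relevant range $d \approx n^{3/4}r^{3/2}$. This is exactly where the exponent $1/12$ should enter: plugging $n \approx d^{4/3}/r^2$ into the $k=2$ term $n^2 r^4/d^2$ requires the stated growth restriction on $r$.

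The main obstacle is the overlap control in the high-SNR setting. Unlike the fixed-kernel case, the linearization $K(rt)\approx K(0)+rK'(0)t$ is no longer accurate uniformly, because $rt$ can be of order $r/\sqrt d \cdot \sqrt{\log n}$. The tail condition (a) is what keeps likelihood ratios bounded: it gives $\log(K/(1-K))$ growth at most quadratic, so the posterior tilt is at most $\exp(O(r^2 t^2))$. I would first truncate to the event $\max_{i\ne j}|\langle x_i,x_j\rangle|\le C\sqrt{\log n/d}$, show via (a) that on this event the likelihood ratio is within $1+o(1)$ of its Gaussian-kernel surrogate, and then transfer the fixed-kernel overlap bounds (those used for Theorem~\ref{thm:lowsnr}, with $r$ tracked explicitly) to this surrogate.
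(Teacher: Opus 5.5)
Your detection half matches the paper. It uses the signed triangle count, with $\tr(\kappa^3)\asymp r^3/d^2$ and $\tr(\kappa^4)\lesssim r^4/d^3$ plugged into Theorem~\ref{thm:ub}. The paper gets the coefficient bounds from Lemma~\ref{lmm:rodrigues} rather than by Fourier inversion, which is immaterial. Like the paper, you need $f(0)>0$ to get $\kappa_r'(0)\asymp r$. This does not follow from (a)--(b) as stated: $f(x)=x^2e^{-x^2/2}/\sqrt{2\pi}$ satisfies both with $f(0)=0$. So it is an extra hypothesis that both arguments rely on.

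\textbf{The gap: the impossibility half.} The I-MMSE identity you invoke is specific to Gaussian noise. The more basic problem is that bounding KL by an integral of the posterior overlap $\mathbb{E}\langle\kappa(X),\kappa(\tilde X)\rangle$ cannot reach $n^{3/4}r^{3/2}$, because the overlap is at least its single-edge part:
\[
\mathbb{E}\Big[\sum_{i<j}\kappa(X_{ij})\kappa(\tilde X_{ij})\Big]
=\sum_{i<j}\mathbb{E}\big[(\mathbb{E}[\kappa(X_{ij})\mid A])^2\big]
\ge\binom n2\,\mathbb{E}\big[(\mathbb{E}[\kappa(X_{12})\mid A_{12}])^2\big]
=\binom n2\big(\mathbb{E}[\kappa(X_{12})^2]\big)^2\asymp\frac{n^2r^4}{d^2}.
\]
At $d\asymp n^{3/4}r^{3/2}$ this equals $d^{2/3}\to\infty$, however $r$ grows. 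So this route at best reproduces the Gaussian-channel barrier ($d\asymp nr^2$ here). It also means your attempt to locate the exponent $1/12$ in the term $n^2r^4/d^2$ cannot succeed.

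\textbf{What the argument needs instead.} You need a decomposition in which the first-order term cancels exactly, thanks to the matched marginals $A_{ij}\sim\Bern(p)$. In the paper this is the vertex-by-vertex KL chain rule, with each increment bounded by the $\chi^2$ of a Bernoulli mixture, giving $\KL\le\sum_{k\ge2}\binom n{k+1}g(k)$.
\begin{itemize}
\item The $k=1$ term vanishes because $\mathbb{E}[\kappa(\langle x_1,x_{n+1}\rangle)\mid A]=0$.
\item Integrating out the fresh vertex $x_{n+1}$ turns the $k=2$ term into $\mathbb{E}[(\mathbb{E}[\eta(X_{12})\mid A])^2]$, where $\eta$ is the kernel of the operator $\kappa^2$. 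Its single-edge value is exactly $\tr^2(\kappa^3)$, and this is where the cubic quantity comes from.
\item The posterior-overlap estimate (change of measure plus Hanson--Wright) is then used only to show that the full graph estimates $X_{12}$ essentially no better than the single edge does. Hence $g(2)\lesssim r^6/d^4$ up to lower-order terms, and $n^3g(2)\to0$.
\item All remaining $k$ up to $n$ are controlled separately, through the Wick expansion and the two bounds on $M_{\boldsymbol\ell}$.
\end{itemize}

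\textbf{The high-SNR reduction is also missing.} An edgewise $1+o(1)$ approximation of the likelihood by a ``Gaussian-kernel surrogate'' does not control a product over $\binom n2$ edges. You need kernel error $o(n^{-2})$ in $L_2(\mu)$, cf.~\eqref{eq:kernel-approx-twosided}. The paper gets this by Taylor-truncating $K$ at degree $L\asymp r^2$ (Lemma~\ref{lmm:poly-highsnr}):
\begin{itemize}
\item condition (a) keeps the truncation a valid kernel on $[-r,r]$;
\item condition (b) gives $|a_\ell|\le C^\ell/\sqrt{\ell!}$, which is exactly the coefficient condition~\eqref{eq:coeffs-assumption} required by Theorem~\ref{thm:poly}.
\end{itemize}
That theorem needs $L\le d^{1/6-\epsilon}$, and with $L\asymp r^2$ this is the true source of the restriction $r\le d^{1/12-\epsilon}$.
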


We now comment on the assumptions imposed on the kernel $K$ in the preceding theorem. It is straightforward to verify that these conditions are satisfied by the CDF of any Gaussian or any Gaussian convolution (with an arbitrary distribution). 
These conditions arise in the polynomial approximation argument used in our proof of the negative result:
\begin{itemize}
    \item We first approximate the kernel by a polynomial up to statistically negligible error, then study the RGG with a polynomial kernel. Unlike Theorems \ref{thm:fixed} and \ref{thm:lowsnr}, where a constant-degree polynomial suffices, the high-SNR regime requires the degree to grow to infinity at an appropriate speed. The  condition (a) in \prettyref{thm:highsnr} ensures the Taylor polynomial approximant is itself a valid probability kernel (bounded between 0 and 1). 
\item The condition (b) on the characteristic function ensures\footnote{This is essentially an equivalent condition; see \cite[Problem 5.1]{stein2010complex}.} the $\ell$th Taylor coefficient decays superpolynomially as $(O(\ell))^{-\ell/2}$.
This is crucially required for the current result for polynomial kernels (see \prettyref{thm:poly}) to control various combinatorial quantities and higher-order terms. Relaxing this condition may require substantially new ideas and is beyond the scope of the current program.
\end{itemize}

\subsection{Recovery of latent points}
\label{sec:recovery}

Next, we turn to the problem of recovering the latent points given the random geometric graph. 
Since the latent points $x_1, \dots, x_n$ are only identifiable up to a global orthogonal transformation, it is equivalent to estimating the inner products $\langle x_i, x_j \rangle$. 
Define $X \in \reals^{n \times n}$ by $X_{ij} = \langle x_i, x_j \rangle$ for $i \ne j$ and $X_{ii} = 0$. 
The minimum mean squared error (MMSE) of estimating $X$ is
$$
\mmse \eqdef \min_{\hat X} \Expect \|\hat X - X\|_F^2 ,
$$
where the infimum is over all estimators $\hat X$ measurable with respect to the observed graph $A$. As a benchmark, the trivial estimator $\hat X = 0$ achieves a mean squared error $\Expect \|X\|_F^2 = n(n-1)/d$. We refer to $\Expect \|\hat X - X\|_F^2 \ll \Expect \|X\|_F^2$ as \textit{consistent} estimation and 
$\Expect \|\hat X - X\|_F^2 \leq (1-\Omega(1)) \Expect \|X\|_F^2$ as 
\textit{non-trivial} estimation (i.e.~weak recovery), respectively.

\begin{theorem} \label{thm:fixeded-kernel-spectral-recovery}
Let $K$ be a fixed smooth kernel satisfying the assumption of \prettyref{thm:fixed}.
If $1 \ll d \ll \sqrt{n}$, then $\mmse = O\left( \frac{1}{d^2} + \frac{d^2}{n} \right) \Expect \|X\|_F^2$, 
achieved by an efficient spectral method; if $d \gg \sqrt{n}$, then $\mmse = \left( 1 - O\left( \frac{1}{d} + \frac{\sqrt{\log n}}{n} \right) \right) \Expect \|X\|_F^2$.
\end{theorem}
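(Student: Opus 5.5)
Two parts: an upper bound via a spectral estimator, and a lower bound via the posterior overlap analysis.

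\textbf{Achievability for $d\ll\sqrt n$.} Taylor expand the kernel around $0$: $K(t)=p+K'(0)t+O(t^2)$ up to centering. Write
\[
A - p\mathbf 1\mathbf 1^\top = K'(0)\,X + R + W,
\]
where $W$ is the centered Bernoulli noise conditional on the latent points and $R$ collects the higher-order terms. The signal $X$ has rank $d$ with eigenvalues about $n/d$, since $\frac1n\sum_i x_ix_i^\top\approx I_d/d$. The noise satisfies $\|W\|_{op}=O(\sqrt n)$. For $R$, the degree-$2$ part $\frac{K''(0)}2\langle x_i,x_j\rangle^2$ equals a constant $c/d$ (absorbed by re-centering with the empirical edge density) plus a rank-$O(d^2)$ term living on the second harmonic, with eigenvalue about $n/d^2$. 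Higher orders are similarly smaller, of order $n/d^k$ on the $k$th harmonic.

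Let $\hat X$ be the projection of $(A-\hat p\mathbf 1\mathbf 1^\top)/K'(0)$ onto its top $d$ eigenvectors, with the diagonal zeroed. Davis--Kahan, or directly the bound $\|\hat X-X\|_F^2\le 2d\,\|{\rm perturbation}\|_{op}^2$ for rank-$d$ truncation, should give error $O(d\cdot n)$ from $W$. Relative to $\|X\|_F^2\approx n^2/d$ this is $d^2/n$.

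The contribution of $R$ needs a little care. Its top part is nearly orthogonal to the $X$-eigenspace, being a different harmonic. The rank-$d$ truncation therefore picks up only cross terms, and these should contribute a relative error $O(1/d^2)$, coming from the squared ratio of eigenvalue scales $(n/d^2)/(n/d)$. I would make this precise by controlling $\|P_X R P_X\|$ and the spectral gap $n/d-O(n/d^2+\sqrt n)$, using concentration of the empirical harmonic Gram matrices when $n\gg d^2$.

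\textbf{Impossibility for $d\gg\sqrt n$.} By the I-MMSE/Nishimori identity,
\[
\mathrm{mmse}=\Expect\|X\|_F^2-\Expect\|\Expect[X\mid A]\|_F^2,
\qquad
\Expect\|\Expect[X\mid A]\|_F^2=\Expect\langle X,X'\rangle,
\]
where $X'$ is built from an independent posterior sample $x'$. It therefore suffices to show
\[
\Expect\langle X,X'\rangle = O\!\left(\tfrac1d+\tfrac{\sqrt{\log n}}{n}\right)\tfrac{n^2}d .
\]
The overlap $\langle X,X'\rangle=\|Q\|_F^2-\sum_i\langle x_i,x_i'\rangle^2$ with $Q=\sum_i x_i x_i'^\top$, so the task reduces to bounding the overlap matrix $Q$ between the planted points and a posterior sample.

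I would use the paper's central posterior-overlap analysis. Via Nishimori, replace the planted pair by two posterior samples, then compare to the prior through a second-moment or interpolation (Guerra-type) argument. The likelihood ratio restricted to overlaps is governed by terms of the form $\exp\bigl(c\sum_{i<j}\kappa\kappa'\bigr)$, which are roughly $\exp\bigl(\frac{c}{d}\|Q\|_F^2\bigr)$ to leading order. Under the prior, $\|Q\|_F^2$ concentrates at $n$ with Gaussian-like fluctuations in each of its $d^2$ entries. When $n\ll d^2$ the tilt is too weak to push $\|Q\|_F^2$ up to order $n^2/d$.

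This leaves $\Expect\|Q\|_F^2=O(n+n^2/d^2)$ plus a log-factor deviation term, which I expect to produce the $\sqrt{\log n}/n$ piece. Plugging in gives the overlap bound above.

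\textbf{Main obstacle.} The hard step is the second part: rigorously controlling the tilted overlap distribution, including the higher-order kernel terms and the truncation of atypical configurations. I would first handle it through a polynomial-kernel reduction (as the paper does for detection), and then a union or large-deviation bound on $\|Q\|_{op}$ over a net.
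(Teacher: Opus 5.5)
Your achievability argument is essentially the paper's. It uses the top-$d$ eigenspace of the centered adjacency matrix, a Gegenbauer decomposition of the kernel, an operator-norm perturbation bound, and the rank-$2d$ conversion to Frobenius norm. No near-orthogonality analysis of $R$ is needed: putting $\|R\|\lesssim n/d^2$ into the same crude bound already gives $d\cdot n^2/d^4=n^2/d^3$, i.e.\ relative error $1/d^2$, which is how the paper gets that term (via Davis--Kahan with gap $\asymp n/d$). The gap is in the impossibility half, and it has two layers.

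\textbf{The target is mis-stated.} Because $\|x_i\|=\|x_i'\|=1$, the correct identity is $\iprod{X}{X'}=\Fnorm{Q}^2-n$, not $\Fnorm{Q}^2-\sum_i\iprod{x_i}{x_i'}^2$. So you must show that the \emph{excess} $\Fnorm{Q}^2-n$ is $O(n^2/d^2+n\sqrt{\log n}/d)$, i.e.\ within a $\sqrt{\log n}$ factor of its null fluctuation $\asymp n/d$. A bound $\Expect\Fnorm{Q}^2=O(n+n^2/d^2)$ only yields relative error $O(d/n)$, which is strictly weaker than the theorem when $d\gg\sqrt n$. A net or large-deviation bound on $\|Q\|$ cannot close this either: already under the prior $d\|Q\|^2\approx 4n$ while $\Fnorm{Q}^2\approx n$. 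Incidentally, to leading order the tilt is $\exp(\tfrac{b_1^2}{2}(\Fnorm{Q}^2-n))$ with $b_1=\kappa'(0)$ and no $1/d$. The condition $d\gg\sqrt n$ comes from the entry variance $n/d^2$ of $Q$.

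\textbf{The posterior-to-prior step is missing.} Neither of your proposed devices works here. No Guerra-type interpolation result is known at this precision with $d\to\infty$; existing ones are for fixed $d$. The untruncated second moment $\Expect_{X\indep X^*}[g(X,X^*)]=1+\chi^2(P_A\|Q_A)$ diverges for $d\ll n$ because of clustered configurations, so it cannot cover $\sqrt n\ll d\ll n$.

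The paper's key step is a change-of-measure (planting) lemma. Write the posterior density as $P_X(x)L(A|x)/Z(A)$ and use $\pprob{Z(A)\le\epsilon}\le\epsilon$. This gives $\pprob{(X,\tilde X)\in\calF}\le 2\sqrt{\Expect_{X\indep X^*}[g(X,X^*)\indc{(X,X^*)\in\calF}]}$ with $g=\prod_{i<j}(1+\kappa(X_{ij})\kappa(X^*_{ij}))$.

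Since $\tilde X$ has the prior marginal, you may intersect $\calF$ with $\{X,\tilde X\in\calE\}$ at cost $2\pprob{X\notin\calE}$. Here $\calE$ is the typical set $\Fnorm{X}^2\lesssim n^2/d$, $\|X\|\lesssim n/d+\sqrt{n/d}$, $\Fnorm{\kappa(X)-b_1X}^2\lesssim n^2/d^2$. This intersection is what removes the rare events.

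Conditionally on $X^*\in\calE$, the leading term $b_1^2\iprod{X}{X^*}$ is a quadratic form in the latent coordinates. Apply Hanson--Wright with variance proxy $\asymp n^2/d^2$ and scale $\asymp(n/d+\sqrt{n/d})/d$; the scale is $O(1)$ exactly when $d\gtrsim\sqrt n$. The remaining kernel terms are bounded on $\calE$ (deterministically or by the same inequality) and separated by Cauchy--Schwarz. This bounds the truncated exponential moment above the threshold $\delta n^2/d$ by $\exp(-c\min\{\delta^2n^2,\delta nd,\delta n^{3/2}d^{1/2}\})\le n^{-D}$ for $\delta\asymp\max\{1/d,\sqrt{\log n}/n\}$.

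Finally, convert this tail bound into $\Expect\iprod{X}{\tilde X}\lesssim n^2/d^2+n\sqrt{\log n}/d$ via Cauchy--Schwarz and $\Expect\Fnorm{X}^4\lesssim n^4/d^2$.
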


The theorem shows that the critical dimension for weak recovery is given by $d = n^{1/2}$, lower than the detection threshold $d_* = n^{3/4}$. This phenomenon is analogous to known results in dense hard RGG, where the recovery threshold $d = \tilde \Theta(n)$ \cite{li2023spectral,mao2024impossibility} as compared to the detection threshold $d = n^3$ \cite{bubeck2016testing}. 
More precisely, assuming a hard RGG with Gaussian latent points, \cite{li2023spectral} shows that $\mmse = o(\Expect \|X\|_F^2)$ if $\polylog(n) \le d \le \frac{n}{\polylog(n)}$; for either spherical or Gaussian latent points, \cite{mao2024impossibility} proves that $\mmse = \Omega(\Expect \|X\|_F^2)$ if $d = \Omega(n)$. 
In comparison, our recovery threshold $d = n^{1/2}$ for soft RGG 
is tight up to constant factors. 


Moreover, our upper bound for the recovery of latent inner products is achieved via a simple spectral method that computes the leading eigenspace of the centered adjacency matrix, as in the hard RGG setting~\cite{li2023spectral}. 
For RGGs with general kernels, prior work shows that the spectral method achieves consistency when $d$ is a constant, under additional regularity conditions \cite{araya2019latent} or eigenvalue conditions \cite{eldan2022community} on the kernel. 
Our spectral analysis yields the tight condition $1 \ll d \ll n$ without logarithmic factors or extra eigenvalue conditions, which is new for recovery in any RGG model.



\subsection{Discussions}
\label{sec:discussion}

\paragraph{Proof techniques}
For the upper bound, we consider the signed triangle count introduced in \cite{bubeck2016testing}. Its analysis is by now standard and has appeared in several works (see, e.g.,~\cite{bubeck2016testing,liu2022testing,liu2023probabilistic,liu2023phase}). Here, leveraging the operator $\kappa$, we give a simple and unified argument that bounds the mean and variance under RGG in terms of $\tr(\kappa^3)$ and $\tr(\kappa^4)$ (cf.~\prettyref{sec:analysis_triangle}).

For the lower bound, the proof in \cite{bubeck2016testing} relies on the indistinguishability between the Wishart and Wigner matrices when $d\gg n^3$, a fact also proved by  \cite{jiang2015approximation}. 
By the data processing inequality, this implies the indistinguishability of the  hard RGG and the \ER, which can be obtained from the Wishart and Wigner by applying the step-function kernel entrywise.
This argument, by design, cannot yield a result that depends on the kernel, so news ideas are required in our setting. 

A common strategy for proving the impossibility of detection is to bound the second moment of the likelihood ratio. However, as we explain in~\prettyref{sec:outline_proof}, this second moment diverges when $d \ll n$, and therefore cannot capture the sublinear critical threshold $d \asymp n^{3/4}$.

Instead, our lower bound proof proceeds by bounding the KL divergence and expanding it using the chain rule. This technique was  introduced in~\cite{brennan2020phase} for RGGs and later refined in~\cite{liu2022testing,liu2023phase,liu2023probabilistic}. Roughly speaking, it decomposes the RGG by sequentially revealing the edges between the vertex $t$ and the previous vertices $1, \ldots, t-1$, thereby reducing the problem to bounding the incremental KL divergence at each step $t$. Obtaining a tight bound, however, requires understanding the posterior distribution of these edges given all edges exposed so far (i.e., the induced graph on vertices $1,\ldots,t-1$.)  

Nearly all existing works \cite{liu2022testing,liu2023phase,liu2023probabilistic,bangachev2024detection,bangachev2025random} avoids this difficulty by further exposing the latent points  $x_1, \ldots, x_{t-1}$.\footnote{The only  exception is \cite{liu2022testing} for sparse hard RGG, which analyzes the posterior distribution via belief
propagation and exploits the locally tree-like structure of neighborhoods. This approach, however, does not appear
to extend to our setting in the dense regime.}
While convenient, this strategy turns out to reveal too much information, causing the KL bound to blow up whenever $d \ll n$. This limitation has been noted in several recent works and identified as a common central obstacle to obtain sharp detection thresholds for various RGG models
(see, e.g.,~\cite[Section 8.1]{bangachev2025random} for a detailed discussion.)

To break the barrier at $d \asymp n$, we directly analyze the posterior. The core technical step requires proving a statement of the following type: For estimating $\iprod{x_1}{x_2}$, the information contained in the entire graph $A$ beyond the single edge $A_{12}$ is negligible.
As a by-product, we also obtain an impossibility result for non-trivial estimation of the latent points. We defer a more detailed discussion of these ideas to~\prettyref{sec:outline_proof}. 


\paragraph{Universality and non-universality} 
There are a variety of questions on universality arising in the RGG and related models. Of particular interest are \textit{channel universality} (with respect to the observation model) and \textit{source universality} (with respect to the latent point distribution.)

For the first question, let us consider a parametric model 
$P(\cdot | \theta)$. This output channel defines the following  hypothesis testing problem
$$
\calH_1: (A_{ij})_{i<j} \iiddistr P\left(\cdot | \iprod{x_i}{x_j} \right) \quad \text{v.s.} \quad \calH_0: (A_{ij})_{i<j} \iiddistr P\left(\cdot | \theta_0 \right),
$$
where $\theta_0$ is chosen to match the first moment of $A_{ij}$ under $\calH_0$ and $\calH_1$. Note that, under $\calH_1$, the observation $A$ is a noisy observation of the matrix $X=(\Iprod{x_i}{x_j})$ with rank $\min\{d,n\}$. 
A canonical model for low-rank matrix estimation and PCA assumes additive Gaussian noise, with $P(\cdot |\theta)=\calN(\theta,1)$ and $\theta_0=0$. 
The RGG model corresponds to the Bernoulli channel with kernel $K$, so that 
$P(\cdot |\theta) = \Bern(K(\theta))$ and $K(\theta_0) =p= \Expect[K(\Iprod{x_1}{x_2})]$.

For fixed $d$, the threshold for detection and non-trivial estimation, in terms of the SNR parameter, has been shown to be \textit{channel-universal} and only depend on the Fisher information at $\theta_0$ \cite{lesieur2015mmse,krzakala2016mutual,lelarge2017fundamental}.
For large $d$, channel universality may not hold.
To see this, consider the following example:
Under the Gaussian channel, i.e., $A_{ij} = \Iprod{x_i}{x_j} + z_{ij}$ with $z_{ij}\iiddistr N(0,1)$, it is straightforward to show the critical dimension of detection is $d_*=n$.
\footnote{The mean of $T=\sum_{i<j}A_{ij}^2$ is $\binom{n}{2}$ under $\calH_0$ and $\binom{n}{2}(1+1/d)$ under $\calH_1$; in both cases the variance is $O(n^2)$. Thus this test statistic succeeds when $d\ll n$. Conversely, when $d\gg n$, a direct calculation yields that the second moment of the likelihood ratio is $\Expect_{x \indep x^*}[ \exp ( \sum_{i<j} \iprod{x_i}{x_j} \iprod{x^*_i}{x^*_j}) ] \le \exp ( O(n^2/d^2) ) = 1+o(1)$.} 
Now, if we take the sign 
$A_{ij} = \indc{\Iprod{x_i}{x_j} + z_{ij}>0}$, we obtain a soft RGG with kernel given by the Gaussian CDF, and
\prettyref{thm:fixed} shows that 
the critical dimension lowers to $d_* = n^{3/4}$.
In other words, 
applying a one-bit quantization to the Gaussian observation makes the detection problem strictly harder.
This phenomenon stands in sharp contrast to the recovery threshold: For both Gaussian and its quantized version (RGG),  the  threshold for weak recovery is $d= \sqrt{n}$. (See \prettyref{thm:fixeded-kernel-spectral-recovery} and  \prettyref{rmk:recovery-gaussian}.)


Turning to source universality, for dense hard RGG, the threshold $d=n^3$ is known to hold for latent points that are spherical, Gaussian, or, more recently, 
uniform on the Hamming cube (\cite[Corollary 4.8]{bangachev2025random}.)
Extending these results, the following theorem (proved in \prettyref{app:dn3}) establishes non-detection at $d\gg n^3$ for a wide range of kernels and latent point distributions. In view of the  results in the existing literature and the present paper, we expect this to be sharp for the hard-RGG kernel and loose for smooth kernels.

\begin{theorem}
    \label{thm:dn3-universal}
    Let the kernel $K$ be a right-continuous function on $\reals$ with bounded variation.    
    Suppose that the latent points $x_1,\ldots,x_n$ are independently drawn from a product distribution $\pi^{\otimes d}$, where $\pi$ is a fixed subgaussian distribution.
    Then $\KL(P_A\|Q_A) = O(\frac{n^3}{d})$, 
provided that $d = \Omega(n^3)$.
\end{theorem}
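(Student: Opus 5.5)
We prove \prettyref{thm:dn3-universal} by a data-processing argument, in the spirit of \cite{bubeck2016testing}. The graph $A$ is obtained by applying a (randomized) channel entrywise to the Gram matrix $W=(\Iprod{x_i}{x_j})_{i<j}$. Scale so that the entries are $W_{ij}=\frac{1}{\sqrt d}\sum_{k=1}^d x_{ik}x_{jk}$ with $x_{ik}\iiddistr\pi$, and assume without loss of generality that $\pi$ is centered with unit variance; the kernel $K$ is applied to this normalized inner product. Under $Q_A$ the edges are i.i.d.\ $\Bern(p)$ with $p=\Expect K(W_{12})$.

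The intermediate reference is the law $Q_W$ under which the entries $W_{ij}$ are i.i.d.\ with the marginal law $\mu_d$ of $W_{12}$. Pushing $Q_W$ through the same entrywise channel gives exactly $G(n,p)$. Data processing then yields
\[
\KL(P_A\|Q_A)\le \KL(P_W\|Q_W).
\]
At this point the kernel drops out completely. Bounded variation and right-continuity are used only to make $K$ a legitimate measurable kernel, or, if $K$ is not $[0,1]$-valued, to write $K$ as a difference of monotone functions or a mixture of step kernels $\indc{t\ge\tau}$. What remains is to show that the off-diagonal Gram entries of $n$ i.i.d.\ product-subgaussian vectors have joint law within KL $O(n^3/d)$ of i.i.d.\ copies of the marginal, whenever $d\gtrsim n^3$.

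To bound $\KL(P_W\|Q_W)$ I use the chain rule, exposing vertices one at a time:
\[
\KL(P_W\|Q_W)=\sum_{t=2}^n \Expect\,\KL\bigl(P_{W_{t,<t}\mid W_{<t}}\,\big\|\,\mu_d^{\otimes(t-1)}\bigr).
\]
Conditioning further on the latent vectors $x_1,\dots,x_{t-1}$ only increases this (convexity of KL; this is the "exposing latent points" step, which is harmless here since $d\gg n$). Given $x_{<t}$, the vector $W_{t,<t}=\frac{1}{\sqrt d}\sum_k x_{tk}(x_{1k},\dots,x_{t-1,k})$ is a normalized sum of $d$ independent random vectors in $\reals^{t-1}$. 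Its covariance is $\Sigma=\frac1d M M^\top$, where $M$ has rows $x_i$. The target $\mu_d^{\otimes(t-1)}$ is also a law of normalized sums, with identity covariance.

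I then compare both laws to Gaussians through an Edgeworth expansion, or a KL-type CLT with explicit third-cumulant correction, for sums of independent nonidentical vectors:
\begin{enumerate}
\item The covariance mismatch gives $\KL(\calN(0,\Sigma)\|\calN(0,I))\approx\frac14\|\Sigma-I\|_F^2$. Its expectation is $\asymp t^2/d$, which is too large by itself.
\item The third-order terms mostly cancel against the corresponding terms of $\mu_d$. The leading cross-cumulants such as $\Expect[W_{ti}W_{tj}\cdots]$ involve $\frac{1}{d^{3/2}}\sum_k x_{ik}x_{jk}x_{lk}$, which contributes $O(t^3/d)$.
\end{enumerate}

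The main obstacle is that the naive bound $t^2/d$ per step sums to $n^3/d$. That is actually fine: $\sum_t t^2/d=O(n^3/d)$, exactly the claimed rate. So the covariance term is not a problem after all. The real difficulty is making the Edgeworth remainder rigorous for non-smooth $\pi$ (e.g.\ Rademacher, lattice distributions), where densities do not exist.

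I would handle this by first smoothing: replace $W$ by $W+\sigma Z$ with tiny Gaussian noise, absorbed into a slightly perturbed kernel, or avoided by working with bounded-Lipschitz or characteristic-function comparisons on a high-probability event. The non-Gaussian remainder terms are controlled using subgaussian concentration of $\Sigma$ and of the third moments. These remainders are of lower order, $O(t^4/d^2)$ and similar, which is negligible when $d=\Omega(n^3)$, giving
\[
\KL(P_A\|Q_A)=O(n^3/d).
\]
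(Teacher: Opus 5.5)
\textbf{There is a genuine gap.} It sits at the step you regard as harmless: the data-processing reduction to the Gram matrix. The inequality $\KL(P_A\|Q_A)\le\KL(P_W\|Q_W)$ is valid. For Gaussian $\pi$ this is essentially the route of \cite{bubeck2016testing}. For general subgaussian $\pi$, however, the right-hand side is not small for any $d$.

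Take $\pi$ Rademacher (the hypercube case, which the theorem covers) and write $N_{ij}=\sqrt d\,W_{ij}\in\mathbb{Z}$. For $a,b,c\in\{\pm1\}^d$, each coordinate contributes $a_kb_k+b_kc_k+a_kc_k\in\{3,-1\}$. So under $P_W$, every triple satisfies $N_{ij}+N_{jl}+N_{il}\equiv -d \pmod 4$ surely. Under $Q_W$ the entries are independent, and each $N_{ij}$ has the law of $d-2\,\mathrm{Bin}(d,\frac12)$, which is exactly uniform on its two admissible residues mod $4$. Hence, conditionally on the first row, the $\binom{n-1}{2}$ events $\{N_{1i}+N_{1j}+N_{ij}\equiv -d \pmod 4\}$ are independent with probability $\frac12$ each. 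This gives $\KL(P_W\|Q_W)\ge\binom{n-1}{2}\log 2$ and $\TV(P_W,Q_W)\to 1$, whatever $d$ is.

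So the obstacle is not the rigor of an Edgeworth remainder for lattice $\pi$. The quantity you reduce to is genuinely of order $n^2$: the Gram matrix carries arithmetic information that only the kernel washes out.

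Your smoothing fix does not close this. Adding $\sigma Z$ replaces $K$ by its Gaussian convolution $K_\sigma$, and a step kernel is not a Gaussian convolution of any valid kernel. Returning to $K$ through the coupling bound behind \eqref{eq:kernel-approx} costs $\binom{n}{2}\Expect|K(W_{12})-K_\sigma(W_{12})|$. For a step kernel this is of order $n^2\sigma$ once $\sigma$ exceeds the lattice spacing $2/\sqrt d$, and exceeding that spacing is exactly what erasing the mod-$4$ structure requires. The two requirements are compatible only when $d\gg n^4$, and even then you get a TV statement rather than $\KL=O(n^3/d)$. Bounded-Lipschitz or characteristic-function comparisons cannot bound a KL divergence at all.

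\textbf{The missing idea is to keep the kernel inside the computation.} The paper never compares Gram matrices. Via Lemma~\ref{lem:kl-expansion} and Jensen, it reduces, as in \eqref{eq:KLexp32}, to moments of $\eta(y,z)=\Expect_x[\kappa(\Iprod{x}{y})\kappa(\Iprod{x}{z})]$.

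Right-continuity and bounded variation are then used in an essential way, not as a measurability formality. They let one take $K$ to be the CDF of a probability measure $\nu$. Then $\Expect[K(U)K(V)]=\iint\prob{U\ge s,V\ge t}\,\nu(ds)\nu(dt)$ is an average of quadrant probabilities of $(U,V)=(\Iprod{x}{y},\Iprod{x}{z})$.

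The multivariate Berry--Esseen bound over convex sets needs no density, so it covers lattice $\pi$. On a good event, $(U,V)$ can therefore be replaced by a bivariate Gaussian at cost $O(d^{-1/2})$. Gaussian maximal correlation then gives $|\eta|\lesssim|\rho|+d^{-1/2}$ (Lemma~\ref{lmm:maxcorr-CLT}, \eqref{eq:claim-gyz1}), and summing $\binom{n}{k+1}(Ck/\sqrt d)^k$ gives $O(n^3/d)$. A BV kernel sees $W$ only through threshold events, and that is precisely what neutralizes the lattice structure your reduction retains.

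Separately, your third-order count is off. A per-step contribution of $O(t^3/d)$ would sum to $O(n^4/d)$, which is of order $n$ at $d\asymp n^3$. The off-diagonal third cumulants $d^{-3/2}\Expect[x^3]\sum_k x_{ik}x_{jk}x_{lk}$ are $O(d^{-1})$, so the correct per-step order is $t^3/d^2$.
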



Note that the above result assumes an inner product kernel. Otherwise, the detection threshold may \textit{not} be universal with respect to the latent point distribution. In \prettyref{app:non-universality}, we give an example of a distance kernel under which the detection thresholds for spherical and Gaussian latent points are different.


\subsection{A spectral conjecture}
\label{sec:spectral-conjecture}

Here, we reconcile our results with the existing literature on the hard RGG and formulate a unifying conjecture.
Recall that $\kappa$ denotes the normalized kernel identified as an operator \prettyref{eq:operator} on the sphere. 
We conjecture that the critical threshold for detection is determined by $\tr(\kappa^3)$.
\begin{conjecture}\label{conj:trace}
The sharp threshold for distinguishing RGG from \ER
is given by 
$n^3 \tr^2(\kappa^3) \gg 1$ (detectable)
and $n^3 \tr^2(\kappa^3) \ll 1$ (undetectable).
\end{conjecture}
This conjecture is supported by evidence from both the analysis of the signed triangle statistic and the impossibility proof. While $\tr(\kappa^3)$ naturally arises in the analyzing the signed triangle count, it also appears, somewhat surprisingly, in the KL expansion. See~\prettyref{app:heuristic} for more details.



In what follows, we denote the detection threshold $d_*$ as $\dtest$ to distinguish from the estimation threshold denoted by $\dest$. 
If $\tr(\kappa^3)$ is dominated by the contribution from $\lambda_1$ (with multiplicity $\Theta(d)$),  that is, $\tr(\kappa^3) \asymp \lambda_1^3 d$, then the conjectured critical threshold simplifies to 
\begin{equation}
\dtest = b_{1}^{3/2} n^{3/4},
    \label{eq:dtest}
\end{equation}
where $b_1 \equiv d \lambda_1$ is the scaled second eigenvalue. 
This setting encompasses the following examples:
\begin{itemize}
\item Hard RGG: $b_1 \asymp \sqrt{d p\log(1/p)}$, so  the conjectured threshold reduces to $d\asymp (np\log(1/p))^3$, matching the existing conjecture in~\cite{liu2022testing} with the exact log factors.
\item Scaled smooth kernel: $b_1 \asymp r$, so 
the conjectured threshold reduces to $d\asymp n^{3/4} r^{3/2}$, which is confirmed by Theorems~\ref{thm:fixed}, \ref{thm:lowsnr}, and~\ref{thm:highsnr} for $n^{-1/2+\epsilon} \ll r \ll d^{1/12-\epsilon}$.
\item Sparse linear kernel: $b_1 \asymp r/\sqrt{p}$, so the conjectured threshold reduces to
$d \asymp (nr^2/p)^{3/4}$, which is confirmed by~\prettyref{thm:linear}.
\end{itemize}
It is interesting to compare this with the phase transition threshold for estimation. For estimation, the critical dimension is conjectured to be 
\begin{equation}
\dest = b_{1} n^{1/2},
    \label{eq:dest}
\end{equation}
in the sense that if $d \ll \dest$, then consistent estimation is possible;
if $d \gg \dest$, then estimation better than chance is impossible.

More generally, 
if $\tr(\kappa^3)$ is dominated by the contribution from the $k_0^{\rm th}$ eigenvalue, namely, 
$\tr(\kappa^3) \asymp \lambda^3_{k_0} d^{k_0}$ (e.g., all eigenvalues before $k_0$ vanish), 
then the conjectured critical dimension for detection becomes
\begin{equation}
\dtest = b_{k_0}^{3/(2k_0)} n^{3/(4k_0)},
    \label{eq:dtest-general}
\end{equation}
where $b_{k} \equiv d^k \lambda_k$ is the scaled eigenvalue. For example, for the quadratic kernel $K(t)=\frac{1}{2} (t^2-\frac{1}{d})+\frac{1}{2}$, the conjectured critical dimension for detection is $\dtest = n^{3/8}$.

The quantity $k_0$ is reminiscent of the notion of information exponent in the generalized linear model 
\cite{benarous2021online,damian2024computational}, defined as the degree $k^\star$ of the first nonzero Hermite coefficient of the link function. In that setting, the role of the information exponent is computational, as it dictates that the sample complexity of stochastic gradient descent and low-degree polynomials scales as $\tilde\Theta(d^{\max(1,k^\star-1)})$. In contrast, here the role of $k_0$ is information-theoretic  as it determines the detection threshold.


\section{Proof outline}
\label{sec:outline_proof}
In this section, we outline the main proof ideas for detection lower bounds and highlight the key new ingredients. For clarity, we focus primarily on the \textit{linear} kernel. We first explain why the naive second-moment method fails for sublinear $d\ll n$, which motivates the use of the KL divergence. We then review a common simplification in prior work by revealing the latent points in the KL expansion, which fails to improve over the second-moment method. Next, we show how a careful posterior analysis allows us to break the sublinearity barrier and obtain the sharp threshold $d \asymp n^{3/4}$.
Finally, we discuss extensions to polynomial kernels and, more generally, to smooth kernels via polynomial approximation.

 \paragraph{Failure of the naive second-moment bound.} 
Let $P_A$ and $Q_A$ denote the distribution of the adjacency matrix $A$ under the RGG model with kernel $K$ and the \ER model, respectively.
A standard approach to proving the impossibility of detection is to bound the second moment of the likelihood ratio. Specifically, for linear kernels $\kappa(t)=t$, we have 
\begin{align}
\chi^2(P_A \| Q_A) +1  = \expects{\left( \frac{P_A}{Q_A} \right)^2}{Q_A} 
& =\expects{ \prod_{i<j} \left( 1+ \iprod{x_i}{x_j} \iprod{x^*_i}{x^*_j} \right)}{x \indep x^*} \label{eq:second_moment_expression} \\
& \le \expects{ \exp \left(  \sum_{i<j} \iprod{x_i}{x_j} \iprod{x^*_i}{x^*_j}\right)}{x \indep x^*} \nonumber \\
& \le \exp \left( O(n^2/d^2) \right) = 1+o(1), \quad \text{ when } d \gg n. \nonumber 
\end{align}
Unfortunately, when $d \ll n$ the second-moment method is derailed by certain rare events, for example,
$\calE=\{\|x_1-x_i\|_2 \le 0.1,~i=2,\ldots,n\}$, which satisfies $\prob{\calE} \ge \exp(-cnd)$ for some constant $c>0$. But, on the event $\calE$,
we have $\iprod{x_i}{x_j} \ge 0.5$. It then follows from~\prettyref{eq:second_moment_expression} that 
$$
\expects{\left( \frac{P_A}{Q_A} \right)^2}{Q_A}
\ge  \prob{x \in \calE} \prob{x^* \in \calE} \times \left( 1+ 0.5^2 \right)^{\binom{n}{2}}
\ge e^{-2 cnd} \times (1.25)^{\binom{n}{2}}
\to \infty, \quad \text{whenever } d \ll n.
$$


\paragraph{KL expansion.} 
To curb the influence of rare events, a tighter approach is to bound the KL divergence $\KL(P_A \| Q_A) = \Expect_{P_A}[\log (P_A/Q_A)]$.
Let $a^t=(A_{t,1},\ldots,A_{t,t-1})$ denote edges between node $t$ and  $1,\ldots,t-1$, and let $A^t$ be the subgraph induced by nodes $1, \ldots, t$.
Applying the chain rule for the KL divergence according to the decomposition $A = (a^2,\ldots,a^n)$ and the fact that $A$ consists of i.i.d.\  $\Bern(p)$ under $Q$, we obtain
\begin{align}
     \KL(P_A\|Q_A) 
= \sum_{t=2}^n 
\Expect_{A^{t-1}}[\KL(P_{a^t|A^{t-1}}\|\Bern(p)^{\otimes (t-1)})] .
\label{eq:KL_chainrule}
\end{align}
The law of $a^t$
conditional on $A^{t-1}$ is a mixture of products of Bernoulli distributions:
\begin{align}
P_{a^t|A^{t-1}}= 
\Expect_{x_1,\ldots,x_t|A^{t-1}}
\qth{\prod_{i=1}^{t-1} \Bern(K(\Iprod{x_i}{x_t}) }, \label{eq:posterior_distribution}
\end{align}
where the mixing distribution is given by the \textit{posterior} distribution of the latent points $(x_1,\ldots,x_t)$ given $A^{t-1}$. 
Note that  
$(x_1,\ldots,x_{t-1}, A^{t-1})$ defines an RGG with the same kernel with $t-1$ nodes and 
that $x_t \sim \Unif(S^{d-1})$ is independent of $(x_1,\ldots,x_{t-1}, A^{t-1})$. Nevertheless, the posterior distribution of $(x_1, \ldots, x_{t-1})$ given $A^{t-1}$ is typically complicated and challenging to analyze.

\paragraph{Revealing latent points leads to loose bounds.} 
In view of the conditional independence of $a^t$ and $A^{t-1}$ given the latent positions $x_1,\ldots, x_t$, to avoid dealing with the complicated posterior distribution in \prettyref{eq:posterior_distribution}, a simplification used in the prior work \cite{liu2022testing,liu2023probabilistic} is to further condition on the latent points, which, thanks to the convexity of the KL divergence, yields
\begin{align}
     \KL(P_A\|Q_A) 
&\le \sum_{t=2}^n 
\Expect_{x_1, \dots, x_{t-1}, A^{t-1}}[\KL(P_{a^t|x_1, \dots, x_{t-1}, A^{t-1}}\|\Bern(p)^{\otimes (t-1)})] \nonumber \\
&= \sum_{t=2}^n 
\Expect_{x_1, \dots, x_{t-1}}[\KL(P_{a^t|x_1, \dots, x_{t-1}}\|\Bern(p)^{\otimes (t-1)})] \nonumber \\
&\le \sum_{t=2}^n 
\Expect_{x_1, \dots, x_{t-1}}[\chi^2(P_{a^t|x_1, \dots, x_{t-1}}\|\Bern(p)^{\otimes (t-1)})], 
\label{eq:KL_loose}
\end{align}
where the last step follows from the general fact that 
$\KL \leq \chi^2$. Furthermore, introducing an independent replica $x_t^*$ of $x_t$, this $\chi^2$-divergence can be computed as
\begin{align*}
 \Expect_{x_1, \dots, x_{t-1}}[\chi^2(P_{a^t|x_1, \dots, x_{t-1}}\|\Bern(p)^{\otimes (t-1)})]
 & =\Expect_{x^{t-1} }  \Expect_{ x_t \indep x_t^* } \qth{ \prod_{i=1}^{t-1} \left( 1+ \iprod{x_i}{x_t} \iprod{x_i}{x^*_t} \right)} -1 \\
& = \Expect_{ x_t \indep x_t^* }  [\left(1+ \iprod{x_t}{x^*_t}/d \right)^t] -1 = \Theta (t^2/d^3). 
\end{align*}
Plugging this back into~\prettyref{eq:KL_loose}, we obtain $\KL=o(1)$, provided that $\sum_{t=2}^n t^2/d^3=o(1)$, that is, $d \gg n$. Thus,
while the KL expansion is in principle tighter than the second-moment method, once the latent points are revealed, this approach cannot succeed in the sublinear regime of $d \ll n$. 
In fact, this is also the primary source of looseness in the current impossibility conditions for the soft RGG (see~\cite[Theorem~1.1]{liu2023probabilistic}), the hard RGG 
(see~\cite[Theorem 1.2]{liu2022testing}) and other variants (see~\cite[Theorem 1.3]{bangachev2024detection} and~\cite[Section 8.1]{bangachev2025random}).


\paragraph{Analyzing the posterior.} 
To make progress in the sublinear regime,
we therefore need to analyze the posterior distribution in~\eqref{eq:posterior_distribution} in earnest.
Specifically, upper bounding each KL term in the expansion \prettyref{eq:KL_chainrule} by $\chi^2$, we obtain
\begin{align}
     \KL(P_A\|Q_A) 
= \sum_{t=2}^n 
\Expect_{A^{t-1}}[\KL(P_{a^t|A^{t-1}}\|\Bern(p)^{\otimes (t-1)})]
\leq  \sum_{t=2}^n 
\Expect_{A^{t-1}}[\chi^2(P_{a^t|A^{t-1}}\|\Bern(p)^{\otimes (t-1)})] . \label{eq:kl-chain-rule-chi-sq}
\end{align}
A direct computation of the above $\chi^2$-divergence (see Lemma~\ref{lem:kl-expansion}) yields that
$$
\KL(P_A\|Q_A)
\le \sum_{k=2}^{n-1} \binom{n}{k+1} 
\underbrace{\Expect_{A}\left[\pth{\Expect_{x_1,\ldots,x_k, x_{n+1}|A}
\qth{\prod_{i=1}^k
\Iprod{x_i}{x_{n+1}}}}^2\right]}_{\triangleq g(k)},
$$
where $x_{n+1}$ is an independent copy of $x_i$'s. 
By symmetry, $g(k)=0$ for all odd $k$. For even $k$, we may average over $x_{n+1}$ using 
Wick's formula (Isserlis' theorem) to obtain:
$$
g(k) = \left( \frac{1}{d(d+2)\cdots (d+k-2)}\right)^2 
\expects{ \left(\sum_{\pi \in \Pi([k])} 
\expects{\prod_{(i,j)\in \pi} \iprod{x_i}{x_j}}{x_1,\ldots,x_k | A }\right)^2 }{A},
$$
where $\Pi([k])$ denotes the set of all pairings of $[k]$. 
Thus, the problem reduces to controlling these cross moments of posterior correlations. 
Of particular importance is the case of $k=2$, where 
\[
g(2) = d^{-2} \Expect[(\Expect[\iprod{x_1}{x_2} \mid A])^2].
\]
Note that $\Expect[\iprod{x_1}{x_2}^2]=1/d$, and 
$\Expect[(\Expect[\iprod{x_1}{x_2}|A])^2]$ is precisely the variance reduction thanks to observing the graph $A$. It is straightforward to verify that 
observing a single edge $A_{12}$ reduces the variance by
$\Expect[\left(\expect{\iprod{x_1}{x_2} \mid A_{12} }\right)^2] = \Theta(d^{-2})$. 
Crucially, we show that the additional variance reduction due to \textit{all} remaining edges is negligible,  that is, 
\begin{equation}
\Expect[(\Expect[\iprod{x_1}{x_2} |A])^2]  =O(d^{-2})
\label{eq:varred-A}
\end{equation}
so that $g(2)=O(d^{-4})$. Moreover, this argument extends to show that $g(k)=O(d^{-2k})$ for all $k \le k_0$, where $k_0$ is some  large constant. 
For $k \ge k_0$, we instead establish a coarser bound $g(k)=O(k^k d^{-3k/2})$ which is looser in $d$ but has a controlled dependency on $k$. Using these estimates, we show that the sum $\sum_{k=2}^{n-1} \binom{n}{k+1} g(k)$ is dominated by the leading term of $k=2$,
and therefore vanishes whenever $n^3 g(2)=o(1)$, yielding the sharp condition $d \gg n^{3/4}$.


As discussed above, a \emph{sine qua non} of our proof is a careful analysis of the posterior distribution and, in particular, of the following quantity which we call the \textit{posterior overlap}. We 
show that
when $d \gg\sqrt{n}$, 
for $\delta =\Theta(1/d)$,
\begin{align}
\prob{\sum_{i<j} \iprod{\tilde{x}_i}{\tilde{x}_j}
\iprod{x_i}{x_j} \ge \delta \frac{n^2}{d}} \le n^{-\Omega(1)} , \label{eq:posterior_probability_0}
\end{align}
where $\tilde{x}$ is  a fresh  draw from the \emph{posterior} distribution of $x$ given $A$. By symmetry, this immediately yields the desired bound 
\prettyref{eq:varred-A}.\footnote{
We remark that results analogous to \prettyref{eq:posterior_probability_0}, but with $\delta=o(1)$, have been obtained in the related problem of low-rank matrix estimation aiming at sharp signal-to-noise threshold; however,  to the best of our knowledge, existing rigorous results \cite{lelarge2017fundamental} are largely limited to fixed dimension $d$, while the high-dimensional case remains open~\cite{pourkamali2024matrix,barbier2024information}. Crucially, in the current paper, obtaining the sharp detection threshold requires establishing~\prettyref{eq:posterior_probability_0} with $\delta=\Theta(1/d)$ in high dimensions.}

To prove \prettyref{eq:posterior_probability_0}, we apply a change-of-measure argument to upper-bound the probability by 
a truncated exponential moment of the form $$
\Expect\left[\exp \left(\sum_{i<j} \langle x_i,x_j\rangle \langle x^*_i,x^*_j\rangle \right) \indc{ \sum_{i<j} \langle x_i,x_j\rangle \langle x^*_i,x^*_j\rangle \ge \delta n^2/d} \right],
$$
where 
$x$ and $x^*$
 are two independent draws from the \emph{prior}. We then show that this truncated exponential moment is $n^{-\Omega(1)} $ by a suitable application of the Hanson-Wright inequality.
We remark that this change-of-measure argument  is  an instance of the so-called ``planting trick'', a technique first introduced
in the study of random constraint satisfaction problems~\cite{achlioptas2008algorithmic}, and more recently used to establish impossibility results for non-trivial estimation in statistical inference problems, including  group testing~\cite{coja2022statistical} and planted subgraph recovery~\cite{Mossel2025sharp,gaudio2025all}.

\paragraph{Extensions to polynomial and general kernels.} We next extend the analysis from the linear kernel to polynomial kernels. For degree-$L$ polynomial kernels, one can still derive an explicit expression for $g(k)$ by averaging over $x_{n+1}$ using Wick's formula. The resulting expression consists of higher-order moments of posterior correlations of the form 
\begin{equation*}
M_{\boldsymbol{\ell}}
\triangleq \Expect_{A}
\pth{\Expect_{x_1,\ldots,x_k|A}
\qth{ \prod_{1 \le i<j \le k} 
\iprod{x_i}{x_j}^{\ell_{ij}}}
}^2,
\end{equation*}
where the multi-index
$\boldsymbol{\ell}=(\ell_{ij})$ defines a multi-graph on $[n]$ with maximal degree $L$. Generalizing the previous posterior  analysis, we show that when $d \gtrsim \sqrt{n}$, 
$M_{\boldsymbol{\ell}}=O(n^{2\ell-v} d^{-2\ell})$, where $v$ is the number of non-isolated vertices and $\ell$ is the total number of edges, provided $\ell \le L_0$ for some large but fixed  constant $L_0$. For $\ell \ge L_0$, we instead rely on a coarser bound $M_{\boldsymbol{\ell}} \le  d^{-\ell} \prod_{i=1}^k (2\ell_i-1)!!$, where $\ell_i$ is the degree of vertex $i$.
Summing contributions over all multi-graphs 
$\boldsymbol{\ell}$ yields $g(k)=O(d^{-2k})$ for $k \le k_0$ and $g(k)=O(k^k d^{-3k/2})$ for $k>k_0$, leading again to the threshold $d \gg n^{3/4}$.

Finally, to handle general smooth kernels, we approximate them by polynomials. For a fixed or low SNR, the degree only needs a sufficiently large constant, while for a high SNR, the degree must grow to infinity at an appropriate speed. 
We show that this approximation induces a vanishing total variation distance between the resulting random geometric graph models, allowing the above analysis to carry over to general kernels.


\section{Counting signed triangles}
\label{sec:analysis_triangle}
In this section, we analyze the performance of the signed triangle count for detection. 
Denote by $\bar A = (\bar A_{ij})$ the standardized adjacency matrix, with 
\begin{equation}
\bar A_{ij} \eqdef \frac{A_{ij} - p}{\sqrt{p (1-p)}}.
\label{eq:def-A-bar}
\end{equation}
Define the signed triangle count as 
\begin{equation}
T(A) \eqdef \sum_{(i,j,k) \in \binom{[n]}{3}} \bar A_{ij} \bar A_{jk} \bar A_{ki} .
    \label{eq:signed-triangle}
\end{equation}
One can verify that under the \ER model, $\Expect_Q[T(A)] = 0$ and 
$\Var_Q[T(A)] = \binom{n}{3}$ (see \cite[Sec.~3.1]{bubeck2016testing}). The next theorem bounds the mean and variance under the RGG.

\begin{theorem}
\label{thm:ub}
Under the RGG model,
$$
\Expect_P[T(A)] = \binom{n}{3} \tr(\kappa^3) 
$$
and
$$
\Var_P(T(A)) \le 6 \binom{n}{4} \tr(\kappa^4) \pth{1 + \frac{|1-2p|}{1-p} \lor \frac{|1-2p|}{p}} + \binom{n}{3} \left(1 + \frac{(1-2p)^3}{(p(1-p))^{3/2}} \tr(\kappa^3) \right)
$$
where $\kappa$ denotes the standardized kernel in \prettyref{eq:kappa}.

\end{theorem}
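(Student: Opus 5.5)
Both claims follow from conditioning on the latent points. Given $x=(x_1,\ldots,x_n)$, the entries $\bar A_{ij}$ are independent with conditional mean $\Expect[\bar A_{ij}\mid x]=\kappa(\Iprod{x_i}{x_j})\eqqcolon\kappa_{ij}$. The conditional second moment follows from $\Expect[(A_{ij}-p)^2\mid x]=K(1-2p)+p^2$, which gives
\[
\Expect[\bar A_{ij}^2\mid x]=1+\frac{1-2p}{\sqrt{p(1-p)}}\,\kappa_{ij}.
\]
The mean is then immediate. For a triangle $(i,j,k)$ the three edges are distinct, so conditional independence gives $\Expect[\bar A_{ij}\bar A_{jk}\bar A_{ki}\mid x]=\kappa_{ij}\kappa_{jk}\kappa_{ki}$. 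Taking expectation over i.i.d.\ uniform $x_i,x_j,x_k$ and using \prettyref{eq:tr} with $m=3$ gives $\tr(\kappa^3)$. Summing over the $\binom n3$ triangles yields $\Expect_P[T(A)]=\binom n3\tr(\kappa^3)$.

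For the variance, expand $\Var_P(T)=\sum_{\Delta,\Delta'}\Cov(\chi_\Delta,\chi_{\Delta'})$ over pairs of triangles, where $\chi_\Delta$ is the signed product. Classify pairs by how many vertices they share.

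\emph{At most one shared vertex.} No edge is shared and the vertex sets overlap in at most one point. Conditioning on the shared vertex (if any), the two triangle products depend on disjoint sets of the remaining latent points. Each has conditional mean $\Expect[\kappa_{ij}\kappa_{jk}\kappa_{ki}\mid x_i]=\tr(\kappa^3)$, which does not depend on $x_i$ by rotation invariance. Hence the covariance is exactly zero.

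\emph{Two shared vertices.} Here the triangles share exactly one edge, say $(i,j)$, and the four vertices $i,j,k,l$ are distinct; there are $\binom n4\cdot 6$ such ordered pairs. The product is
\[
\chi_\Delta\chi_{\Delta'}=\bar A_{ij}^2\,\bar A_{ik}\bar A_{jk}\bar A_{il}\bar A_{jl}.
\]
Its conditional expectation is
\[
\Bigl(1+\tfrac{1-2p}{\sqrt{p(1-p)}}\kappa_{ij}\Bigr)\kappa_{ik}\kappa_{jk}\kappa_{il}\kappa_{jl}.
\]
Averaging the leading term gives $\tr(\kappa^4)$, namely the 4-cycle $i\!-\!k\!-\!j\!-\!l\!-\!i$. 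For the correction term, note $\Expect[\kappa_{ik}\kappa_{kj}\mid x_i,x_j]=\kappa^{(2)}(\Iprod{x_i}{x_j})$, the kernel of the operator $\kappa^2$, which is positive semidefinite. The correction is therefore
\[
\tfrac{1-2p}{\sqrt{p(1-p)}}\,\Expect\bigl[\kappa_{ij}\,\kappa^{(2)}(\Iprod{x_i}{x_j})^2\bigr].
\]
Bound it using $\frac{|1-2p|}{\sqrt{p(1-p)}}|\kappa|\le \frac{|1-2p|}{1-p}\lor\frac{|1-2p|}{p}$. This holds because $\kappa\in\bigl[-\sqrt{p/(1-p)},\sqrt{(1-p)/p}\bigr]$, since $K\in[0,1]$. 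Also $\Expect[\kappa^{(2)}(\Iprod{x_i}{x_j})^2]=\tr(\kappa^4)$. Subtracting the squared mean $\tr(\kappa^3)^2\ge0$ only helps, so these pairs contribute at most $6\binom n4\tr(\kappa^4)\bigl(1+\tfrac{|1-2p|}{1-p}\lor\tfrac{|1-2p|}{p}\bigr)$.

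\emph{Identical triangles.} There are $\binom n3$ such pairs. Each contributes at most
\[
\Expect\Bigl[\prod_{e\in\Delta}\Bigl(1+\tfrac{1-2p}{\sqrt{p(1-p)}}\kappa_e\Bigr)\Bigr].
\]
Expand the product. The constant term is $1$. Every term containing one or two factors of $\kappa$ vanishes, because $\Expect[\kappa_{ij}\mid x_i]=0$ and a path of two edges also averages to zero. The triple term gives $\frac{(1-2p)^3}{(p(1-p))^{3/2}}\tr(\kappa^3)$, matching the stated bound.

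The main obstacle is the shared-edge case: getting the correction term into the clean form $\tr(\kappa^4)$ times the stated factor. The step I would try first is to rewrite the average over $x_k,x_l$ via the kernel of $\kappa^2$ and then apply the sup bound on $|\kappa|$. This route needs the nonnegativity of $(\kappa^{(2)})^2$, which holds since it is a square.
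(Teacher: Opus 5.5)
Apart from one step, your proposal is the paper's proof. It uses the same case split by shared vertices and the same identity $\Expect[\bar A_{ij}^2\mid x]=1+\frac{1-2p}{\sqrt{p(1-p)}}\kappa(\Iprod{x_i}{x_j})$. It handles the correction term the same way, writing it as $\Expect[\kappa(\Iprod{x_i}{x_j})\,(\Expect_{x_k}[\kappa(\Iprod{x_i}{x_k})\kappa(\Iprod{x_k}{x_j})])^2]$ and pulling out $\sup|\kappa|$, and it uses the same expansion for identical triangles. Conditioning on the shared vertex instead of the paper's random rotation is an equivalent way to show the one-vertex covariances vanish.

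The step that is wrong is the count in the shared-edge case. The expansion $\Var_P(T)=\sum_{\Delta,\Delta'}\Cov(\chi_\Delta,\chi_{\Delta'})$ runs over ordered pairs. The number of ordered pairs of triangles sharing exactly one edge is $\binom n2(n-2)(n-3)=12\binom n4$: pick the shared edge, then an ordered pair of distinct third vertices. The quantity $6\binom n4$ counts unordered pairs. With the correct count, your argument proves the bound with $12\binom n4$ in front of $\tr(\kappa^4)$.

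This cannot be patched to recover the constant $6$, because the displayed inequality with $6\binom n4$ is false. The paper's proof never writes out this count, and the constant in the statement has the same factor-of-two slip. Take $p=\frac12$ and $K(t)=\frac{1+t}{2}$, so that $\kappa(t)=t$, $\bar A_{ij}\in\{\pm1\}$, $\tr(\kappa^3)=d^{-2}$ and $\tr(\kappa^4)=d^{-3}$. Your own case analysis then gives the variance exactly:
\[
\Var_P(T)=\binom n3\bigl(1-d^{-4}\bigr)+12\binom n4\bigl(d^{-3}-d^{-4}\bigr).
\]
Subtracting the claimed bound $\binom n3+6\binom n4 d^{-3}$ leaves $d^{-4}\bigl[(6d-12)\binom n4-\binom n3\bigr]$, which is positive for all $n\ge 4$ and $d\ge 3$. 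For example, at $n=4$, $d=3$ the variance is $4+\frac{20}{81}$ while the claimed bound is $4+\frac{18}{81}$.

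The correct statement has $12\binom n4$. This does not affect anything downstream: every application of this bound (Theorems~\ref{thm:fixed}--\ref{thm:highsnr}, including the sparse linear case) uses it only up to constant factors.
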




\begin{proof}
It is straightforward to compute the mean. 
For the variance, we have
$$
\Var_P(T(A)) = \sum_{(i,j,k),(i',j',k')} \Cov_P(\bar A_{ij} \bar A_{jk} \bar A_{ki}, \bar A_{i'j'} \bar A_{j'k'} \bar A_{k'i'}) .
$$
Consider the following cases.
\begin{itemize}
\item
If $|\{i,j,k\} \cap \{i',j',k'\}| = 0$, then $\bar A_{ij} \bar A_{jk} \bar A_{ki}$ and $\bar A_{i'j'} \bar A_{j'k'} \bar A_{k'i'}$ are independent, so their covariance is zero. 

\item
If $|\{i,j,k\} \cap \{i',j',k'\}| = 1$, then the covariance is also zero. 
To see this, suppose without loss of generality that $i = i'$.
Let $Q \in \reals^{d \times d}$ be a uniformly random orthogonal matrix. Then $x_i$ and $Q x_i$ are independent. 
Therefore, the tuple of random variables
$$
(\iprod{x_i}{x_j}, \iprod{x_i}{x_k}, \iprod{x_j}{x_k}) = (\iprod{Q x_i}{Q x_j}, \iprod{Q x_i}{Q x_k}, \iprod{Q x_j}{Q x_k})
$$
is independent from $(\iprod{x_i}{x_{j'}}, \iprod{x_i}{x_{k'}}, \iprod{x_{j'}}{x_{k'}})$. 
Hence $\bar A_{ij} \bar A_{jk} \bar A_{ki}$ and $\bar A_{i'j'} \bar A_{j'k'} \bar A_{k'i'}$ are independent, and the claim follows. 

\item 
If $|\{i,j,k\} \cap \{i',j',k'\}| = 2$, suppose $i=i'$ and $j=j'$ without loss of generality. 
Write $X = (x_1, \dots, x_n)$. 
Then we have
\begin{align*}
\Expect[\bar A_{ij}^2 \mid X]
&= K(\iprod{x_i}{x_j}) \frac{(1-p)^2}{p(1-p)} + (1-K(\iprod{x_i}{x_j})) \frac{p^2}{p(1-p)} \\
&= \left( p + \kappa(\iprod{x_i}{x_j}) \sqrt{p(1-p)} \right) \frac{(1-p)^2}{p(1-p)} + \left( 1 - p - \kappa(\iprod{x_i}{x_j} ) \sqrt{p(1-p)} \right) \frac{p^2}{p(1-p)} \\
&= 1 + \kappa(\iprod{x_i}{x_j}) \frac{1-2p}{\sqrt{p(1-p)}} .
\end{align*}
By the conditional independence of the edges given $X$, we obtain 
\begin{align*}
&\Expect_P[\bar A_{ij}^2 \bar A_{ik} \bar A_{jk} \bar A_{ik'} \bar A_{jk'}] \\
&= \Expect_X\left[ \left( 1 + \kappa(\iprod{x_i}{x_j}) \frac{1-2p}{\sqrt{p(1-p)}} \right) \kappa(\iprod{x_i}{x_k}) \kappa(\iprod{x_j}{x_k}) \kappa(\iprod{x_i}{x_{k'}}) \kappa(\iprod{x_j}{x_{k'}}) \right] \\
&= \tr(\kappa^4) + \frac{1-2p}{\sqrt{p(1-p)}} \Expect_X\left[ \kappa(\iprod{x_i}{x_j}) \kappa(\iprod{x_i}{x_k}) \kappa(\iprod{x_j}{x_k}) \kappa(\iprod{x_i}{x_{k'}}) \kappa(\iprod{x_j}{x_{k'}}) \right] .
\end{align*}
Since $|\kappa(t)| = \frac{|K(t)-p|}{\sqrt{p(1-p)}} \le \sqrt{\frac{p}{1-p}} \lor \sqrt{\frac{1-p}{p}}$, we obtain
\begin{align*}
&\left| \Expect_X\left[ \kappa(\iprod{x_i}{x_j}) \kappa(\iprod{x_i}{x_k}) \kappa(\iprod{x_j}{x_k}) \kappa(\iprod{x_i}{x_{k'}}) \kappa(\iprod{x_j}{x_{k'}}) \right] \right| \\
&= \left| \Expect_{x_i,x_j} \left[ \kappa(\iprod{x_i}{x_j}) ( \Expect_{x_k} [ \kappa(\iprod{x_i}{x_k}) \kappa(\iprod{x_j}{x_k}) ] )^2 \right] \right| \\
&\le \pth{\sqrt{\frac{p}{1-p}} \lor \sqrt{\frac{1-p}{p}}} \Expect_{x_i,x_j} \left[ ( \Expect_{x_k} [ \kappa(\iprod{x_i}{x_k}) \kappa(\iprod{x_j}{x_k}) ] )^2 \right] \\
&= \pth{\sqrt{\frac{p}{1-p}} \lor \sqrt{\frac{1-p}{p}}} \Expect_X\left[ \kappa(\iprod{x_i}{x_k}) \kappa(\iprod{x_j}{x_k}) \kappa(\iprod{x_i}{x_{k'}}) \kappa(\iprod{x_j}{x_{k'}}) \right] \\
&= \pth{\sqrt{\frac{p}{1-p}} \lor \sqrt{\frac{1-p}{p}}} \tr(\kappa^4) .
\end{align*}
Therefore,
\begin{align*}
\left| \Cov_P(\bar A_{ij} \bar A_{ik} \bar A_{jk} , \bar A_{ij} \bar A_{ik'} \bar A_{jk'}) \right|
\le \left| \Expect_P[\bar A_{ij}^2 \bar A_{ik} \bar A_{jk} \bar A_{ik'} \bar A_{jk'}] \right| 
\le \tr(\kappa^4) \pth{1 + \frac{|1-2p|}{1-p} \lor \frac{|1-2p|}{p}} .
\end{align*}


\item
If $|\{i,j,k\} \cap \{i',j',k'\}| = 3$, i.e., $i=i'$, $j=j'$, $k=k'$, then the covariance can be bounded by
\begin{align*}
&\Var_P(\bar A_{ij} \bar A_{jk} \bar A_{ki}) 
\le \Expect_P[(\bar A_{ij} \bar A_{jk} \bar A_{ki})^2] \\ 
&= \Expect_X\left[ \left( 1 + \kappa(\iprod{x_i}{x_j}) \frac{1-2p}{\sqrt{p(1-p)}} \right) \left( 1 + \kappa(\iprod{x_j}{x_k}) \frac{1-2p}{\sqrt{p(1-p)}} \right) \left( 1 + \kappa(\iprod{x_k}{x_i}) \frac{1-2p}{\sqrt{p(1-p)}} \right) \right] \\
&= 1 + \frac{(1-2p)^3}{(p(1-p))^{3/2}} \tr(\kappa^3) . 
\end{align*}
\end{itemize}
\end{proof}

\section{Detection lower bound for  polynomial kernels}
\label{sec:polynomial-kernel}

This section establishes the following negative result when the kernel is a polynomial.

\begin{theorem}
Suppose that the standardized kernel
\[
\kappa(t)
\eqdef
\frac{K(t)-p}{\sqrt{p(1-p)}} 
\]
is a polynomial $\kappa(t) = \sum_{\ell=0}^L b_\ell t^\ell$, where the coefficients satisfy 
\begin{equation}
|b_\ell| \le \frac{B r^{\ell}}{\sqrt{\ell!}} \ \forall \, 1 \le \ell \le L, 
\quad |b_0| \le \frac{B r^2}{d} ,
    \label{eq:coeffs-assumption}
\end{equation}
for some $B>0$.
Assume that 
$$d \gg n^{3/4} r^{3/2}, $$
$n^{-1/2 + \epsilon} \le r \le d^{1/3 - \epsilon}$, and $L \le d^{1/6 - \epsilon}$.
Then $\TV(P_A, Q_A) = o(1)$.

In the special case of linear kernel, i.e., $\kappa(t) = b_1 t$, if $d \gg n^{3/4} b_1^{3/2}$ 
and $b_1\gtrsim n^{-1/2} \log n$, then $\TV(P_A, Q_A) = o(1)$.


\label{thm:poly}
\end{theorem}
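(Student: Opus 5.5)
We will follow the route sketched in \prettyref{sec:outline_proof}. By Pinsker's inequality it suffices to show $\KL(P_A\|Q_A)=o(1)$. We use the chain rule \prettyref{eq:KL_chainrule} and bound each term by the $\chi^2$-divergence as in \prettyref{eq:kl-chain-rule-chi-sq}.

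For the first step we expand the conditional likelihood ratio $P_{a^t|A^{t-1}}/\Bern(p)^{\otimes(t-1)}$. Given the latent points it equals $\prod_{i<t}(1+\kappa(\iprod{x_i}{x_t})\bar A_{it})$, and averaging over the posterior gives a polynomial in $\bar a^t$. Its squared $L_2(Q)$ norm, summed over $t$ and over subsets $S\subseteq[t-1]$ with $|S|=k$, is bounded by
\[
\sum_{k\ge 2}\binom{n}{k+1}g(k),\qquad g(k)=\Expect_A\Big(\Expect_{x|A}\Big[\prod_{i=1}^k\kappa(\iprod{x_i}{x_{n+1}})\Big]\Big)^2 .
\]
The $k=1$ term vanishes up to the $b_0$ contribution, since $\Expect_{x_{n+1}}\kappa=0$; this is where the condition $|b_0|\le Br^2/d$ enters as a small correction. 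We then expand $\kappa$ in monomials $b_\ell t^\ell$ and average over $x_{n+1}$ using the spherical Wick formula. This writes $g(k)$ as a combination of posterior cross moments $M_{\boldsymbol\ell}$ indexed by multigraphs on $[k]$, in which vertex $i$ has degree equal to the exponent $\ell_i$ chosen for it. The weights are products of the $b_{\ell_i}$ and pairing counts divided by $d(d+2)\cdots$.

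The core step, and the main obstacle, is bounding $M_{\boldsymbol\ell}$. For multigraphs with a bounded number $\ell\le L_0$ of edges, we aim to prove $M_{\boldsymbol\ell}=O(n^{2\ell-v}d^{-2\ell})$ times the appropriate powers of $r$. We argue by exchangeability: the square of a posterior moment equals a joint expectation over two independent posterior samples $x,\tilde x$. Averaging over the $\binom{n}{v}$ relabelings turns this into an expectation of a polynomial in the overlap matrix $\sum\iprod{x_i}{x_j}\iprod{\tilde x_i}{\tilde x_j}$ and its higher analogues. We then control this through the posterior-overlap concentration \prettyref{eq:posterior_probability_0}. That bound is proved by the planting trick: $\tilde x$ has the same joint law with $(x,A)$ as $x$ itself, so the probability is a truncated exponential moment under two independent prior draws, which we bound by Hanson--Wright with $\delta=\Theta(1/d)$, valid when $d\gg\sqrt n$. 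The delicate points are two. First, the degree-$\ell$ analogues need overlaps of tensor powers, which we handle with Gegenbauer and Hanson--Wright bounds at growing degree; this is where $L\le d^{1/6-\epsilon}$ and $r\le d^{1/3-\epsilon}$ enter. Second, the bad event must be lifted to moments by Cauchy--Schwarz, using crude a priori moment bounds. For $\ell>L_0$ we use the coarse bound $M_{\boldsymbol\ell}\le\Expect\prod\iprod{x_i}{x_j}^{2\ell_{ij}}\le d^{-\ell}\prod_i(2\ell_i-1)!!$, obtained from Jensen's inequality.

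Finally we sum. Combining the two regimes with the coefficient decay $|b_\ell|\le Br^\ell/\sqrt{\ell!}$, which absorbs the double factorials, we will get $g(k)=O((r^2/d^2)^{k})$ for $k\le k_0$ and $g(k)=O(k^k(Cr^2/d^{3/2})^k)$ for larger $k$. The sum is then dominated by $k=2$, giving $n^3r^4/d^4=o(1)$ exactly when $d\gg n^{3/4}r^{3/2}$. The tail is geometric because $nk\,r^2/d^{3/2}\ll1$ for $k\le n$ in this regime, and $r\ge n^{-1/2+\epsilon}$ keeps the needed $d\gtrsim\sqrt n$ condition implied by $d\gg n^{3/4}r^{3/2}$. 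For the linear kernel only $\ell_i=1$ appears, so no degree truncation is needed. The same argument then goes through with $r=b_1$, and the logarithmic condition $b_1\gtrsim n^{-1/2}\log n$ supplies the polynomially small failure probability in \prettyref{eq:posterior_probability_0}.
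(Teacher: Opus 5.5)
Your skeleton (chain rule with $\chi^2$, reduction to $\sum_k\binom{n}{k+1}g(k)$, Wick averaging over $x_{n+1}$, and a sharp/crude dichotomy for $M_{\boldsymbol\ell}$) is the paper's. However, the step you flag as the core is not actually supplied, and the mechanism you propose for it is unnecessary and unsupported.

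\textbf{The missing idea is positivity.} Write $\Expect\iprod{X}{\tilde X}^\ell=2^\ell\sum\Expect[X_{i_1j_1}\cdots X_{i_\ell j_\ell}\tilde X_{i_1j_1}\cdots\tilde X_{i_\ell j_\ell}]$. Every summand equals $\Expect[(\Expect[X_{i_1j_1}\cdots X_{i_\ell j_\ell}\mid A])^2]\ge 0$. So you may discard all terms except the $\binom{n}{v}$ relabelings of the given multigraph, which gives $M_{\boldsymbol\ell}\le\binom{n}{v}^{-1}\Expect\iprod{X}{\tilde X}^\ell$, multi-edges included. Hence only moments of the \emph{scalar} overlap are needed (the tail bound plus Cauchy--Schwarz against $\Expect\Fnorm{X}^{4\ell}$), and no ``tensor-power overlaps'' enter. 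Without positivity, your relabeling average is a signed combination of shape-dependent statistics, each needing its own concentration bound, which you do not provide. A Hanson--Wright bound for $x_i^{\otimes m}$ at growing degree is neither standard nor relevant here, since $\ell\le L_0$ is fixed.

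\textbf{The change of measure for non-linear $\kappa$ is also not handled.} There the planted exponent is $\iprod{\kappa(X)}{\kappa(X^*)}$, which is not a quadratic form in $x$, so Hanson--Wright cannot be applied to it directly. The paper splits off $b_1X$. Hanson--Wright, conditional on $X^*$, handles $b_1^2\iprod{X}{X^*}$ and $b_1\iprod{X}{\kappa(X^*)-b_1X^*}$. The remaining term $\iprod{\kappa(X)-b_1X}{\kappa(X^*)-b_1X^*}$ is bounded by Cauchy--Schwarz on the high-probability event $\Fnorm{\kappa(X)-b_1X}^2\lesssim B^2r^4n^2/d^2$, which comes from a hypercontractive bound on $\sum_{i<j}\iprod{x_i}{x_j}^4$.

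\textbf{Your final accounting is also off, and you misplace the constraints on $L$ and $r$.} Note first that $g(k)$ is the second moment of a sum over $\boldsymbol\ell$, not a combination of the $M_{\boldsymbol\ell}$. Passing to $\sum_{\boldsymbol\ell}\beta_{\boldsymbol\ell}a_{\boldsymbol\ell}^2M_{\boldsymbol\ell}$ needs a weighted Cauchy--Schwarz with $\sum_{\boldsymbol\ell}\beta_{\boldsymbol\ell}^{-1}\le(e/(e-1))^k$. Those weights produce a factor $[(\sum_i\ell_i-1)!!]^2$, and bounding it via $\sum_i\ell_i\le Lk$ gives, for large $k$, $g(k)\le(CB^2r^2Lk/d^{3/2}+CB^2r^4/d^2)^k$.

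The $r^4/d^2$ term comes from vertices with $\ell_i=0$, i.e.\ the $b_0$ and self-paired even terms. This, not $g(1)$, is where $|b_0|\le Br^2/d$ is used; $g(1)=0$ exactly because $\kappa$ is centered. Since $\binom{n}{k+1}\le(en/k)^{k+1}$ cancels the $k^k$, the tail is $o(1)$ once $d\gg n^{2/3+\epsilon}r^{4/3}L^{2/3}+n^{1/2+\epsilon}r^2$. It is these two conditions, not the overlap analysis, that force $L\le d^{1/6-\epsilon}$ and $r\le d^{1/3-\epsilon}$. Your condition ``$nkr^2/d^{3/2}\ll1$ for $k\le n$'' is far stronger and fails near the threshold.

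For small $k$, the bound $g(k)=O((r^2/d^2)^k)$ has the wrong power of $r$. The dominant $k=2$ term is $a_{\boldsymbol\ell}^2M_{\boldsymbol\ell}\approx(r^2/d)^2\cdot r^2/d^2=r^6/d^4$. This is consistent with the Jensen lower bound $g(2)\ge\tr^2(\kappa^3)$, which is of order $r^6/d^4$ when $b_1\asymp r$. It is $n^3r^6/d^4=o(1)$ that is equivalent to $d\gg n^{3/4}r^{3/2}$; $n^3r^4/d^4=o(1)$ is equivalent to $d\gg n^{3/4}r$.

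Finally, the overlap proposition needs $d\gtrsim n^{1/2}(r+r^{3/2})+\cdots$, not $d\gtrsim\sqrt n$. The small-$k$ bound also carries a second branch from the floor $\delta\ge\sqrt{\log n}/n$, and it is this floor that secures the $n^{-D}$ failure probability. The assumption $r\ge n^{-1/2+\epsilon}$ (resp.\ $b_1\gtrsim n^{-1/2}\log n$) is what makes $n^{1/2}r$ and this branch subdominant to $n^{3/4}r^{3/2}$; it does not itself produce the polynomially small failure probability.
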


\begin{remark}
    We comment that the standing assumption on $b_\ell$'s for $\ell\ge 1$  already implies that 
$|b_0| \le 2B \frac{r^2}{d}$ as long as $d \ge 2 r^2$. To see this,
$\Expect[\kappa(\iprod{x_1}{x_2})] = 0$ by definition and hence,
$$
b_0 = - \sum_{\ell=1}^{L/2} b_{2\ell}
\Expect[\iprod{x_1}{x_2}^{2\ell}]
=- \sum_{\ell=1}^{L/2} b_{2\ell} \frac{(2\ell-1)!!}{d(d+2)\cdots (d+2\ell-2)}.
$$
Therefore,
\begin{equation}
|b_0| \leq \sum_{\ell=1}^{L/2} |b_{2\ell}|
\frac{(2\ell-1)!!}{d^\ell}
\le B \sum_{\ell=1}^{L/2} \frac{r^{2\ell}}{\sqrt{(2\ell)!}}
\frac{(2\ell-1)!!}{d^\ell}
\le B \sum_{\ell=1}^{L/2} \left(\frac{r^2}{d}\right)^\ell
\le 2B \frac{r^2}{d},
    \label{eq:b0-auto}
\end{equation}
where the second inequality follows from~\prettyref{eq:coeffs-assumption}; the third inequality holds due to $(2\ell-1)!!\le \sqrt{(2\ell)!}$; and the last inequality holds due to $d \ge 2r^2$.
\end{remark}



\subsection{Analysis of the posterior overlap}
\label{sec:posterior-analysis}


Before proving Theorem~\ref{thm:poly}, we establish the following bound on the overlap between $X$ drawn from the prior and $\tilde{X}$, an independent redraw from the posterior.
While the direct consequence is a \emph{recovery} lower bound, this result is also a key step in the proof of the \emph{detection} lower bound as we will see in \prettyref{sec:proof-theorem-poly}.

\begin{proposition} \label{prop:posterior-concentration}
Let $X \in \reals^{n \times n}$ be defined by $X_{ij}=\iprod{x_i}{x_j}$ for $i \ne j$ and $X_{ii}=0$.
Let $\tilde{X}$ be a fresh random draw from the posterior distribution $\mu_A$ of $X$ conditional on $A$. 
For any constant $D>0$, there exists a constant $C>0$ depending only on $D$ such that the following holds.
Assume that the standardized kernel is a polynomial $\kappa(t) = \sum_{\ell=0}^L b_\ell t^\ell$ whose coefficients satisfy \eqref{eq:coeffs-assumption}.
If
\nbb{
$$
d \ge C B^2 r^2 + n^{1/2} B r^{3/2} + n^{1/2} (B + B^{1/2}) r + n^{1/3} (B^{4/3} + B^{2/3}) r^{4/3} + \log n + r^2 (\log n)^3 ,
$$}
then
\begin{align}
\prob{\iprod{\tilde{X}}{X} \ge \delta \frac{n^2}{d}} \le \gamma ,
\label{eq:posterior_probability}
\end{align}
where $\gamma \eqdef n^{-D}$ and 
\begin{equation}
\delta \eqdef C \max\{B(B+1) r^2/d , (\log n)^{1/2}/n \} .
\label{eq:def-delta}
\end{equation}
For a linear kernel $\kappa(t) = b_1 t$, if 
\nbb{$$d \gtrsim n^{1/2} b_1 + n^{1/3} b_1^{4/3} + \log n,$$}
then \prettyref{eq:posterior_probability} holds with $\gamma \eqdef n^{-D}$ and $\delta \eqdef C \max\{ b_1^2/d , (\log n)^{1/2} / n \}$.
\end{proposition}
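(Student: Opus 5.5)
The plan is the planting trick described in \prettyref{sec:outline_proof}. By the Nishimori identity, the pair $(X,\tilde X)$, with $X$ from the prior and $\tilde X$ an independent posterior draw given $A$, has the same law as $(X,X')$ where $(X,X')$ are two independent posterior draws. Equivalently, $(X,\tilde X,A)$ is exchangeable in its first two coordinates. I write the joint density of $(X,\tilde X)$ relative to the product of priors $\pi\otimes\pi$:
\[
\frac{dP_{X,\tilde X}}{d(\pi\otimes\pi)}(x,x^*) = \Expect_{A\sim Q}\Big[\frac{P(A|x)}{Q(A)}\frac{P(A|x^*)}{P(A)}\Big].
\]
Since $P(A)$ appears in the denominator, this is not directly a product. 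Instead, I intend to use the standard bound $\prob{(X,\tilde X)\in E}\le \Expect_{x,x^*\sim\pi}\big[\tfrac{P(A|x)P(A|x^*)}{\ldots}\big]$ via Cauchy--Schwarz or a change of measure to the planted pair. Concretely, I will use $\prob{(X,\tilde X)\in E} = \Expect_{A\sim P}[\mu_A\otimes\mu_A(E)]$. Writing $\mu_A(x)=\pi(x)P(A|x)/P(A)$ and applying the elementary inequality $\Expect_P[\mu_A\otimes\mu_A(E)]\le \epsilon+\Expect_Q[\ldots]$ reduces the problem to the $Q$-expectation. The reduction rests on splitting according to whether $P(A)/Q(A)$ is small; the $Q$-expectation is exactly the second-moment quantity restricted to $E$:
\[
\Expect_{x,x^*}\Big[\prod_{i<j}\big(1+\kappa(\iprod{x_i}{x_j})\kappa(\iprod{x^*_i}{x^*_j})\big)\indc{\iprod{X}{X^*}\ge\delta n^2/d}\Big].
\]
The difficulty of handling the likelihood ratio $Q/P$ is resolved by bounding the denominator using a lower bound on $P(A)/Q(A)$ that holds with high probability, say $P(A)/Q(A)\ge n^{-D'}$ outside an event of $P$-probability $n^{-D}$ (Markov applied to $Q/P$ under $P$ gives this trivially). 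This costs only a polynomial factor, which the truncated moment will absorb.

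For the main estimate, I use $1+u\le e^u$ and expand $\kappa(s)\kappa(s^*)=\sum_{\ell,m}b_\ell b_m s^\ell s^{*m}$. This gives an exponent $\sum_{\ell,m}b_\ell b_m\sum_{i<j}X_{ij}^\ell X_{ij}^{*m}$. The $\ell=m=1$ term is $b_1^2\iprod{X}{X^*}/2$, and the remaining terms are lower order because $|X_{ij}|\lesssim\sqrt{\log n/d}$ with overwhelming probability (truncated on a good event). The condition $d\gtrsim r^2(\log n)^3$ and the coefficient bounds \eqref{eq:coeffs-assumption} make these contributions $O(B^2r^2n^2/d^2)$ or smaller, hence $o(\delta n^2/d)$ or absorbable into the constant $C$ in $\delta$. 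The terms with $b_0$ contribute $O(n^2 B^2r^4/d^2)$ times small factors. It then remains to bound $\Expect[e^{\lambda Z}\indc{Z\ge \delta n^2/d}]$ for $Z=\iprod{X}{X^*}$ with $\lambda\asymp B r$-dependent. Conditionally on $X^*$, $Z=\sum_{i\neq j}\iprod{x_i}{x_j}X^*_{ij}$ is a quadratic form (chaos) in the Gaussian-like vector $(x_1,\dots,x_n)$ with matrix $X^*\otimes I_d/d$-scaled. Hanson--Wright gives tails $\exp(-c\min(t^2/\|M\|_F^2,t/\|M\|_{op}))$, where $\|X^*\|_F^2\approx n^2/d$ and $\|X^*\|_{op}\lesssim n/d+\sqrt{n/d}$ whp. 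Integrating $e^{\lambda t}$ against this tail beyond $t_0=\delta n^2/d$ yields decay $n^{-D}$ provided $\lambda\|M\|_{op}\ll 1$ and $t_0^2/\|M\|_F^2\gtrsim D\log n$. These two requirements are exactly where the conditions $d\ge n^{1/2}Br^{3/2}+n^{1/2}Br+n^{1/3}B^{4/3}r^{4/3}$ and the $(\log n)^{1/2}/n$ branch of $\delta$ enter.

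The main obstacle will be controlling the higher-order terms $\ell,m\ge2$ and the mixed terms inside the exponent uniformly, since they are not small pointwise but only on typical configurations. I would handle them by restricting to a high-probability good event $\{\max|X_{ij}|,|X^*_{ij}|\le C\sqrt{\log n/d}\}$ together with operator-norm bounds on the entrywise powers $X^{\circ\ell}$. Each $\sum X_{ij}^\ell X_{ij}^{*m}$ would then be bounded via its mean plus Hanson--Wright-type fluctuations. The linear-kernel case follows by the same argument with only the $\ell=m=1$ term, giving the simplified conditions.
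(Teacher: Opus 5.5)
\textbf{Genuine gap: the cross terms in the exponent.} Your reduction is the paper's. Splitting on $\{P_A/Q_A\le\epsilon\}$ is exactly \prettyref{lmm:com}, and isolating $\tfrac{b_1^2}{2}\iprod{X}{X^*}$ and applying Hanson--Wright conditionally on $X^*$ (on a marginal event controlling $\Fnorm{X^*}$ and $\norm{X^*}$) is how the paper treats the leading term. The problem is the terms with $\ell=1$, $m\ge 2$ (and symmetrically $m=1$, $\ell\ge2$), i.e.\ $Y_2=b_1\sum_{i<j}X_{ij}\,(\kappa(X^*_{ij})-b_1X^*_{ij})$.

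These are not lower order in any pointwise sense. Even on $\{X\in\calE\}\cap\{X^*\in\calE\}$, Cauchy--Schwarz gives only $|b_1b_2|\,\Fnorm{X}\,(\sum_{i<j}X_{ij}^{*4})^{1/2}\asymp B^2r^3n^2/d^{3/2}$ for the $m=2$ piece. That is far larger than the decay exponent $\delta^2n^2\asymp\max\{B^2(B+1)^2r^4n^2/d^2,\log n\}$ of the truncated leading term, which is the only budget available to absorb it. Their smallness comes from cancellation: given $X^*$, $Y_2$ is a mean-zero quadratic form in $(x_1,\dots,x_n)$ of typical size $|b_1|Br^2n/d^{3/2}$.

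Your proposed fix, a high-probability good event on which each $\sum_{i<j}X_{ij}^\ell X_{ij}^{*m}$ is at most its mean plus fluctuations, fails here because that event is joint in $(X,X^*)$. Its complement cannot be discarded at the price of a prior probability, since $(X,\tilde X)$ is a correlated pair whose marginals, but not its joint law, equal the prior. Nor can you cut out a region of small product measure inside $\Expect_{X\indep X^*}[g\,\indc{(X,X^*)\in\calF}]$ for free, because the weight $g$ is exponentially large there.

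What is needed is to keep $Y_2$ and $Y_3$ inside the exponential moment:
\begin{itemize}
\item Conditionally on $X^*\in\calE$, integrating the Hanson--Wright tail gives $\Expect_X[e^{Y_2}]\lesssim e^{CS}$, with $S=\tfrac{b_1^2}{d}\Fnorm{\kappa(X^*)-b_1X^*}^2\lesssim b_1^2B^2r^4n^2/d^3$.
\item This is valid when $\tfrac{|b_1|}{d}\norm{\kappa(X^*)-b_1X^*}\lesssim 1$. This, not the leading term, is where $d\gtrsim n^{1/2}Br^{3/2}$ comes from.
\item Then separate $Y_2+Y_3$ from the truncated leading term by Cauchy--Schwarz, and use $d\gtrsim B^2r^2$ to get $S\ll\delta^2n^2$.
\item Only the block $\ell,m\ne 1$ may be bounded pointwise, by $\Fnorm{\kappa(X)-b_1X}\,\Fnorm{\kappa(X^*)-b_1X^*}$. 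Even there, the $b_2$ part needs $\sum_{i<j}X_{ij}^4\lesssim n^2/d^2$ from a hypercontractivity bound; the sup bound $|X_{ij}|\lesssim\sqrt{\log n/d}$ would cost a factor $\log n$ in $\delta$.
\end{itemize}

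\textbf{Quantitative claims that need correcting.} The $\ell,m\ne1$ block can be as large as order $B^2r^4n^2/d^2$ (its mean already contains $b_2^2n^2/d^2$), not $O(B^2r^2n^2/d^2)$. It is absorbed because it is at most about $\delta^2n^2/C^2$, not because it is $o(\delta n^2/d)$. Extra terms in the exponent must be compared with the decay rate of the truncated moment, not with the truncation level.

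More importantly, the two conditions you list for the leading term ($\lambda\norm{M}\ll 1$ and $t_0^2/\Fnorm{M}^2\gtrsim D\log n$) do not suffice. You also need $t_0\gtrsim\lambda\Fnorm{M}^2$, i.e.\ $\delta n^2/d\gtrsim b_1^2 n^2/d^2$. Otherwise the tilt $e^{\lambda Z}$ moves the mass of $Z$ to about $\lambda\Fnorm{M}^2>t_0$, and the truncated moment is of order $\exp(c\,b_1^4n^2/d^2)$ rather than small. The requirement $\delta\gtrsim b_1^2/d$, together with $\delta\gtrsim Br^2/d$ for absorbing the $\ell,m\ne1$ block, is what produces the $B(B+1)r^2/d$ branch of $\delta$. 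That branch is the $\Theta(1/d)$ scaling the whole argument hinges on, and your sketch never accounts for it.
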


\begin{remark}[Intuition for the scaling of $\delta$]
We briefly explain the intuition behind the scaling of $\delta$ in~\prettyref{eq:def-delta}. 
First, each term $\tilde{X}_{ij} X_{ij}$ typically has magnitude of order $1/d$. Consequently, even if we pretend that $\tilde{X}$ and $X$ are independent matrices with independent entries, the inner product $\langle \tilde{X}, X \rangle$ would be of order $n/d$. This suggests that $\delta$ should scale at least as $1/n$, which corresponds to the second term in~\prettyref{eq:def-delta}. 
To understand the first term in~\prettyref{eq:def-delta}, note that 
\begin{align*}
\mathbb{E}\!\left[\langle \tilde{X}, X \rangle \right] = n(n-1)\, \mathbb{E}\!\left[\tilde{X}_{12} X_{12}\right] = n(n-1)\, \mathbb{E}\!\left[\big(\mathbb{E}[ X_{12} \mid A ]\big)^2 \right].
\end{align*}
By Jensen's inequality,
\[
\mathbb{E}\!\left[\big(\mathbb{E}[ X_{12} \mid A_{12}] \big)^2 \right]
\le
\mathbb{E}\!\left[\big(\mathbb{E}[ X_{12} \mid A ] \big)^2 \right]
\le
\mathbb{E}\!\left[X_{12}^2\right].
\]
A direct calculation shows that 
\begin{align*}
\mathbb{E}\!\left[\big(\mathbb{E}[ X_{12} \mid A_{12}] \big)^2 \right]
= \big(\mathbb{E}[\kappa(X_{12}) X_{12}]\big)^2 = \left(\sum_{\ell} b_\ell \, \mathbb{E}[\langle x_1, x_2\rangle^{\ell+1}]\right)^2
\asymp \left(\frac{b_1}{d}\right)^2,
\end{align*}
where the first equality holds because $\expect{X_{12} \mid A_{12}}=
(-1)^{1-A_{12}} [p/(1-p)]^{1/2-A_{12}} \expect{X_{12}\kappa(X_{12})}$ 
and
the last asymptotic follows from~\prettyref{eq:coeffs-assumption} under the condition $d \gtrsim r^2$. This yields $\delta \gtrsim b_1^2/d$, consistent with the first term in~\prettyref{eq:def-delta}. In other words, here the entire graph $A$ provides only negligible more information than a single edge $A_{12}$ to estimate $X_{12}$.
\end{remark}

As an immediate corollary of~\prettyref{prop:posterior-concentration}, we obtain the following bounds on the moments of the posterior overlap. 
\begin{corollary} \label{cor:posterior-moments}
In the setting of \prettyref{prop:posterior-concentration}, for any constant $C_1 > 0$, there is a constant $C_2 > 0$ depending only on $C_1$ such that for all positive integers $\ell \le C_1$, we have
$$
\expect{\iprod{\tilde{X}}{X}^\ell}
\le C_2 \bigg( \max \bigg\{ \frac{B(B+1) r^2 n^2}{d^2} , \frac{n (\log n)^{1/2}}{d} \bigg\} \bigg)^\ell .
$$

In the case where the kernel is linear, i.e., $\kappa(t) = b_1 t$, we have
$$
\expect{\iprod{\tilde{X}}{X}^\ell}
\le C_2 \bigg( \max \bigg\{ \frac{b_1^2 n^2}{d^2} , \frac{n (\log n)^{1/2}}{d} \bigg\} \bigg)^\ell .
$$
\end{corollary}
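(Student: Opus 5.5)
We integrate the tail bound of \prettyref{prop:posterior-concentration}, splitting at the threshold $\tau \eqdef \delta n^2/d$. Write $Z = \iprod{\tilde{X}}{X}$ and
\[
M \eqdef \max \bigg\{ \frac{B(B+1) r^2 n^2}{d^2} , \frac{n (\log n)^{1/2}}{d} \bigg\},
\]
so that $\tau = CM$ by \prettyref{eq:def-delta}. Since $\ell$ is a positive integer, we have $Z^\ell \le |Z|^\ell$. We therefore decompose
\[
\expect{Z^\ell} \le \expect{|Z|^\ell \indc{|Z| < \tau}} + \expect{|Z|^\ell \indc{Z \ge \tau}} + \expect{|Z|^\ell \indc{Z \le -\tau}} .
\]
The first term is at most $\tau^\ell = C^\ell M^\ell$.

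For the other two terms we use a crude deterministic bound. Each entry satisfies $|X_{ij}| \le 1$, and likewise $|\tilde{X}_{ij}| \le 1$, so $|Z| \le n^2$. Hence the second term is at most $n^{2\ell}\,\prob{Z \ge \tau} \le n^{2\ell - D}$ by \eqref{eq:posterior_probability}. Choosing $D = D(C_1)$ large makes this $o(M^\ell)$, because $M \ge n^{1/2}/d$ and $d \le n^{O(1)}$ in the relevant regime. If $d$ is super-polynomial, we instead use the Cauchy--Schwarz bound
\[
\expect{|Z|^\ell \indc{Z\ge\tau}} \le \sqrt{\expect{Z^{2\ell}}\,\prob{Z\ge\tau}},
\]
together with the prior moment bound $\expect{Z^{2\ell}} \le \expect{\|X\|_F^{4\ell}}$. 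By Cauchy--Schwarz applied to $Z$ and exchangeability of $(X,\tilde X)$, this is $O((n^2/d)^{2\ell})$. The resulting term is then $(n^2/d)^{\ell} n^{-D/2}$, which is at most $M^\ell$ once $D \ge 2\ell$, since $n^2/d \le n^{2} M$ and can be absorbed.

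The third term, the lower tail, is the main obstacle, since \prettyref{prop:posterior-concentration} only controls the upper tail. The key fact is the Nishimori identity: $(X,\tilde X)$ is exchangeable, and
\[
\expect{\iprod{\tilde{X}}{X}} = \expect{\|\expect{X\mid A}\|_F^2} \ge 0 .
\]
More usefully, for even $\ell$ the sign does not matter, but we need $\prob{Z \le -\tau}$ small. We would obtain this by rerunning the proof of the proposition with $-\tilde{X}$, i.e.\ the change-of-measure argument with the event $\{\sum_{i<j} X_{ij} X^*_{ij} \le -\delta n^2/d\}$. The Hanson--Wright step is symmetric in sign, so the same argument yields $\prob{Z \le -\tau} \le n^{-D}$.

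Alternatively, and more cleanly, the lower tail can be avoided altogether. For odd $\ell$ we have $Z^\ell \indc{Z<0} \le 0$, so only the bound $\expect{Z^\ell \indc{Z \ge 0}}$ is needed. For even $\ell$ we write $Z^\ell = \iprod{\tilde{X}^{\otimes \ell}}{X^{\otimes \ell}}$. By the Nishimori identity this equals $\expect{\|\expect{X^{\otimes \ell}\mid A}\|^2} \ge 0$, and the second-moment structure gives
\[
\expect{Z^\ell} \le \sqrt{\expect{Z^{2\ell}\indc{Z\ge0}}\cdot 1}\,,
\]
which reduces to the positive part. I would attempt this reduction first and fall back on the symmetric rerun if it fails.

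Finally, for the linear kernel the argument is identical, with $\delta = C\max\{b_1^2/d, (\log n)^{1/2}/n\}$ and $M$ replaced by $\max\{b_1^2 n^2/d^2, n(\log n)^{1/2}/d\}$. We set $C_2 = C^{C_1} + 1$, adjusted for the tail contributions, with $D$ chosen depending only on $C_1$.
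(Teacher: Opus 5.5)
Your main line matches the paper's proof: split at $\tau=\delta n^2/d=CM$, bound the part below $\tau$ by $\tau^\ell$, and handle $\{Z\ge\tau\}$ by Cauchy--Schwarz with $\expect{Z^{2\ell}}\le\expect{\Fnorm{X}^{4\ell}}\lesssim(n^2/d)^{2\ell}$ and $\prob{Z\ge\tau}\le n^{-D}$. Two small points on this part. The detour through $|Z|\le n^2$ is unnecessary, since the Cauchy--Schwarz bound works for every $d$. Your claim that $D\ge 2\ell$ suffices is right, but the justification should be $n^2/d\le nM$ (from $M\ge n/d$); the inequality $n^2/d\le n^2M$ you cite would require $D\ge 4\ell$.

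You go beyond the paper on the lower tail, and your caution is justified. The paper's estimate $\expect{Z^\ell\indc{Z<\tau}}\le\tau^\ell$ is immediate only for odd $\ell$. For even $\ell$ it implicitly needs control of $\{Z\le-\tau\}$, which the paper does not provide. Your symmetric rerun closes this correctly: apply Lemma~\ref{lmm:com} to $\calF^-=\{X\in\calE,\ \tilde X\in\calE,\ \iprod{X}{\tilde X}\le-\delta n^2/d\}$. Terms II--IV go through verbatim. Term I is actually easier, because $e^{2Y_1}\le 1$ on $\calF^-$, so you only need the conditional Hanson--Wright bound
\[
\prob{\iprod{X}{X^*}\le-\delta n^2/d\mid X^*}\le 2\exp(-c\min\{\delta^2n^2,\delta nd,\delta n^{3/2}d^{1/2}\}),\qquad X^*\in\calE.
\]
The requirements on $\delta$ and $\gamma$ are the same as for the upper tail.

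Your preferred ``cleaner'' route, by contrast, is not justified as written. The inequality $\expect{Z^\ell}\le(\expect{Z^{2\ell}\indc{Z\ge0}})^{1/2}$ does not follow from positivity of the moments: a symmetric $\pm1$ variable has nonnegative moments and violates it. The idea can be rescued, though. Write $Z_\pm=\max\{\pm Z,0\}$. The identity $\expect{Z^m}=\expect{\|\expect{X^{\otimes m}\mid A}\|^2}\ge 0$ holds for \emph{every} $m$. Taking the odd exponent $m=\ell+1$ gives $\expect{Z_-^{\ell+1}}\le\expect{Z_+^{\ell+1}}$. For even $\ell$, Lyapunov's inequality then gives
\[
\expect{Z_-^{\ell}}\le\big(\expect{Z_-^{\ell+1}}\big)^{\ell/(\ell+1)}\le\big(\expect{Z_+^{\ell+1}}\big)^{\ell/(\ell+1)}\lesssim M^{\ell},
\]
where the last step is your upper-tail argument run with exponent $\ell+1$ and $D$ chosen depending on $C_1+1$. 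This repairs the even case using only the one-sided statement of Proposition~\ref{prop:posterior-concentration}, without reopening its proof. The rerun instead gives a stronger two-sided tail bound, at the cost of revisiting the change-of-measure computation.
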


\begin{proof}
Let $\delta$ and $\gamma$ be as defined in \prettyref{prop:posterior-concentration}. 
Note that 
\begin{align*}
\expect{\iprod{\tilde{X}}{X}^\ell}
& =\expect{\iprod{\tilde{X}}{X}^\ell \indc{\iprod{\tilde{X}}{X} \ge \frac{\delta n^2}{d}}}+ 
\expect{\iprod{\tilde{X}}{X}^\ell \indc{\iprod{\tilde{X}}{X} < \frac{\delta n^2}{d}}} \\
& \le \sqrt{\expect{\iprod{\tilde{X}}{X}^{2\ell}} \prob{\iprod{\tilde{X}}{X} \ge \frac{\delta n^2}{d}}}+  \left(\frac{\delta n^2}{d}\right)^\ell. 
\end{align*}
Moreover, 
\begin{align*}
\expect{\iprod{\tilde{X}}{X}^{2\ell}}
\le \expect{ \Fnorm{\tilde{X}}^{2\ell} \Fnorm{X}^{2\ell}}
=\expect{  
\left(\expect{ \Fnorm{X}^{2\ell} \mid A}\right)^2  }
\le \expect{\Fnorm{X}^{4\ell} }
\le \left( \frac{n^2}{d} \right)^{2\ell} (4\ell-1)!!,
\end{align*}
where the last inequality holds because, by Jensen's inequality,
$$
\expect{\Fnorm{X}^{4\ell} }
=\expect{ \left( \sum_{i\neq j} X_{ij}^2 \right)^{2\ell}}
\le [n(n-1)]^{2\ell-1} \expect{\sum_{i\neq j} X_{ij}^{4\ell}}
\le n^{4\ell} \expect{X_{ij}^{4\ell}}
\le n^{4\ell} (4\ell-1)!! d^{-2\ell}.
$$
Therefore, we deduce that 
$$
\expect{\iprod{\tilde{X}}{X}^\ell}
\le \left( \frac{n^2}{d}\right)^\ell
\left( \sqrt{(4\ell-1)!! \cdot \gamma} + \delta^\ell \right).
$$
Recall the choices of $\delta$ and $\gamma = n^{-D}$ in \prettyref{prop:posterior-concentration}. We can choose $D$ to be sufficiently large depending only on $C_1$ such that $\sqrt{(4\ell-1)!! \cdot \gamma} \le \delta^\ell$ for all $\ell \le C_1$. The conclusion then follows.
\end{proof}

Before proving \prettyref{prop:posterior-concentration}, let us first establish a few lemmas.

\begin{lemma} \label{lem:X-spectral-norm-bound}
Let $X \in \reals^{n \times n}$ be defined by $X_{ij}=\iprod{x_i}{x_j}$ for $i \ne j$ and $X_{ii}=0$. 
Then there is an absolute constant $c>0$ such that with probability at least $1 - 2 \exp(-cd) - 2 \exp(-cn)$, 
$$
\|X\| \lesssim \sqrt{\frac{n}{d}} + \frac{n}{d} 
$$
and
$$
\Fnorm{X}^2 \lesssim \frac{n^2}{d} .
$$
\end{lemma}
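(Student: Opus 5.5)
The plan is to write $X = Y^\top Y - D$, where $Y = [x_1,\ldots,x_n] \in \reals^{d\times n}$ and $D = \mathrm{diag}(\|x_i\|^2) = I_n$ since the points lie on the unit sphere. Hence $X = Y^\top Y - I_n$, and both bounds reduce to controlling the Gram matrix $Y^\top Y$.

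\textbf{Spectral norm.} We have $\|X\| \le \max\{\|Y\|^2 - 1,\, 1 - \sigma_{\min}(Y^\top Y)\}$, and it suffices to show $\|Y\|^2 \le 1 + C(\sqrt{n/d} + n/d)$. The nonzero eigenvalues of $Y^\top Y$ coincide with those of $YY^\top = \sum_i x_i x_i^\top$, a $d\times d$ sum of independent isotropic rank-one matrices with $\Expect[x_ix_i^\top] = I_d/d$.

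We rescale $z_i = \sqrt d\, x_i$. These are independent, isotropic, and subgaussian in $\reals^d$ with $O(1)$ subgaussian norm, a standard fact for the uniform distribution on $\sqrt d S^{d-1}$. The matrix deviation bound for random matrices with independent isotropic subgaussian columns (Vershynin, Theorem 4.6.1 applied to $Z^\top$) then gives
\[
\Big\|\tfrac1n Z Z^\top - I_d\Big\| \lesssim \sqrt{d/n} + d/n
\]
with probability $1 - 2e^{-u^2}$ at deviation level $u$. Equivalently, $\|Y\|^2 = \|Z\|^2/d \le (\sqrt n + C\sqrt d + Cu)^2/d$. Taking $u = \sqrt{c\,\max(n,d)}$ gives $\|Y\|^2 \lesssim 1 + n/d$ with probability at least $1 - 2e^{-cn} - 2e^{-cd}$.

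This needs to be converted into the claimed form. If $n \ge d$, then $\|X\| \le \|Y\|^2 + 1 \lesssim n/d$, which is dominated by $\sqrt{n/d} + n/d$. If $n < d$, we apply the same theorem to the $n\times n$ matrix $Y^\top Y$ directly. Here the rows of $Y$ are not independent, so instead we use the two-sided singular value bound $\sqrt n - C\sqrt d - u \le s_{\min}(Z) \le s_{\max}(Z) \le \sqrt n + C\sqrt d + u$. In the transposed form it reads $\sqrt d \pm C(\sqrt n + u)$ for the singular values of the $d\times n$ matrix $Z$ with independent columns, valid when $n\le d$ (Vershynin, Theorem 5.58 for independent isotropic columns with $\|z_i\|=\sqrt d$ exactly). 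This gives $\|Y^\top Y - I_n\| \lesssim \sqrt{n/d} + u/\sqrt d$, and choosing $u = \sqrt{cn}$ yields the bound with failure probability $2e^{-cn}$. The main technical point is this case split and locating the correct theorem for independent-column matrices; the exact normalization $\|x_i\| = 1$ is precisely what makes the column version apply.

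\textbf{Frobenius norm.} Use $\Fnorm{X}^2 = \Fnorm{Y^\top Y}^2 - n = \Fnorm{YY^\top}^2 - n \le d\,\|YY^\top\|^2 - n$.
\begin{itemize}
\item When $n \ge d$: $d(1 + n/d)^2 \lesssim n^2/d$.
\item When $n < d$: instead use $\Fnorm{X}^2 \le n\|X\|^2 \lesssim n \cdot n/d = n^2/d$, since $\sqrt{n/d}$ dominates in that regime.
\end{itemize}
Both cases hold on the same high-probability event, which completes the proof.
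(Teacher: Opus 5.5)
Your argument is correct and is essentially the paper's proof: writing $X = Y^\top Y - I_n$ and splitting on $n$ versus $d$, you use the independent-rows singular value bound to control $\sigma_{\max}$ when $n \ge d$. Your Theorem~4.6.1 is the same result as the paper's Theorem~5.39 of the 2010 notes. When $n<d$ you use the exact-norm independent-columns bound (Theorem~5.58) for two-sided control, which matches the paper's case split. Your Frobenius step via $\Fnorm{X}^2 = \Fnorm{YY^\top}^2 - n \le d\|Y\|^4 - n$ and $\Fnorm{X}^2 \le n\|X\|^2$ is only a repackaging of the paper's identity $\Fnorm{X}^2 = \sum_{i \le n\wedge d}(\sigma_i^2-1)^2 + \max\{0,n-d\}$. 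Your early remark that an upper bound on $\|Y\|^2$ alone suffices holds only for $n \ge d$, but your $n<d$ case supplies the needed lower bound on $\sigma_{\min}$, so nothing is missing.
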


\begin{proof}
We have $X=Z^\top Z - I_n$, where $Z \eqdef [x_1,\ldots, x_n]$, so  
$$
\|X\| \le \max\{1 - \lambda_{\min}(Z^\top Z), \lambda_{\max}(Z^\top Z) - 1\} = \max\{ 1-\sigma_{\min}^2(Z), \sigma_{\max}^2(Z)-1\} .
$$
If $d \ge n$, then by Theorem~5.58 in \cite{vershynin2010introduction}, with probability at least $1-2\exp(-cn)$, 
$$
 1- O(\sqrt{n/d}) \le \sigma_{\min}(Z) \le \sigma_{\max}(Z) \le 1+ O(\sqrt{n/d}).
$$
If $d < n$, then by Theorem~5.39 in \cite{vershynin2010introduction}, with probability at least $1-2\exp(-cd)$, 
$$
 \sigma_{\max}(Z) \le \sqrt{n/d} + C .
$$
Combining the above bounds, we obtain
$\norm{X} \lesssim \sqrt{n/d} + n/d$. 
Furthermore,
$$
\Fnorm{X}^2= \sum_{i=1}^{n\wedge d} \left( \sigma_i^2(Z)-1  \right)^2 + \max\{0, n-d\}
\lesssim (n \wedge d) \left( \sqrt{n/d} + n/d \right)^2 + \max\{0, n-d\}
\lesssim n^2/d ,
$$
completing the proof.
\end{proof}

\begin{lemma} \label{lem:fourth_moment_hypercontractivity}
Consider i.i.d.\ uniformly random $x_1, \dots, x_n$ over $S^{d-1}$. There is an absolute constant $c>0$ such that 
with probability at least $1 - n \exp(-cd) - \exp(- \min \{ (nd)^{1/8}, n^{1/4} \})$, 
$$
\sum_{i<j} \iprod{x_i}{x_j}^4 
\lesssim \frac{n^2}{d^2}.
$$
\end{lemma}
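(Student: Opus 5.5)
We will show that $S \eqdef \sum_{i<j}\iprod{x_i}{x_j}^4$ has mean $\binom n2\,\Expect\iprod{x_1}{x_2}^4 = \binom n2 \frac{3}{d(d+2)} \asymp n^2/d^2$, and then prove concentration. Our route is to split $S$ into a near-diagonal part that is controlled through the Gram matrix, plus a fluctuation that behaves like a degree-two chaos. First, on the event $\mathcal{G}$ that $\|x_i\|$-type quantities are well behaved, we would like to write $\iprod{x_i}{x_j}^4 = \iprod{x_i^{\otimes 4}}{x_j^{\otimes 4}}$. This gives
\[
2S = \Big\|\sum_i x_i^{\otimes 4}\Big\|^2 - n .
\]
Set $u_i = x_i^{\otimes 4} - \Expect x^{\otimes 4}$ and $m = \Expect x^{\otimes 4}$, where $\|m\|^2 = \Expect\iprod{x_1}{x_2}^4 \asymp d^{-2}$. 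Expanding,
\[
\Big\|\sum_i x_i^{\otimes 4}\Big\|^2 = n^2\|m\|^2 + 2n\sum_i\iprod{m}{u_i} + \Big\|\sum_i u_i\Big\|^2 .
\]
The first term is $\asymp n^2/d^2$. The cross term consists of i.i.d.\ summands, since $\iprod{m}{u_i} = \Expect_y\iprod{x_i}{y}^4 - \|m\|^2 = 0$ identically: by rotation invariance, $\iprod{m}{x_i^{\otimes 4}}$ is constant. So the cross term vanishes exactly, which is a useful simplification.

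It therefore remains to bound $\|\sum_i u_i\|^2 = \sum_i\|u_i\|^2 + 2\sum_{i<j}\iprod{u_i}{u_j}$. The diagonal is at most $n$, which cancels the $-n$. The off-diagonal term is $2\sum_{i<j} h(x_i,x_j)$ with $h(x,y)=\iprod{x}{y}^4-\|m\|^2$, a degenerate (canonical) U-statistic of order two. Its variance is $\binom n2 \Var\iprod{x_1}{x_2}^4 \asymp n^2/d^4$, so its typical size is $n/d^2 \ll n^2/d^2$. We need a tail bound showing it stays $O(n^2/d^2)$ with the stated probability. Two steps produce this.

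The first step is truncation. By a union bound, with probability $1-n^2 e^{-cd}$-type bounds, all $|\iprod{x_i}{x_j}| \lesssim \sqrt{\log n / d}$. The stated probability has only an $n e^{-cd}$ term, however. Since $x_i$ is uniform and independent of the others, we instead condition on the rows: for each fixed $i$, the vector $(\iprod{x_i}{x_j})_j$ is controlled through the Gram matrix. Concretely, use $\max_i\sum_j \iprod{x_i}{x_j}^2 \lesssim n/d + 1$ with probability $1-ne^{-cd}$, which follows from Lemma~\ref{lem:X-spectral-norm-bound}-type bounds applied per row.

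The second step applies a moment bound for degenerate U-statistics of bounded-degree polynomials on the sphere, namely hypercontractivity for spherical polynomials. Taking the $q$-th moment with $q \asymp \min\{(nd)^{1/8}, n^{1/4}\}$, we get $\|U\|_q \lesssim q^{4}\cdot n/d^2$ (degree $8$ polynomial in the variables, giving a $q^{4}$ growth). Markov's inequality then yields $P(U > n^2/d^2) \le (q^4/n)^q$. Choosing $q \approx n^{1/8}$ gives the term $\exp(-n^{1/4})$; the $(nd)^{1/8}$ alternative would come from balancing against the $d$-dependence when $d$ is small.

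The main obstacle is the second step: obtaining the precise moment growth so that the exponent matches $\min\{(nd)^{1/8},n^{1/4}\}$. If plain hypercontractivity gives a weaker exponent, we would fall back on a decoupling inequality combined with a Hanson--Wright or Lata{\l}a-type bound conditional on one copy. In that fallback, we treat $\sum_{i<j}\iprod{u_i}{u_j}$ as a quadratic form in the independent vectors $u_i$, whose operator-norm parameters are controlled by the good event from the first step.
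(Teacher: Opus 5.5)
Your main line is correct but follows a genuinely different route from the paper's.

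\textbf{The paper's route.} It writes $x_i=g_i/\|g_i\|$ and uses the event $\min_i\|g_i\|^2\ge d/2$, which is the source of the $ne^{-cd}$ term. On that event $S\lesssim d^{-4}\sum_{i<j}\iprod{g_i}{g_j}^4$, and the paper then applies off-the-shelf Gaussian hypercontractivity (Janson, Thm.~6.7) to this degree-$8$ Gaussian chaos. The price is that the Gaussian proxy is not degenerate. The shared norm $\|g_i\|$ gives $\Cov(\iprod{g_i}{g_j}^4,\iprod{g_i}{g_k}^4)\asymp d^3$, so the variance is $\asymp n^3d^3+n^2d^4$. It is exactly the $n^3d^3$ term that produces the $(nd)^{1/8}$ in the exponent.

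\textbf{Your route.} You stay on the sphere, where rotation invariance makes these overlapping covariances vanish. So $S-\Expect S=U$ is a canonical U-statistic with $\|U\|_2\asymp n/d^2$. This buys a strictly stronger tail, at the cost of needing hypercontractivity on the product $(S^{d-1})^n$ rather than in Gauss space. That tool exists. The Mueller--Weissler log-Sobolev constant $d-1$ of $S^{d-1}$ tensorizes to the product. By the Gegenbauer expansion of $t^4$ and the addition formula, $h(x,y)=\iprod{x}{y}^4-\Expect\iprod{x_1}{x_2}^4$ is a combination of products of degree-$k$ harmonics in $x$ and in $y$ with $k\in\{2,4\}$. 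Hence $U$ lies in eigenspaces of $-\sum_m\Delta_m$ with eigenvalue at most $8(d+2)$, giving
$\|U\|_q\le (q-1)^{4(d+2)/(d-1)}\|U\|_2$.
This is $O(q^4 n/d^2)$ for $q\le n^{1/4}$ once $d\gtrsim \log n$. For $d\lesssim\log n$ the statement is vacuous because of the $ne^{-cd}$ term, and that is the only role this term plays in your argument.

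\textbf{Things to fix.}
\begin{enumerate}
\item The truncation step is never used and should be dropped, along with the unspecified event $\mathcal G$. The identity $\iprod{x_i}{x_j}^4=\iprod{x_i^{\otimes 4}}{x_j^{\otimes 4}}$ holds unconditionally, and in any case $S=\binom n2\Expect\iprod{x_1}{x_2}^4+U$ directly.
\item Your final choice of $q$ is wrong, and this matters. With $q\approx n^{1/8}$, Markov gives only $\exp(-\tfrac12 n^{1/8}\log n)$, which is weaker than the claimed bound whenever $d\gtrsim(\log n)^8$. Instead, write $\|U\|_q\le C_0q^4n/d^2$. Then $\prob{U>Cn^2/d^2}\le (C_0q^4/(Cn))^q\le e^{-q}$ for every $2\le q\le n^{1/4}$, once $C\ge eC_0$.
\item Taking $q=n^{1/4}$ gives $\exp(-n^{1/4})\le\exp(-\min\{(nd)^{1/8},n^{1/4}\})$. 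Your initial choice $q=\min\{(nd)^{1/8},n^{1/4}\}$ also works and gives exactly the stated probability.
\end{enumerate}
There is therefore nothing to ``match'': the $(nd)^{1/8}$ is an artifact of the Gaussian route, and the decoupling/Hanson--Wright fallback is unnecessary.
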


\begin{proof}
Let $g_1, \dots, g_n$ be i.i.d.\ standard Gaussian vectors in $\reals^d$ such that $x_i = g_i/\|g_i\|$ for each $i \in [n]$. By the concentration of a $\chi^2_d$ random variable together with a union bound, we have $\|g_i\|^2 \ge d/2$ for all $i \in [n]$ with probability at least $1 - n \exp(-cd)$ for an absolute constant $c>0$. On this event, $\sum_{i<j} \iprod{x_i}{x_j}^4 
\lesssim \frac{1}{d^4} \sum_{i<j} \iprod{g_i}{g_j}^4$.

Moreover, by Theorem~6.7 in \cite{janson1997gaussian} applied to the degree-$8$ polynomial $\sum_{i<j} ( \iprod{g_i}{g_j}^4 - \Expect[\iprod{g_i}{g_j}^4] )$ of Gaussian random variables, we obtain 
$$
\sum_{i<j} \iprod{g_i}{g_j}^4 \lesssim \sum_{i<j} \Expect[\iprod{g_i}{g_j}^4] + \sqrt{\Var(\sum_{i<j} \iprod{g_i}{g_j}^4)} \cdot (\log(1/\gamma))^4
$$
with probability at least $1-\gamma$. 
The expectation can be computed to give
$$
\Expect[\iprod{g_i}{g_j}^4] = 3 d(d+2) .
$$
For the variance, we have
$$
\Var(\sum_{i<j} \iprod{g_i}{g_j}^4) 
= \sum_{i<j} \sum_{i'<j'} \Cov(\iprod{g_i}{g_j}^4, \iprod{g_{i'}}{g_{j'}}^4) 
= \sum_{i<j} \sum_{i'<j'} \left( \Expect[\iprod{g_i}{g_j}^4 \iprod{g_{i'}}{g_{j'}}^4] - 9 d^2 (d+2)^2 \right) .
$$
If $i,j,i',j'$ are all distinct, then the above covariance is zero.
If $|\{i,j\} \cap \{i',j'\}| = 1$, then we can compute
$$
\Expect[\iprod{g_i}{g_j}^4 \iprod{g_{i'}}{g_{j'}}^4] = 9 d(d+2)(d+4)(d+6) .
$$
If $(i,j) = (i',j')$, then we have $\Expect[\iprod{g_i}{g_j}^8] \lesssim d^4$.
Combining these cases yields
$$
\Var(\sum_{i<j} \iprod{g_i}{g_j}^4) \lesssim n^3 d^3 + n^2 d^4. 
$$

Putting it together, on the intersection of the above two high-probability events, we have
$$
\sum_{i<j} \iprod{x_i}{x_j}^4 
\lesssim \frac{1}{d^4} \sum_{i<j} \iprod{g_i}{g_j}^4 
\lesssim \frac{1}{d^4} \left( n^2 d^2 + \sqrt{n^3 d^3 + n^2 d^4} \cdot (\log(1/\gamma))^4 \right)
$$
with probability at least $1 - \gamma - n \exp(-cd)$. Choosing $\gamma = \exp(- \min \{ (nd)^{1/8}, n^{1/4} \})$ completes the proof.
\end{proof}

\begin{lemma} \label{lem:kappa-X-b1-X-bound}
Let $X$ be as defined in \prettyref{lem:X-spectral-norm-bound}. 
For the kernel $\kappa(t)=\sum_{\ell=0}^L b_\ell t^\ell$, assume that \prettyref{eq:coeffs-assumption} holds. 
Let $\kappa(X)$ denote the $n\times n$ matrix with $\kappa(X)_{ij} = \kappa(X_{ij})$ if $i \ne j$ and $\kappa(X)_{ii} = 0$. 
There is an absolute constant $c>0$ such that with probability at least $1 - n \exp(-c \min\{d, d/r^2, d^{1/3}/r^{2/3}\} ) - \exp(- \min \{ (nd)^{1/8}, n^{1/4} \})$, 
$$
\Fnorm{\kappa(X) - b_1 X }^2 \lesssim \frac{n^2 B^2 r^4}{d^2} .
$$
\end{lemma}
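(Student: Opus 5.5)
Write $\kappa(X)_{ij} - b_1 X_{ij} = b_0 + \sum_{\ell=2}^L b_\ell X_{ij}^\ell$ for $i\neq j$. We split the sum into three pieces: the constant $b_0$, the quadratic and cubic terms $\ell\in\{2,3\}$, and the tail $\ell\ge 4$. By Cauchy--Schwarz it suffices to bound each piece's Frobenius norm squared by $O(n^2B^2r^4/d^2)$.

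\emph{Constant term.} The contribution is $n(n-1)b_0^2 \le n^2 B^2 r^4/d^2$, directly from \prettyref{eq:coeffs-assumption}.

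\emph{Quadratic and cubic terms.} For $\ell=2,3$ we have $|b_\ell|\lesssim Br^\ell$, so
\[
\sum_{i\ne j}\big(b_2X_{ij}^2+b_3X_{ij}^3\big)^2 \lesssim B^2(r^4+r^6\max_{i\ne j}X_{ij}^2)\sum_{i\ne j}X_{ij}^4 .
\]
\prettyref{lem:fourth_moment_hypercontractivity} gives $\sum_{i<j}X_{ij}^4\lesssim n^2/d^2$ with the stated probability. We also need a uniform bound on the entries. By concentration of the uniform measure on the sphere and a union bound, $|X_{ij}|\le s$ for all $i\ne j$ with probability at least $1-n^2e^{-cds^2}$. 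The choice of $s$ is dictated by the failure probability $n\exp(-c\min\{d,d/r^2,d^{1/3}/r^{2/3}\})$. To have $r^2s^2\lesssim 1$ we take $s\asymp 1/r$, which costs $e^{-cd/r^2}$; this is where the $d/r^2$ term comes from, and the $n^2$ versus $n$ prefactor can be absorbed by adjusting $c$ when the bound is nontrivial. With this choice the cubic term is also $O(n^2B^2r^4/d^2)$.

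\emph{Tail $\ell\ge4$.} Use $|X_{ij}|^\ell \le X_{ij}^4 s^{\ell-4}$, so
\[
\Big|\sum_{\ell\ge4} b_\ell X_{ij}^\ell\Big| \le X_{ij}^2\cdot B\sum_{\ell\ge 4}\frac{r^\ell |X_{ij}|^{\ell-2}}{\sqrt{\ell!}} .
\]
The aim is to show the last sum is $\lesssim Br^2$, i.e.\ that $\sum_{\ell\ge4} r^{\ell-2}s^{\ell-2}/\sqrt{\ell!}$ is bounded. This holds whenever $rs\lesssim 1$, and in fact the $1/\sqrt{\ell!}$ decay gives convergence for any $rs$, with a bound of order $e^{(rs)^2/2}$. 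Squaring and summing against $\sum X_{ij}^4\lesssim n^2/d^2$ then gives $O(n^2B^2r^4/d^2)$.

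\emph{Main obstacle.} The difficulty is the regime where $r$ is large, where $s\asymp 1/r$ may be weaker than the natural entry scale $\sqrt{(\log n)/d}$. There I would instead take $s=(rd)^{-1/3}$. Then the entry bound costs $e^{-cd s^2}=e^{-c d^{1/3}/r^{2/3}}$, which is exactly the third exponent in the failure probability. With $s=(rd)^{-1/3}$ one has $rs=(r^2/d)^{1/3}\lesssim 1$ provided $d\gtrsim r^2$. This is an implicit assumption for the bound to be meaningful, and it is consistent with the hypotheses under which the lemma is later used. It keeps every tail factor bounded, including the cubic term. The $e^{-cd}$ term covers the norm normalization in \prettyref{lem:fourth_moment_hypercontractivity}. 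Collecting the events yields the stated probability and the claimed bound.
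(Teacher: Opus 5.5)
Your argument is correct, and it takes a somewhat different route from the paper's.

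\textbf{The paper's route.} The paper imposes the entrywise bound $|X_{ij}|\le C_1\sqrt{\log(n/\gamma)/d}$. It bounds the whole $\ell\ge 3$ part \emph{pointwise} by $Br^3(\log(n/\gamma))^{3/2}/d^{3/2}$ and multiplies by $n^2$ entries. The fourth-moment lemma is used only for the $b_2$ term. The resulting error $n^2B^2r^6(\log(n/\gamma))^3/d^3$ is $\lesssim n^2B^2r^4/d^2$ only when $\log(n/\gamma)\lesssim d^{1/3}/r^{2/3}$; that is the source of the exponent $d^{1/3}/r^{2/3}$. The exponent $d/r^2$ comes from needing $r|X_{ij}|\le 1/2$ so that the coefficient series converges.

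\textbf{Your route.} You use $|X_{ij}|\lesssim 1/r$ only to factor out $X_{ij}^2$. This gives $|\kappa(X_{ij})-b_0-b_1X_{ij}|\lesssim Br^2X_{ij}^2$ for all $\ell\ge 2$ at once, so everything reduces to $\sum_{i\ne j}X_{ij}^4$. This yields a strictly stronger statement: the failure probability is $O(ne^{-c\min\{d,d/r^2\}})+e^{-\min\{(nd)^{1/8},n^{1/4}\}}$. That implies the stated bound, since a minimum over three terms is smaller than over two. As a side benefit, the condition $d\gtrsim r^2(\log n)^3$ that this lemma feeds into Proposition~\ref{prop:posterior-concentration} could be weakened to $d\gtrsim r^2\log n$. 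Your absorption of the $n^2$ prefactor into $n$ is legitimate: $(ne^{-cd/r^2})^2\le ne^{-cd/r^2}$ whenever the right side is below $1$, and otherwise the lemma is vacuous.

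\textbf{Your ``main obstacle'' paragraph.} This paragraph is unnecessary, and its motivation is backwards. For $d\ge r^2$ one has $(rd)^{-1/3}\le 1/r$. So the event $\{|X_{ij}|\le (rd)^{-1/3}\ \forall i\neq j\}$ is more restrictive and costs more probability than $\{|X_{ij}|\le 1/r\ \forall i\neq j\}$; it cannot rescue a regime the $1/r$ choice misses. The worry that $1/r$ falls below $\sqrt{(\log n)/d}$ only arises when $d/r^2<\log n$. There $ne^{-cd/r^2}>1$ for any $c\le 1$, so the statement is empty. The matching of $e^{-cd^{1/3}/r^{2/3}}$ with the third exponent is a coincidence of reverse-engineering; in the paper that term has the different origin described above. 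You can simply delete that paragraph.
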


\begin{proof}
With probability at least $1 - \gamma$, for an absolute constant $C_1>0$, we have $|\iprod{x_i}{x_j}| \le C_1 \sqrt{\frac{\log(n/\gamma)}{d}}$ for all pairs of distinct $i,j \in [n]$ by sub-Gaussian concentration. 
On this event, by assumption \prettyref{eq:coeffs-assumption}, we obtain 
$$
\left| \sum_{\ell=3}^L b_\ell \iprod{x_i}{x_j}^\ell \right|
\le \sum_{\ell=3}^L \frac{B}{\sqrt{\ell!}} r^{\ell} C_1^\ell \left( \frac{\log(n/\gamma)}{d} \right)^{\ell/2} 
\le C_1^3 B r^3 \frac{(\log(n/\gamma))^{3/2}}{d^{3/2}}
$$
provided that $C_1 r \sqrt{\frac{\log(n/\gamma)}{d}} \le 1/2$, i.e., $\gamma \ge n \exp(-\frac{d}{4 C_1^2 r^2})$. 
Moreover, by \prettyref{lem:fourth_moment_hypercontractivity}, with probability at least 
$1 - n \exp(-cd) - \exp(- \min \{ (nd)^{1/8}, n^{1/4} \})$, 
we have $\sum_{i \neq j} \iprod{x_i}{x_j}^4 \lesssim n^2/d^2$. 
Then
\begin{align*}
\Fnorm{\kappa(X) - b_1 X }^2
&= \sum_{i\neq j} (\kappa(X) - b_1 X)_{ij}^2 \\ 
&= \sum_{i \neq j} \left( b_0 + b_2 \iprod{x_i}{x_j}^2 + \sum_{\ell=3}^L b_\ell \iprod{x_i}{x_j}^\ell \right)^2 \\
&\lesssim n^2 b_0^2 + \sum_{i \neq j} b_2^2 \iprod{x_i}{x_j}^4 + \sum_{i \neq j} \left( \sum_{\ell=3}^L b_\ell \iprod{x_i}{x_j}^\ell \right)^2 \\
&\lesssim n^2 B^2 \frac{r^4}{d^2} + B^2 r^4 \frac{n^2}{d^2} + n^2 B^2 r^6 \frac{(\log(n/\gamma))^3}{d^3} 
\lesssim n^2 B^2 \frac{r^4}{d^2} 
\end{align*}
by assumption \prettyref{eq:coeffs-assumption}, if $\gamma \ge n \exp( - d^{1/3}/r^{2/3} )$. 
Taking a union bound then completes the proof.
\end{proof}

The following is a generic lemma that is useful for evaluating the joint probability of two posterior replicas by means of a change of measure to two replicas drawn from the prior.
\begin{lemma}[Change of measure]
\label{lmm:com}
Consider $(X,A)$ from a joint distribution $P_{X,A}$. 
Let $\tilde X$ be a fresh draw from the posterior distribution $P_{X|A}$.
Let $Q_A$ be a reference density for $A$, such that $P_{A|X=x} \ll Q_A$ for every $x$. Define the likelihood ratio
$L(a|x) = \frac{P_{A|X}(a|x)}{Q_A(a)}$.
Then for any joint event $\calF$,
\[
\pprob{(X,\tilde X) \in \calF}
\leq 
2 \sqrt{
\Expect_{X\indep X^*}[g(X,X^*) \indc{(X,X^*) \in \calF}]}
\]
where $X^*$ is an i.i.d.\  copy of $X$ independent of everything else, and
\[
g(x,x^*) \triangleq \Expect_{A\sim Q_A}[L(A|x^*)L(A|x)].
\]

\end{lemma}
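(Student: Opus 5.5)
The approach is a change of measure from $(X,\tilde X)$ to $(X,X^*)$, followed by Cauchy--Schwarz. Write $P_{X,A}=P_X\,P_{A|X}$. By construction, $(X,\tilde X,A)$ has joint law $P_A(a)\,P_{X|A}(x|a)\,P_{X|A}(\tilde x|a)$. Hence
\[
\pprob{(X,\tilde X)\in\calF}=\Expect_{A\sim P_A}\big[\mu_A^{\otimes 2}(\calF)\big],
\qquad
\mu_A^{\otimes2}(\calF)=\int \indc{(x,\tilde x)\in\calF}\,dP_{X|A}(x|A)\,dP_{X|A}(\tilde x|A).
\]
By Bayes' rule, $dP_{X|A}(x|a)=L(a|x)\,dP_X(x)\,Q_A(a)/P_A(a)$. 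Substituting this for both replicas gives
\[
\pprob{(X,\tilde X)\in\calF}=\Expect_{A\sim Q_A}\Big[\frac{Q_A(A)}{P_A(A)}\,h(A)\Big],
\qquad
h(a)\triangleq\Expect_{X\indep X^*}\big[L(a|X)L(a|X^*)\indc{(X,X^*)\in\calF}\big].
\]
Here $X,X^*$ are independent draws from the prior. Averaging $h$ over $a\sim Q_A$ and applying Fubini yields exactly $\Expect_{X\indep X^*}[g(X,X^*)\indc{\calF}]$, which is the quantity inside the claimed square root.

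The factor $Q_A/P_A$ is a potential obstacle, since it can be unbounded. The fix is to avoid producing it at all: keep the $P_A$ weighting on the outer expectation and apply Cauchy--Schwarz there. Write $q\triangleq\mu_A^{\otimes2}(\calF)\in[0,1]$, so that $q^2\le q$. We have $P_A(a)=Q_A(a)\,\Expect_X L(a|X)$, and $q(a)=h(a)/(\Expect_X L(a|X))^2$ because each posterior factor carries normalization $\Expect_X L(a|X)$. Setting $\ell(a)\triangleq\Expect_X L(a|X)=P_A(a)/Q_A(a)$, we get $q=h/\ell^2$. Then
\[
\Expect_{P_A}[q]=\Expect_{Q_A}[\ell q]=\Expect_{Q_A}\big[\sqrt{h}\cdot\sqrt{q}\big]\le\sqrt{\Expect_{Q_A}[h]}\,\sqrt{\Expect_{Q_A}[q]}.
\]
This reduces the problem to bounding $\Expect_{Q_A}[q]$, the probability of $\calF$ under the posterior averaged over the null law of $A$. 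That term is a priori only at most $1$. Using the trivial bound $\Expect_{Q_A}[q]\le 1$ already gives $\pprob{(X,\tilde X)\in\calF}\le \sqrt{\Expect_{X\indep X^*}[g\,\indc{\calF}]}$. This is stronger than the claim, and the extra factor $2$ gives slack.

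If the $\sqrt{q}$ route has a subtlety, for instance with measurability or with $\ell=0$ (on $\{\ell=0\}$ we have $P_A=0$, so it contributes nothing), there is a fallback that explains the factor $2$. Split on the event $\{\ell\ge t\}$ versus $\{\ell<t\}$. On $\{\ell<t\}$ we have $P_A(\ell<t)=\Expect_{Q_A}[\ell\,\indc{\ell<t}]\le t$. On $\{\ell\ge t\}$, $\Expect_{P_A}[q\,\indc{\ell\ge t}]=\Expect_{Q_A}[h/\ell]\le \Expect_{Q_A}[h]/t$. Optimizing with $t=\sqrt{\Expect_{Q_A} h}$ gives $2\sqrt{\Expect_{Q_A}h}$, which is exactly the stated bound.

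The only genuinely delicate point is justifying the Fubini interchange defining $\Expect_{Q_A}[h]=\Expect[g\,\indc{\calF}]$. This is fine because every integrand is nonnegative, by Tonelli. The absolute continuity assumption $P_{A|X=x}\ll Q_A$ is what makes $L$ and the Bayes formula valid.
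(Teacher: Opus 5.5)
Your proof is correct, and your primary argument differs from the paper's and gives a sharper constant. Your fallback is essentially the paper's proof. The paper keeps $X$ jointly distributed with $A$ and rewrites only the replica $\tilde X$ via Bayes' rule. This gives $\Expect_{X,A}\bigl[\int P_X(d\tilde x)\,L(A|\tilde x)\,\indc{(X,\tilde x)\in\calF}/Z(A)\bigr]$ with $Z=P_A/Q_A$, which is your $\ell$. The paper then splits on $\{Z(A)\le\epsilon\}$, using $\pprob{Z(A)\le\epsilon}\le\epsilon$ there and $1/Z<1/\epsilon$ on the complement. It converts the remaining $P_{X,A}$-expectation into $\Expect_{X\indep X^*}[g\,\indc{\calF}]$ via $dP_{A|X}=L\,dQ_A$ and optimizes $\epsilon$. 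That optimization is where the factor $2$ comes from.

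Your main route instead expands both replicas and applies Cauchy--Schwarz under $Q_A$ to the factorization $\ell q=(\ell\sqrt{q})\sqrt{q}=\sqrt{h}\,\sqrt{q}$, using $\ell^2 q=h$. This yields
\[
\pprob{(X,\tilde X)\in\calF}\le\sqrt{\Expect_{Q_A}[h]\,\Expect_{Q_A}[q]}\le\sqrt{\Expect_{X\indep X^*}\bigl[g(X,X^*)\,\indc{(X,X^*)\in\calF}\bigr]},
\]
with no threshold to optimize.

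What your route buys:
\begin{itemize}
\item The constant $1$, which cannot be improved in general. Take $X=A$ uniform on a finite set, $Q_A$ uniform, and $\calF=\{x=\tilde x\}$; then both sides equal $1$.
\item A refinement by the null-averaged posterior mass $\Expect_{Q_A}[q]$, which can be smaller than $1$.
\end{itemize}
The paper's truncation is the more common form of the planting/change-of-measure trick but has no quantitative advantage here. The constant is immaterial for its use in Proposition~\ref{prop:posterior-concentration}.

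Your handling of $\{\ell=0\}$ is right: there $L(a|X)=0$ $P_X$-a.s., so $h=0$ and the integrand $\sqrt{h}\sqrt{q}$ vanishes however $q$ is defined. Your Tonelli justification for identifying $\Expect_{Q_A}[h]$ with $\Expect[g\,\indc{\calF}]$ is also correct.
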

\begin{proof}
The posterior density is given by
\[
P_{X|A}(x|a) = \frac{P_X(x) P_{A|X}(a|x)}{P_A(a)}
= \frac{P_X(x) L(a|x)}{Z(a)},
\]
where $Z(a) \equiv \frac{P_A(a)}{Q_A(a)}$ is the likelihood ratio of the marginals.
Then
\begin{align*}
 \pprob{(X,\tilde X) \in \calF} 
= & \Expect_{X,A}\qth{\int P_{X|A}(d\tilde x|A) \indc{(X,\tilde x) \in \calF}} \\    
= & \Expect_{X,A}\qth{\int P_X(d\tilde x) \frac{L(A|\tilde x)}{Z(A)}\indc{(X,\tilde x) \in \calF}} \\    
\leq & \prob{Z(A)\leq \epsilon}
+ \frac{1}{\epsilon} \Expect_{X,A}\qth{\int P_X(d\tilde x) L(A|\tilde x)\indc{(X,\tilde x) \in \calF}}
\\    
= & \prob{Z(A)\leq \epsilon}
+ \frac{1}{\epsilon} \iint P_X(d\tilde x)P_X(d x) \indc{(x,\tilde x) \in \calF}
\underbrace{\Expect_{A\sim Q_A}[L(A|\tilde x) L(A|x)]}_{g(x,\tilde x)}.
\end{align*}
It remains to show $\prob{Z(A)\leq \epsilon} \leq \epsilon$, which, upon optimizing $\epsilon$, yields the desired result.
This is a well-known property of the likelihood ratio:
\[
\prob{Z(A)\leq \epsilon} 
= \Expect_{A\sim Q_A}\qth{\frac{P_A(A)}{Q_A(A)} \indc{\frac{P_A(A)}{Q_A(A)} \leq \epsilon}} \leq \epsilon.
\]
    
\end{proof}


We are ready to prove \prettyref{prop:posterior-concentration}.

\begin{proof}[Proof of \prettyref{prop:posterior-concentration}]
Define events
\begin{align}
\calE \eqdef \left\{ 
\Fnorm{X}^2 \lesssim \frac{n^2}{d}, \; \norm{X} \lesssim \frac{n}{d} +\sqrt{ \frac{n}{d} } , 
\; \Fnorm{\kappa(X) - b_1 X }^2 \lesssim \frac{B^2 r^4 n^2}{d^2}\right\} \label{eq:event_three_bounds}
\end{align}
and 
$$
\calF \eqdef \left\{ X \in \calE, \, \tilde X \in \calE, \, \Iprod{X}{\tilde X} \geq \frac{\delta n^2}{d} \right\} .
$$
It follows that 
\begin{equation}
\prob{\iprod{\tilde{X}}{X} \ge \delta \frac{n^2}{d}} \le \prob{X \notin \calE} + \prob{\tilde X \notin \calE} + \prob{(X, \tilde X) \in \calF} . \label{eq:EcEcF}
\end{equation}

To bound the last term in \prettyref{eq:EcEcF}, we apply \prettyref{lmm:com} with $Q_A = G(n,p)$, $P_{X,A}$ being the joint law of $(X,A)$, and $\calF$ as defined above. To compute $g$, we note that 
$$
L(A|X) = \prod_{i<j} \left(
\frac{K(X_{ij})}{p}\right)^{A_{ij}}
\left( \frac{1-K(X_{ij})}{1-p} \right)^{1-A_{ij}} ,
$$
so 
$$
L(A|X)L(A|X^*)
= \prod_{i<j} \left(
\frac{K(X_{ij})K(X_{ij}^*)}{p^2}\right)^{A_{ij}}
\left( \frac{(1-K(X_{ij})(1-K(X_{ij}^*)}{(1-p)^2} \right)^{1-A_{ij}}
$$
and
\begin{align}
g(X,X^*) &= \Expect_{A \sim Q_A}[L(A|X)L(A|X^*)|X,X^*] \nonumber \\
&= \prod_{i<j} \left(
\frac{K(X_{ij})K(X_{ij}^*)}{p} + \frac{(1-K(X_{ij})(1-K(X_{ij}^*)}{1-p} \right) \nonumber \\
&= 
\prod_{i<j}\left( 1+ \kappa (X_{ij}) \kappa (X^*_{ij})\right)
\le \exp\left( \sum_{i<j} \kappa (X_{ij}) \kappa (X^*_{ij}) \right), \label{eq:g_computation}
\end{align}
As a result of \prettyref{lmm:com}, we then obtain
\begin{equation}
\pprob{(X,\tilde X) \in \calF}
\leq 
2 \sqrt{
\Expect_{X\indep X^*}\left[\exp\left( \iprod{\kappa (X)}{ \kappa (X^*)} \right) \indc{(X,X^*) \in \calF}\right]} .
\label{eq:X-tilde-X-in-calF}
\end{equation}

Recall that $\kappa(t)=\sum_{\ell=0}^L b_\ell t^\ell$. We have
\begin{align*}
\iprod{ \kappa(X)}{\kappa(X^*)} 
& =b_1^2 \iprod{X}{X^*} \tag*{\textbf{(Term I)}} \\
& + b_1 \iprod{X}{\kappa(X^*)-b_1X^*} 
 + b_1 \iprod{X^*}{\kappa(X)-b_1X} \tag*{\textbf{(Terms II and III)}} \\
& + \iprod{\kappa(X^*)-b_1X^*}{\kappa(X)-b_1X} \tag*{\textbf{(Term IV)}} .
\end{align*}
We proceed by separately considering the four terms above, denoted by $Y_1, \ldots, Y_4$. Recall the event $\calE$ defined in \prettyref{eq:event_three_bounds}.


\paragraph{Term IV:}  By the Cauchy--Schwarz inequality, for $X, X^* \in \calE$, we have
\begin{align*}
Y_4\triangleq \iprod{\kappa(X^*)-b_1X^*}{\kappa(X)-b_1X}
\le \Fnorm{\kappa(X^*)-b_1X^*}
\Fnorm{\kappa(X)-b_1X} 
\lesssim B^2 r^4 n^2 /d^2.
\end{align*}

\paragraph{Terms II and III:} Since Terms II and III are symmetric in $X$ and $X^*$, it suffices to consider one of them. We focus on Term II. By conditioning on $X^* \in \calE$ and applying the Hanson--Wright inequality (see Exercise~6.8 in \cite{vershynin2018high} and Theorem~6 in \cite{li2024simple}), we get 
$$
\prob{  | b_1 \iprod{X}{\kappa(X^*)-b_1X^*} |  \ge t}
\le  \exp \left( -c \min\{ t^2/S, t/M\} \right),
$$
where
$$
S \eqdef \frac{b_1^2}{d} \Fnorm{\kappa(X^*)-b_1X^*}^2 \lesssim 
\frac{b_1^2 n^2   B^2 r^4}{d^3}  
$$
and 
$$
M \eqdef \frac{|b_1|}{d} \norm{ \kappa(X^*)-b_1 X^*}
\lesssim \frac{|b_1| n B r^2}{d^2} .
$$
It follows that 
\begin{align*}
\expects{\exp\left( b_1 \iprod{X}{\kappa(X^*)-b_1 X^*} \right)}{X}
& =\int_{-\infty}^\infty \exp(t) \prob{  \left| b_1 \iprod{X}{\kappa(X^*)-b_1X^*}  \right| \ge t} \diff t  \\
& \le \int_{-\infty}^{S/M} \exp\left( t - c t^2/S \right) \diff t + \int_{S/M}^\infty \exp(t - c t/M) \diff t \\
& \lesssim \exp ( C S),
\end{align*}
where $C>0$ is a large constant, and the last inequality holds under the condition that $M \lesssim 1$. To ensure $M \lesssim 1$, since $|b_1| \le Br$, it suffices to have 
\nbb{$d \gtrsim n^{1/2} B r^{3/2}$}. 

\paragraph{Term I:} Again, we condition on $X^* \in \calE$ and apply the Hanson-Wright inequality to get that 
$$
\prob{\left| b_1^2 \iprod{X}{X^*} \right| \ge t}
\le 2 \exp \left( - c \min \{t^2/S', t/M'\}\right),
$$
where 
$$
S' \eqdef \frac{b_1^4}{d} \Fnorm{X^*}^2 \lesssim \frac{b_1^4 n^2}{d^2} 
$$
and
$$
M' \eqdef \frac{b_1^2}{d} \norm{X^*}
\lesssim \frac{b_1^2}{d} \left( \frac{n}{d} + \sqrt{\frac{n}{d}} \right).
$$
It follows that 
\begin{align*}
\expects{\exp\left( b^2_1 \iprod{X}{X^*} \right) \indc{(X,X^*) \in \calF} }{X} 
& \le \int_{\delta n^2b_1^2/d}^\infty \exp(t) \prob{   b^2_1 \iprod{X}{X^*}   \ge t} \diff t  \\
& \le \int_{\delta n^2b_1^2/d}^\infty \exp\left( t - c \min\{ t^2/S', t/M'\} \right) \diff t \\
&\lesssim \exp ( - (c/2) \min\{ \delta^2 n^4 b_1^4 / (d^2 S') , \delta n^2 b_1^2 / (d M') \} ) \\
& \lesssim \exp ( - c' \min\{ \delta^2 n^2, \delta n d, \delta n^{3/2} d^{1/2} \} ),
\end{align*}
where the second-to-last inequality holds under the condition that $\delta n^2b_1^2/d \gtrsim S'$ and $M' \lesssim 1$. Since $|b_1| \le Br$, it suffices to have \nbb{$\delta \gtrsim B^2 r^2/d$ and $d \gtrsim n^{1/2} B r + n^{1/3} B^{4/3} r^{4/3}$}.

\bigskip

Combining all four terms, we get that 
\begin{align*}
&\expects{\exp \left(  \iprod{\kappa(X)}{\kappa(X^*)}\right) \indc{(X,X^*) \in \calF}}{X, X^*} \\
& = \expects{\exp\left(  \left[ Y_1 + Y_2 +Y_3+Y_4 \right]\right)\indc{(X,X^*) \in \calF}}{X, X^*} \\
 & \le e^{B^2 r^4 n^2/d^2}
 \expects{\exp\left( \left[Y_1+Y_2+Y_3\right] \right)\indc{(X,X^*) \in \calF}}{X, X^*}  \\
 & \le e^{B^2 r^4 n^2/d^2}
 \left(\expects{\exp\left( Y_1 \right) \indc{(X,X^*) \in \calF}}{X, X^*}\right)^{1/2}
 \left( \expects{ \exp\left( Y_2+Y_3 \right) \indc{(X,X^*) \in \calF}}{X, X^*}\right)^{1/2} \\
  & \lesssim \exp\left(B^2 r^4 n^2/d^2
 + C n^2 b_1^2 B^2 r^4 /d^3 - (c'/2) \min\{ \delta^2 n^2, \delta n d, \delta n^{3/2} d^{1/2} \} \right) \\
 &\le \exp\left( - (c'/4) \min\{ \delta^2 n^2, \delta n d, \delta n^{3/2} d^{1/2} \} \right),
\end{align*}
where the last inequality holds under the conditions that 
\nbb{$d \ge B^2 r^2$} (so that $B^2 r^4 n^2/d^2 \ge n^2 b_1^2 B^2 r^4 /d^3$) and \nbb{$\delta \gtrsim B r^2/d + B^2 r^4 n/d^3 + B^2 r^4 n^{1/2}/d^{5/2}$}.


\paragraph{Finishing up:} 
In summary, the above bound together with \prettyref{eq:X-tilde-X-in-calF} implies that
%
$$
\pprob{(X,\tilde X) \in \calF}
\le 2\exp\left( - (c'/8) \min\{ \delta^2 n^2, \delta n d, \delta n^{3/2} d^{1/2} \} \right).
$$
It remains to note that for the event $\calE$ defined in \eqref{eq:event_three_bounds}, by Lemmas~\ref{lem:X-spectral-norm-bound} and \ref{lem:kappa-X-b1-X-bound}, we have 
$$
\prob{X \notin \calE}=\prob{\tilde{X} \notin \calE} \le n \exp(-c \min\{d, d/r^2, d^{1/3}/r^{2/3}\} ) + 2 \exp(- c \min \{(nd)^{1/8}, n^{1/4} \}).
$$
Combining the above bounds with \eqref{eq:EcEcF}, we see that if \nbb{
$$
d \gtrsim B^2 r^2 + n^{1/2} B r^{3/2} + n^{1/2} B r + n^{1/3} B^{4/3} r^{4/3}, 
$$}
then the desired bound \prettyref{eq:posterior_probability} holds for 
\begin{align}
\delta \gtrsim B^2 r^2/d + B r^2/d + B^2 r^4 n/d^3 + B^2 r^4 n^{1/2}/d^{5/2} \label{eq:condition_delta}
\end{align}
and 
\begin{align}
\gamma \ge & 2\exp( -c \min\{ \delta^2 n^2, \delta n d, \delta n^{3/2} d^{1/2} \} ) \nonumber \\
& + 2 n \exp(-c \min\{d, d/r^2, d^{1/3}/r^{2/3}\} ) + 4 \exp(- c \min \{(nd)^{1/8}, n^{1/4} \}) . \label{eq:condition_gamma}
\end{align}
We now verify that conditions~\prettyref{eq:condition_delta}
and~\prettyref{eq:condition_gamma} hold under our choice of $\gamma = n^{-D}$ for any constant $D>0$ and 
$$
\delta \ge C \max\{ B(B+1) r^2/d , (\log n)^{1/2} / n \}
$$
for a large constant $C>0$ depending only on $D$.

If we, in addition, require that \nbb{
$$
d \gtrsim n^{1/2} B^{1/2} r + n^{1/3} B^{2/3} r^{4/3} ,
$$ }
then $B^2 r^4 n/d^3 \le B r^2/d$ and $B^2 r^4 n^{1/2}/d^{5/2} \le B r^2/d$; hence \prettyref{eq:condition_delta} holds under our choice of $\delta$. 

It remains to check~\prettyref{eq:condition_gamma} holds under our choice of $\gamma = n^{-D}$:
\begin{itemize}
\item 
$\exp(-c \delta^2 n^2) \le \exp(-c C \log n) \le n^{-D}$ if $C$ is sufficiently large depending on $D$;

\item
$\exp(-c\delta nd) \le \exp(-c C (\log n)^{1/2} d) \le n^{-D}$ if \nbb{$d \ge \sqrt{\log n}$};

\item
$\exp(-c\delta n^{3/2} d^{1/2}) \le \exp(-c C (n d \log n)^{1/2}) \ll n^{-D}$;

\item
$n \exp(-c d) \le n^{-D}$ if \nbb{$d \ge C \log n$};

\item
$n \exp(-c d/r^2) \le n^{-D}$ if $d \ge C r^2 \log n$;

\item
$n \exp(-c d^{1/3}/r^{2/3}\} ) \le n^{-D}$ if \nbb{$d \ge C r^2 (\log n)^3$};

\item 
$\exp(- c \min \{(nd)^{1/8}, n^{1/4} \}) \ll n^{-D}$. 
\end{itemize}
Putting together all the required conditions on $d$ completes the proof.

\paragraph{Linear kernel:} In the case where the kernel is linear, i.e., $\kappa(\langle x_i, x_j \rangle) = b_1 \langle x_i, x_j \rangle$, we can instead define $\calE \eqdef \left\{ X: \Fnorm{X}^2 \lesssim \frac{n^2}{d}, \; \norm{X} \lesssim \frac{n}{d} +\sqrt{ \frac{n}{d} } \right\} $, and it suffices to consider only \textbf{Term I} above. Therefore, by the same argument, if \nbb{$d \gtrsim n^{1/2} b_1 + n^{1/3} b_1^{4/3}$}, $\delta \gtrsim b_1^2/d$, 
and 
$$
\gamma \ge 2\exp( -c \min\{ \delta^2 n^2, \delta n d, \delta n^{3/2} d^{1/2} \} ) + 2 \exp(-c d\} ) + 2 \exp(- c n \}) ,
$$
then \prettyref{eq:posterior_probability} holds. 
If we further assume \nbb{$d \gtrsim \log n$} and choose
$$
\delta \ge C \max\{ b_1^2/d , (\log n)^{1/2} / n \} ,
$$
then the condition on $\gamma$ can be simplified to $\gamma = n^{-D}$ as before.
\end{proof}

\begin{remark}
    \label{rmk:recovery-gaussian}

The conclusion of \prettyref{prop:posterior-concentration} for the linear kernel continues to hold if the Bernoulli observations are replaced by Gaussian observations, namely $A_{ij} = b_1 \Iprod{x_i}{x_j} + z_{ij}$ with $z_{ij}\iiddistr N(0,1)$ for $i<j$. 
The proof proceeds verbatim with only minor modifications. In this case, the likelihood function becomes
\[
L(A\mid X)=\prod_{i<j}\exp\!\left(
b_1 A_{ij} X_{ij}-\frac{b_1^2}{2}X_{ij}^2
\right),
\]
and therefore
\[
g(X,X^*)
=\Expect_{A_{ij}\iiddistr N(0,1)}\!\left[
L(A\mid X)L(A\mid X^*) \mid X,X^*
\right]
=\exp\!\left(
b_1^2 \sum_{i<j} X_{ij}X^*_{ij}
\right).
\]
Consequently, \prettyref{eq:g_computation} holds with equality, and the remainder of the proof is identical to that for the linear kernel $\kappa(t)=b_1 t$.

\end{remark}

\subsection{Proof of Theorem~\ref{thm:poly}}
\label{sec:proof-theorem-poly}

In this subsection, we prove the detection lower bounds in Theorem~\ref{thm:poly}. Following the outline in \prettyref{sec:outline_proof}, the argument proceeds in five steps.

\subsubsection{Step 1: KL expansion} \label{sec:step-1}
Denote by $Q_A$ and $P_A$ the law of $A = (A_{ij}: 1 \leq i < j \leq n)$ under $H_0$ and $H_1$ respectively. 
Then $Q_A = \Bern(p)^{\otimes \binom{n}{2}}$.
To prove the impossibility of detection, i.e., $\TV(P_A,Q_A) = o(1)$, it suffices to show that $\KL(P_A\|Q_A)=o(1)$ by Pinsker's inequality.

\begin{lemma} \label{lem:kl-expansion}
For $t \in [n]$, denote by $A^t$ the adjacency matrix induced by the first $t$ nodes, i.e., the $t \times t$ principal minor of $A$.  
Let $x_{n+1}$ be an independent copy  of $x_1,\ldots,x_n$. Then 
\begin{equation}
\KL(P_A\|Q_A) 
\leq  \sum_{k=2}^{n-1} \binom{n}{k+1} g(k) 
    \label{eq:KLexp3}
\end{equation}
where 
\begin{equation}
g(k) \triangleq 
\Expect_{A}
\qth{
\pth{\Expect_{x_1,\ldots,x_k, x_{n+1}|A}
\qth{\prod_{i=1}^k
 \kappa(\Iprod{x_i}{x_{n+1}})
}}^2} .
\label{eq:def-g(k)}
\end{equation}
\end{lemma}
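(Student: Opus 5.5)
Start from the chain-rule bound \prettyref{eq:kl-chain-rule-chi-sq}: $\KL(P_A\|Q_A)\le\sum_{t=2}^n \Expect_{A^{t-1}}[\chi^2(P_{a^t|A^{t-1}}\|\Bern(p)^{\otimes(t-1)})]$. We compute each $\chi^2$ term exactly. Conditional on $(x_1,\dots,x_t)$, the vector $a^t$ is a product of $\Bern(K(\Iprod{x_i}{x_t}))$. Its likelihood ratio against $\Bern(p)^{\otimes(t-1)}$ is $\prod_{i<t}(1+\kappa(\Iprod{x_i}{x_t})\bar a_i)$, where $\bar a_i=(a_i-p)/\sqrt{p(1-p)}$ is the standardized entry. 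Taking the posterior mixture and squaring introduces two conditionally independent replicas $(x,x_t)$ and $(x',x_t')$ drawn from the posterior given $A^{t-1}$. Averaging over $a^t\sim\Bern(p)^{\otimes(t-1)}$ uses $\Expect[\bar a_i]=0$ and $\Expect[\bar a_i^2]=1$, which gives
\[
1+\chi^2=\Expect\Big[\prod_{i=1}^{t-1}\big(1+\kappa(\Iprod{x_i}{x_t})\kappa(\Iprod{x_i'}{x_t'})\big)\Bigm| A^{t-1}\Big].
\]
Expanding the product over subsets $S\subseteq[t-1]$ and using the independence of the two replicas given $A^{t-1}$, we obtain $\chi^2=\sum_{\emptyset\ne S}\big(\Expect[\prod_{i\in S}\kappa(\Iprod{x_i}{x_t})\mid A^{t-1}]\big)^2$.

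Next we simplify. For $|S|=1$ the posterior mean vanishes: $x_t$ is independent of $(x_1,\dots,x_{t-1},A^{t-1})$ and $\Expect[\kappa(\Iprod{x}{y})]=0$ for fixed $x$. For $|S|=k\ge2$, the quantity $\Expect_{A^{t-1}}[(\Expect[\prod_{i\in S}\kappa(\Iprod{x_i}{x_t})\mid A^{t-1}])^2]$ depends only on $k$, by exchangeability of the vertices. It is bounded above by the same quantity with $A^{t-1}$ replaced by the full graph $A$, with $x_t$ relabeled as an independent fresh point $x_{n+1}$ and $S$ relabeled as $[k]$. The reason is that the map $A^{t-1}\mapsto A$ is a refinement: by Jensen, $\Expect[(\Expect[Y\mid\mathcal G])^2]\le\Expect[(\Expect[Y\mid\mathcal H])^2]$ whenever $\mathcal G\subseteq\mathcal H$. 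Here $x_{n+1}$ is independent of $A$, so the relabeling is legitimate, and by exchangeability the value is $g(k)$.

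Summing gives $\KL\le\sum_{t=2}^n\sum_{k=2}^{t-1}\binom{t-1}{k}g(k)=\sum_{k=2}^{n-1}g(k)\sum_{t}\binom{t-1}{k}$. The hockey-stick identity $\sum_{t=k+1}^{n}\binom{t-1}{k}=\binom{n}{k+1}$ then yields \prettyref{eq:KLexp3}.

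The main point to check carefully is the refinement/exchangeability step. Specifically, we must verify that replacing $t$ by $n+1$ and conditioning on more edges only increases the second moment of the conditional expectation. This needs the observation that $x_t$ and $x_{n+1}$ have the same joint law with the respective sigma-fields, which holds because $x_t$ is independent of $A^{t-1}$.
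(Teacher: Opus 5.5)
Your proposal is correct and follows the paper's proof essentially step by step: the chain rule with $\KL \le \chi^2$, the product-Bernoulli $\chi^2$ identity with two conditionally independent posterior replicas, the subset expansion, vanishing of the $|S|=1$ terms (the paper records this at the end as $g(1)=0$), Jensen's inequality to pass from $A^{t-1}$ to $A$ with $x_t$ replaced by $x_{n+1}$, and the hockey-stick identity. Two points are handled more carefully than in the paper. You apply exchangeability only after averaging over $A^{t-1}$, whereas the paper's ``by symmetry'' step is written pointwise in $A^{t-1}$, where it need not hold. You also spell out why replacing $x_t$ by $x_{n+1}$ is legitimate.
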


\begin{proof}
Recall the KL expansion \eqref{eq:kl-chain-rule-chi-sq}. Therein, the law of $a^t$
conditional on $A^{t-1}$ is a mixture of products of Bernoulli distributions, 
\[
P_{a^t|A^{t-1}}= 
\Expect_{x_1,\ldots,x_t|A^{t-1}}
\qth{\prod_{i=1}^{t-1} \Bern(K(\Iprod{x_i}{x_t})) },
\]
where the mixing distribution is  the \textit{posterior} distribution of the latent points $(x_1,\ldots,x_t)$ given $A^{t-1}$. 
Furthermore, 
$(x_1,\ldots,x_{t-1}, A^{t-1})$ defines an RGG with the same kernel with $t-1$ nodes and 
$x_t$ is independent of $(x_1,\ldots,x_{t-1}, A^{t-1})$.

The $\chi^2$-divergence between a mixture of products of Bernoullis and a single product Bernoulli distribution can be computed as follows by relating the $\chi^2$-divergence to   the second moment of the likelihood ratio:
Let $(p_1,\ldots,p_k) \in [0,1]^k$ be a random vector and $0<p<1$ a constant. Then
\begin{equation}
\chi^2\pth{
\Expect_{p_1,\ldots,p_k}
\qth{\prod_{i=1}^k
\Bern(p_i)} \Bigg\| 
\Bern(p)^{\otimes k} 
}
= 
\Expect_{p_1,\ldots,p_k,\tilde p_1,\ldots,\tilde p_k}
\qth{\prod_{i=1}^k 
1 + \frac{(p_i-p)(\tilde p_i-p)}{p(1-p)}}-1,
\label{eq:chi2prod}
\end{equation}
where 
$(\tilde p_1,\ldots,\tilde p_k)$ is an independent copy (replica) of $(p_1,\ldots,p_k)$.

Applying the fact \prettyref{eq:chi2prod} to each individual term in \prettyref{eq:kl-chain-rule-chi-sq}, we get
\begin{align}
  \KL(P_A\|Q_A) 
\leq  \sum_{t=2}^n 
\Expect_{A^{t-1}}
\qth{
\Expect_{x_1,\ldots,x_t,\tilde x_1,\ldots,\tilde  x_t|A^{t-1}}
\qth{\prod_{i=1}^{t-1} 
\left(1 + \kappa(\Iprod{x_i}{x_t})
\kappa(\Iprod{\tilde x_i}{\tilde x_t}) \right)
}-1
},
\label{eq:KLexp2}
\end{align}
where, conditioned on $A^{t-1}$, 
$(\tilde x_1,\ldots,\tilde x_t)$ is another independent draw from the posterior law $P_{x_1,\ldots,x_{t}|A^{t-1}}$.
So this inner conditional expectation can be expanded as follows:
\begin{align*} &\Expect_{x_1,\ldots,x_t,\tilde x_1,\ldots,\tilde  x_t|A^{t-1}}
\qth{\prod_{i=1}^{t-1} 
\left(1 + \kappa(\Iprod{x_i}{x_t})
\kappa(\Iprod{\tilde x_i}{\tilde x_t}) \right)
} \\
= & 
\sum_{S \subset [t-1]}
\Expect_{x_1,\ldots,x_{t},\tilde x_1,\ldots,\tilde x_{t}|A^{t-1}}
\qth{\prod_{i\in S}
 \kappa(\Iprod{x_i}{x_t})
\kappa(\Iprod{\tilde x_i}{\tilde x_t})}\\
= & 
\sum_{S \subset [t-1]}
\pth{\Expect_{x_1,\ldots,x_{t}|A^{t-1}}
\qth{\prod_{i\in S}
 \kappa(\Iprod{x_i}{x_t})}}^2\\
= & 
\sum_{k=0}^{t-1}
\binom{t-1}{k} \pth{\Expect_{x_1,\ldots,x_k, x_t|A^{t-1}}
\qth{\prod_{i=1}^k
 \kappa(\Iprod{x_i}{x_t})}}^2,
\end{align*}
where the second equality 
follows from the conditional independence of $x_i$'s and $\tilde x_i$'s, and the third equality is due to symmetry.
Putting these together, we arrive at 
\begin{align*}
\KL(P_A\|Q_A) 
&\leq  \sum_{t=2}^n 
\sum_{k=1}^{t-1}
\binom{t-1}{k} 
\Expect_{A^{t-1}}
\qth{
\pth{\Expect_{x_1,\ldots,x_k, x_t|A^{t-1}}
\qth{\prod_{i=1}^k
 \kappa(\Iprod{x_i}{x_t})
}}^2} \\
& \le \sum_{t=2}^n 
\sum_{k=1}^{t-1}
\binom{t-1}{k} \Expect_{A}
\qth{
\pth{\Expect_{x_1,\ldots,x_k, x_{n+1}|A}
\qth{\prod_{i=1}^k
 \kappa(\Iprod{x_i}{x_{n+1}})
}}^2} ,
\end{align*}
where $x_{n+1}$ is an independent copy of $x_1,\ldots,x_n$, and the last step follows from Jensen's inequality. 
With the definition of $g(k)$ in \eqref{eq:def-g(k)}, we obtain
\[
\KL(P_A\|Q_A)
\le \sum_{k=1}^{n-1} \sum_{t=k+1}^n \binom{t-1}{k} g(k) 
= \sum_{k=1}^{n-1} \binom{n}{k+1} g(k)  = \sum_{k=2}^{n-1} \binom{n}{k+1} g(k), \qedhere
\]
where the last equality holds because $\Expect_{x_1,x_{n+1}|A}[\kappa(\iprod{x_1}{x_{n+1}}] =0$ so that $g(1)=0$ by definition.
\end{proof}

As a side observation, we note that without analyzing the posterior law, one can drop the conditioning in \prettyref{eq:KLexp3} 
and show that \textit{for any kernel} we always have
(see \prettyref{app:dn3} for details):
\begin{equation}
\KL(P_A\|Q_A) \lesssim \frac{n^3}{d}.
\label{eq:dn3}
\end{equation}
This is consistent with the prior result using data processing inequality and the Wigner-Wishart comparison \cite{bubeck2016testing}. Clearly, this  approach does not yield kernel-dependent bound and so we proceed differently next.

\subsubsection{Step 2: Averaging over $x_{n+1}$ using generalized Wick's formula}
Next, we rewrite $g(k)$ as follows.

\begin{lemma} \label{lem:g(k)-a-ell}
For $g(k)$ defined in \prettyref{eq:def-g(k)}, we have
\begin{equation*}
g(k) =\Expect_{A}
\qth{
\pth{ \sum_{\boldsymbol \ell } a_{\boldsymbol \ell} 
\cdot \Expect_{x_1,\ldots,x_k|A}
\qth{ \prod_{1 \le i<j \le k} 
\iprod{x_i}{x_j}^{\ell_{ij}}}
}^2} ,
\end{equation*}
where 
\begin{equation}
a_{\boldsymbol \ell}
\triangleq \frac{1}{ \prod_{i<j} \ell_{ij}!}
\sum_{\mathbf{m}} 
\frac{\prod_{i=1}^k b_{\ell_i} \ell_i! (2m_i-1)!!/(2m_i)!  }{d(d+2) \cdots (d+\sum_{i=1}^k \ell_i -2)} .
\label{eq:def-a-ell}
\end{equation}
In the above, $\boldsymbol{\ell} = (\ell_{ij})$ is a multi-index whose entries take values in ${0,\ldots,L}$, and $\mathbf{m} = (m_i)$ is a multi-index whose entries are nonnegative integers bounded by $L/2$.

In the special case of a linear kernel $\kappa(t) = b_1 t$, 
if $k$ is odd, then $g(k) = 0$; if $k$ is even, then
\begin{equation*}
g(k) = \left(\frac{b_{1}^k}{d(d+2) \cdots (d+k-2)}\right)^2  \Expect_{A}
\qth{
\pth{ \sum_{\boldsymbol \ell } \Expect_{x_1,\ldots,x_k|A}
\qth{ \prod_{1 \le i<j \le k} 
\iprod{x_i}{x_j}^{\ell_{ij}}}
}^2} ,
\end{equation*}
where $\boldsymbol \ell = (\ell_{ij})$  is a multi-index 
such that $\ell_{ij} \in \{0,1\}$ and $\sum_{j \neq i} \ell_{ij} = 1$ for each $i$.  
\end{lemma}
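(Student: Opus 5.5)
The plan is to expand the product $\prod_{i=1}^k \kappa(\iprod{x_i}{x_{n+1}})$ into monomials and then integrate out $x_{n+1}$ exactly. The key structural fact is that $x_{n+1}$ is independent of $(x_1,\ldots,x_k,A)$. Hence, conditionally on $A$ and $x_1,\ldots,x_k$, the inner average over $x_{n+1}$ is simply a uniform-sphere average. Writing $\kappa(t)=\sum_{\ell} b_\ell t^\ell$, we get
\[
\Expect_{x_{n+1}}\Big[\prod_{i=1}^k \kappa(\iprod{x_i}{x_{n+1}})\Big] = \sum_{\ell_1,\ldots,\ell_k} \prod_i b_{\ell_i}\, \Expect_{y\sim\Unif(S^{d-1})}\Big[\prod_{i=1}^k \iprod{x_i}{y}^{\ell_i}\Big].
\]
The whole task is therefore to evaluate the sphere moment $\Expect_y[\prod_i \iprod{x_i}{y}^{\ell_i}]$ as a polynomial in the Gram entries $\iprod{x_i}{x_j}$.

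For this I would use the Gaussian trick. Write $y=g/\|g\|$ with $g\sim\calN(0,I_d)$, where $\|g\|$ is independent of $g/\|g\|$. Then for total degree $N=\sum_i\ell_i$,
\[
\Expect_g\Big[\prod_i\iprod{x_i}{g}^{\ell_i}\Big]=\Expect\|g\|^N\cdot \Expect_y\Big[\prod_i\iprod{x_i}{y}^{\ell_i}\Big], \qquad \Expect\|g\|^N=d(d+2)\cdots(d+N-2)
\]
for even $N$; for odd $N$ both sides vanish by symmetry. The Gaussian moment is computed by Wick's (Isserlis') formula. We form the multiset containing $\ell_i$ copies of the jointly Gaussian variable $\iprod{x_i}{g}$, whose covariances are $\iprod{x_i}{x_j}$, and note that $\iprod{x_i}{x_i}=1$. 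The moment is then the sum over perfect matchings of this multiset of the product of covariances.

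Next I would group the matchings by type. Let $\ell_{ij}$ be the number of pairs joining a copy of $i$ with a copy of $j$ for $i<j$, and let $m_i$ be the number of self-pairs at $i$, so that $\ell_i=2m_i+\sum_{j\ne i}\ell_{ij}$. Each such matching contributes $\prod_{i<j}\iprod{x_i}{x_j}^{\ell_{ij}}$, because self-pairs contribute $1$. The number of matchings of a given type is
\[
\frac{\prod_i \ell_i!}{\prod_{i<j}\ell_{ij}!\;\prod_i 2^{m_i}m_i!} = \frac{\prod_i \ell_i!\,(2m_i-1)!!/(2m_i)!}{\prod_{i<j}\ell_{ij}!}.
\]
To see this, distribute the labelled copies of each $i$ among its cross-partners and its self-block, then match the cross copies bijectively and pair up the self copies. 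The second form uses $(2m-1)!!/(2m)!=1/(2^m m!)$.

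Reindexing the sum over $(\ell_1,\ldots,\ell_k)$ as a sum over $\boldsymbol\ell$ and $\mathbf m$ (with $\ell_i$ determined by both) produces exactly the coefficient $a_{\boldsymbol\ell}$ in \eqref{eq:def-a-ell}. We then take the conditional expectation over $x_1,\ldots,x_k$ given $A$, which is legitimate by linearity since the sums are finite, square, and take $\Expect_A$. This gives the claimed formula for $g(k)$. I expect the combinatorial count of matchings per type to be the only delicate step, specifically getting the factor $2^{m_i}m_i!$ right.

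The linear case is then immediate. Here $\ell_i=1$ for every $i$, so $m_i=0$ and each $\ell_{ij}\in\{0,1\}$ with $\sum_{j\ne i}\ell_{ij}=1$; that is, $\boldsymbol\ell$ ranges over perfect matchings of $[k]$. In this case $a_{\boldsymbol\ell}=b_1^k/(d(d+2)\cdots(d+k-2))$. When $k$ is odd there are no perfect matchings, and correspondingly the odd Gaussian moment vanishes, so $g(k)=0$.
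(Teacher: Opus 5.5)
Your proposal is correct and follows essentially the same route as the paper: expand $\kappa$ into monomials, integrate out the independent $x_{n+1}$ via the spherical Wick formula (from $x_{n+1}=g/\|g\|$ and Isserlis' theorem), regroup the pairings by type $(\boldsymbol\ell,\mathbf m)$ with $\ell_i=\sum_{j\ne i}\ell_{ij}+2m_i$, and count them as $\prod_i \ell_i!(2m_i-1)!!/(2m_i)!$ divided by $\prod_{i<j}\ell_{ij}!$. The only difference is cosmetic: the paper cites the generalized Wick formula, whereas you derive its normalization $d(d+2)\cdots(d+N-2)$, and your linear-case specialization matches the paper's.
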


\begin{proof}
Since $x_{n+1}$ in \prettyref{eq:def-g(k)}
is uniformly distributed over the sphere and independent of everything else, 
we can first average it using the following generalized Wick's formula (see, e.g.~\cite{vignat2008extension}): 
\begin{align}
\expects{\prod_{i=1}^k \iprod{x_{n+1}}{x_i}^{\ell_i}}{x_{n+1}} = \frac{1}{d(d+2) \cdots (d+\sum_{i=1}^k \ell_i -2)}
\sum_{\pi \in \Pi(\ell_1,\ldots, \ell_k)}
\prod_{\{i,j\} \in \pi} \Iprod{x_i}{x_j},
\end{align}
where $\Pi(\ell_1,\ldots, \ell_k)$ denotes 
the set of all pairings among $\ell_i$ copies of integer $i$ for $1 \le i \le k$.
This is obtained by representing spherical uniform as normalized Gaussian and apply the standard Wick's formula for Gaussians.

Since $\kappa(t)=\sum_{\ell=0}^L b_\ell t^\ell$, it follows that 
\begin{align}
\expects{\prod_{i=1}^k \kappa(\iprod{x_i}{x_{n+1}})}{x_{n+1}}
&=\sum_{\ell_1, \ldots, \ell_k=0}^L
\left(\prod_{i=1}^k b_{\ell_i} \right) 
\expects{\prod_{i=1}^k \iprod{x_{n+1}}{x_i}^{\ell_i}}{x_{n+1}} \nonumber \\
& = \sum_{\ell_1, \ldots, \ell_k=0}^L
 \frac{\prod_{i=1}^k b_{\ell_i}}{d(d+2) \cdots (d+\sum_{i=1}^k \ell_i -2)}
\sum_{\pi \in \Pi(\ell_1,\ldots, \ell_k)}
\prod_{\{i,j\} \in \pi} \iprod{x_i}{x_j},
\label{eq:avg_x_t}
\end{align}

We can further simplify the above expression as follows. Observe that for a given pairing $\pi \in \Pi(\ell_1, \ldots, \ell_k)$,  let $\ell_{ij}(\pi)$ denote the number of $\{i,j\}$ pairs  and $m_i(\pi)$ denote the number of $\{i,i\}$ pairs appearing in $\pi$.  Then $\ell_i = \sum_{j \neq i} \ell_{ij}(\pi) + 2m_i(\pi)$, and 
$$
\prod_{\{i,j\} \in \pi} \iprod{x_i}{x_j}
= \prod_{1 \le i<j \le k} \iprod{x_i}{x_j}^{\ell_{ij}(\pi)},
$$
where the equality holds because $\iprod{x_i}{x_i}=1$. Thus, in~\prettyref{eq:avg_x_t}, we can regroup the summands in terms of ${\boldsymbol \ell}=(\ell_{ij})$
and $\mathbf{m}=(m_i)$ as follows:
\begin{align}
& \sum_{\ell_1, \ldots, \ell_k=0}^L
 \frac{\prod_{i=1}^k b_{\ell_i}}{d(d+2) \cdots (d+\sum_{i=1}^k \ell_i -2)}
\sum_{\pi \in \Pi(\ell_1,\ldots, \ell_k)}
\prod_{\{i,j\} \in \pi} \iprod{x_i}{x_j} \nonumber \\
& = 
\sum_{\boldsymbol \ell } \prod_{1 \le i<j \le k} \iprod{x_i}{x_j}^{\ell_{ij}}
\sum_{\mathbf{m}} 
\frac{ \left(\prod_{i=1}^k b_{\ell_i}\right) | \Pi(\boldsymbol\ell, \mathbf{m})| }{d(d+2) \cdots (d+\sum_{i=1}^k \ell_i -2)},
\end{align}
where $ \Pi(\boldsymbol\ell, \mathbf{m})$ is the set of all possible pairings $\pi$ in which $\{i,j\}$ pairs appearing $\ell_{ij}$ times and $\{i,i\}$ pairs appearing $m_i$ times. 

Next, we compute $| \Pi(\boldsymbol\ell, \mathbf{m})|$. To determine a pairing $\pi \in \Pi(\boldsymbol\ell, \mathbf{m})$, among $\ell_i$ copies of integer $i$, we first choose which copies are paired with $j$ for $j \neq i$ and which copies are paired with $i$ itself. There are exactly $\binom{\ell_i}{(\ell_{ij})_{j\neq i}, 2m_i}$ many different such choices. Then among the $\ell_{ij}$ copies of $i$ and $j$ to be paired, there are $\ell_{ij}!$ different pairings. Similarly, among the $2m_i$ copies of $i$ to be paired among themselves, there are $(2m_i-1)!!$ different pairings. Therefore, in total, we have
\begin{align}
| \Pi(\boldsymbol\ell, \mathbf{m})|
& = \prod_{i=1}^k \binom{\ell_i}{(\ell_{ij})_{j\neq i}, 2m_i}  
(2m_i-1)!! \times\prod_{1 \le i<j \le k}
\ell_{ij}! \\
& = \prod_{i=1}^k  \frac{\ell_i! (2m_i-1)!!}{(2m_i)!} \times \frac{1}{\prod_{1 \le i<j \le k}
\ell_{ij}!} \label{eq:pairing_number}
\end{align}

In summary, we obtain that 
\begin{align*}
\expects{\prod_{i=1}^k \kappa(\iprod{x_i}{x_{n+1}})}{x_{n+1}}
&=\sum_{\boldsymbol \ell } \prod_{1 \le i<j \le k} 
\frac{\iprod{x_i}{x_j}^{\ell_{ij}}}{ \ell_{ij}!}
\sum_{\mathbf{m}} 
\frac{\prod_{i=1}^k b_{\ell_i} \ell_i! (2m_i-1)!!/(2m_i)!  }{d(d+2) \cdots (d+\sum_{i=1}^k \ell_i -2)} ,
\end{align*}
which implies the desired result.

In the case of a linear kernel, it suffices to note that $\ell_i = 1$ throughout that proof, so $m_i = 0$ and  
$a_{\boldsymbol \ell}
= 
\frac{b_{1}^k}{d(d+2) \cdots (d+k-2)}$.
\end{proof}

\prettyref{lem:g(k)-a-ell} together with the Cauchy--Schwarz inequality yields the following.

\begin{lemma} \label{lem:g(k)-cauchy-schwarz}
Let $g(k)$ and $a_{\boldsymbol{\ell}}$ be defined in \eqref{eq:def-g(k)} and \eqref{eq:def-a-ell} respectively. 
Also recall the notation $\ell_i \eqdef \sum_{j\neq i} \ell_{ij}$ for $\boldsymbol{\ell} = (\ell_{ij})_{i \ne j}$ where $0 \le \ell_{ij} \le L$.  
We have 
$$
g(k) \le \Big( \frac{e}{e-1} \Big)^k  \cdot 
\sum_{\boldsymbol \ell }
\beta_{\boldsymbol{\ell}} a^2_{\boldsymbol \ell} M_{\boldsymbol{\ell}} ,
$$
where
$$
\beta_{\boldsymbol \ell} \triangleq 
e^{\sum_{i=1}^k \ell_i} \cdot \left(\sum_{i=1}^k \ell_i -1\right)!! \cdot \frac{\prod_{i<j} \ell_{ij}!}{\prod_{i=1}^k \ell_i! } 
$$
and 
\begin{equation}
M_{\boldsymbol{\ell}}
\triangleq \Expect_{A}
\pth{\Expect_{x_1,\ldots,x_k|A}
\qth{ \prod_{1 \le i<j \le k} 
\iprod{x_i}{x_j}^{\ell_{ij}}}
}^2 .
\label{eq:def-M_ell}
\end{equation}
\end{lemma}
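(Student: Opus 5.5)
The plan is a weighted Cauchy--Schwarz applied inside the square in Lemma~\ref{lem:g(k)-a-ell}. Write $E_{\boldsymbol\ell}\triangleq\Expect_{x_1,\ldots,x_k|A}[\prod_{i<j}\iprod{x_i}{x_j}^{\ell_{ij}}]$, so that $g(k)=\Expect_A[(\sum_{\boldsymbol\ell}a_{\boldsymbol\ell}E_{\boldsymbol\ell})^2]$. For any positive weights $w_{\boldsymbol\ell}$,
\[
\Big(\sum_{\boldsymbol\ell}a_{\boldsymbol\ell}E_{\boldsymbol\ell}\Big)^2\le\Big(\sum_{\boldsymbol\ell}w_{\boldsymbol\ell}^{-1}\Big)\Big(\sum_{\boldsymbol\ell}w_{\boldsymbol\ell}a_{\boldsymbol\ell}^2E_{\boldsymbol\ell}^2\Big).
\]
Taking $\Expect_A$ turns $E_{\boldsymbol\ell}^2$ into $M_{\boldsymbol\ell}$. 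Hence it suffices to choose $w_{\boldsymbol\ell}=\beta_{\boldsymbol\ell}$ and prove
\[
\sum_{\boldsymbol\ell}\beta_{\boldsymbol\ell}^{-1}\le\Big(\frac{e}{e-1}\Big)^k,
\]
where the sum runs over all multi-indices $\boldsymbol\ell=(\ell_{ij})_{1\le i<j\le k}$ with nonnegative entries. Extending the range beyond $\{0,\ldots,L\}$ only enlarges the sum, so this is harmless.

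To bound $\sum_{\boldsymbol\ell}\beta_{\boldsymbol\ell}^{-1}$, write
\[
\beta_{\boldsymbol\ell}^{-1}=e^{-\sum_i\ell_i}\cdot\frac{1}{(\sum_i\ell_i-1)!!}\cdot\frac{\prod_i\ell_i!}{\prod_{i<j}\ell_{ij}!}.
\]
Group the multi-indices by the degree vector $(\ell_1,\ldots,\ell_k)$. For fixed degrees, the quantity $\prod_i\ell_i!/\prod_{i<j}\ell_{ij}!$ summed over $\boldsymbol\ell$ with those degrees counts pairings of a multiset. By the computation \prettyref{eq:pairing_number} with $\mathbf m=0$, $\prod_i\ell_i!/\prod_{i<j}\ell_{ij}!=|\Pi(\boldsymbol\ell,\mathbf 0)|$, the number of perfect matchings of the $\sum_i\ell_i$ labeled copies, in which copies of $i$ and $j$ are matched $\ell_{ij}$ times and no copy is matched to another copy of the same index. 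Summing over $\boldsymbol\ell$ consistent with the degrees gives at most the total number of perfect matchings of $\sum_i\ell_i$ points, namely $(\sum_i\ell_i-1)!!$.

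The double factorials therefore cancel, and
\[
\sum_{\boldsymbol\ell}\beta_{\boldsymbol\ell}^{-1}\le\sum_{\ell_1,\ldots,\ell_k\ge0}e^{-\sum_i\ell_i}=\Big(\sum_{m\ge0}e^{-m}\Big)^k=\Big(\frac{e}{e-1}\Big)^k.
\]
When $\sum_i\ell_i$ is odd, no multi-index realizes that degree vector, so those terms contribute zero; the case where every $\ell_i=0$ uses the convention $(-1)!!=1$.

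The only step needing care is this combinatorial identification: that $\sum_{\boldsymbol\ell:\,\text{degrees }(\ell_i)}\prod_i\ell_i!/\prod_{i<j}\ell_{ij}!$ equals the number of loopless perfect matchings, and is therefore at most $(\sum_i\ell_i-1)!!$. I would verify it directly by the same counting used for $|\Pi(\boldsymbol\ell,\mathbf m)|$ in the proof of Lemma~\ref{lem:g(k)-a-ell}. Everything else is the Cauchy--Schwarz step and a geometric series.
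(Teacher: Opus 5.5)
Your proposal is correct and follows the paper's proof essentially step for step. The paper also applies Cauchy--Schwarz with weights $\beta_{\boldsymbol\ell}$ to the expression from Lemma~\ref{lem:g(k)-a-ell}. It then bounds $\sum_{\boldsymbol\ell}\beta_{\boldsymbol\ell}^{-1}$ by grouping multi-indices by degree vector, using that $\prod_i \ell_i!/\prod_{i<j}\ell_{ij}!$ counts loopless pairings (hence their sum is at most $(\sum_i\ell_i-1)!!$), and summing the geometric series to get $(1-1/e)^{-k}$.
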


\begin{proof}
Applying \prettyref{lem:g(k)-a-ell} and the Cauchy--Schwarz inequality, we have 
$$
g(k) \le \sum_{\boldsymbol \ell } \beta_{\boldsymbol \ell}^{-1} \cdot
\sum_{\boldsymbol \ell }
\beta_{\boldsymbol \ell} a^2_{\boldsymbol \ell} 
\Expect_{A}
\pth{\Expect_{x_1,\ldots,x_k|A}
\qth{ \prod_{1 \le i<j \le k} 
\iprod{x_i}{x_j}^{\ell_{ij}}}
}^2
= \sum_{\boldsymbol \ell } \beta_{\boldsymbol \ell}^{-1} \cdot
\sum_{\boldsymbol \ell }
\beta_{\boldsymbol \ell} a^2_{\boldsymbol \ell} 
M_{\boldsymbol{\ell}} .
$$
We now bound $\sum_{\boldsymbol \ell } \beta_{\boldsymbol \ell}^{-1}$.
Note that given $(\ell_1, \ldots, \ell_k)$,  we have
\begin{align}
\sum_{\boldsymbol \ell: \ell_i=\sum_{j\neq i} \ell_{ij}} \frac{\prod_{i=1}^k \ell_i! }{\prod_{i<j} \ell_{ij}!} \le \left(\sum_{i=1}^k \ell_i -1\right)!!. \label{eq:pairing_bound}
\end{align}
To see why this is true, for the LHS, given $\boldsymbol \ell$, there are $\prod_{i=1}^k \ell_i! /\prod_{i<j} \ell_{ij}!$ distinct pairings in which $\{i,j\}$ pairs appear $\ell_{ij}$ times, according to~\prettyref{eq:pairing_number}; the RHS is an upper bound to the total number of pairings among $\ell_i$ copies of integer $i$ for $1 \le i \le k$. 
Therefore, 
$$
\sum_{\boldsymbol \ell } \beta_{\boldsymbol \ell}^{-1}
=\sum_{(\ell_1, \ldots, \ell_k)}
\sum_{\boldsymbol \ell: \ell_i=\sum_{j\neq i} \ell_{ij}}
\frac{e^{-\sum_{i=1}^k \ell_i}}{
(\sum_{i=1}^k \ell_i -1)!!}
\frac{\prod_{i=1}^k \ell_i!}{\prod_{i<j} \ell_{ij}!}
\le \sum_{(\ell_1, \ldots, \ell_k)}
e^{-\sum_{i=1}^k \ell_i}
\le \frac{1}{(1-1/e)^k} .
$$
The desired result follows.
\end{proof}

To proceed from \prettyref{lem:g(k)-cauchy-schwarz}, we next bound the combinatorial coefficients $a_{\boldsymbol \ell}^2$. 

\begin{lemma} \label{lem:a_ell-bounds}
Assume \prettyref{eq:coeffs-assumption} and \nbb{$d \ge 4 r^2$}. The quantities $a_{\boldsymbol \ell}$ defined in \prettyref{eq:def-a-ell} satisfy 
\begin{equation*}
| a_{\boldsymbol \ell} |
\le \frac{(3B)^{k}}{  \prod_{i<j} \ell_{ij}!}   \prod_{i=1}^k \sqrt{\big(\sum_{j\neq i} \ell_{ij} \big)!} \cdot \left( \frac{2 r^2}{d} \right)^{\sum_{j\neq i} \ell_{ij} /2+  \cdot \indc{\sum_{j\neq i} \ell_{ij}=0}} .
\end{equation*}
\end{lemma}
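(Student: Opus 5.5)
The plan is to bound the sum over $\mathbf m$ in \eqref{eq:def-a-ell} term by term and then sum, factoring over vertices. Fix $\boldsymbol\ell$ and write $\ell_i^\circ = \sum_{j\ne i}\ell_{ij}$ for the off-diagonal degree. A given $\mathbf m$ corresponds to kernel degrees $\ell_i = \ell_i^\circ + 2m_i$. The prefactor $1/\prod_{i<j}\ell_{ij}!$ already appears in the claimed bound, so it remains to show
\[
\sum_{\mathbf m} \frac{\prod_i |b_{\ell_i}|\, \ell_i!\,(2m_i-1)!!/(2m_i)!}{d(d+2)\cdots(d+\sum_i\ell_i-2)} \le (3B)^k \prod_i \sqrt{\ell_i^\circ!}\,\Big(\frac{2r^2}{d}\Big)^{\ell_i^\circ/2+\indc{\ell_i^\circ=0}} .
\]

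First, lower bound the denominator by $d^{\sum_i \ell_i/2}$. The product has $\sum_i\ell_i/2$ factors, each at least $d$; when $\sum_i \ell_i$ is odd, the Wick expectation vanishes anyway. This bound factorizes over $i$, so the left side is at most $\prod_i \sum_{m_i\ge 0} T_i(m_i)$, where
\[
T_i(m) = |b_{\ell_i^\circ+2m}|\,\frac{(\ell_i^\circ+2m)!\,(2m-1)!!}{(2m)!}\, d^{-(\ell_i^\circ+2m)/2}.
\]
Next I would insert $|b_\ell|\le Br^\ell/\sqrt{\ell!}$ for $\ell\ge1$. This gives $T_i(m)\le B\sqrt{(\ell_i^\circ+2m)!}\,\frac{(2m-1)!!}{(2m)!}\,(r^2/d)^{\ell_i^\circ/2+m}$.

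To handle the factorial, use $\sqrt{(a+2m)!}\le \sqrt{a!}\sqrt{(2m)!}\,2^{(a+2m)/2}$, which holds because $\binom{a+2m}{a}\le 2^{a+2m}$. Then use $(2m-1)!!/\sqrt{(2m)!}\le 1$. Together these give $T_i(m)\le B\sqrt{\ell_i^\circ!}\,(2r^2/d)^{\ell_i^\circ/2}(2r^2/d)^m$. Since $d\ge 4r^2$, we have $2r^2/d\le 1/2$, so the geometric sum over $m$ costs at most a factor of $2$.

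The remaining case is $\ell_i^\circ=0$, which requires the extra factor $2r^2/d$. The term $m=0$ involves $b_0$, and \eqref{eq:coeffs-assumption} gives $|b_0|\le Br^2/d$, which supplies that factor. The terms $m\ge1$ already carry $(2r^2/d)^m$, and their sum is at most $2\cdot(2r^2/d)$. So in this case the per-vertex total is at most $3B\cdot(2r^2/d)$. In the case $\ell_i^\circ\ge1$ the per-vertex total is at most $2B\sqrt{\ell_i^\circ!}\,(2r^2/d)^{\ell_i^\circ/2}$. Multiplying over $i$ gives the claim.

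The main obstacle is tracking the constants so that each vertex contributes at most $3B$. This requires the $2^{\ell/2}$ loss from splitting the factorial to be absorbed into the $2r^2/d$ base, and the geometric sum to stay at most $2$, which is where $d\ge4r^2$ is used.
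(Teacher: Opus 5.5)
Your proposal is correct and follows the paper's proof essentially step for step. Both bound the denominator by $d^{\sum_i \ell_i/2}$ and factorize over vertices. Both then use $(a+2m)!\le 2^{a+2m}a!(2m)!$ and $(2m-1)!!\le\sqrt{(2m)!}$, take the geometric sum under $d\ge 4r^2$, and get the extra $2r^2/d$ for isolated vertices from $|b_0|\le Br^2/d$. One small remark: $\sum_i \ell_i = 2\sum_{i<j}\ell_{ij}+2\sum_i m_i$ is always even, so your caveat about odd totals is unnecessary.
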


\begin{proof}
By the definition of 
$a_{\boldsymbol \ell}$ given in~\prettyref{eq:def-a-ell}, 
\begin{align*}
|a_{\boldsymbol \ell}|
& \le \frac{1}{ \prod_{i<j} \ell_{ij}!}
\sum_{\mathbf{m}} 
\frac{\prod_{i=1}^k |b_{\ell_i}| \ell_i! (2m_i-1)!!/(2m_i)!  }{d(d+2) \cdots (d+\sum_{i=1}^k \ell_i -2)} \\
&\le  \frac{1}{ \prod_{i<j} \ell_{ij}!}
\sum_{\mathbf{m}} 
\frac{\prod_{i=1}^k B r^{\ell_i} (r^2/d)^{\indc{\ell_i=0}} \sqrt{\ell_i!} (2m_i-1)!!/(2m_i)!  }{d^{\sum_{i=1}^k \ell_i /2}}\\
& \le \frac{1}{ \prod_{i<j} \ell_{ij}!} B^k
\sum_{\mathbf{m}} 
\prod_{i=1}^k \frac{(2m_i-1)!!}{(2m_i)!}  \sqrt{\ell_i!}
\left( \frac{r^2}{d} \right)^{\ell_i/2+ \indc{\ell_i=0}}.
\end{align*}
Since $(\sum_{j\neq i} \ell_{ij} +2m)! \le 2^{\sum_{j\neq i} \ell_{ij} +2m} (\sum_{j\neq i} \ell_{ij})! (2m)!$ and $(2m-1)!! \le \sqrt{(2m)!}$, we obtain
\begin{align*}
\sum_{\mathbf{m}} 
& \prod_{i=1}^k \frac{(2m_i-1)!!}{(2m_i)!}  \sqrt{\ell_i!}
\left( \frac{r^2}{d} \right)^{\ell_i/2+ \indc{\ell_i=0}} \\
&= \prod_{i=1}^k  
\sum_{m} \frac{(2m-1)!!}{(2m)!}  \sqrt{(\sum_{j\neq i} \ell_{ij} +2m)!}
\left( \frac{ r^2}{d} \right)^{\sum_{j\neq i} \ell_{ij}/2+ m+ \indc{\sum_{j\neq i} \ell_{ij}+2m=0}} \\
&\le \prod_{i=1}^k \sum_m \sqrt{(\sum_{j\neq i} \ell_{ij})!}
\left( \frac{2 r^2}{d} \right)^{\sum_{j\neq i} \ell_{ij}/2+ m+ \indc{\sum_{j\neq i} \ell_{ij}+2m=0}} \\
& \le 3^k \prod_{i=1}^k \sqrt{(\sum_{j\neq i} \ell_{ij})!}
\left( \frac{2 r^2}{d} \right)^{\sum_{j\neq i} \ell_{ij}/2 + \indc{\sum_{j\neq i} \ell_{ij}=0}}
\end{align*}
assuming \nbb{$d \ge 4 r^2$}. 
The claimed bound on $|a_{\boldsymbol \ell}|$ then follows.
\end{proof}

Moreover, for the special case of $\boldsymbol{\ell}=\boldsymbol{0}$, we can improve the above upper bound as follows. 

\begin{lemma} \label{lem:a_0-bound}
Assume \prettyref{eq:coeffs-assumption}, \nbb{$d \ge 8 r^2$, and $d \ge kL$}. Then we have
$$
|a_{\boldsymbol 0}|
\le \frac{B^{k} (5 r^2)^{k}}{d^{k+1}}.
$$
\end{lemma}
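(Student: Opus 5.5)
The plan is to exploit the cancellation that comes from the centering $\Expect[\kappa(\iprod{x}{y})]=0$. For $\boldsymbol\ell=\boldsymbol 0$ we have $\ell_i=2m_i$, and \prettyref{eq:def-a-ell} reduces to
\[
a_{\boldsymbol 0}=\sum_{\mathbf m}\frac{\prod_{i=1}^k b_{2m_i}(2m_i-1)!!}{D\big(\sum_i m_i\big)},\qquad D(M)\eqdef d(d+2)\cdots(d+2M-2),
\]
with $D(0)=1$. Set $c_m\eqdef b_{2m}(2m-1)!!/D(m)$. Computing $\Expect[\kappa(\iprod{x_1}{x_2})]$ through the moments $\Expect[\iprod{x_1}{x_2}^{2m}]=(2m-1)!!/D(m)$, as in \prettyref{eq:b0-auto}, gives the identity $\sum_{m\ge0}c_m=0$. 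If the joint denominator factorized as $\prod_i D(m_i)$, then $a_{\boldsymbol0}$ would equal $\prod_i(\sum_m c_m)=0$. I will therefore write
\[
a_{\boldsymbol 0}=\sum_{\mathbf m}\Big(\prod_{i=1}^k c_{m_i}\Big)\big(\rho(\mathbf m)-1\big),\qquad \rho(\mathbf m)\eqdef\frac{\prod_i D(m_i)}{D(\sum_i m_i)},
\]
and the task becomes bounding this defect.

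\textbf{Step 1 (coefficients).} From \prettyref{eq:coeffs-assumption}, $(2m-1)!!\le\sqrt{(2m)!}$ and $D(m)\ge d^m$, we get $|c_m|\le B(r^2/d)^m$ for $m\ge1$ and $|c_0|=|b_0|\le Br^2/d$. So every index contributes at least a factor $Br^2/d$, and the product of the $k$ factors already gives $B^k(r^2/d)^k$.

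\textbf{Step 2 (the defect).} Order the vertices and compare $D(M)$ with $\prod_i D(m_i)$ factor by factor. The factor $d+2s$ for the $s$-th term of vertex $i$ gets replaced by $d+2(s+\sum_{j<i}m_j)$. Hence $\rho\le1$, and
\[
1-\rho\le 1-\exp\Big(-\tfrac{2}{d}\sum_{i<j}m_im_j\Big)\le \tfrac2d\sum_{i<j}m_im_j .
\]
The intermediate step uses $\log(1+u)\ge u/2$ for $u\le1$, which is where $d\ge kL\ge 2M$ enters. In particular $\rho=1$ whenever at most one $m_i$ is nonzero, so those terms vanish. The surviving terms carry an extra factor $2/d$, which produces the additional power of $d$ in the claim.

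\textbf{Step 3 (summation).} Bound $\sum_{i<j}m_im_j\le\sum_{i<j}2^{m_i}2^{m_j}$, pull the pair sum out, and for each pair factorize the remaining sum over $\mathbf m$ into one-vertex sums. These take two forms:
\[
\sum_m|c_m|\le Br^2/d+B\sum_{m\ge1}(r^2/d)^m\le 3Br^2/d,
\qquad
\sum_m|c_m|2^m\le Br^2/d+\sum_{m\ge1}B(2r^2/d)^m\le 4Br^2/d,
\]
both valid for $d\ge8r^2$. This yields $|a_{\boldsymbol0}|\le\frac2d\binom k2\,4^2\,3^{k-2}(Br^2/d)^k$.

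\textbf{Expected obstacle.} The main difficulty is the bookkeeping of absolute constants, namely absorbing $\binom k2\cdot 16\cdot 3^{k-2}$ into $5^k$. This fails for some small $k$ with the crude inequality $m_im_j\le 2^{m_i+m_j}$. I would first try the sharper $m\le (5/4)^m\cdot$const, or use the vanishing for $k\le1$ and a direct check for small $k$. If the exact constant $5$ proves elusive, a constant $C^k$ would suffice for the downstream uses.
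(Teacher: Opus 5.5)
Your proof is correct in substance and follows the paper's strategy. Both arguments subtract the factorized quantity $(\sum_m c_m)^k=(\Expect[\kappa(\iprod{x_1}{x_2})])^k=0$ and then bound the denominator defect. Where you differ is the defect estimate. The paper uses the additive bound
\[
\Big|\frac{1}{D(M)}-\prod_{i=1}^k\frac{1}{D(m_i)}\Big|\le\frac{D(M)-d^{M}}{d^{2M}}\le\frac{4^{M}}{d^{M+1}} .
\]
This needs $2M\le d$, which is the only place $d\ge kL$ is used, but it factorizes at once into $\frac1d\big(\sum_m|c_m|4^m\big)^k$. Your multiplicative bound $1-\rho\le\frac2d\sum_{i<j}m_im_j$ is sharper. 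It needs no hypothesis on $d$, and it kills every $\mathbf m$ with fewer than two nonzero entries. The price is a $\binom{k}{2}$ prefactor.

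Two corrections. First, the inequality you cite points the wrong way. To bound $\rho=\prod_t(1+u_t)^{-1}$ from below you need $\log(1+u)\le u$ for $u\ge 0$, not $\log(1+u)\ge u/2$. Equivalently, use $1-\prod_t(1-v_t)\le\sum_t v_t$ for $v_t\in[0,1]$. So $d\ge kL$ plays no role in your argument.

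Second, the constant obstacle is self-inflicted. After you pull out $\sum_{i<j}$, the sum over $\mathbf m$ already factorizes exactly, so you never need $m\le 2^m$. With $x=r^2/d\le 1/8$,
\[
\sum_{m}m|c_m|\le\frac{Bx}{(1-x)^2}\le\frac{64}{49}Bx,\qquad \sum_m|c_m|\le Bx+\frac{Bx}{1-x}\le\frac{15}{7}Bx .
\]
Hence, for every $k\ge 2$,
\[
|a_{\boldsymbol 0}|\le\frac{k(k-1)}{d}\Big(\frac{64}{49}\Big)^2\Big(\frac{15}{7}\Big)^{k-2}(Bx)^k\le\frac{(5Bx)^k}{d} .
\]
The ratio of the middle expression's $k$-dependent factor to $5^k$ peaks at $k=3$, at about $0.18$. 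For $k=1$ you already have $a_{\boldsymbol 0}=0$. So your route gives the stated constant $5$ cleanly.

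By contrast, the paper's own per-vertex factor $\frac{r^2}{d}+\frac{4r^2/d}{1-4r^2/d}$ is only bounded by $9r^2/d$ under $d\ge 8r^2$; it approaches $5r^2/d$ only as $r^2/d\to 0$. In any case, only a bound of the form $C^k$ is used downstream, in Case 0 of Lemma~\ref{lem:g(k)-k-small}.
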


\begin{proof}
By definition, 
$$
a_{\boldsymbol 0}
\triangleq 
\sum_{\mathbf{m}} 
\frac{\prod_{i=1}^k b_{2m_i}  (2m_i-1)!! }{d(d+2) \cdots (d+2\sum_{i=1}^k m_i -2)}.
$$
Define
$$
\tilde{a}_{\boldsymbol 0}
\triangleq  
\left( \sum_{m} 
\frac{ b_{2m}  (2m-1)!! }{d(d+2) \cdots (d+2m -2)} \right)^{k} =\left( \expect{\kappa(\iprod{x_1}{x_2})}\right)^k=0.
$$
Note that 
$$
a_{\boldsymbol 0}
-\tilde{a}_{\boldsymbol 0}
= \sum_{\mathbf{m}} 
\prod_{i=1}^k b_{2m_i}  (2m_i-1)!! 
\left( 
\frac{1}{d(d+2) \cdots (d+2\sum_{i=1}^k m_i -2)}
- \prod_{i=1}^k \frac{1}{d(d+2) \cdots (d+2 m_i -2)}
\right).
$$
It is easy to check that 
\begin{align*}
&\left|\frac{1}{d(d+2) \cdots (d+2\sum_{i=1}^k m_i -2)}
- \prod_{i=1}^k \frac{1}{d(d+2) \cdots (d+2 m_i -2)}\right| \\
&\le \frac{d(d+2) \cdots (d+2\sum_{i=1}^k m_i -2) - d^{\sum_{i=1}^k m_i}}{d^{2 \sum_{i=1}^k m_i}} \\
&\le \frac{1}{d^{2 \sum_{i=1}^k m_i}} \sum_{\ell \ge 1} \binom{\sum_{i=1}^k m_i}{\ell }\left( 2 \sum_{i=1}^k m_i \right)^{\ell} d^{\sum_{i=1}^k m_i - \ell} \\
&\le  \frac{2 \sum_{i=1}^k m_i}{d^{\sum_{i=1}^k m_i +1}}  \sum_{\ell \ge 1} \binom{\sum_{i=1}^k m_i}{\ell } \le  \frac{2^{2\sum_{i=1}^k m_i}}{d^{\sum_{i=1}^k m_i +1}},
\end{align*}
where the second to the last inequality holds by the assumption that $d \ge kL \ge \sum_{i=1}^k \ell_i \ge 2\sum_{i=1}^k m_i$.
It then follows from \prettyref{eq:coeffs-assumption} that 
\begin{align*}
\left|a_{\boldsymbol 0}
-\tilde{a}_{\boldsymbol 0}\right|
&\le  
\sum_{\mathbf{m}} 
\left( \prod_{i=1}^k \frac{B r^{2m_i}}{\sqrt{(2m_i)!}} 
(r^2/d)^{\indc{m_i=0}}
(2m_i-1)!! \right) 
\frac{2^{2\sum_{i=1}^k m_i}}{d^{\sum_{i=1}^k m_i +1}}
\\
&\le \frac{ B^k}{d} 
\sum_{\mathbf{m}} 
(r^2/d)^{\sum_{i=1}^k (m_i + \indc{m_i=0})} 2^{2 \sum_{i=1}^k m_i} \\
&= \frac{ B^k}{d} \left( \sum_{m} (r^2/d)^{m + \indc{m=0}} 2^{2 m} \right)^k \\
&\le \frac{ B^k (5 r^2)^{k}}{d^{k+1}}
\end{align*}
if \nbb{$d \ge 8 r^2$}. 
Therefore, we obtain the desired bound on $|a_{\boldsymbol 0}|$.
\end{proof}




\subsubsection{Step 3: Bounding the posterior moments}



Our next goal is to bound the posterior moments $M_{\boldsymbol{\ell}}$ defined by \eqref{eq:def-M_ell} in \prettyref{lem:g(k)-cauchy-schwarz}. We will derive two bounds. The first is a simple looser bound that holds for any $\boldsymbol{\ell}$. The second bound is sharper but applies only when $\boldsymbol{\ell}$ has small total sum; its derivation relies crucially on our posterior overlap analysis in \prettyref{sec:posterior-analysis}. This refinement will be the key when we bound $g(k)$ for small $k$ in the next step.

We begin with the crude bound. By Jensen's inequality, we can move the conditional expectation outside the square to obtain a simple upper bound: 
\begin{align}
M_{\boldsymbol{\ell}}
 \le \Expect_{x_1,\ldots,x_k}
\qth{ \prod_{1 \le i<j \le k} 
\iprod{x_i}{x_j}^{2\ell_{ij}}}. \label{eq:M_ell_upper_bound_simple}
\end{align}
Since $\iprod{x_i}{x_j}$ typically has magnitude on the order of $1/\sqrt{d}$, we expect this upper bound to scale roughly as $d^{-\sum_{i<j}\ell_{ij}}$. 
To make this heuristic precise, we first prove a technical lemma.
\begin{lemma}
\label{lem:gaussian-vector-product-expectation}
Let $g_1, \dots, g_k$ be i.i.d.\ $N(0, I_d)$ random vectors.
For a multigraph $H$ on $[k]$, let $E(H)$ denote the edge set of $H$ where edges between the same pair of vertices are distinct elements. 
Then we have
\begin{equation}
\label{eq:gaussian-vector-product-expectation-equality}
\expect{\prod_{(i,j) \in E(H)} \iprod{g_i}{g_j}} = \sum_{\pi \in \Pi(H)} (d)_{|\pi|} \prod_{B \in \pi} \prod_{i \in V(B)} \mu_{\deg_B(i)} ,
\end{equation}
where $\Pi(H)$ denotes the set of partitions of $E(H)$, $(d)_m$ denotes the falling factorial, $V(B)$ denotes the vertex set of (the graph induced by) $B$, $\deg_B(i)$ denotes the degree of vertex $i$ in $B$, and $\mu_m$ denotes the $m$th moment of $N(0,1)$. Moreover,
$$
\expect{\prod_{(i,j) \in E(H)} \iprod{g_i}{g_j}} \le d^{|E(H)|/2} \prod_{i \in V(H)} (\deg_H(i) - 1)!! .
$$
\end{lemma}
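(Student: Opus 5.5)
For the equality I will expand each inner product in coordinates. Writing $\iprod{g_i}{g_j}=\sum_{c=1}^d g_i(c)g_j(c)$, the product over $E(H)$ becomes a sum over colorings $c:E(H)\to[d]$ of $\prod_{e=(i,j)} g_i(c(e))g_j(c(e))$. Each coloring induces a partition $\pi$ of $E(H)$ into color classes. The number of colorings inducing a given $\pi$ is exactly $(d)_{|\pi|}$, since distinct blocks must receive distinct colors. For a fixed coloring, the coordinates $g_i(c)$ are independent over all pairs $(i,c)$. Hence the expectation factorizes over blocks $B$ and over vertices $i\in V(B)$. Within a block the factor is $\Expect[g_i(c)^{\deg_B(i)}]=\mu_{\deg_B(i)}$. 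Summing over $\pi\in\Pi(H)$ gives \eqref{eq:gaussian-vector-product-expectation-equality}.

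For the inequality I will not try to bound the partition sum term by term. Instead I will use a Wick-type argument with half-edges. Each edge $(i,j)$ is split into a half-edge at $i$ and a half-edge at $j$, carrying a coordinate index $c_e$. By Wick's formula for the independent Gaussians $g_i(\cdot)$, vertex by vertex,
\[
\Expect\Big[\prod_{e\ni i} g_i(c_e)\Big]=\sum_{\sigma_i}\prod_{\{e,e'\}\in\sigma_i}\indc{c_e=c_{e'}},
\]
where $\sigma_i$ ranges over the perfect matchings of the $\deg_H(i)$ half-edges at $i$; there are $(\deg_H(i)-1)!!$ of them. Summing over the indices $c$ then gives
\[
\expect{\prod_{(i,j)\in E(H)} \iprod{g_i}{g_j}}=\sum_{(\sigma_i)_{i}} d^{\#\text{cycles}},
\]
where the cycles are those of the 2-regular structure on $E(H)$ formed by the edges of $H$ together with the pairings $\sigma_i$. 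Each edge is identified with its two endpoint half-edges and has degree 2 there, one from each endpoint's matching. So the structure is a disjoint union of cycles, and every index must be constant along a cycle, contributing $d$ per cycle.

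It remains to count cycles. Every cycle through $E(H)$ has length at least $2$, unless it passes through a self-loop, which does not occur since edges join $i\ne j$. Hence the number of cycles is at most $|E(H)|/2$. Each term is therefore at most $d^{|E(H)|/2}$, and the number of terms is $\prod_i(\deg_H(i)-1)!!$, interpreted as $0$ when some degree is odd. This gives the claim.

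The main obstacle is making this cycle decomposition rigorous. I need to check that each cycle of length $m$ in the edge sequence really forces all its $m$ indices to be equal and then contributes a factor $d$, and that a 2-cycle, namely two parallel edges matched at both endpoints, is the extremal case.
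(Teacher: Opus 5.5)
Your proof is correct and follows essentially the same route as the paper. The equality is proved identically, and for the inequality both arguments sum over pairings $(\sigma_i)$ of the edges at each vertex and bound the number of compatible coordinate labelings by $d^{|E(H)|/2}$, since each class of edges forced to share a label has at least two edges. The only difference is in how the pairing expansion is obtained: you get it directly from Isserlis' theorem, whereas the paper derives it from the partition formula restricted to even-degree blocks, via a bijection of decorated labelings. Your route yields the exact identity $\sum_{(\sigma_i)} d^{\#\text{cycles}}$, and it makes the step you flag as the main obstacle immediate: the matched-pair constraints $c_e=c_{e'}$ force a common label exactly on each connected component, and those components are cycles of length at least two.
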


\begin{proof}
We have 
\begin{equation*}
\expect{\prod_{(i,j) \in E(H)} \iprod{g_i}{g_j}}
= \expect{\prod_{(i,j) \in E(H)} \sum_{k=1}^d (g_i)_k (g_j)_k} 
= \sum_{\substack{ \mathbf{k} = (k_\gamma)_{\gamma \in E(H)} : \\ 1 \le k_\gamma \le d }} \expect{\prod_{\gamma \in E(H)} (g_{i_\gamma})_{k_\gamma} (g_{j_\gamma})_{k_\gamma}} ,
\end{equation*}
where we use $\gamma$ to denote an edge between $i_\gamma$ and $j_\gamma$ to emphasize that the edges between the same pair of vertices are distinct. 
For any fixed $\mathbf{k}$, we have a partition $\pi = \pi(\mathbf{k}) \in \Pi(H)$ according to the values of $k_{\gamma}$. That is, any two edges $\gamma$ and $\gamma'$ are in the same edge set $B \in \pi$ if and only if $k_\gamma = k_{\gamma'}$. 
Conversely, for any partition $\pi \in \Pi(H)$, there are $(d)_{|\pi|}$ multi-indices $\mathbf{k}$ such that $\pi(\mathbf{k}) = \pi$, because it amounts to choosing the value of $k_\gamma$ for one $\gamma \in B$ for every $B \in \pi$. Moreover, note that the quantity $\expect{\prod_{\gamma \in E(H)} (g_{i_\gamma})_{k_\gamma} (g_{j_\gamma})_{k_\gamma}}$ depends only on $\pi(\mathbf{k})$ and is equal to 
$\prod_{B \in \pi(\mathbf{k})} \prod_{i \in V(B)} \mu_{\deg_B(i)}$.
Therefore, \eqref{eq:gaussian-vector-product-expectation-equality} holds.

Furthermore, since $\mu_m = 0$ if $m$ is odd, in \eqref{eq:gaussian-vector-product-expectation-equality}, it suffices to consider $\pi \in \Pi(H)$ such that every vertex of every $B \in \pi$ has an even degree; let $\Pi'(H)$ be the subset of $\Pi(H)$ consisting of all such $\pi$. Then we have
\begin{align*}
\expect{\prod_{(i,j) \in E(H)} \iprod{g_i}{g_j}} &= \sum_{\pi \in \Pi'(H)} (d)_{|\pi|} \prod_{B \in \pi} \prod_{i \in V(B)} (\deg_B(i) - 1)!! \\
&= \sum_{\substack{ \mathbf{k} = (k_\gamma)_{\gamma \in E(H)} : \\ 1 \le k_\gamma \le d, \, \pi(\mathbf{k}) \in \Pi'(H) }} \prod_{B \in \pi(\mathbf{k})} \prod_{i \in V(B)} (\deg_B(i) - 1)!! .
\end{align*}

Note that $(\deg_B(i) - 1)!!$ is the number of pairings of edges in $B$ incident to $i$. 
Hence, the right-hand side above is counting the number of elements of $\calH$ defined as follows. Each element of $\calH$ is a decoration $(\mathbf{k}, \mathbf{p})$ on the multigraph $H$. More precisely, each edge $\gamma$ of the multigraph $H$ is labeled by $k_\gamma \in [d]$. Let $\mathbf{k} = (k_\gamma)_{\gamma \in E(H)}$. Moreover, for each vertex $i$ of $H$, and for each set of edges $\gamma$ incident to $i$ with $k_\gamma$ taking the same value $\ell \in [d]$, there is a pairing $p_{i,\ell}$ of these edges. Let $\mathbf{p} = (p_{i,\ell})_{i \in V(H), \, \ell \in [d]}$. 

Alternatively, we can define the elements $(\mathbf{k}, \mathbf{p})$ of $\calH$ as follows. For each vertex $i$ of the multigraph $H$, let $p_i$ be a pairing of all edges incident to $i$. The edges $\gamma$ of $H$ are then labeled by $k_\gamma \in [d]$ such that the following compatibility condition is satisfied: if two edges $\gamma$ and $\gamma'$ are both incident to some vertex $i$ and are paired in $p_i$, then $k_\gamma = k_{\gamma'}$. 
We can then split the pairing $p_i$ into a list of pairings $(p_{i,\ell})_{\ell \in [d]}$ according to the values of $k_\gamma \in [d]$ for edges $\gamma$ incident to $i$. 
Finally, let $\mathbf{k} = (k_\gamma)_{\gamma \in E(H)}$ and $\mathbf{p} = (p_{i,\ell})_{i \in V(H), \, \ell \in [d]}$. 

The second definition of $\calH$ allows us to bound $|\calH|$ as follows. There are $\prod_{i \in V(H)} (\deg_H(i) - 1)!!$ pairings of edges $(p_i)_{i \in V(H)}$ over all the vertices. 
For any fixed $(p_i)_{i \in V(H)}$, there is a maximal partition $\pi$ of $E(H)$ such that if edges $\gamma$ and $\gamma'$ are paired in some $p_i$, then they must belong to the same block $B \in \pi$. 
We claim that any compatible labeling $\mathbf{k} = (k_\gamma)_{\gamma \in E(H)}$ must assign the same value to $k_\gamma$ for all $\gamma$ in the same block $B$. 
If not, we can further partition $B$ into subsets according to the values taken by $(k_\gamma)_{\gamma \in B}$, and so the partition $\pi$ can be further refined, contradicting its maximality. 
Note that $|B| \ge 2$ for all $B \in \pi$, so $|\pi| \le |E(H)|/2$. Consequently, there are at most $d^{|E(H)|/2}$ choices of $\mathbf{k} = (k_\gamma)_{\gamma \in E(H)}$ where $k_\gamma \in [d]$. It follows that
$$
|\calH| \le \prod_{i \in V(H)} (\deg_H(i) - 1)!! \cdot d^{|E(H)|/2} ,
$$
which completes the proof.
\end{proof}

Leveraging Lemma \ref{lem:gaussian-vector-product-expectation}, we can then further upper-bound the RHS of~\prettyref{eq:M_ell_upper_bound_simple}, yielding our coarse first bound on  $M_{\boldsymbol{\ell}}$. 

\begin{lemma} \label{lem:M_ell-naive-bound}
Let $\ell_i \eqdef \sum_{j:j\neq i} \ell_{ij}$ and
$\ell \eqdef \sum_{i<j}\ell_{ij}$. It holds that 
$$
M_{\boldsymbol{\ell}}
\le 
d^{-\ell} \prod_{i=1}^k (2\ell_i -1 )!! .
$$

In the special case of a linear kernel $\kappa(t) = b_1 t$,  
we have 
$$
M_{\boldsymbol{\ell}}
\le 
d^{-k/2} .
$$
\end{lemma}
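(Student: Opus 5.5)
Starting from the Jensen bound \prettyref{eq:M_ell_upper_bound_simple}, it suffices to bound the prior moment $\Expect\big[\prod_{i<j}\iprod{x_i}{x_j}^{2\ell_{ij}}\big]$. I will write $x_i = g_i/\|g_i\|$ with $g_i \sim N(0,I_d)$ i.i.d. Then $x_i$ is independent of $\|g_i\|$, so
\[
\Expect\Big[\prod_{i<j}\iprod{g_i}{g_j}^{2\ell_{ij}}\Big] = \Expect\Big[\prod_{i<j}\iprod{x_i}{x_j}^{2\ell_{ij}}\Big]\prod_{i=1}^k \Expect\big[\|g_i\|^{2\ell_i}\big],
\]
since vertex $i$ carries total degree $2\ell_i$ in the multigraph. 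I use $\Expect\|g_i\|^{2\ell_i} = d(d+2)\cdots(d+2\ell_i-2) \ge d^{\ell_i}$, and $\sum_i \ell_i = 2\ell$. Dividing through gives
\[
\Expect\Big[\prod_{i<j}\iprod{x_i}{x_j}^{2\ell_{ij}}\Big] \le d^{-2\ell}\,\Expect\Big[\prod_{i<j}\iprod{g_i}{g_j}^{2\ell_{ij}}\Big].
\]

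Next I apply the second part of \prettyref{lem:gaussian-vector-product-expectation} to the multigraph $H$ on $[k]$ with $2\ell_{ij}$ parallel edges between $i$ and $j$. Then $|E(H)| = 2\ell$ and $\deg_H(i) = 2\ell_i$. The lemma gives
\[
\Expect\Big[\prod_{i<j}\iprod{g_i}{g_j}^{2\ell_{ij}}\Big] \le d^{\ell}\prod_i (2\ell_i-1)!!.
\]
Combining the two displays yields $M_{\boldsymbol\ell} \le d^{-\ell}\prod_i(2\ell_i-1)!!$. Isolated vertices contribute $(-1)!!=1$, so they cause no issue.

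In the linear case, \prettyref{lem:g(k)-a-ell} restricts attention to perfect matchings: each $\ell_i = 1$, so $\ell = k/2$ and $\prod_i(2\ell_i-1)!! = 1$. The general bound then reads $M_{\boldsymbol\ell} \le d^{-k/2}$ directly. One can also see this by hand: the product is $\prod_{\{i,j\}}\iprod{x_i}{x_j}^2$ over disjoint pairs, whose prior expectation is exactly $d^{-k/2}$.

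There is no real obstacle. The only points to check are that the sphere-to-Gaussian normalization goes in the favorable direction, which holds because the rising products are at least $d^{\ell_i}$, and that the degree bookkeeping is right: doubling $\ell_{ij}$ doubles both the edge count and the degrees.
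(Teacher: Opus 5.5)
Your proposal is correct and follows the paper's proof step for step. You pass to the prior moment via Jensen as in \eqref{eq:M_ell_upper_bound_simple}, factor $x_i = g_i/\|g_i\|$ using $\Expect\|g_i\|^{2\ell_i} = d(d+2)\cdots(d+2\ell_i-2)\ge d^{\ell_i}$, and apply the second part of Lemma~\ref{lem:gaussian-vector-product-expectation} to the multigraph with $2\ell_{ij}$ parallel edges to get $d^{-2\ell}\cdot d^{\ell}\prod_i(2\ell_i-1)!!$, with the linear case read off from $\ell_i=1$, $\ell=k/2$.
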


\begin{proof}
To upper-bound the RHS of~\prettyref{eq:M_ell_upper_bound_simple}, write $x_i=g_i/\norm{g_i}$ for $i \in [k]$, where $g_1, \dots, g_k$ are i.i.d.\ $N(0, I_d)$ random vectors.
Let $H$ be the multigraph on $[k]$ where the edge between vertices $i$ and $j$ has multiplicity $2 \ell_{ij}$. 
Then
\begin{align}
\expect{\prod_{(i,j) \in E(H)} \iprod{g_i}{g_j}}
& =
\expect{\prod_{(i,j) \in E(H)} \norm{g_i} \norm{g_j}\iprod{x_i}{x_j}} \notag \\
& = \expect{\prod_{v \in V(H)}\norm{g_v}^{\deg_H(v)}}
\expect{\sum_{(i,j) \in E(H)} \iprod{x_i}{x_j}} \notag \\
& = \prod_{v \in V(H)} 
\left( d(d+2) \cdots (d+\deg_H(v)-2)\right)\expect{\sum_{(i,j) \in E(H)} \iprod{x_i}{x_j}} , \label{eq:spherical-to-gaussian-moment}
\end{align}
where $\deg_H(v)$ denotes the degree of vertex $v$ in $H$. 
Therefore, 
\begin{equation*}
\expect{\prod_{(i,j) \in E(H)} \iprod{x_i}{x_j}}
\le d^{-|E(H)|} \expect{\prod_{(i,j) \in E(H)} \iprod{g_i}{g_j}} 
\le d^{-|E(H)|/2} \prod_{i \in V(H)} (\deg_H(i) - 1)!! ,
\end{equation*}
where the second inequality follows from \prettyref{lem:gaussian-vector-product-expectation}.
Plugging the above bound into~\prettyref{eq:M_ell_upper_bound_simple} yields the claimed upper bound.

In the special case of linear kernels, it suffices to note that
$\ell_i = 1$ and $\ell = k/2$.
\end{proof}

Next, we derive a sharper second bound, which applies when $\ell$ is a fixed constant. Its proof relies crucially on the posterior-overlap analysis in \prettyref{sec:posterior-analysis}, in particular, \prettyref{cor:posterior-moments}. As an example, consider a linear kernel with $b_1=O(1)$. When $d \ll n/\sqrt{\log n}$,  the lemma below yields an upper bound on 
$M_{\boldsymbol{\ell}}$ that scales as $d^{-k}$, which decays much faster than the crude upper bound $d^{-k/2}$ from \prettyref{lem:M_ell-naive-bound} above.  

\begin{lemma}
\label{lem:M_ell-better-bound}
Let 
$\ell \eqdef \sum_{i<j}\ell_{ij}$ and $v \eqdef \sum_{i=1}^k \indc{\ell_i >0}$.
Assume \eqref{eq:coeffs-assumption} and \nbb{ 
$$
d \gtrsim B^2 r^2 + n^{1/2} B r^{3/2} + n^{1/2} (B + B^{1/2}) r + n^{1/3} (B^{4/3} + B^{2/3}) r^{4/3} + \log n + r^2 (\log n)^3.
$$}
For any constant $C_1 > 0$, there is a constant $C_2 > 0$ depending only on $C_1$ such that if $\ell \le C_1$, then
\begin{equation*}
M_{\boldsymbol{\ell}}
\le C_2 \left( \max \bigg\{ \frac{B(B+1) r^2 n^2}{d^2} , \frac{n (\log n)^{1/2}}{d} \bigg\} \right)^\ell \frac{1}{n^v} .
\end{equation*}

In the special case of a linear kernel $\kappa(t) = b_1 t$, assume 
\nbb{$d \gtrsim n^{1/2} b_1 + n^{1/3} b_1^{4/3} + \log n$.} 
For any constant $C_1 > 0$, there is a constant $C_2 > 0$ depending only on $C_1$ such that if $k \le C_1$, then
\begin{equation*}
M_{\boldsymbol{\ell}}
\le C_2 \left( \max \bigg\{ \frac{b_1^2}{d^2} , \frac{(\log n)^{1/2}}{n d} \bigg\} \right)^{k/2} .
\end{equation*}
\end{lemma}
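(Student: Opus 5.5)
The plan is to reduce $M_{\boldsymbol{\ell}}$ to a moment of the posterior overlap $\iprod{\tilde X}{X}$ by exploiting exchangeability of the vertices, and then apply \prettyref{cor:posterior-moments}.

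\textbf{Step 1 (two replicas).} Let $\tilde x$ be an independent posterior draw given $A$. By conditional independence,
\[
M_{\boldsymbol{\ell}} = \Expect\Big[\prod_{i<j\le k} X_{ij}^{\ell_{ij}} \tilde X_{ij}^{\ell_{ij}}\Big] = \Expect\Big[\prod_{i<j\le k} (X_{ij}\tilde X_{ij})^{\ell_{ij}}\Big].
\]

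\textbf{Step 2 (symmetrization).} The joint law of $(X,\tilde X)$ is invariant under simultaneous relabeling of the vertices by a permutation $\sigma$ of $[n]$. The support of $\boldsymbol{\ell}$ involves $v$ vertices. Average over all injective maps of these $v$ vertices into $[n]$; there are $(n)_v \asymp n^v$ of them. This gives
\[
M_{\boldsymbol{\ell}} = \frac{1}{(n)_v}\,\Expect\Big[\sum_{\text{injective }\phi}\ \prod_{i<j} (X_{\phi(i)\phi(j)}\tilde X_{\phi(i)\phi(j)})^{\ell_{ij}}\Big].
\]
Now expand $\iprod{\tilde X}{X}^\ell = \big(\sum_{a\ne b} X_{ab}\tilde X_{ab}\big)^\ell$. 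It is a sum over ordered $\ell$-tuples of ordered pairs, and each injective embedding of the multigraph appears as one such term, up to a constant combinatorial multiplicity depending only on $\boldsymbol{\ell}$.

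The obstacle is sign: other terms in the expansion, such as non-injective ones, need not be nonnegative, so we cannot simply drop them. My approach is as follows. For each pair $(a,b)$, $\Expect[(X_{ab}\tilde X_{ab})^{m}\mid\cdot]$ is not obviously positive, but $\Expect[f(X)f(\tilde X)] = \Expect[(\Expect[f(X)\mid A])^2] \ge 0$ for any $f$. Every monomial term in the expansion has the form $\Expect[f(X)f(\tilde X)]$ with $f$ a monomial in the entries of $X$, so every term is nonnegative in expectation. Hence
\[
(n)_v\, M_{\boldsymbol{\ell}} \cdot c_{\boldsymbol\ell} \le \Expect\big[\iprod{\tilde X}{X}^\ell\big]
\]
for some constant $c_{\boldsymbol\ell} \ge 1$ bounded in terms of $C_1$. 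One detail: $X$ has $X_{ab}=X_{ba}$, so ordered pairs double-count each unordered pair, which only changes constants.

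\textbf{Step 3 (apply the corollary).} \prettyref{cor:posterior-moments} with $\ell \le C_1$ bounds $\Expect[\iprod{\tilde X}{X}^\ell]$ by $C_2(\max\{\cdots\})^\ell$. Its hypotheses match the assumed lower bound on $d$ (which is that of \prettyref{prop:posterior-concentration}). Dividing by $(n)_v \gtrsim n^v$ (since $v \le 2\ell \le 2C_1$) gives the claim.

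\textbf{Linear kernel.} Here $\ell_{ij}\in\{0,1\}$ forms a perfect matching, so $\ell = k/2$ and $v = k$. The linear-kernel version of the corollary then gives
\[
M_{\boldsymbol{\ell}} \lesssim \big(\max\{b_1^2n^2/d^2,\ n(\log n)^{1/2}/d\}\big)^{k/2} n^{-k} = \big(\max\{b_1^2/d^2,\ (\log n)^{1/2}/(nd)\}\big)^{k/2},
\]
as stated.
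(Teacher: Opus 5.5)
Your proposal is correct and is essentially the paper's argument. You write $M_{\boldsymbol\ell}$ as a two-replica moment, note that every monomial in the expansion of $\iprod{\tilde X}{X}^\ell$ has nonnegative expectation (it equals $\Expect[(\Expect[f(X)\mid A])^2]$), keep only the terms that are vertex-relabelings of $\boldsymbol\ell$, and apply \prettyref{cor:posterior-moments}. The only cosmetic difference is the counting: the paper lower-bounds the number of such terms directly by $\binom{n}{v}$ distinct vertex sets, while you count $(n)_v$ injective maps with a multiplicity constant $c_{\boldsymbol\ell}$. Your route requires tracking automorphisms; your claim $c_{\boldsymbol\ell}\ge 1$ is true because the automorphism group acts freely on the oriented, ordered edge tuples.
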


\begin{proof}
Let $\tilde{X}$ be a random draw from the posterior distribution $P_{ X|A}$. It follows that 
\begin{align}
\expect{\iprod{X}{\tilde{X}}^\ell}
& = \expect{ \left( 2 \sum_{i<j} X_{ij} \tilde{X}_{ij} \right)^\ell } \notag \\
&= 2^\ell \sum_{i_1<j_1} \cdots \sum_{i_\ell<j_\ell} \expect{ X_{i_1j_1} \cdots X_{i_\ell j_\ell} \tilde{X}_{i_1 j_1} \cdots \tilde{X}_{i_\ell j_\ell} } \notag \\
& \ge 2^\ell \binom{n}{v} 
\expect{ \prod_{i<j} X_{ij}^{\ell_{ij}} \tilde{X}_{ij}^{\ell_{ij}} }. \label{eq:X-tilde-X-symmetry-bound}
\end{align}
To see why the last inequality holds, first note that 
$$\expect{ X_{i_1j_1} \cdots X_{i_\ell j_\ell} \tilde{X}_{i_1 j_1} \cdots \tilde{X}_{i_\ell j_\ell}\mid A} = 
(\expect{ X_{i_1j_1} \cdots X_{i_\ell j_\ell} \mid A})^2 \ge 0.
$$
Moreover, for any injective mapping $f:[v] \to [n]$, by symmetry, 
$$
\expect{ \prod_{i<j} X_{ij}^{\ell_{ij}}\tilde{X}_{ij}^{\ell_{ij}} } = \expect{ \prod_{i<j} X_{f(i)f(j)}^{\ell_{ij}}\tilde{X}_{f(i)f(j)}^{\ell_{ij}} },
$$
and there are at least $\binom{n}{v}$ distinct $f$. 
%
Combining \eqref{eq:def-M_ell}, \eqref{eq:X-tilde-X-symmetry-bound}, and \prettyref{cor:posterior-moments} then yields that 
\begin{align*}
M_{\boldsymbol{\ell}}
= \expect{\prod_{i<j} X_{ij}^{\ell_{ij}}\tilde{X}_{ij}^{\ell_{ij}} } 
\le \frac{1}{\binom{n}{v} } \expect{\iprod{X}{\tilde{X}}^\ell}
\le C_2 \left( \max \bigg\{ \frac{B(B+1) r^2 n^2}{d^2} , \frac{n (\log n)^{1/2}}{d} \bigg\} \right)^\ell \frac{1}{\binom{n}{v}} .
\end{align*}
Since $v \le 2\ell \le 2C_1$, we have $\binom{n}{v} \ge (n/v)^v \ge n^{v}/(2C_1)^{2C_1}$. Absorbing $(2C_1)^{2C_1}$ into the constant $C_2$ yields the desired bound. 

In the case of a linear kernel, it suffices to note that $\ell_i = 1$, $\ell = k/2$, and $v = k$. 
\end{proof}




\subsubsection{Step 4: Controlling $g(k)$}
We now combine the above estimates to bound $g(k)$.
%
%
%
%
To apply \prettyref{lem:g(k)-cauchy-schwarz}, we fix a constant $k_0 > 0$ to be chosen later, and consider the two cases $k \ge k_0$ and $k < k_0$ separately. 

For large $k$, it suffices to apply the crude bound on
$M_{\boldsymbol{\ell}}$ from \prettyref{lem:M_ell-naive-bound}.

\begin{lemma}
\label{lem:g(k)-k-large}
There is an absolute constant $C>0$ such that the following holds for all $k \ge k_0$. 
If we assume \prettyref{eq:coeffs-assumption} and \nbb{$d^{3/2} \ge 8 e r^2 L n$}, then
$$
g(k) \le \left( \frac{ C B^2 r^2 Lk}{d^{3/2}} + \frac{C B^2 r^4}{d^2} \right)^k .
$$
\end{lemma}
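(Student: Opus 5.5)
We start from \prettyref{lem:g(k)-cauchy-schwarz}, which gives $g(k)\le (\frac{e}{e-1})^k\sum_{\boldsymbol\ell}\beta_{\boldsymbol\ell}a_{\boldsymbol\ell}^2M_{\boldsymbol\ell}$. We insert the crude bound $M_{\boldsymbol\ell}\le d^{-\ell}\prod_i(2\ell_i-1)!!$ from \prettyref{lem:M_ell-naive-bound} and the coefficient bound of \prettyref{lem:a_ell-bounds}. Because $(2\ell_i-1)!!\le 2^{\ell_i}\ell_i!$ and $\sum_i \ell_i=2\ell$, the summand is at most
\[
C^k\,\frac{e^{2\ell}(2\ell-1)!!}{\prod_{i<j}\ell_{ij}!}\prod_{i=1}^k \ell_i!\,\Big(\frac{2r^2}{d}\Big)^{\ell_i+2\indc{\ell_i=0}}\, d^{-\ell}\, 2^{\ell_i}B^2 .
\]
Here the factors $\prod_{i<j}\ell_{ij}!$ and $\prod\ell_i!$ from $\beta_{\boldsymbol\ell}$ and $a_{\boldsymbol\ell}^2$ partly cancel, leaving one net factor $\prod\ell_i!/\prod\ell_{ij}!$. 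The sum over $\boldsymbol\ell$ with prescribed degrees $(\ell_1,\dots,\ell_k)$ is then controlled by the pairing count \prettyref{eq:pairing_bound}, which bounds $\sum\prod\ell_i!/\prod\ell_{ij}!$ by $(2\ell-1)!!$. This reduces the problem to a sum over degree sequences $(\ell_1,\dots,\ell_k)\in\{0,\dots,L\}^k$ with even total $2\ell$.

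Next we bound the scalings. For the degree sequence, the total factor is roughly
\[
\big((2\ell-1)!!\big)^2\, C^{2\ell} B^{2k} r^{4\ell} d^{-3\ell}\, (r^2/d)^{2\#\{i:\ell_i=0\}} .
\]
We use $((2\ell-1)!!)^2\le (2\ell)^{2\ell}$ and $2\ell\le kL$, so $((2\ell-1)!!)^2\le (kL)^{\ell}(2\ell)^{\ell}$. The goal is to distribute factors vertex by vertex, so that each vertex with $\ell_i\ge1$ contributes at most $(C r^2 Lk/d^{3/2})^{\ell_i/2}$, and hence at least $C B^2 r^2Lk/d^{3/2}$ when $\ell_i$ is small. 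Each vertex with $\ell_i=0$ contributes $C B^2 r^4/d^2$. A crude route: since $\ell\le kL/2$, we have $(2\ell)!!^2$-type growth $\lesssim (CkL\ell)^{\ell}$ against $d^{-3\ell}r^{4\ell}$. Using $\ell\le kL\le$ (implicitly) $d$-sized quantities, we bound $\ell^\ell\le (kL)^{\ell}$, which gives per unit of $\ell$ a factor $(C r^2 kL/d^{3/2})^2\cdot(\cdot)$. Factoring the product over vertices and summing each $\ell_i$ independently gives
\[
\Big(\sum_{m\ge0} c_m\Big)^k,\qquad c_0=\frac{CB^2r^4}{d^2},\qquad c_m\le C B^2\Big(\frac{Cr^2Lk}{d^{3/2}}\Big)^{m}\ \ (m\ge1),
\]
which is a geometric series. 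The hypothesis $d^{3/2}\ge 8er^2Ln$ (and $k\le n$) makes the ratio at most $1/2$. The sum is then dominated by the $m=1$ term, yielding $(CB^2r^2Lk/d^{3/2}+CB^2r^4/d^2)^k$.

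The main obstacle is the global factor $((2\ell-1)!!)^2$, which couples all vertices and is superexponential in the total degree. It must be split into per-vertex factors of the form $(Ck L)^{\ell_i}$; in this step the factor $k$ in the bound (and hence the restriction to $k\ge k_0$) appears. We would first try $(2\ell-1)!!\le (2\ell)^{\ell}\le (kL)^{\ell}=\prod_i (kL)^{\ell_i/2}$, which yields exactly a factor $(kL)^{\ell_i}$ per vertex after squaring. Combined with $d^{-3\ell}r^{4\ell}=\prod_i (r^2/d^{3/2})^{\ell_i}$, this gives the per-vertex ratio $r^2kL/d^{3/2}$. If extra powers of $\ell_i!$ remain uncancelled, we absorb them through $\ell_i!\le L^{\ell_i}$, at the cost of constants.
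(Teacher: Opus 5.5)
Your proposal is correct and follows the paper's proof essentially step for step. Like the paper, you combine \prettyref{lem:g(k)-cauchy-schwarz} with the crude bound of \prettyref{lem:M_ell-naive-bound} and the bound of \prettyref{lem:a_ell-bounds}, cancel factorials, apply $(2\ell_i-1)!!\le 2^{\ell_i}\ell_i!$ and the pairing count \eqref{eq:pairing_bound}, use $[(2\ell-1)!!]^2\le (2\ell)^{2\ell}\le (kL)^{2\ell}$, and finish with a per-vertex geometric series whose ratio is at most $1/2$ because $d^{3/2}\ge 8er^2Ln\ge 8er^2Lk$. Two small corrections: the per-vertex factor is $(Cr^2Lk/d^{3/2})^{\ell_i}$, not $(Cr^2Lk/d^{3/2})^{\ell_i/2}$ (your final version has this right); and no uncancelled $\ell_i!$ remains, so your fallback $\ell_i!\le L^{\ell_i}$ is unnecessary, which matters because it would have cost an extra factor of $L$.
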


\begin{proof}
We apply \prettyref{lem:g(k)-cauchy-schwarz} together with \prettyref{lem:M_ell-naive-bound} which bounds $M_{\boldsymbol{\ell}}$ and \prettyref{lem:a_ell-bounds} which states that
\begin{equation*}
a^2_{\boldsymbol \ell}
\le \frac{(3B)^{2k}}{  (\prod_{i<j} \ell_{ij}!)^2}    \prod_{i=1}^k \ell_i! \left( \frac{2 r^2}{d} \right)^{\ell_i + 2 \cdot \indc{\ell_i=0}} .
\end{equation*}
In the sequel, let $C$ denote a constant whose value may vary between lines. 
Then we obtain
\begin{align}
    g(k) & \le C^k B^{2k}
    \sum_{\boldsymbol \ell }
    e^{\sum_{i=1}^k \ell_i}\frac{(\sum_{i=1}^k \ell_i -1)!!}{\prod_{i<j} \ell_{ij}! }  \left( \frac{2 r^2}{d} \right)^{\ell_i+ 2 \cdot \indc{\ell_i=0}}
    d^{-\sum_{i=1}^k \ell_i/2} \prod_{i=1}^k (2\ell_i -1 )!! \notag \\
    & \le 
    C^k B^{2k}
    \sum_{\ell_1,\ldots, \ell_k }
    e^{\sum_{i=1}^k \ell_i}
    \frac{\left[(\sum_{i=1}^k \ell_i -1)!!\right]^2}{\prod_{i=1}^k \ell_i! } \left( \frac{2 r^2}{d} \right)^{\ell_i+ 2 \cdot \indc{\ell_i=0}}
    d^{-\sum_{i=1}^k \ell_i/2} \prod_{i=1}^k (2\ell_i -1 )!! 
    \notag \\
    & \le C^k B^{2k}
    \sum_{\ell_1, \ldots, \ell_k}
    (2e)^{\sum_{i=1}^k \ell_i} 
    \bigg[\Big(\sum_{i=1}^k \ell_i -1\Big)!!\bigg]^2 
    \left( \frac{2 r^2}{d} \right)^{\sum_{i=1}^k \ell_i+ 2\sum_{i=1}^k \indc{\ell_i=0}} d^{-\sum_{i=1}^k \ell_i/2} \label{eq:g(k)-bound-intermediate} \\
& \le 
C^k B^{2k} 
\prod_{i=1}^k 
\left[ \sum_{\ell_i}  \left( 
\frac{4 e r^2 Lk}{d^{3/2}}\right)^{\ell_i}  \left( \frac{2 r^2}{d}\right)^{2 \cdot \indc{\ell_i=0}}\right] \notag \\
& \le C^k B^{2k}
\left( \frac{ r^2 Lk}{d^{3/2}} + \frac{r^4}{d^2} \right)^k ,
\end{align}
where the second inequality follows from~\prettyref{eq:pairing_bound};
the third equality holds due to $(2\ell_i-1)!! \le 2^{\ell_i}\ell_i!$;
the fourth one uses the fact that  $\sum_{i=1}^k \ell_i \le Lk$; and the last one holds due to the assumption that 
\nbb{$d^{3/2} \ge 8 e r^2 L n \ge 8 e r^2 L k$}.
\end{proof}

For small $k$, we derive a tighter bound on $g(k)$ by further leveraging our sharper bound on $M_{\boldsymbol{\ell}}$ for small $\ell$ from Lemma \ref{lem:M_ell-better-bound}.

\begin{lemma}
\label{lem:g(k)-k-small}
There is a constant $C>0$ depending only on $k_0$ such that the following holds for all $k \le k_0$. 
If we assume \prettyref{eq:coeffs-assumption} and \nbb{
\begin{align*}
d &\ge C \Big[ L^2 + n^{1/2} r^2 + (B(B+1))^{1/4} r^{3/2} n^{1/2} + r^{4/3} n^{1/3} (\log n)^{1/6} + r^{4/3} L^{2/3} \\
& \quad + B^2 r^2 + n^{1/2} B r^{3/2} + n^{1/2} (B + B^{1/2}) r + n^{1/3} (B^{4/3} + B^{2/3}) r^{4/3} + \log n + r^2 (\log n)^3 \Big] ,
\end{align*} }
then
\begin{align*}
g(k) 
&\le C \bigg[ \frac{B^{2k} r^{4k}}{d^{2k+2}} + \frac{B^{2k+1} (B+1) r^{4k-2}}{d^{2k}} + \frac{B^{2k} r^{4k-4} \log n}{d^{2k-1} n} \\
&\qquad + \bigg(\frac{B^{5/2} (B+1)^{1/2} r^3}{d^2}\bigg)^k + \bigg( \frac{B^2 r^2 (\log n)^{1/4}}{d^{3/2} n^{1/2}} \bigg)^k + \bigg( \frac{B^{2} r^{4} L^{2}}{d^3} \bigg)^k \bigg] .
\end{align*}
\end{lemma}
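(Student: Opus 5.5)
The plan is to start from \prettyref{lem:g(k)-cauchy-schwarz}, which gives $g(k)\le (e/(e-1))^k\sum_{\boldsymbol\ell}\beta_{\boldsymbol\ell}a_{\boldsymbol\ell}^2M_{\boldsymbol\ell}$, and to split the sum over multigraphs $\boldsymbol\ell$ on $[k]$ according to the total edge count $\ell=\sum_{i<j}\ell_{ij}$. Since $k\le k_0$ is a constant, $(e/(e-1))^k$ and the number of $\boldsymbol\ell$ with $\ell\le C_1$ are absorbed into $C$. There are three regimes.
\begin{itemize}
\item $\boldsymbol\ell=\boldsymbol 0$.
\item $1\le\ell\le C_1$, where $C_1$ is a constant depending on $k_0$. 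Here I use the sharp bound of \prettyref{lem:M_ell-better-bound}.
\item $\ell>C_1$. Here I use the crude bound of \prettyref{lem:M_ell-naive-bound} together with \prettyref{lem:a_ell-bounds}, exactly as in the proof of \prettyref{lem:g(k)-k-large}.
\end{itemize}

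\emph{The term $\boldsymbol\ell=\boldsymbol 0$.} Here $M_{\boldsymbol0}=1$ and $\beta_{\boldsymbol0}=1$. \prettyref{lem:a_0-bound}, which needs $d\ge 8r^2$ and $d\ge kL$ (implied by $d\ge CL^2$), gives $a_{\boldsymbol0}^2\le B^{2k}(5r^2)^{2k}/d^{2k+2}$. This is the first term of the claimed bound.

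\emph{Small $\ell\ge1$.} \prettyref{lem:a_ell-bounds} gives
\[
a_{\boldsymbol\ell}^2\lesssim B^{2k}\prod_i \ell_i!\,(2r^2/d)^{\ell_i+2\cdot\indc{\ell_i=0}},
\]
so that $a_{\boldsymbol\ell}^2\lesssim B^{2k}(r^2/d)^{2\ell+2(k-v)}$. \prettyref{lem:M_ell-better-bound} gives $M_{\boldsymbol\ell}\lesssim \Delta^{\ell}n^{-v}$, where $\Delta=\max\{B(B+1)r^2n^2/d^2,\,n\sqrt{\log n}/d\}$. The summand is therefore at most of order
\[
B^{2k}(r^2/d)^{2\ell+2k-2v}\Delta^\ell n^{-v}.
\]
I then optimize over the admissible pairs $(\ell,v)$, which satisfy $v\le\min(k,2\ell)$ and $v\ge2$.
\begin{itemize}
\item For $\ell=1$, $v=2$: plugging in the two branches of $\Delta$ gives, up to constants, $B^{2k+1}(B+1)r^{4k-2}/d^{2k}$ and $B^{2k}r^{4k-4}\sqrt{\log n}/(d^{2k-1}n)$. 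These match the second and third terms.
\item For $\ell$ large enough that $v=k$ (for instance $2\ell\ge k$ with the edges spread over all vertices), the factor $(r^2/d)^{2\ell}\Delta^\ell n^{-k}$ is largest at $\ell=k/2$. The two branches of $\Delta$ then give $(B^{5/2}(B+1)^{1/2}r^3/d^2)^k$ and $(B^2r^2(\log n)^{1/4}/(d^{3/2}n^{1/2}))^k$.
\end{itemize}
The key claim is that increasing $\ell$ with $v$ fixed multiplies the summand by $(r^2/d)^2\Delta\ll1$, and that trading isolated vertices for covered ones is monotone between the two extreme cases above. This is where the conditions $d\gtrsim n^{1/2}r^2$, $d\gtrsim(B(B+1))^{1/4}r^{3/2}n^{1/2}$ and $d\gtrsim r^{4/3}n^{1/3}(\log n)^{1/6}$ enter: they give $r^4\Delta/d^2\lesssim 1$. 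I will check that the ratio in $v$ is then log-convex, so that the extremes dominate.

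\emph{Large $\ell>C_1$.} Here I follow the chain of inequalities leading to \eqref{eq:g(k)-bound-intermediate}, with $k$ now bounded, which gives a bound of the form
\[
B^{2k}\sum_{\ell_1,\dots,\ell_k}\Bigl(\frac{Cr^2Lk}{d^{3/2}}\Bigr)^{\sum_i\ell_i}.
\]
Restricting to $\sum_i\ell_i=2\ell>2C_1$ and using $d\gtrsim r^{4/3}L^{2/3}$ (so that $r^2L/d^{3/2}\lesssim1$) yields a geometric tail. Choosing $C_1\ge k$ makes this tail at most $(B^2r^4L^2/d^3)^k$, which is the last term.

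The main obstacle is the middle regime: organizing the $(\ell,v)$ optimization so that every intermediate configuration is dominated by one of the listed terms, and tracking the factor $(r^2/d)^{2\cdot\indc{\ell_i=0}}$ contributed by isolated vertices. I would first verify the monotonicity in $\ell$ at fixed $v$, and then handle $v$ by comparing consecutive values.
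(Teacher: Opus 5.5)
Your proposal is correct and follows the paper's proof. The paper handles $\boldsymbol\ell=\boldsymbol 0$ by Lemma~\ref{lem:a_0-bound}, bounded $\ell$ by Lemma~\ref{lem:M_ell-better-bound} (split there into $1\le \ell\le k/2$ and $k/2<\ell\le k$, each dominated first by the largest admissible $v$ and then by the endpoints $\ell=1$ and $\ell=k/2$), and $\ell>k$ by the crude bound of Lemma~\ref{lem:M_ell-naive-bound} with cutoff $L_0=k$, exactly as you describe. The only difference is that you merge the two bounded-$\ell$ cases into one log-linear optimization over $(\ell,v)$, which works equally well; note, though, that $d\gtrsim n^{1/2}r^2$ is what makes the summand nondecreasing in $v$ (each extra covered vertex contributes a factor $d^2/(r^4 n)$), while only the other two conditions give $r^4\Delta/d^2\lesssim 1$.
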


\begin{proof}
Throughout the proof, we use the notation $\lesssim$ to hide a multiplicative constant that may depend only on $k_0$. 
Let us consider different ranges of $\ell=\sum_{i<j} \ell_{ij}$. Recall that $\ell_i = \sum_{j \ne i} \ell_{ij}$, $\sum_i \ell_i = 2\ell$, and 
$$
v = \sum_{i=1}^k \indc{\ell_i >0} \le \min\{2\ell, k\}.
$$

{\bf Case 0}: $\ell=0$. In this case, $\ell_{ij}=0$ for all $i<j$. We have $\beta_{\boldsymbol{0}}=1$, $a^2_{\boldsymbol{0}} \le \frac{ B^{2k} (5 r^2)^{2k}}{d^{2k+2}}
$ by \prettyref{lem:a_0-bound} if \nbb{$d \gtrsim r^2 + L$},  
and $M_{\boldsymbol{0}}=1$. 

{\bf Case 1}: $1 \le \ell \le k/2$. 
It follows from Lemmas~\ref{lem:g(k)-cauchy-schwarz}, \ref{lem:a_ell-bounds}, and \ref{lem:M_ell-better-bound} that 
\begin{align}
& \sum_{\boldsymbol \ell: 1 \le \ell \le k/2 }
\beta_{\boldsymbol \ell} a^2_{\boldsymbol \ell} M_{\boldsymbol{\ell}} \notag \\
& \lesssim B^{2k}
    \sum_{\boldsymbol \ell }
    e^{\sum_{i=1}^k \ell_i}\frac{(\sum_{i=1}^k \ell_i -1)!!}{\prod_{i<j} \ell_{ij}! } \left( \frac{2 r^2}{d} \right)^{\ell_i+ 2 \cdot \indc{\ell_i=0}} 
\cdot 
\left( \max \bigg\{ \frac{B(B+1) r^2 n^2}{d^2} , \frac{n (\log n)^{1/2}}{d} \bigg\} \right)^\ell \frac{1}{n^v} \notag \\
&\lesssim B^{2k}
    \sum_{\ell_1, \ldots, \ell_k }
    e^{\sum_{i=1}^k \ell_i}
   \frac{ \left[(\sum_{i=1}^k \ell_i -1)!!\right]^2}{\prod_{i=1}^k \ell_i! } \left( \frac{2 r^2}{d} \right)^{\ell_i+ 2 \cdot \indc{\ell_i=0}} 
    \notag \\
& \qquad \qquad \qquad \cdot
    \max \bigg\{ \bigg(\frac{\sqrt{B(B+1)} r}{d}\bigg)^{2 \ell} n^{2\ell - v} , \bigg( \frac{\sqrt{\log n}}{d} \bigg)^\ell n^{\ell-v} \bigg\}  \notag \\
& \lesssim B^{2k}
\sum_{\ell} \sum_{v}
 e^{2\ell}
    \left( (2\ell -1)!!\right)^2  \left( \frac{2 r^2}{d} \right)^{2\ell+ 2(k-v)} n^{2\ell-v}
     \notag \\
& \qquad \qquad \qquad \cdot \max \bigg\{ \bigg(\frac{\sqrt{B(B+1)} r}{d}\bigg)^{2 \ell} , \bigg( \frac{\sqrt{\log n}}{d n} \bigg)^\ell \bigg\} \sum_{\ell_1, \ldots,\ell_k:\sum_{i} \ell_i=2\ell,  \sum_{i} \indc{\ell_i>0}=v}
\frac{1}{\prod_{i=1}^k \ell_i!} . \label{eq:case-1-intermediate-bound}
\end{align}
Note that 
$$
\sum_{\ell_1, \ldots,\ell_k:\sum_{i} \ell_i=2\ell,  \sum_{i} \indc{\ell_i>0}=v}
\frac{1}{\prod_{i=1}^k \ell_i!}
\le 
\sum_{\ell_1, \ldots,\ell_k:\sum_{i} \ell_i=2\ell}
\frac{1}{\prod_{i=1}^k \ell_i!}
= \frac{k^{2\ell}}{(2\ell)!},
$$
where the last equality holds by the multinomial theorem.
Moreover, under the assumption \nbb{$d^2 \ge 8nr^4$}, we have 
$$
\sum_{1 \le v \le 2\ell} 
\left( \frac{4r^4 n}{d^2} \right)^{2\ell-v}
\le 2,
$$
Using the assumption $\ell \le k/2 \le k_0/2$ to hide all the constants depending on $k,\ell$,
we obtain
\begin{align*}
\sum_{\boldsymbol \ell: 1 \le \ell \le k/2 }
\beta_{\boldsymbol \ell} a^2_{\boldsymbol \ell} M_{\boldsymbol{\ell}} 
& \lesssim B^{2k}
\sum_{1 \le \ell \le k/2} 
 \left( \frac{2 r^2}{d} \right)^{2k-2\ell} 
    \max \bigg\{ \bigg(\frac{\sqrt{B(B+1)} r}{d}\bigg)^{2 \ell} , \bigg( \frac{\sqrt{\log n}}{d n} \bigg)^\ell \bigg\} \\
& \lesssim B^{2k} r^{4k} d^{-2k}
\sum_{1 \le \ell \le k/2} 
 \bigg[ \bigg(\frac{\sqrt{B(B+1)}}{r}\bigg)^{2 \ell} + \bigg( \frac{d \sqrt{\log n}}{n r^4} \bigg)^\ell \bigg] \\
 &\lesssim B^{2k+1} (B+1) r^{4k-2} d^{-2k} + B^{5k/2} (B+1)^{k/2} r^{3k} d^{-2k} \\
 &\quad + B^{2k} r^{4k-4} d^{-2k+1} \frac{\log n}{n} + B^{2k} r^{2k} d^{-3k/2} \bigg( \frac{\sqrt{\log n}}{n} \bigg)^{k/2} ,
\end{align*}
where the last inequality holds since the sum is dominated by either the first or the last term.

{\bf Case 2:} $k/2<\ell \le L_0$, where $L_0$ is a large constant to be specified. 
In this case, the bound \prettyref{eq:case-1-intermediate-bound} with the range $k/2<\ell \le L_0$ is still valid. 
The difference is that now $v$ can be as large as $k$. Thus, under the assumption that $d^2 \ge 8nr^4$, we have
$
\sum_{1 \le v \le k} 
\left( \frac{4r^4 n}{d^2} \right)^{k-v}
\le 2,
$
and hence
\begin{align*}
\sum_{\boldsymbol \ell: k/2 \le \ell \le L_0 } \beta_{\boldsymbol \ell} a^2_{\boldsymbol \ell} M_{\boldsymbol{\ell}} 
& \lesssim B^{2k}
\sum_{\ell} 
 \left( \frac{2 r^2}{d} \right)^{2\ell} n^{2\ell-k}
    \max \bigg\{ \bigg(\frac{\sqrt{B(B+1)} r}{d}\bigg)^{2 \ell} , \bigg( \frac{\sqrt{\log n}}{d n} \bigg)^\ell \bigg\} \\
& \lesssim B^{2k} n^{-k}
\sum_{\ell} 
    \bigg[ \bigg(\frac{\sqrt{B(B+1)} r^3 n}{d^2}\bigg)^{2 \ell} + \bigg( \frac{r^2 n^{1/2} (\log n)^{1/4}}{d^{3/2}} \bigg)^{2\ell} \bigg] \\
& \lesssim
\bigg(\frac{B^{5/2} (B+1)^{1/2} r^3}{d^2}\bigg)^k + \bigg( \frac{B^2 r^2 (\log n)^{1/4}}{d^{3/2} n^{1/2}} \bigg)^k ,
\end{align*}
where the last step holds under the assumptions \nbb{$d^2 \ge 2 \sqrt{B(B+1)} r^3 n$ and $d^{3/2} \ge r^2 n^{1/2} (\log n)^{1/4}$}. 

{\bf Case 3:} $L_0<\ell \le Lk/2$. 
Note that we always have $\ell = \frac 12 \sum_{i=1}^k \ell_i \le Lk/2$. 
In this case, we apply \prettyref{lem:M_ell-naive-bound} to bound $M_{\boldsymbol \ell }$. Analogous to the case $k \ge k_0$ (see \prettyref{eq:g(k)-bound-intermediate} more specifically), we have
\begin{align*}
& \sum_{\boldsymbol \ell:  L_0 < \ell \le Lk/2}
   \beta_{\boldsymbol \ell} a^2_{\boldsymbol \ell} M_{\boldsymbol{\ell}} \\
&\le C^k B^{2k}
    \sum_{\ell_1, \ldots, \ell_k : L_0 < \ell \le Lk/2}
    (2e)^{\sum_{i=1}^k \ell_i} 
    \bigg[\Big(\sum_{i=1}^k \ell_i -1\Big)!!\bigg]^2 
    \left( \frac{2 r^2}{d} \right)^{\sum_{i=1}^k \ell_i+ 2\sum_{i=1}^k \indc{\ell_i=0}} d^{-\sum_{i=1}^k \ell_i/2} \\
& \le C^k B^{2k}
    \sum_{L_0 < \ell \le Lk/2}
    (2e)^{2\ell} 
    [(2\ell -1)!!]^2 
    \left( \frac{2 r^2}{d} \right)^{2\ell} d^{-\ell} \binom{2\ell+k-1}{k-1} 
\end{align*}
if \nbb{$d \ge 2 r^2$}, since given $\ell$, there are at most $\binom{2\ell+k-1}{k-1}$ choices of $(\ell_1, \dots, \ell_k)$ such that $\sum_i \ell_i = 2 \ell$. 
Let $C$ denote a constant that may vary between lines.
Using the naive bound $\binom{2\ell+k-1}{k-1} \le 2^{2\ell+k-1}$, we obtain 
\begin{align*}
\sum_{\boldsymbol \ell:  L_0 < \ell \le Lk/2}
   \beta_{\boldsymbol \ell} a^2_{\boldsymbol \ell} M_{\boldsymbol{\ell}} 
& \le C^k B^{2k}
    \sum_{L_0 < \ell \le Lk/2}
    (2 e Lk)^{2\ell}  
    \left( \frac{4 r^2}{d} \right)^{2\ell} d^{-\ell} 
\le C^k B^{2k}
    \left( \frac{8 e L k r^2}{d^{3/2}} \right)^{2L_0}
\end{align*}
if \nbb{$d^{3/2} \ge 16 e r^2 Lk$}.
Choosing $L_0=k$ yields
$$
\sum_{\boldsymbol \ell:  L_0<\ell \le Lk/2 }
   \beta_{\boldsymbol \ell} a^2_{\boldsymbol \ell} M_{\boldsymbol{\ell}} 
   \le C^{k} B^{2k} r^{4k} L^{2k} d^{-3k} k^{2k} .
$$

Combining all the cases together with \prettyref{lem:g(k)-cauchy-schwarz}, we get that  for $2 \le k \le k_0$, 
\begin{align*}
g(k) 
&\lesssim \frac{B^{2k} r^{4k}}{d^{2k+2}} + \frac{B^{2k+1} (B+1) r^{4k-2}}{d^{2k}} + \frac{B^{2k} r^{4k-4} \log n}{d^{2k-1} n} \\
&\quad + \bigg(\frac{B^{5/2} (B+1)^{1/2} r^3}{d^2}\bigg)^k + \bigg( \frac{B^2 r^2 (\log n)^{1/4}}{d^{3/2} n^{1/2}} \bigg)^k + \bigg( \frac{B^{2} r^{4} L^{2}}{d^3} \bigg)^k .
\end{align*}
\end{proof}

In the special case of a linear kernel, we can similarly derive separate upper bounds on $g(k)$ in the two regimes $k \ge k_0$ and $k \le k_0$.

\begin{lemma}
\label{lem:g(k)-all-bounds-linear-kernel}
Suppose that the kernel is linear, i.e., $\kappa(t) = b_1 t$. 
Then we have
$$
g(k) \le \bigg(\frac{b_1^2 k}{d^{3/2}}\bigg)^{k}.
$$
Moreover, there is a constant $C>0$ depending only on $k_0$ such that the following holds for all $k \le k_0$. 
If 
\nbb{$d \gtrsim n^{1/2} b_1 + n^{1/3} b_1^{4/3} + \log n$,}  
then
$$
g(k) \le C \bigg( \max \bigg\{ \frac{b_1^3}{d^2} , \frac{b_1^2 (\log n)^{1/4}}{n^{1/2} d^{3/2}} \bigg\} \bigg)^k .
$$
\end{lemma}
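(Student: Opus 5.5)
For the linear kernel I will work from the explicit formula in \prettyref{lem:g(k)-a-ell}. There $g(k)=0$ for odd $k$, and for even $k$
\[
g(k)=\Big(\tfrac{b_1^k}{d(d+2)\cdots(d+k-2)}\Big)^2\,\Expect_A\Big[\Big(\sum_{\boldsymbol\ell}\Expect[\textstyle\prod_{i<j}\iprod{x_i}{x_j}^{\ell_{ij}}\mid A]\Big)^2\Big],
\]
where the sum runs over perfect matchings $\boldsymbol\ell$ of $[k]$. There are $(k-1)!!$ such matchings. Applying Cauchy--Schwarz to the inner sum (no weights $\beta_{\boldsymbol\ell}$ are needed) gives
\[
g(k)\le \frac{b_1^{2k}}{d^{k}}\,(k-1)!!\sum_{\boldsymbol\ell}M_{\boldsymbol\ell}.
\]
By symmetry every perfect matching gives the same value of $M_{\boldsymbol\ell}$, so this becomes $g(k)\le b_1^{2k}d^{-k}[(k-1)!!]^2 M$, where $M$ is the common value.

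For the first (crude) bound I take $M\le d^{-k/2}$ from the linear case of \prettyref{lem:M_ell-naive-bound}. Since $[(k-1)!!]^2\le k^k$, this gives $g(k)\le b_1^{2k}k^k d^{-3k/2}=(b_1^2k/d^{3/2})^k$, exactly as claimed. I should double-check that the Cauchy--Schwarz constant $(k-1)!!$ combines with the count $(k-1)!!$ of matchings into $[(k-1)!!]^2\le k^k$ with no extra factor. It does: $(k-1)!!\le k^{k/2}$.

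For the refined bound with $k\le k_0$, the same reduction applies, and I instead bound $M$ using the linear case of \prettyref{lem:M_ell-better-bound}. This requires $d\gtrsim n^{1/2}b_1+n^{1/3}b_1^{4/3}+\log n$, which is the stated hypothesis. It gives
\[
M\le C_2\max\{b_1^2/d^2,(\log n)^{1/2}/(nd)\}^{k/2}.
\]
Multiplying by $b_1^{2k}d^{-k}$ and absorbing $[(k-1)!!]^2\le k_0^{k_0}$ into the constant gives
\[
g(k)\le C\max\{b_1^3/d^2,\ b_1^2(\log n)^{1/4}/(n^{1/2}d^{3/2})\}^k.
\]
This holds because $(b_1^2/d)\cdot(b_1/d)=b_1^3/d^2$ and $(b_1^2/d)\cdot((\log n)^{1/4}/(n^{1/2}d^{1/2}))$ equals the second term.

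The only delicate point is the reduction to a single $M$. I need to check that $d(d+2)\cdots(d+k-2)\ge d^{k/2}$, so that the prefactor squared is at most $b_1^{2k}d^{-k}$. I also need the symmetry claim: the quantity $\Expect_A(\Expect[\cdot\mid A])^2$ is invariant under relabeling vertices, since the joint law of $(x,A)$ is exchangeable. Everything else is routine substitution of the earlier lemmas, so I do not expect a real obstacle. The substantive work was already done in \prettyref{prop:posterior-concentration}.
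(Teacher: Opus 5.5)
Your proposal is correct and follows the paper's proof essentially verbatim. It uses Cauchy--Schwarz over the $(k-1)!!$ perfect matchings with unit weights, bounds the prefactor by $b_1^{2k}/d^k$, and then applies the linear cases of Lemma~\ref{lem:M_ell-naive-bound} and Lemma~\ref{lem:M_ell-better-bound} for the crude and refined bounds respectively. Your exchangeability step reducing everything to a single common $M$ is valid but unnecessary: the paper simply sums the uniform bound on $M_{\boldsymbol\ell}$ over the $(k-1)!!$ matchings, which yields the same factor $[(k-1)!!]^2$.
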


\begin{proof}
In the case of a linear kernel, recall Lemma~\ref{lem:g(k)-a-ell}, where $\boldsymbol \ell = (\ell_{ij})$ is a multi-index such that $\ell_{ij} \in \{0,1\}$ for $1 \le i <j \le k$ and $\ell_i = \sum_{j \neq i} \ell_{ij} = 1$ for all $i$. There are $(k-1)!!$ choices of such $\boldsymbol \ell$. Therefore, we can apply Lemma~\ref{lem:g(k)-a-ell} together the Cauchy--Schwarz inequality (as in the proof of Lemma~\ref{lem:g(k)-cauchy-schwarz} but with $\beta_{\boldsymbol{\ell}} = 1$) to obtain 
\begin{equation*}
g(k) \le \left(\frac{b_{1}^k}{d(d+2) \cdots (d+k-2)}\right)^2 (k-1)!! 
\sum_{\boldsymbol \ell} M_{\boldsymbol{\ell}} 
\le \frac{b_{1}^{2k}}{d^k} (k-1)!! 
\sum_{\boldsymbol \ell} M_{\boldsymbol{\ell}} .
\end{equation*}

Next, for $k \ge k_0$, we apply Lemma~\ref{lem:M_ell-naive-bound} to obtain $M_{\boldsymbol{\ell}} \le d^{-k/2}$, so
$$
g(k) \le \frac{b_{1}^{2k}}{d^k} ((k-1)!!)^2 d^{-k/2}
\le \left(\frac{b_1^2 k}{d^{3/2}}\right)^{k} .
$$
For $k \le k_0$, we can hide all the dependencies on $k$ in a constant $C>0$ and apply the above bound on $g(k)$ together with Lemma~\ref{lem:M_ell-better-bound} to obtain
\begin{equation*}
g(k) \le C \frac{b_{1}^{2k}}{d^k} \bigg( \max \bigg\{ \frac{b_1^2}{d^2} , \frac{(\log n)^{1/2}}{n d} \bigg\} \bigg)^{k/2} .
\end{equation*}
This desired result then follows.
\end{proof}

\subsubsection{Step 5: Finishing up}
By \prettyref{lem:kl-expansion}, we have
$$
\KL(P_A\|Q_A) 
\leq  
\sum_{k=2}^{n-1} \binom{n}{k+1}
g(k) .
$$
We splits the above bound into two parts according to the size of $k$. 
Applying \prettyref{lem:g(k)-k-large} gives
\begin{align*}
\sum_{k=k_0+1}^{n-1} \binom{n}{k+1} 
g(k) 
&\le 
\sum_{k=k_0+1}^{n-1} \binom{n}{k+1} 
\left( \frac{ C B^2 r^2 Lk}{d^{3/2}} + \frac{C B^2 r^4}{d^2} \right)^k \\
&\le \sum_{k \ge k_0} \left( \frac{ C n^{1+1/k} B^2 r^2 L}{d^{3/2}} + \frac{C n^{1+1/k} B^2 r^4}{d^2} \right)^k .
\end{align*}
For the above bound to be $o(1)$, it suffices to have 
\nbb{
$$
d \gg n^{\frac{2}{3} + \frac{2}{3 k_0}} B^{4/3} r^{4/3} L^{2/3} + n^{\frac{1}{2} + \frac{1}{2 k_0}} B r^2 .
$$
}
Applying 
\prettyref{lem:g(k)-k-small}, we obtain 
\begin{align*}
&
\sum_{k=2}^{k_0} \binom{n}{k+1}
g(k) \\
&\lesssim 
\max_{2 \le k \le k_0} n^{k+1} \bigg[ \frac{B^{2k} r^{4k}}{d^{2k+2}} + \frac{B^{2k+1} (B+1) r^{4k-2}}{d^{2k}} + \frac{B^{2k} r^{4k-4} \log n}{d^{2k-1} n} \\
&\qquad \qquad \qquad \qquad + \bigg(\frac{B^{5/2} (B+1)^{1/2} r^3}{d^2}\bigg)^k + \bigg( \frac{B^2 r^2 (\log n)^{1/4}}{d^{3/2} n^{1/2}} \bigg)^k + \bigg( \frac{B^{2} r^{4} L^{2}}{d^3} \bigg)^k \bigg] \\
&\lesssim 
\max_{2 \le k \le k_0} \bigg[ \bigg(\frac{B^{2} r^{4} n^{1+1/k}}{d^{2+2/k}}\bigg)^k + \bigg(\frac{B^{2+1/k} (B+1)^{1/k} r^{4-2/k} n^{1+1/k}}{d^{2}}\bigg)^k + \bigg(\frac{B^{2} r^{4-4/k} n (\log n)^{1/k}}{d^{2-1/k}}\bigg)^k \\
&\qquad \qquad + \bigg(\frac{B^{5/2} (B+1)^{1/2} r^3 n^{1+1/k}}{d^2}\bigg)^k + \bigg( \frac{B^2 r^2 n^{1/2+1/k} (\log n)^{1/4}}{d^{3/2}} \bigg)^k + \bigg( \frac{B^{2} r^{4} L^{2} n^{1+1/k}}{d^3} \bigg)^k \bigg] .
\end{align*}
For the above bound to be $o(1)$, we need \nbb{
\begin{align*}
d &\gg 
\max_{2 \le k \le k_0} \left(B^{\frac{1}{1+1/k}} r^{\frac{2}{1+1/k}} n^{1/2} + B^{1+\frac{1}{2k}} (B+1)^{\frac{1}{2k}} r^{2-\frac{1}{k}} n^{\frac{1}{2} + \frac{1}{2 k}} + B^{\frac{2}{2-1/k}} r^{\frac{4-4/k}{2-1/k}} n^{\frac{1}{2-1/k}} (\log n)^{\frac{1}{2k-1}} \right) \\
&\quad + B^{5/4} (B+1)^{1/4} r^{3/2} n^{3/4} + B^{4/3} r^{4/3} n^{2/3} (\log n)^{1/6} + B^{2/3} r^{4/3} L^{2/3} n^{1/2} .
\end{align*}
}
We also need the conditions assumed by the lemmas: \nbb{ 
\begin{align*}
d &\gtrsim r^{4/3} L^{2/3} n^{2/3} + L^2 + n^{1/2} r^2 + (B(B+1))^{1/4} r^{3/2} n^{1/2} + r^{4/3} n^{1/3} (\log n)^{1/6} \\
& \quad + B^2 r^2 + n^{1/2} B r^{3/2} + n^{1/2} (B + B^{1/2}) r + n^{1/3} (B^{4/3} + B^{2/3}) r^{4/3} + \log n + r^2 (\log n)^3 .
\end{align*}
}


Under the assumption that $B$ is a constant, the above conditions on $d$ can be simplified to
\begin{align*}
d &\gg n^{\frac{2}{3} + \frac{2}{3 k_0}} r^{4/3} L^{2/3} + n^{\frac{1}{2} + \frac{1}{2 k_0}} r^2 + n^{1/2} r + r^{3/2} n^{3/4} \\
&\quad + \max_{2 \le k \le k_0} \left( r^{\frac{2}{1+1/k}} n^{1/2} + r^{2-\frac{1}{k}} n^{\frac{1}{2} + \frac{1}{2 k}} + r^{\frac{4-4/k}{2-1/k}} n^{\frac{1}{2-1/k}} (\log n)^{\frac{1}{2k-1}} \right) 
\end{align*}
and 
$$
d \gtrsim L^2 + \log n .
$$
The former condition above can be simplified to 
\begin{align*}
d &\gg n^{2/3 + \epsilon} r^{4/3} L^{2/3} + n^{1/2 + \epsilon} r^2 + n^{1/2} r + r^{3/2} n^{3/4} 
\end{align*}
for an arbitrarily small constant $\epsilon > 0$ if $k_0$ is taken to be sufficiently large. 
We would like this to be dominated by 
$$
d \gg n^{3/4} r^{3/2} .
$$
\begin{itemize}
\item
If $r \ge 1/\sqrt{n}$, then $n^{1/2} r \le r^{3/2} n^{3/4}$. 
\item
If $r \le d^a$, then $d \gg n^{1/2+\epsilon} r^2$ follows from $d \gg n^{1/2+\epsilon} r^{2-b} d^{ab}$, i.e., $d \gg n^{\frac{1/2+\epsilon}{1-ab}} r^{\frac{2-b}{1-ab}} = n^{3/4} r^{3/2}$ if $ab = 1/3 - 4\epsilon/3$ and $b = 1-2 \epsilon$. Hence, $a = \frac{1-4\epsilon}{3-6\epsilon} = \frac 13 - \frac{2\epsilon}{3-6\epsilon}$. 
\item
If $L \le d^c$, then $d \gg n^{2/3 + \epsilon} r^{4/3} L^{2/3}$ follows from $d \gg n^{2/3 + \epsilon} r^{4/3} d^{2c/3}$, i.e., $d \gg n^{\frac{2+3\epsilon}{3-2c}} r^{\frac{4}{3-2c}}$. This condition is dominated by $d \gg r^{3/2} n^{3/4}$ if $n^{\frac{2+3\epsilon}{3-2c}} r^{\frac{4}{3-2c}} = (n r^2)^{\frac{2}{3-2c}} n^{\frac{3\epsilon}{3-2c}} \le (n r^2)^{3/4}$, i.e., $n r^2 \ge n^{\frac{3\epsilon}{(3-2c)(3/4-2/(3-2c))}}$. Therefore, it suffices to have $c \le 1/6 - \epsilon_1$ and $r \ge n^{-1/2+\epsilon_2}$ for any constants $\epsilon_1$ and $\epsilon_2$. 
\end{itemize}
To conclude, the set of conditions we need is
\nbb{$n^{-1/2 + \epsilon} \le r \le d^{1/3 - \epsilon}$, $L \le d^{1/6 - \epsilon}$, $ d \gtrsim \log n$, $ d \gg n^{3/4} r^{3/2}$.}


Finally, let us consider the special case of a linear kernel, i.e., $\kappa(t) = b_1 t$.
By \prettyref{lem:g(k)-all-bounds-linear-kernel}, 
\begin{align*}
\sum_{k=k_0+1}^{n-1} \binom{n}{k+1} 
g(k) 
\le 
\sum_{k=k_0+1}^{n-1} \binom{n}{k+1}  \left(\frac{b_1^2 k}{d^{3/2}}\right)^{k} 
\le \sum_{k \ge k_0} \bigg(\frac{e n^{1+1/k} b_1^2}{d^{3/2}}\bigg)^{k} ,
\end{align*}
which is $o(1)$ if \nbb{
$$
d \gg n^{\frac{2}{3} + \frac{2}{3 k_0}} b_1^{4/3} .
$$}
Moreover, by Lemmas~\ref{lem:g(k)-a-ell} and~\ref{lem:g(k)-all-bounds-linear-kernel}, 
\begin{align*}
\sum_{k=2}^{k_0} \binom{n}{k+1} 
g(k) 
&\lesssim 
\max_{2 \le k \le k_0} n^{k+1} \bigg( \max \bigg\{ \frac{b_1^3}{d^2} , \frac{b_1^2 (\log n)^{1/4}}{n^{1/2} d^{3/2}} \bigg\} \bigg)^k \\
&\lesssim \max_{2 \le k \le k_0} \bigg[ \bigg(\frac{b_1^3 n^{1+1/k}}{d^2}\bigg)^k + \bigg( \frac{b_1^2 n^{1/2+1/k} (\log n)^{1/4}}{d^{3/2}} \bigg)^k \bigg] ,
\end{align*}
which is $o(1)$ if \nbb{
\begin{equation*}
d \gg b_1^{3/2} n^{3/4} + b_1^{4/3} n^{2/3} (\log n)^{1/6} .
\end{equation*} }
We also need the condition in \prettyref{lem:g(k)-all-bounds-linear-kernel} to hold:
$d \gtrsim n^{1/2} b_1 + n^{1/3} b_1^{4/3} + \log n$,
which, in view of the condition $d \gg b_1^{4/3} n^{2/3} (\log n)^{1/6}$, can be simplified to
\nbb{$$d \gtrsim n^{1/2} b_1 + \log n.$$}
Moreover, if $b_1 \ge n^{-1/2} \log n$, then we have $b_1^{4/3} n^{2/3} (\log n)^{1/6} \le n^{3/4} b_1^{3/2}$ and $n^{1/2} b_1 \le n^{3/4} b_1^{3/2}$. 
In conclusion, the set of conditions we need is \nbb{$d \gg n^{3/4} b_1^{3/2}$ and $d \gtrsim \log n$.}

\section{Proofs for detection with a general kernel}
\label{sec:general-kernel}

In this section we prove the main result, \prettyref{thm:fixed}, and its extensions in Theorems \ref{thm:lowsnr}--\ref{thm:highsnr}.

\subsection{Proof of \prettyref{thm:fixed}}

\paragraph{Upper bound: $1\ll d \ll n^{3/4}$.}
For detection, we consider the signed triangle count as the test statistic. To  apply \prettyref{thm:ub}, we compute the trace of the operator $\kappa$ and show that  $\tr(\kappa^3) = \Theta(d^{-2})$ 
and $\tr(\kappa^4) = O(d^{-3})$.

Expand $\kappa$ under the Gegenbauer basis as
\[
\kappa(t) = \sum_{k=1}^\infty \alpha_k C_k^\lambda(t),
\]
where the constant term vanishes because $\kappa$ is centered. Applying \prettyref{lmm:rodrigues} and the fact that $\Iprod{x_1}{x_2}$ is $O(\frac{1}{d})$-subgaussian, we have
\begin{equation}
\alpha_1 = \frac{\kappa'(0)+o_{d}(1)}{d}, 
\quad
|\alpha_k| \leq C d^{-k}, k=2,3,4, 
\quad
|\alpha_k| \leq \frac{d+2k-2}{d-2}
\frac{\|\kappa\|_{L_2(\mu)}}{\sqrt{\dim(\calH_d^k)}}, k\geq 5, 
\label{eq:gegenbauer-coefficient-bound}
\end{equation}
for some constant $C$.
Note that $\|\kappa\|_{L_2(\mu)}^2 = 
\frac{1}{p(1-p)}\var(K(\Iprod{x_1}{x_2}))$,
where 
$\var(K(\Iprod{x_1}{x_2})) \leq \Expect[K(\Iprod{x_1}{x_2})^2]
\leq \Expect[K(\Iprod{x_1}{x_2})]=p$
and similarly 
$\var(K(\Iprod{x_1}{x_2})) \leq 1-p$.
We get $\|\kappa\|_{L_2(\mu)}^2 \leq \frac{1}{\max\{p,1-p\}}\leq 2$.

Without loss of generality, assume that 
$\kappa'(0)>0$.
Recalling the eigenvalue and 
multiplicity formulas in \prettyref{eq:eigenvalues-kappa} and \prettyref{eq:dim-harmonics}, 
we have 
\begin{align*}
    \tr(\kappa^3) = 
    \sum_{k\geq 1}^\infty \pth{\frac{d-2}{d-2+2k} \alpha_k}^3
    \dim(\calH_d^k)
\geq c d^{-2}
- 2^{3/2} \sum_{k\geq 5}^\infty 
    \dim(\calH_d^k)^{-1/2}
\end{align*} 
for some constant $c>0$.
We claim that the remainder sum is $o(d^{-2})$, which shows 
$\tr(\kappa^3) = \Theta(d^{-2})$. Indeed, since $\dim(\calH_d^k)=
\frac{d-2+2k}{d-2} 
\binom{d-3+k}{k} \geq \binom{d-3+k}{k}$, it suffices to consider 
$\sum_{k\geq 5} a_k$, where $a_k = 1/\sqrt{\binom{d-3+k}{k}}$. 
Note that $a_{k+1}/a_k = \sqrt{\frac{k}{d-2+k} 
}
\le \sqrt{\frac{d}{2d-2} 
} \le \sqrt{\frac{5}{8}}<1$ for $5 \le k \le d$. Thus $\sum_{k=5}^d a_k \leq C a_5 = O(d^{-5/2})$.
For $k>d$,
$\binom{d-3+k}{k} = \binom{d-3+k}{d-3} \geq \frac{k^{d-3}}{(d-3)!}$. 
Thus $
\sum_{k\geq d} a_k \leq 
\sqrt{(d-3)!}
\sum_{k\geq d} k^{-(d-3)/2}$.
Since $\sum_{k\geq d} k^{-(d-3)/2} \leq 
\int_d^{\infty} x^{-(d-3)/2} dx
= \frac{2}{d-5} d^{-(d-5)/2}$. 
Applying Stirling's approximation, we conclude that 
$\sum_{k\geq d} a_k \leq e^{-c' d}$ for some absolute constant $c'>0$.


Similarly, 
we have $\tr(\kappa^4) = O(d^{-3})$.
Finally, using \prettyref{thm:ub}, the signed triangle count $T$ satisfies
$\Expect_Q[T]=0$
and 
$\Var_Q[T]=O(n^3)$ under the \ER model,
and 
$|\Expect_P[T]| = \Theta(n^3 \tr(\kappa^3))$
and 
$\Var_P[T]=O(n^3+n^4 \tr(\kappa^4))$ under the RGG.
Thus,
\begin{equation}
|\Expect_P[T]-\Expect_Q[T]|^2 
\gg 
\Var_P[T]+\Var_Q[T],
\label{eq:mean-sep}
\end{equation}
provided that $d\ll n^{3/4}$.
This implies the desired strong detection
$\TV(P,Q) \to 1$.

\paragraph{Lower bound: $d \gg n^{3/4}$.}
We aim to apply \prettyref{thm:poly} for a polynomial kernel together with a polynomial approximation argument to deal with a general kernel $\kappa$.

Let $x_1,\ldots,x_n$ be i.i.d. 
Given two kernels $K$ and $\tilde K$, denote the induced distributions on RGG $G$ by $P$ and $\tilde P$.
Let 
$p_{ij}=K(x_i,x_j)$ and 
$\tilde p_{ij}=\tilde{K}(x_i,x_j)$. By conditioning on the coordinates and the convexity of $\TV$,
\begin{align*}
    \TV(P,\tilde P) 
    \leq&  \Expect\qth{\TV\pth{\prod_{i<j} \Bern(p_{ij}), \prod_{i<j} \Bern(\tilde p_{ij})}} \\
     \leq&  \sum_{i<j} \Expect\qth{\TV\pth{\Bern(p_{ij}), \Bern(\tilde p_{ij})}} 
     =  \binom{n}{2} \Expect\qth{|p_{12}-\tilde p_{12}|} 
    \leq \binom{n}{2} \sqrt{\Expect\qth{|p_{12}-\tilde p_{12}|^2}}.    
\end{align*}
For inner-product kernels $K$ and $\tilde K$,
we have, with 
$\mu$ being the law of $\Iprod{x_1}{x_2}$:
\begin{equation}
\TV(P,\tilde P) 
    \leq
    \binom{n}{2} \|K-\tilde K\|_{L_2(\mu)}
    \label{eq:kernel-approx}
\end{equation}
Let $Q$ and $\tilde Q$ denote the corresponding \ER graphs, namely, 
$G(n,p)$ and $G(n,\tilde p)$, 
where $p = \Expect[K(\Iprod{x_1}{x_2})]$
and $\tilde p = \Expect[\tilde K(\Iprod{x_1}{x_2})]$.
Then 
$\TV(Q,\tilde Q) \leq \binom{n}{2} 
|p-\tilde p| \leq \binom{n}{2} \|K-\tilde K\|_{L_2(\mu)}$.
By the triangle inequality, we have
\begin{equation}
    |\TV(P,Q) - \TV(\tilde P,\tilde Q)|
\leq 2 \binom{n}{2} \|K-\tilde K\|_{L_2(\mu)}.
\label{eq:kernel-approx-twosided}
\end{equation}
As long as $\|K-\tilde K\|_{L_2(\mu)} = o(n^{-2})$, we may replace the kernel $K$ by $\tilde K$.
The next lemma is useful for finding such a polynomial kernel $\tilde K$.

\begin{lemma} \label{lem:polynomial-approximation}
Let $K: [-1,1]\to [0,1]$ be a fixed $C^\infty$ function such that $c \le K(t) \le 1-c$ for all $t \in [-1,1]$ and a fixed constant $c>0$. For any positive integer $\ell$, there exists a polynomial $\tilde K$ of degree $L = L_{K,\ell}$ such that (i) $c/2 \le \tilde K(t) \le 1-c/2$ for all $t \in [-1,1]$, and (ii) $|\tilde K(t) - K(t)| \le \frac{c}{2} |t|^\ell$. 
\end{lemma}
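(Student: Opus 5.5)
The plan is to take $\tilde K$ as the degree-$(\ell-1)$ Taylor polynomial of $K$ at $0$, plus a small correction that is divisible by $t^\ell$. The Taylor polynomial alone, $T(t)=\sum_{j<\ell} K^{(j)}(0)t^j/j!$, has Taylor remainder $|K(t)-T(t)| \le \frac{\|K^{(\ell)}\|_\infty}{\ell!}|t|^\ell$. The constant $M_\ell \eqdef \|K^{(\ell)}\|_\infty/\ell!$ need not be at most $c/2$, and $T$ need not stay in $[c/2,1-c/2]$ away from $0$. So I will not use $T$ directly. Instead I will write $K(t)=T(t)+t^\ell h(t)$ with $h(t)=\frac{1}{(\ell-1)!}\int_0^1(1-s)^{\ell-1}K^{(\ell)}(st)\,ds$, which is $C^\infty$ on $[-1,1]$ because $K$ is, and then approximate $h$ uniformly.

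By the Weierstrass approximation theorem, I choose a polynomial $q$ with $\sup_{[-1,1]}|h-q|\le c/2$ and set $\tilde K(t)=T(t)+t^\ell q(t)$. Then
\[
|\tilde K(t)-K(t)| = |t|^\ell |q(t)-h(t)| \le \tfrac{c}{2}|t|^\ell ,
\]
which is (ii). For (i), since $|t|\le 1$ we have $|\tilde K(t)-K(t)|\le c/2$, and combined with $c\le K\le 1-c$ this gives $c/2\le \tilde K\le 1-c/2$ on $[-1,1]$. The degree $L=\max(\ell-1,\ \ell+\deg q)$ depends only on $K$ and $\ell$, as claimed.

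The only step needing care is the smoothness of $h$, i.e., making sure the division by $t^\ell$ does not create a singularity at $0$. The integral form of the Taylor remainder handles this, since $h$ is defined by that integral for every $t\in[-1,1]$, including $t=0$. Only continuity of $h$ is actually needed for Weierstrass, so even $K\in C^\ell$ would suffice.
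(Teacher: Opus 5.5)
Your proposal is correct and follows the paper's proof: write $K$ as a Taylor polynomial plus $t^\ell$ times a continuous function, approximate that function uniformly within $c/2$ by Weierstrass, and deduce (i) from (ii) using $|t|\le 1$. The only difference is cosmetic: you take the degree-$(\ell-1)$ Taylor polynomial with the explicit integral remainder, while the paper uses the degree-$\ell$ Taylor polynomial; either choice works.
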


\begin{proof}
Let $K_\ell$ be the degree-$\ell$ Taylor approximation of $K$ at $0$. 
By Taylor's theorem, there is a continuous function $S:[-1,1]\to\mathbb R$ such that $K(t) = K_\ell(t) + t^\ell S(t)$ for $t \in [-1,1]$. 
The Weierstrass approximation theorem
provides a polynomial $P$ of degree $L'=L'_{K,\ell}$ such that $|S(t) - P(t)| \le c/2$ for $t \in [-1,1]$. 
Define $\tilde K(t) = K_\ell(t)+ t^\ell P(t)$.
It is straightforward to verify the two claims.
\end{proof}

Recall that $K: [-1,1]\to [0,1]$ is  a $C^\infty$ function such that $c \leq  K(t) \leq 1-c$ for all $t\in[-1,1]$ and  some constant $c$. 
Thus
$p = \Expect[K(\Iprod{x_1}{x_2})] \in [c,1-c]$.
Applying \prettyref{lem:polynomial-approximation}, there exists a constant $L$ and a degree-$L$ polynomial $\tilde K$ such that
(a) $\min_{t\in[-1,1]} \tilde K(t) \geq c/2$ and $\max_{t\in[-1,1]} \tilde K(t) \leq 1-c/2$;
(b) $|K(t) - \tilde K(t)| \leq \frac{c}{2}|t|^6$. 
Then $\tilde p \equiv \Expect[\tilde K(\Iprod{x_1}{x_2})] $ satisfies $\tilde p \in [c/2,1-c/2]$ and
\[
|p-\tilde p| \leq 
\|K-\tilde K\|_{L_2(\mu)}
\lesssim \sqrt{\Expect[|\Iprod{x_1}{x_2}|^{12}]} \lesssim d^{-3}.
\]
Applying the triangle inequality and \prettyref{eq:kernel-approx},
\[
\TV(P,Q)
\leq 
\TV(P,\tilde P)+\TV(Q,\tilde Q)+\TV(\tilde P,\tilde Q)
\leq 2 \binom{n}{2} \|K-\tilde K\|_{L_2(\mu)} + \TV(\tilde P,\tilde Q).
\]
As the first term vanishes due to the assumption $d \gg n^{3/4}$, it remains to bound 
$\TV(\tilde P,\tilde Q)$. 
This is an RGG with a degree-$L$ polynomial kernel $\tilde K$.
The standardized version is 
\[
\tilde \kappa(t)
\eqdef
\frac{\tilde K(t)-\tilde p}{\sqrt{\tilde p(1-\tilde p)}} 
= \frac{\sum_{\ell=1}^L a_\ell t^\ell -\tilde p}{\sqrt{\tilde p(1-\tilde p)}} 
\]
where $c/2 \leq \tilde p \leq 1-c/2$. 
Since $L$ is a constant, the assumption 
\prettyref{eq:coeffs-assumption} is automatically satisfied for suitably large constants $B$ and $r$.
Applying \prettyref{thm:poly} concludes that $\TV(\tilde P,\tilde Q) = o(1)$ whenever $d \gg n^{3/4}$, completing the proof.

\subsection{Proofs of Theorems \ref{thm:lowsnr}--\ref{thm:highsnr}}

\begin{proof}[Proof of \prettyref{thm:lowsnr}]
We follow the same argument as in the proof of \prettyref{thm:fixed}, with the following modifications.

For the lower bound, since by assumption
 $d \gg n^{3/4} r^{3/2}$ and $r \gg n^{-1/2+\epsilon}$ for some constant $\epsilon$, we have $d \geq n^{3\epsilon/2}$.
 Again applying \prettyref{lem:polynomial-approximation}, there exists a constant $L$ and a degree-$L$ polynomial $\tilde K$ such that
(a) $\min_{t\in[-1,1]} \tilde K(t) \geq c/2$ and $\max_{t\in[-1,1]} \tilde K(t) \leq 1-c/2$;
(b) $|K(t) - \tilde K(t)| \leq \frac{c}{2}|t|^{10/\epsilon}$.
This ensures that $
|p-\tilde p| \leq 
\|K-\tilde K\|_{L_2(\mu)}
\lesssim d^{-3}$
is still satisfied. The lower bound follows from the same kernel polynomial approximation argument and 
\prettyref{thm:poly}.

For the upper bound, we still use the signed triangle count $T$ as the test statistic. In applying \prettyref{thm:ub} and 
bounding the trace, the first two parts in 
\prettyref{eq:gegenbauer-coefficient-bound} become now $
\alpha_1 = \frac{r \kappa'(0)+o_{d}(1)}{d}$ and $|\alpha_k| \leq C (r/d)^{k}$ for $ k=2,3,4$. The third inequality in 
\prettyref{eq:gegenbauer-coefficient-bound} continues to hold, and we note that 
$\|\kappa\|_{L_2(\mu)}^2 
= \frac{1}{p(1-p)}
\Var(K(r\Iprod{x_1}{x_2}))$,
where 
$c \leq p \leq 1-c$ and the variance equals
$
= \frac{1}{2} \Expect[(K(r\Iprod{x_1}{x_2})-K(r\Iprod{\tilde x_1}{\tilde x_2}))^2] \leq C r^2/d$,
where $C$ is a constant and $\Iprod{\tilde x_1}{\tilde x_2}$ 
is an i.i.d.\  copy of $\Iprod{ x_1}{x_2}$.
Thus, $\|\kappa\|_{L_2(\mu)} = O(r/\sqrt{d})$.
The same argument yields 
$\tr(\kappa^3) = \Theta(r^3 d^{-2})$
and 
$\tr(\kappa^4) = O(r^4 d^{-3})$.
Thus 
$\Var_P[T] = O(n^3 + n^4 r^4 d^{-3})$.
By assumption, $r \gg n^{-1/2}$. So the desired condition \prettyref{eq:mean-sep} holds provided that $d \ll n^{3/4} r^{3/2}$, implying strong detection.
\end{proof}



\begin{proof}[Proof of \prettyref{thm:linear}]
Recall that $K(t) = p + r t$, where $0<r<p<1/2$.
For the lower bound, suppose $d \gg (\frac{nr^2}{p})^{3/4}$, which implies, in view of the assumption $r\gtrsim \sqrt{\frac{p}{n}} \log n$, that 
$d \gtrsim (\log n)^{3/2}$. The lower bound then follows directly from the second part of \prettyref{thm:poly} by applying 
$b_1 = \frac{r}{\sqrt{p(1-p)}}$.

For the upper bound, since $\kappa(t) = \frac{r}{\sqrt{p(1-p)}} t$ and 
$C_1^{\frac{d-2}{2}}(t) = (d-2)t$. 
The only non-zero eigenvalue is $\alpha_1 = \frac{r}{(d-2)\sqrt{p(1-p)}} $, with multiplicity $d$.
Thus $\tr(\kappa^3) = d \alpha_1^3 \asymp \frac{r^3}{d^2 p^{3/2}}$
and 
$\tr(\kappa^4) = d \alpha_1^4 \asymp \frac{r^4}{d^3 p^2}$.
Applying \prettyref{thm:ub}, the signed triangle count $T$ satisfies
$|\Expect_P[T]-\Expect_Q[T]| \asymp \frac{n^3 r^3}{d^2 p^{3/2}}$,
$\var_Q(T)\asymp n^3$, and 
$\var_P(T)\lesssim \frac{n^3\tr(\kappa^3)}{p^{3/2}} + \frac{n^4\tr(\kappa^4)}{p} 
\asymp  \frac{n^3 r^3}{p^3 d^2} + \frac{n^4r^4}{p^3d^3} $.
To ensure
$|\Expect_P[T]-\Expect_Q[T]| \gg 
\var_Q(T)+\var_P(T)$, we require
$d \ll (\frac{nr^2}{p})^{3/4}$,
$d \ll (nr)^{3/2}$, and
$d \ll (nr)^2$.
The first (and desired) condition dominates the second and third because $p \ge r \gtrsim \sqrt{\frac{p}{n}} \log n$ by definition.
\end{proof}

\begin{proof}[Proof of \prettyref{thm:highsnr}]
Recall that the kernel $K(t) = \int_{-\infty}^t f(x)dx$ is the CDF for the density $f$, which is assumed to satisfy the lower bound 
$f(x) \geq c \exp(-Cx^2)$.
The scaled kernel
$K_r(t) = K(rt) $ defines the RGG, leading to average edge density $p =\Expect[K(r \Iprod{x_1}{x_2})]$.
By assumption, $1\ll r \ll \sqrt{d}$. Since $\Iprod{x_1}{x_2} = O_P(\frac{1}{\sqrt{d}})$, we have 
$p = K(0)+o(1)$ which is a constant in $(0,1)$.
The normalized kernel
$\kappa_r(t) = \frac{K(rt)-p}{\sqrt{p(1-p)}}$ satisfies
$\kappa_r'(0) = r K'(0) = rf(0)$ and $f(0) \geq c$.
From here, the upper bound calculation proceeds identically to that in the proof of \prettyref{thm:lowsnr}.

Next we prove the lower bound assuming the dimension exceeds the threshold with $d \gg n^{3/4} r^{3/2}$. The argument is similar to that of \prettyref{thm:fixed}, except that the polynomial approximation part is more involved and  requires choosing a degree that grows as $L\asymp r^2$.
The following lemma (see \prettyref{app:poly-highsnr} for a proof) ensures that the degree-$L$ Taylor expansion yields a polynomial kernel that
(a) is a valid probability kernel, taking values in $(0,1)$;
and 
(b) satisfies the coefficient decay condition required by \prettyref{thm:poly}.
\begin{lemma}
Suppose the kernel $K(t)$ satisfies the conditions in~\prettyref{thm:highsnr}. 
There exists a degree-$L$ polynomial $\tilde K(t) = \sum_{\ell\geq 0}^L a_\ell t^\ell$ 
that satisfies
(a) $\tilde K(t) \in (0,1)$ for all $t \in (-r,r)$;
(b) 
$\Expect[|K(r\Iprod{x_1}{x_2}) - \tilde K(r\Iprod{x_1}{x_2})|^2]
\leq (C_0r^2/d)^L$;
(c) $|a_\ell| \leq \frac{C_0^\ell}{\sqrt{\ell!}}$;
(d) $L = \lceil C_1 r^2 \rceil$,
where $C_0,C_1$ are positive constants.
\label{lmm:poly-highsnr}
\end{lemma}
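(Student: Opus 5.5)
I will construct $\tilde K$ as the degree-$L$ Taylor polynomial of $s \mapsto K(s)$ at $0$, evaluated at $s=rt$. That is, I set $\tilde K(rt)=\sum_{\ell=0}^L \frac{K^{(\ell)}(0)}{\ell!}(rt)^\ell$, so that $a_\ell = K^{(\ell)}(0)/\ell!$ in the variable $s=rt$, with $L=\lceil C_1 r^2\rceil$. The proof then reduces to three estimates: on the Taylor coefficients, on the remainder on $|s|\le r$, and on the remainder after averaging over $\Iprod{x_1}{x_2}$.

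\textbf{Coefficients, part (c).} For $\ell\ge1$, $K^{(\ell)}(0)=f^{(\ell-1)}(0)$. Fourier inversion gives
\[
f^{(m)}(0)=\frac{1}{2\pi}\int (-i\omega)^m\phi(\omega)\,d\omega,
\]
and condition (b) then yields
\[
|f^{(m)}(0)|\le C\int|\omega|^m e^{-c\omega^2}\,d\omega\lesssim c^{-(m+1)/2}\,\Gamma\!\left(\tfrac{m+1}{2}\right)\le C_0^m\sqrt{m!}.
\]
Therefore $|a_\ell|\le C_0^\ell\sqrt{(\ell-1)!}/\ell!\le C_0^\ell/\sqrt{\ell!}$, which is (c). The same Fourier representation bounds the Lagrange remainder uniformly:
\[
|K(s)-\tilde K(s)|\le \sup_\xi |f^{(L)}(\xi)|\,\frac{|s|^{L+1}}{(L+1)!}\le \frac{C_0^{L}\sqrt{L!}\,|s|^{L+1}}{(L+1)!}\approx \left(\frac{C_0e\,|s|^2}{L}\right)^{(L+1)/2}.
\]
Here the sup bound on $f^{(L)}$ holds at every point, not only at $0$, because it comes from integrating $|\omega|^L|\phi|$.

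\textbf{Validity, part (a).} On $|s|\le r$ with $L\ge C_1r^2$ and $C_1$ large, the remainder is at most $e^{-c'L}\le e^{-c'C_1 r^2}$. By condition (a) of \prettyref{thm:highsnr}, $\min\{K(s),1-K(s)\}\ge c\,e^{-Cs^2}\ge c\,e^{-Cr^2}$ on this range. Choosing $C_1$ large relative to $C$ and $C_0$ makes the remainder smaller than this margin, so $\tilde K\in(0,1)$ on $(-r,r)$. I expect this competition between the Gaussian tail lower bound and the remainder to be the main point to check: it is exactly why $L$ must scale like $r^2$ and why condition (a) is needed. For small $r$ (say $r\lesssim1$), $L$ is a constant and the same inequality holds with constants.

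\textbf{$L_2$ error, part (b).} Set $s=r\Iprod{x_1}{x_2}$. On $|s|\le r$, the remainder bound gives $|K-\tilde K|^2\le (C|s|^2/L)^{L+1}$. Using the spherical moments $\Expect|\Iprod{x_1}{x_2}|^{2L+2}\le (2L+1)!!/d^{L+1}\le (CL/d)^{L+1}$, I get
\[
\Expect\big[|K(s)-\tilde K(s)|^2\big]\le \left(\frac{C}{L}\right)^{L+1}\left(\frac{r^2CL}{d}\right)^{L+1}\le \left(\frac{C_0r^2}{d}\right)^{L}.
\]
The factors of $L$ cancel, which is the key point. Since $|\Iprod{x_1}{x_2}|\le1$ always holds, $|s|\le r$ and no separate tail event is needed. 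This gives (b), and (d) holds by construction. Finally, $(C_0r^2/d)^{L}$ is $o(n^{-4})$ because $r\le d^{1/12-\epsilon}$ makes $r^2/d\le d^{-5/6}$ and $d\ge n^{\Omega(1)}$. This is what is needed to apply \eqref{eq:kernel-approx-twosided} afterwards; the degree $L\asymp r^2\le d^{1/6-2\epsilon}$ also meets the requirement in \prettyref{thm:poly}.
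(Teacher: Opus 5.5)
Your proposal is correct and follows essentially the same route as the paper. Both arguments truncate the Taylor series of $K$ at $0$, bound every derivative $f^{(m)}$ uniformly by $C_0^m\sqrt{m!}$ via Fourier inversion and the Gaussian decay of $\phi$, and compare the Lagrange remainder on $[-r,r]$ with the margin $c\,e^{-Cr^2}$ from condition (a) to get validity once $L\gtrsim r^2$; both then use $\Expect\langle x_1,x_2\rangle^{2L}\le (CL/d)^{L}$ so that the factorial in the remainder cancels the factor $L^L$. The only difference is cosmetic: you truncate at degree $L$ whereas the paper truncates at degree $L-1$, and this does not affect any of the estimates.
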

With this lemma, the rest proof is identical to that of \prettyref{thm:fixed}.
Let $\tilde K_r(t) = \tilde K(rt)$ which is a valid probability kernel on $[-1,1]$ thanks to part (a). Using part (b), 
$\|K_r-\tilde K_r\|_{L_2(\mu)}=
(\Expect[\|K(r\Iprod{x_1}{x_2}) - \tilde K(r\Iprod{x_1}{x_2})|^2])^{1/2} 
\leq (C_0r^2/d)^{L/2} \ll n^{-2}$, 
the last step applying the assumption that $r \leq d^{1/12-\epsilon}$ and $d \geq n^{3/4}$.
Therefore, by 
\prettyref{eq:kernel-approx-twosided}, we can replace the original kernel $K_r$ by $\tilde K_r$.

Next, let $\tilde p = \Expect[\tilde K(r\Iprod{x_1}{x_2})]$.
Then $\tilde p = p + o(1)$ is also a constant. Taylor expanding the normalized kernel $\tilde \kappa(t) \equiv \frac{\tilde K(rt) - \tilde p}{\sqrt{\tilde p (1-\tilde p)}}$ as $\tilde \kappa(t) = 
\sum_{\ell=0}^L b_\ell t^\ell$, we have
$b_\ell = 
\frac{1}{\sqrt{\tilde p (1-\tilde p)}} a_\ell r^\ell$ for $\ell \geq 1$.
Thanks to \prettyref{lmm:poly-highsnr} part (c), the key condition \prettyref{eq:coeffs-assumption} on the coefficients $b_1,\ldots,b_L$ is satisfied with $r$ replaced by $C_0 r$.
(By \prettyref{eq:b0-auto}, the condition on $b_0$ is automatically satisfied because $r\ll \sqrt{d}$.)
By the assumption $r \le d^{1/12 - \epsilon}$, the degree $L = \lceil C_1 r^2 \rceil \leq d^{1/6 - 2\epsilon}$.
Furthermore, $d\gg n^{3/4}$ by assumption.
The proof is then completed by applying \prettyref{thm:poly}.    
\end{proof}

\section{Proofs for recovery}

\subsection{Proof of Theorem~\ref{thm:fixeded-kernel-spectral-recovery}: Lower bound}
Let $\tilde K$ be the degree-$L$ polynomial given by \prettyref{lem:polynomial-approximation}. 
With $\mu$ denoting the law of $\langle x_1, x_2 \rangle$, we have 
$$
\|K - \tilde K\|_{L_2(\mu)} 
\le \frac{c}{2} \sqrt{\Expect [\langle x_1, x_2 \rangle^{2\ell}]}
\le C_\ell d^{-\ell/2} .
$$
Let $P_{X,A}$ (resp.\ $\tilde P_{X,A}$) denote the joint law of $(X,A)$ with kernel $K$ (resp.\ $\tilde K$). 
Note that the proof of \eqref{eq:kernel-approx} is in fact valid for the joint laws, giving
$$
\TV(P_{X,A}, \tilde P_{X,A}) \le C_\ell n^2 d^{-\ell/2} .
$$
Consider the MMSE
$$
\mmse(P) \eqdef \Expect_{P_{X,A}} [\psi(X,A)], \quad \text{ where } \psi(X,A) \eqdef \|\Expect[X \mid A] - X\|_F^2 .
$$
Since $X$ is entrywise bounded in $[-1,1]$, we have $\psi(X,A) \le 4 n^2$. It follows that
$$
|\mmse(P) - \mmse(\tilde P)| \le 8 n^2 \TV(P_{X,A}, \tilde P_{X,A}) \le 8 C_\ell n^4 d^{-\ell/2} \le n^{-10}
$$
if $d \ge n^{\epsilon}$ for a constant $\epsilon > 0$ and $\ell$ is taken to be a sufficiently large constant depending on $\epsilon$. 
Therefore, we may assume without loss of generality that $K$ is a degree-$L$ polynomial kernel.

By Corollary~\ref{cor:posterior-moments}, if $d \gg \sqrt{n}$, then the posterior sample $\tilde{X}$ satisfies
$$
\expect{\iprod{\tilde{X}}{X}}
\le C \bigg( \frac{n^2}{d^2} + \frac{n (\log n)^{1/2}}{d} \bigg) .
$$
Since the MSE achieved by the posterior sample is twice the MMSE, it follows that
\begin{align*}
\mmse(P)
&= \frac 12 \Expect \left[ \big\| \tilde{X} - X \big\|_F^2 \right] \\
&= \frac 12 \left( \Expect \left[ \big\| \tilde{X} \big\|_F^2 \right] + \Expect \left[ \big\| X \big\|_F^2 \right] - 2 \expect{\iprod{\tilde{X}}{X}} \right) \\
&\ge \Expect \left[ \big\| X \big\|_F^2 \right] - C \bigg( \frac{n^2}{d^2} + \frac{n (\log n)^{1/2}}{d} \bigg) \\
&= \left( 1 - O\left( \frac{1}{d} + \frac{\sqrt{\log n}}{n} \right) \right) \Expect \left[ \big\| X \big\|_F^2 \right] ,
\end{align*}
where we used that $\Expect \left[ \big\| X \big\|_F^2 \right] = \frac{n(n-1)}{d}$.

\subsection{Proof of Theorem~\ref{thm:fixeded-kernel-spectral-recovery}: Upper bound}

For the upper bound in Theorem~\ref{thm:fixeded-kernel-spectral-recovery}, 
we consider the following spectral method. 
For $\bar A$ defined in \eqref{eq:def-A-bar}, 
let $U \in \reals^{n \times d}$ be the matrix whose columns are the top-$d$ eigenvectors of $\bar A$ (corresponding to the largest $d$ eigenvalues in absolute values). 
We will show that $U U^\top$ is close to $\frac dn X$ via spectral perturbation analysis. 

Recall that $K: [-1,1]\to [0,1]$ is a fixed $C^\infty$ function that is bounded away from 0 and 1 and satisfies $K'(0) \neq 0$, $p \eqdef \Expect K(\langle x_1, x_2 \rangle)$, and $\kappa(t) \eqdef \frac{K(t)-p}{\sqrt{p(1-p)}}$ for $t \in [-1,1]$. 
Let $X \in \reals^{n \times n}$ be defined by $X_{ij} = \langle x_i, x_j \rangle$ for all $i,j \in [n]$. 
Note that we slightly abuse the notation to define $X_{ii} = \langle x_i, x_i \rangle = 1$ instead of $0$ for convenience. 
This is clearly inessential because we can modify the diagonal of the estimator accordingly, and for $d \ll n$, we have $\Expect \|X\|_F^2 \approx n^2/d$ regardless of the choice of the diagonal. 
Let us first establish a few preliminary results.

\begin{lemma}
\label{lem:recovery-upper-kernel-approximation}
There exist constants $L$ and $C$ depending only on the kernel $K$ and a polynomial $\tilde \kappa$ of degree $L$ such that the following statements hold. 
First, 
$$
\Expect \left[ \|\tilde \kappa(X) - \kappa(X)\|_F^2 \right]
\le C (n^2/d^{10} + n) .
$$
Moreover, let the Gegenbauer polynomial expansion of $\tilde \kappa$ be 
$$
\tilde \kappa(t) = \sum_{k=0}^L \tilde \alpha_k C^\lambda_k(t)
$$
where $\lambda \eqdef (d-2)/2$. 
Then we have 
\begin{equation*}
|\tilde \alpha_0| \le C d^{-5}, \quad 
\tilde \alpha_1 = \frac{\kappa'(0)+o_{d}(1)}{d}, 
\quad
|\tilde \alpha_k| \leq C d^{-k} \text{ for } k=2,\dots,L.  
\end{equation*}
\end{lemma}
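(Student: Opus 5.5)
The plan is to construct $\tilde\kappa$ from the approximation in \prettyref{lem:polynomial-approximation}. Apply that lemma with $\ell = 20$ to get a constant-degree polynomial $\tilde K$ with $c/2 \le \tilde K \le 1-c/2$ and $|\tilde K(t) - K(t)| \le \frac c2 |t|^{20}$. Then set
$$
\tilde\kappa(t) \eqdef \frac{\tilde K(t) - p}{\sqrt{p(1-p)}},
$$
keeping the original $p$ rather than $\tilde p$. This way $\tilde\kappa - \kappa = (\tilde K - K)/\sqrt{p(1-p)}$ holds pointwise, and $|\tilde\kappa(t)-\kappa(t)| \lesssim |t|^{20}$ on $[-1,1]$.

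For the Frobenius bound, I split the entries into off-diagonal and diagonal ones. For $i \ne j$, I use $\Expect\langle x_i,x_j\rangle^{40} \lesssim d^{-20}$ (moments of the spherical inner product, e.g.\ via the Gaussian representation as in \prettyref{lem:M_ell-naive-bound}). Summing over the $n^2$ off-diagonal entries gives $O(n^2 d^{-20}) \le O(n^2/d^{10})$. On the diagonal $X_{ii}=1$, and both $\kappa$ and $\tilde\kappa$ are bounded there because $p$ is bounded away from $0$ and $1$. So each diagonal entry contributes $O(1)$, for a total of $O(n)$. Together these give the claimed $C(n^2/d^{10}+n)$.

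For the Gegenbauer coefficients, I would use the relation between coefficients and moments from \prettyref{lmm:rodrigues} (the same tool used for \prettyref{eq:gegenbauer-coefficient-bound}). There, $\alpha_k$ is expressed as $\Expect[\kappa^{(k)}(\langle x_1,x_2\rangle)\cdot w_k]$ up to a normalizing factor of order $d^{-k}$, through the Rodrigues formula and integration by parts. Since $\tilde\kappa$ is a polynomial of constant degree $L$ whose coefficients are bounded by constants depending only on $K$, its derivatives are uniformly bounded on $[-1,1]$. This gives $|\tilde\alpha_k| \le C d^{-k}$ for $2 \le k \le L$. For $k=1$, the same formula gives
$$
\tilde\alpha_1 = \frac{\Expect[\tilde\kappa'(\langle x_1,x_2\rangle)(1-\langle x_1,x_2\rangle^2)^{\cdot}]}{d}\,(1+o(1)).
$$
Because $\langle x_1,x_2\rangle \to 0$ in probability, this expectation converges to $\tilde\kappa'(0)$. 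The approximation error is $O(|t|^{20})$ and Taylor-agrees to high order at $0$, so $\tilde\kappa'(0) = \kappa'(0)$, which yields $\tilde\alpha_1 = (\kappa'(0)+o_d(1))/d$. For $k=0$, $\tilde\alpha_0 = \Expect\tilde\kappa(\langle x_1,x_2\rangle)$. Since $\Expect\kappa(\langle x_1,x_2\rangle)=0$, we get $|\tilde\alpha_0| \le \Expect|\tilde\kappa-\kappa| \lesssim \Expect|\langle x_1,x_2\rangle|^{20} \lesssim d^{-10} \le Cd^{-5}$.

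The main obstacle is making the Rodrigues-type coefficient bound precise for each fixed $k \le L$. One has to track the normalization $\|C_k^\lambda\|^2$ and the factors $\lambda(\lambda+1)\cdots$, and check that they give exactly the order $d^{-k}$ uniformly for constant $k$. Since $L$ is a constant, I would take this directly from \prettyref{lmm:rodrigues} as it was used in \prettyref{eq:gegenbauer-coefficient-bound}, with $\kappa$ replaced by $\tilde\kappa$.
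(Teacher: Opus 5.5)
Your proposal is correct and follows the paper's proof essentially step for step. You take $\tilde\kappa$ from the construction in Lemma~\ref{lem:polynomial-approximation} (your error exponent $20$ in place of the paper's $|t|^{10}$ is immaterial), bound the Frobenius error entrywise by moments of $\langle x_1,x_2\rangle$ plus an $O(n)$ diagonal term, get $\tilde\alpha_0$ from $\Expect[\kappa(\langle x_1,x_2\rangle)]=0$, and obtain the other coefficients from Lemma~\ref{lmm:rodrigues} using bounded derivatives and $\tilde\kappa'(0)=\kappa'(0)$, exactly as the paper does. Your explicit normalization by the original $p$, giving $\tilde\kappa-\kappa=(\tilde K-K)/\sqrt{p(1-p)}$ pointwise, is what the paper uses implicitly.
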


\begin{proof}
By the proof of Lemma~\ref{lem:polynomial-approximation}, there are constants $L$ and $C_0$ depending only on the kernel $K$ and a polynomial $\tilde \kappa$ of degree $L$ such that 
$$
|\tilde \kappa(t) - \kappa(t)| \le C_0 t^{10} 
$$
for all $t \in [-1,1]$. 
As a result, 
$$
\|\tilde \kappa(X) - \kappa(X)\|_F^2 = \sum_{i,j=1}^n (\tilde \kappa(\langle x_i, x_j \rangle) - \kappa(\langle x_i, x_j \rangle))^2 \le C_0^2 \sum_{i,j=1}^n \langle x_i, x_j \rangle^{20} .
$$
It follows that 
$$
\Expect \|\tilde \kappa(X) - \kappa(X)\|_F^2 
\le C_0^2 (n^2 \Expect \langle x_1, x_2 \rangle^{20} + n) 
\le C (n^2/d^{10} + n) .
$$
Moreover, 
$$
|\tilde \alpha_0| 
= |\tilde \alpha_0 - 0| 
= |\Expect \tilde \kappa(\langle x_1, x_2 \rangle) - \Expect \kappa(\langle x_1, x_2 \rangle)|
\le C_0 \Expect \langle x_1, x_2 \rangle^{10}
\le C/d^5. 
$$
For the approximation in Lemma~\ref{lem:polynomial-approximation}, we have $\tilde \kappa'(t) = \kappa'(t)$, and $\tilde \kappa(t)$ is a uniformly bounded polynomial. 
Therefore, the estimates for $\tilde \alpha_k$ follow from \prettyref{lmm:rodrigues} together with the fact that $\Iprod{x_1}{x_2}$ is $O(\frac{1}{d})$-sub-Gaussian.
\end{proof}

\begin{lemma}
\label{lem:recovery-QR-decomposition}
If $d \le n$, there is a matrix $Q \in \reals^{n \times d}$ with orthonormal columns and an absolute constant $C>0$ such that 
$$
\Expect \left[ \left\|\frac nd Q Q^\top - X\right\|^2 \right] \le C (n/d + 1) .
$$
\end{lemma}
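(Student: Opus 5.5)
The natural candidate is $Q$ built from the singular structure of the data matrix $Z \eqdef [x_1,\ldots,x_n] \in \reals^{d\times n}$, so that $X = Z^\top Z$ (with the convention $X_{ii}=1$ used in this subsection). Since $d \le n$, almost surely $Z$ has rank $d$. Write its SVD as $Z = W \Sigma V^\top$, where $W\in\reals^{d\times d}$ is orthogonal, $\Sigma = \mathrm{diag}(\sigma_1,\ldots,\sigma_d)$, and $V\in\reals^{n\times d}$ has orthonormal columns. Set $Q \eqdef V$. Then
\[
X = V\Sigma^2 V^\top, \qquad X - \frac{n}{d}QQ^\top = V\Big(\Sigma^2 - \frac nd I_d\Big)V^\top ,
\]
so that
\[
\Big\|\frac nd QQ^\top - X\Big\| = \max_{i\le d}\Big|\sigma_i^2(Z) - \frac nd\Big|.
\]
The problem therefore reduces to a nonasymptotic bound on the extreme singular values of $Z^\top$. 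This is an $n\times d$ matrix with independent isotropic rows $\sqrt d\, x_i$ (up to the $1/\sqrt d$ scaling), and these rows are sub-Gaussian with absolute constant because uniform vectors on the sphere are.

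Apply Theorem~5.39 of \cite{vershynin2010introduction} to $\sqrt d\, Z^\top$: for every $t\ge0$, with probability at least $1-2e^{-ct^2}$,
\[
\sqrt n - C\sqrt d - t \le \sqrt d\,\sigma_{\min}(Z) \le \sqrt d\,\sigma_{\max}(Z) \le \sqrt n + C\sqrt d + t .
\]
Squaring and dividing by $d$ gives
\[
\Big|\sigma_i^2 - \frac nd\Big| \lesssim \frac{\sqrt n(\sqrt d + t) + d + t^2}{d}
= \sqrt{\frac nd} + 1 + \frac{t\sqrt n}{d} + \frac{t^2}{d}
\]
on that event. Here we use $a^2-b^2=(a-b)(a+b)$ together with $\sigma_{\min}\ge0$, which handles the case $\sqrt n < C\sqrt d + t$ via the trivial lower bound.

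Next, integrate the tail to control the second moment. With $Y \eqdef \max_i|\sigma_i^2-n/d|$, we have $\Expect Y^2 = \int_0^\infty 2s\,\prob{Y>s}\,ds$, and the bound above gives
\[
\prob{Y > C'(\sqrt{n/d}+1) + C'(t\sqrt n + t^2)/d} \le 2e^{-ct^2}.
\]
Since $t$ is sub-Gaussian in this sense, $\Expect[t^2]$- and $\Expect[t^4]$-type contributions are $O(1)$. This yields
\[
\Expect Y^2 \lesssim \frac nd + 1 + \frac{n}{d^2} + \frac1{d^2} \lesssim \frac nd + 1 .
\]
The tail integration also needs the crude deterministic bound $Y \le \|X\|+n/d \le n + n/d$, which only confirms finiteness.

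The main technical point is the sub-Gaussianity of the rows $\sqrt d\,x_i$ with absolute constant $\psi_2$-norm. This is standard, as in \cite{vershynin2010introduction}, Theorem~5.1.4 of \cite{vershynin2018high}. One must also track that the tail bound holds for all $t$, rather than only for the specific $t$ used in Lemma~\ref{lem:X-spectral-norm-bound}, so that the second moment, and not just a high-probability bound, is controlled. Finally, $Q$ is random (a function of $X$) and not unique when singular values coincide; any measurable choice of $V$ suffices, and ties occur with probability zero anyway.
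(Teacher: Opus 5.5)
Your proposal is correct and is essentially the paper's argument. The paper takes $Q$ from the QR decomposition of $\Phi = Z^\top$ rather than the SVD; both give the same projection $QQ^\top$ onto the column space of $\Phi$, and your SVD version turns its inequality into an equality, so both reduce to bounding $\|\sum_i x_i x_i^\top - \frac nd I_d\|$ via the covariance-estimation bound of \cite[Remark~4.7.3]{vershynin2018high} (equivalent to the Theorem~5.39 singular-value bound you use) followed by integrating the tail.
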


\begin{proof}
Let $\Phi \eqdef [x_1 \cdots x_n]^\top \in \reals^{n \times d}$, and let $\phi_i$ denote the $i$th column of $\Phi$. 
Consider the QR decomposition $\Phi = Q R$ for a matrix $Q \in \reals^{n \times d}$ with orthonormal columns and an upper triangular matrix $R \in \reals^{d \times d}$. 
We have $X = \Phi \Phi^\top = Q R R^\top Q^\top$, so 
$$
\left\|\frac nd Q Q^\top - X\right\|
= \left\|Q \left(\frac nd I_d - R R^\top\right) Q^\top\right\|
\le \left\|R R^\top - \frac nd I_d\right\| 
= \left\|R^\top R - \frac nd I_d\right\| 
= \left\|\Phi^\top \Phi - \frac nd I_d\right\| .
$$
Note that $\Phi^\top \Phi = \sum_{i=1}^n x_i x_i^\top$. 
Since $\Expect[x_i x_i^\top] = \frac 1d I_d$ and $x_i$ is $O(\frac 1d)$-sub-Gaussian, by applying \cite[Remark~4.7.3]{vershynin2018high}, we see that with probability at least $1-\delta$, 
\begin{equation*}
\left\|\frac nd Q Q^\top - X\right\|
= \left\| \sum_{i=1}^n x_i x_i^\top - \frac nd I_d \right\|
\le \frac{Cn}{d} \left( \sqrt{\frac{d + \log(1/\delta)}{n}} + \frac{d + \log(1/\delta)}{n} \right) .
\end{equation*}
Integrating the tail yields the claimed bound.
\end{proof}

\begin{lemma} \label{lem:gegenbauer-term-spectral-bound}
There is an absolute constant $C>0$ such that for any $k \ge 1$, we have
$$
\Expect \left[ \|C^\lambda_k(X)\|^2 \right] \le C \Big( n^2 + d^{2k} \log^2(d^k) \Big) .
$$
\end{lemma}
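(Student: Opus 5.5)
Our plan is to decompose $C^\lambda_k(X)$, with $\lambda=(d-2)/2$ and $X_{ij}=\iprod{x_i}{x_j}$ including the diagonal $X_{ii}=1$, through the addition theorem for spherical harmonics. Let $Y_{k,1},\dots,Y_{k,N_k}$ be an orthonormal basis of $\calH_d^k$ in $L_2(\Unif(S^{d-1}))$, where $N_k=\dim(\calH_d^k)$. Then $C^\lambda_k(\iprod{x}{y}) = c_k \sum_{m=1}^{N_k} Y_{k,m}(x)Y_{k,m}(y)$ with $c_k = C_k^\lambda(1)/N_k = \frac{d-2}{d-2+2k}$, so $c_k\le 1$. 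Writing $\Psi\in\reals^{n\times N_k}$ with $\Psi_{im}=Y_{k,m}(x_i)$, we get $C^\lambda_k(X) = c_k \Psi\Psi^\top$. Hence
\[
\|C^\lambda_k(X)\| = c_k\|\Psi^\top\Psi\| \le \Big\|\sum_{i=1}^n \psi_i\psi_i^\top\Big\|,
\]
where $\psi_i=(Y_{k,m}(x_i))_m\in\reals^{N_k}$. The $\psi_i$ are i.i.d.\ with $\Expect[\psi_i\psi_i^\top]=I_{N_k}$ and $\|\psi_i\|^2 = \sum_m Y_{k,m}(x_i)^2 = N_k$ deterministically, again by the addition theorem.

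We would then apply matrix concentration for sums of i.i.d.\ bounded rank-one matrices (Rudelson's inequality, or matrix Bernstein in the form of \cite[Theorem~5.41]{vershynin2010introduction} for rows with $\|\psi_i\|=\sqrt{N_k}$ almost surely). This gives
\[
\Expect\Big\|\sum_i\psi_i\psi_i^\top\Big\|^2 \lesssim n^2 + N_k^2\log^2 N_k ,
\]
since $\sum_i \psi_i\psi_i^\top = nI + (\text{fluctuation})$ and the fluctuation has second moment of order $nN_k\log N_k + N_k^2\log^2 N_k$. Cross terms are absorbed by AM--GM, because $\sqrt{nN_k\log N_k}\le n+N_k\log N_k$. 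To square the bound, we would integrate the tail of matrix Bernstein, $\prob{\|\sum_i(\psi_i\psi_i^\top-I)\|\ge t}\le 2N_k\exp(-ct^2/(nN_k+N_kt))$, over $t$. The $\log N_k$ factor enters through the dimension prefactor.

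The remaining ingredient is $N_k\le C d^k$ (up to an absolute constant uniform in $k$), which turns $N_k^2\log^2N_k$ into $d^{2k}\log^2(d^k)$. This follows from the multiplicity formula $N_k=\frac{d-2+2k}{d-2}\binom{d-3+k}{k}$ recalled in the paper. For $k\le d$ we have $\binom{d-3+k}{k}\le (d-3+k)^k/k!$, which is fine. The hardest step is making this uniform in $k$. For $k\gg d$ the multiplicity behaves like $k^{d-2}$ rather than $d^k$, but then $d^k\ge k^{d}$ still dominates whenever $k\ge d\ge3$. So we would check $N_k\le 2\binom{d+k}{k}\le 2\min\{(e(d+k)/k)^k,(e(d+k)/d)^d\}\le C d^k$ in each regime. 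The small cases $d\le 2$ are trivial or excluded, and the absolute constant absorbs the residual factors $e^k/k!$.
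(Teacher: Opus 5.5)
Your proposal is correct and follows the paper's proof essentially step by step: the addition formula gives $\|C^\lambda_k(X)\|\le\|\sum_i\psi_i\psi_i^\top\|$ with isotropic $\psi_i$ of deterministic norm $\sqrt{N_k}$, and matrix Bernstein with tail integration then gives $n^2+N_k^2\log^2 N_k$. The step you flag as hardest is immediate, since $N_k\le\binom{d-1+k}{k}=\prod_{j=1}^k\bigl(1+\frac{d-1}{j}\bigr)\le d^k$ for all $k\ge 1$, which is the bound the paper uses.
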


\begin{proof}
Let $D_k$ denote the dimension of $\calH^d_n$, the space of the $k$th spherical harmonics. 
By the addition formula for spherical harmonics \cite[(1.2.8)]{dai2013approximation}, we have 
$$
\frac{\lambda + k}{\lambda} C^\lambda_k(\langle x_i, x_j \rangle) = \sum_{\ell=1}^{D_k} Y_\ell(x_i) Y_\ell(x_j) ,
$$
where $Y_1, \dots, Y_{D_k}$ form an orthonormal basis of $\calH^d_n$. 
Let $\tilde Y \in \reals^{n \times D_k}$ denote the matrix with the $(i,\ell)$th entry defined by $Y_\ell(x_i)$. Then
$$
\|C^\lambda_k(X)\| 
= \frac{\lambda}{\lambda + k} \| \tilde Y \tilde Y^\top \| 
= \frac{\lambda}{\lambda + k} \| \tilde Y^\top \tilde Y \| 
\le \| \tilde Y^\top \tilde Y \| . 
$$

Note that $\tilde Y^\top \tilde Y = \sum_{i=1}^n Y(x_i) Y(x_i)^\top$ where $Y(x_i) \in \reals^{D_k}$ denotes the vector with entries $Y_\ell(x_i)$. 
By the orthonormality of the basis $Y_1, \dots, Y_{D_k}$, we have
$$
\Expect[Y(x_i) Y(x_i)^\top] = I_{D_k} .
$$
Moreover, applying the addition formula again yields
$$
\|Y(x_i)\|_2^2 = \sum_{\ell=1}^{D_k} Y_\ell(x_i)^2 
= \frac{\lambda + k}{\lambda} C^\lambda_k(1) 
= \frac{(\lambda + k) \Gamma(2\lambda + k)}{\lambda \Gamma(2 \lambda) k!} 
= D_k 
$$
at any $x_i \in \reals^d$. 
Hence, $\|Y(x_i) Y(x_i)^\top - I_{D_k}\| \le D_k+1$ and
$$
\Expect[ (Y(x_i) Y(x_i)^\top - I_{D_k})^2 ] = (D_k - 1) I_{D_k} .
$$
By the matrix Bernstein inequality \cite[Theorem~1.6.2]{tropp2015introduction}, for any $\delta \in (0,1)$, with probability at least $1-\delta$, 
$$
\|\tilde Y^\top \tilde Y - n I_{D_k}\| 
= \left\| \sum_{i=1}^n \left( Y(x_i) Y(x_i)^\top - I_{D_k} \right) \right\| 
\le C \left( \sqrt{n D_k \log (D_k/\delta)} + D_k \log(D_k/\delta) \right) 
$$
for a constant $C>0$. 
Since $D_k = \binom{d-1+k}{d-1} -\binom{d-3+k}{d-1} \le d^k$, we have, with probability at least $1-\delta$,
$$
\|C^\lambda_k(X)\| \le n + C \Big( \sqrt{n d^k \log (d^k/\delta)} + d^k \log(d^k/\delta) \Big) .
$$
Finally, integrating the tail finishes the proof.
\end{proof}

\begin{lemma}
\label{lem:bernoulli-noise-spectral-bound}
There is a constant $C>0$ depending only on the kernel $K$ such that 
$$
\Expect \left[ \bigg\| \frac{A-K(X)}{\sqrt{p(1-p)}} \bigg\|^2 \right] \le C n .
$$

\end{lemma}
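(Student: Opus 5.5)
Conditionally on the latent points $X$, the matrix $W \eqdef A - K(X)$ (with zero diagonal, which we interpret as $A_{ii}=0$ and $K(X)_{ii}$ replaced by $0$, the diagonal contributing at most $O(1)$ in operator norm and hence $O(1)\ll n$ to the squared norm) is a symmetric random matrix with independent, centered entries above the diagonal. Each entry satisfies $|W_{ij}|\le 1$ and $\Expect[W_{ij}^2\mid X] = K(X_{ij})(1-K(X_{ij}))\le 1/4$. The plan is therefore to bound $\Expect[\|W\|^2\mid X]\le C n$ uniformly in $X$, and then take the expectation over $X$. Since $p$ is a constant bounded away from $0$ and $1$, dividing by $p(1-p)$ only affects the constant.

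For the conditional bound, I would invoke a standard spectral norm estimate for symmetric matrices with independent bounded centered entries. One choice is the Bandeira--van Handel inequality: $\Expect\|W\| \lesssim \sigma + \sigma_* \sqrt{\log n}$, where $\sigma = \max_i (\sum_j \Expect W_{ij}^2)^{1/2}\le \sqrt{n}/2$ and $\sigma_*\le 1$. This gives $\Expect[\|W\|\mid X]\le C\sqrt n$. Alternatively, one can use the classical $\epsilon$-net argument with Hoeffding's inequality, which gives $\|W\|\le C\sqrt n$ with probability $1-e^{-cn}$.

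To pass from the first moment to the second moment, I would use Talagrand-type concentration of the convex $1$-Lipschitz function $W\mapsto\|W\|$ of independent bounded entries. This yields $\prob{\|W\|\ge \Expect[\|W\|\mid X]+t \mid X}\le 4e^{-ct^2}$, so $\Expect[\|W\|^2\mid X]\le 2(\Expect[\|W\|\mid X])^2 + O(1)\le Cn$. Integrating over $X$ then gives the claim.

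None of these steps is a genuine obstacle. The only point requiring care is that the entries are not identically distributed and their variances depend on $X$. The cited inequalities only require independence and uniform bounds, both of which hold conditionally on $X$, so this is harmless.
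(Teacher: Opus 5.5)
Your proposal is correct and follows essentially the same route as the paper: condition on $X$, treat the diagonal of $A-K(X)$ (equal to $-K(1)$) as an $O(1)$ perturbation, and apply a standard spectral-norm bound for symmetric matrices with independent bounded centered entries. The paper cites the $\epsilon$-net tail bound of Vershynin's Theorem~4.4.3, $\|A-K(X)\|\le C_0\sqrt{n+\log(1/\delta)}$ with probability $1-\delta$, and integrates that tail directly; your $\epsilon$-net option also gives the second moment without further work (use $\|W\|\le n$ on the exceptional event), so your Talagrand step is valid but only needed for the Bandeira--van Handel route.
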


\begin{proof}
By assumption, $K$ is bounded away from $0$ and $1$, so $p = \Expect K(\langle x_1, x_2 \rangle)$ is a constant, and it suffices to bound $\Expect \left[ \| A-K(X) \|^2 \right]$. 
The adjacency matrix $A$ has a zero diagonal and $K(X)$ has diagonal entries equal to $K(1)$, so the diagonal of $A-K(X)$ is constant-sized and does not affect the result. 
The upper-triangular off-diagonal entries of $A-K(X)$ are bounded, independent conditional on $X$. Therefore, by \cite[Theorem~4.4.3]{vershynin2018high}, there is an absolute constant $C_0>0$ such that for any $\delta \in (0,1)$, 
$$
\|A-K(X)\| \le C_0 \sqrt{n + \log (1/\delta)} 
$$
with probability at least $1-\delta$. Integrating the tail completes the proof.
\end{proof}

We are ready to prove the upper bound in Theorem~\ref{thm:fixeded-kernel-spectral-recovery}. 
Let $\tilde \kappa$ be given by Lemma~\ref{lem:recovery-upper-kernel-approximation}. By \eqref{eq:def-A-bar}, we have
\begin{align*}
\bar A
&= \tilde \kappa(X) + \kappa(X) - \tilde \kappa(X) + \frac{A-K(X)}{\sqrt{p(1-p)}} \\
&= \tilde \alpha_1 C^\lambda_1(X) + \tilde \alpha_0 \mathbf{1} \mathbf{1}^\top + \sum_{k=2}^L \tilde \alpha_k C^\lambda_k(X) + \kappa(X) - \tilde \kappa(X) + \frac{A-K(X)}{\sqrt{p(1-p)}} .
\end{align*}
Let $Q$ be given by Lemma~\ref{lem:recovery-QR-decomposition}. Then 
$$
C^\lambda_1(X)
= 2 \lambda X 
= (d-2) X 
= \frac{n (d-2)}{d} Q Q^\top +  (d-2) \left( X - \frac nd Q Q^\top \right) .
$$
Therefore,
$$
\bar A = \tilde \alpha_1 \frac{n (d-2)}{d} Q Q^\top +  \tilde \alpha_1 (d-2) \left( X - \frac nd Q Q^\top \right) + \tilde \alpha_0 \mathbf{1} \mathbf{1}^\top + \sum_{k=2}^L \tilde \alpha_k C^\lambda_k(X) + \kappa(X) - \tilde \kappa(X) + \frac{A-K(X)}{\sqrt{p(1-p)}} .
$$

The matrix $\tilde \alpha_1 \frac{n (d-2)}{d} Q Q^\top$ has eigenvalue $\tilde \alpha_1 \frac{n (d-2)}{d}$ with multiplicity $d$ and remaining eigenvalues equal to $0$, so by the Davis--Kahan theorem \cite[Lemma~4.1.16]{vershynin2018high}, 
\begin{align*}
\|U U^\top - Q Q^\top\|
&\le \frac{2 d}{|\tilde \alpha_1| n (d-2)} \left\| \bar A - \tilde \alpha_1 \frac{n (d-2)}{d} Q Q^\top \right\| \\
&\le \frac{2 d}{|\tilde \alpha_1| n (d-2)} \bigg( |\tilde \alpha_1| (d-2) \left\| X - \frac nd Q Q^\top \right\| + n |\tilde \alpha_0| + \sum_{k=2}^L |\tilde \alpha_k| \cdot \|C^\lambda_k(X)\| \\
& \hspace{180pt} + \|\kappa(X) - \tilde \kappa(X)\| + \bigg\| \frac{A-K(X)}{\sqrt{p(1-p)}} \bigg\| \bigg) .
\end{align*}
It then follows from the triangle inequality that
\begin{align*}
\left\| U U^\top - \frac dn X \right\|
&\le \left\| U U^\top - Q Q^\top \right\| + \left\| Q Q^\top - \frac dn X \right\| \\
&\le \frac{3 d}{n} \left\| X - \frac nd Q Q^\top \right\| + \frac{2 d}{|\tilde \alpha_1| n (d-2)} \bigg( n |\tilde \alpha_0| + \sum_{k=2}^L |\tilde \alpha_k| \cdot \|C^\lambda_k(X)\| \\
& \hspace{180pt} + \|\kappa(X) - \tilde \kappa(X)\| + \bigg\| \frac{A-K(X)}{\sqrt{p(1-p)}} \bigg\| \bigg) .
\end{align*}
Note that $U U^\top - \frac dn X$ has rank at most $2d$, so 
\begin{align*}
\Big\| U U^\top - \frac dn X \Big\|_F^2
&\le 2d \cdot \Big\| U U^\top - \frac dn X \Big\|^2 \\
&\le \frac{C d^3}{n^2} \left\| X - \frac nd Q Q^\top \right\|^2 + \frac{C d}{\tilde \alpha_1^2 n^2} \bigg( n^2 \tilde \alpha_0^2 + \sum_{k=2}^L \tilde \alpha_k^2 \|C^\lambda_k(X)\|^2 \\
& \hspace{180pt} + \|\kappa(X) - \tilde \kappa(X)\|^2 + \bigg\| \frac{A-K(X)}{\sqrt{p(1-p)}} \bigg\|^2 \bigg) 
\end{align*}
for a constant $C>0$ depending only on the kernel $K$. 

It remains to take the expectation and apply Lemmas~\ref{lem:recovery-upper-kernel-approximation}, \ref{lem:recovery-QR-decomposition}, \ref{lem:gegenbauer-term-spectral-bound}, and~\ref{lem:bernoulli-noise-spectral-bound} to obtain 
\begin{align*}
\Expect \Big\| U U^\top - \frac dn X \Big\|_F^2
&\le \frac{C d^3}{n^2} \left( \frac nd + 1 \right) + \frac{C d^3}{\kappa'(0)^2 n^2} \bigg( \frac{n^2}{d^{10}} + \sum_{k=2}^L \frac{1}{d^{2k}} \Big( n^2 + d^{2k} (\log d)^2 \Big) + \frac{n^2}{d^{10}} + n \bigg) \\
&\le C \left( \frac{d^2}{n} + \frac{d^3}{n^2} + \frac{1}{d^7} + \frac{1}{d} + \frac{d^3 (\log d)^2}{n^2} + \frac{1}{d^7} + \frac{d^3}{n} \right) \\
&\le C \left( \frac{1}{d} + \frac{d^3 (\log d)^2}{n^2} + \frac{d^3}{n} \right) 
\end{align*}
where the constant $C$ varies between lines for ease of notation. 
Therefore, the MMSE satisfies
$$
\min_{\hat X} \Expect \|\hat X - X\|_F^2 = O \left( \frac{n^2}{d^3} + d (\log d)^2 + dn \right) 
= O \left( \frac{n^2}{d^3} + dn \right) 
= o(n^2/d) 
$$
if $1 \ll d \ll \sqrt{n}$.

\appendix

\section{Useful facts about spherical harmonics and Gegenbauer polynomials}
\label{app:Gegenbauer}

We follow the notation in the standard reference \cite{dai2013approximation}.
Let $\lambda \equiv \frac{d-2}{2}$ and let $C_k^{\lambda}(t)$ denote the degree-$k$ Gegenbauer polynomial ($C_0^\lambda(t)=1,C_1^\lambda(t)=2\lambda t$, etc.), which satisfies
\[
\Expect[C_k^{\lambda}(\Iprod{x}{y})C_m^{\lambda}(\Iprod{x}{y})]
= \frac{\lambda}{k+\lambda} C_k^{\lambda}(1) 
\indc{k=m}. 
\]
Let $\calH_k^d$ denote the space of spherical harmonics (namely, homogeneous harmonic polynomials) of degree $k$ on $S^{d-1}$.
Let $\{Y_i: 1 \le i \le \dim(\calH_k^d)\}$ be an orthonormal basis of $\calH_k^d$, which satisfies the so-called \textit{addition formula} of spherical harmonics \cite[Eq.~(1.2.8)]{dai2013approximation}:
\begin{align}
\sum_{j=1}^{\dim(\calH_k^d)} Y_j(x) Y_j(y) = \frac{k+\lambda}{\lambda} C_k^\lambda\left( \iprod{x}{y}\right). \label{eq:additive_formula}
\end{align}
In particular, it follows that 
\begin{equation}
C_k^{\lambda}(1) = 
\frac{\lambda}{k+\lambda} \dim(\calH_{k}^d)
= \binom{d-3+k}{k}.
\label{eq:Gegenbauer-norm}
\end{equation}
Thus
\begin{equation}
\dim(\calH_{k}^d) = 
\frac{d-2+2k}{d-2} 
\binom{d-3+k}{k} 
= \binom{d-1+k}{d-1} -\binom{d-3+k}{d-1}.
    \label{eq:dim-harmonics}
\end{equation}

Consider a function $\kappa$ expanded under the (orthogonal unnormalized) Gegenbauer basis:
\begin{align}
\kappa(t) = \sum_{k\geq 0} \alpha_k C_k^{\lambda}(t). \label{eq:kappa_expansion}
\end{align}
These coefficients determine the eigenvalues of the kernel operator 
\[
(\kappa f)(x)\equiv \Expect_{y\sim\Unif(S^{d-1})}[\kappa(\Iprod{x}{y}) f(y)].
\]
Indeed, combining \prettyref{eq:kappa_expansion} with  the addition formula \prettyref{eq:additive_formula}, we see that the eigenvalues of the $\kappa$ operator are precisely
\begin{equation}
\frac{\lambda
}{k+\lambda}\alpha_k, \text{ with multiplicity } \dim(\calH_{k}^d) ,
    \label{eq:eigenvalues-kappa}
\end{equation}
and the corresponding eigenfunctions are the orthonormal polynomial basis of $\calH_{k}^d$.

The following lemma relates the eigenvalues to the smoothness of the kernel. In particular, for fixed $k$ and large $d$, the $k$th eigenvalue of a smooth kernel behaves as $O(d^{-k})$ with multiplicity $\Theta(d^k)$. In contrast, for non-smooth kernels (such as a step function), the $k$th eigenvalue satisfies $O(d^{-k/2})$.
\begin{lemma}
\label{lmm:rodrigues}
For any $k\geq 1$,
\[
|\alpha_k| \leq \frac{d+2k-2}{d-2}
\frac{\sqrt{\expect{\kappa(\Iprod{x_1}{x_2})^2}}}{\sqrt{\dim(\calH_k^d)}}.
\]
Furthermore, assuming $\kappa$ is $k$-times differentiable. Then
\[
\alpha_k  = \frac{d+2k-2}{(d-2)(d-1)(d+1)\cdots (d+2k-3)} \expect{\kappa^{(k)}(\Iprod{x_1}{x_2})\pth{(1-\Iprod{x_1}{x_2}^2}^k}.
\]
\end{lemma}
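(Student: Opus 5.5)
The plan is to prove \prettyref{lmm:rodrigues} by exploiting the orthogonality of the Gegenbauer basis for the first bound, and the Rodrigues formula together with integration by parts for the second.

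\textbf{First bound.} Write $\mu$ for the law of $t=\Iprod{x_1}{x_2}$, which has density proportional to $(1-t^2)^{\lambda-1/2}$ on $[-1,1]$. By the orthogonality relation recalled above,
\[
\alpha_k=\frac{\Expect[\kappa(t)C_k^\lambda(t)]}{\Expect[C_k^\lambda(t)^2]}, \qquad \Expect[C_k^\lambda(t)^2]=\frac{\lambda}{k+\lambda}C_k^\lambda(1).
\]
Cauchy--Schwarz then gives
\[
|\alpha_k|\le \frac{\|\kappa\|_{L_2(\mu)}}{\sqrt{\Expect[C_k^\lambda(t)^2]}}.
\]
By \prettyref{eq:Gegenbauer-norm}, $\Expect[C_k^\lambda(t)^2]=\bigl(\tfrac{\lambda}{k+\lambda}\bigr)^2\dim(\calH_k^d)$. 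Substituting $\frac{k+\lambda}{\lambda}=\frac{d+2k-2}{d-2}$ yields exactly the claimed inequality.

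\textbf{Second formula.} I would use the Rodrigues formula
\[
C_k^\lambda(t)(1-t^2)^{\lambda-1/2}=c_{k,\lambda}\,\frac{d^k}{dt^k}(1-t^2)^{k+\lambda-1/2},
\]
with the standard constant
\[
c_{k,\lambda}=\frac{(-1)^k(2\lambda)_k}{2^k k!\,(\lambda+1/2)_k}.
\]
Integrating by parts $k$ times in $\int\kappa(t)C_k^\lambda(t)(1-t^2)^{\lambda-1/2}\,dt$ produces no boundary terms, since every derivative of order below $k$ of $(1-t^2)^{k+\lambda-1/2}$ vanishes at $\pm1$; this holds for $\lambda>-1/2$, i.e.\ $d\ge2$. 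The result is
\[
\int \kappa(t)C_k^\lambda(t)\,w(t)\,dt=(-1)^k c_{k,\lambda}\int\kappa^{(k)}(t)(1-t^2)^{k}\,w(t)\,dt, \qquad w(t)=(1-t^2)^{\lambda-1/2}.
\]
Dividing by $\int w$, this reads $\Expect[\kappa(t)C_k^\lambda(t)]=(-1)^kc_{k,\lambda}\Expect[\kappa^{(k)}(t)(1-t^2)^k]$.

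Finally, divide by $\Expect[(C_k^\lambda)^2]=\frac{\lambda}{k+\lambda}\frac{(2\lambda)_k}{k!}$. The factor $(2\lambda)_k/k!$ cancels, leaving
\[
\frac{k+\lambda}{\lambda}\cdot\frac{1}{2^k(\lambda+1/2)_k}=\frac{d+2k-2}{d-2}\cdot\frac{1}{(d-1)(d+1)\cdots(d+2k-3)},
\]
since $2^k(\lambda+\tfrac12)_k=\prod_{j=0}^{k-1}(d-1+2j)$. This is exactly the stated prefactor.

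\textbf{Expected obstacle.} The only delicate point is the bookkeeping of constants and signs in the Rodrigues formula under the normalization of \cite{dai2013approximation}. I would sanity-check the result at $k=1$: there $C_1^\lambda=2\lambda t$, and direct integration by parts gives $\alpha_1=\Expect[\kappa'(t)(1-t^2)]/(d-1)\cdot\frac{d}{d-2}$, which matches the formula.
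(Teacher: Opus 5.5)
Your proposal is correct and follows the paper's proof essentially step for step: orthogonality of the Gegenbauer polynomials plus Cauchy--Schwarz for the bound, then the Rodrigues formula (your $c_{k,\lambda}$ is the paper's $\tau_k$) and $k$-fold integration by parts with vanishing boundary terms for the identity. You also make explicit the ``simplification'' the paper leaves implicit, namely the cancellation of $(2\lambda)_k/k!$ and the identity $2^k(\lambda+\tfrac12)_k=(d-1)(d+1)\cdots(d+2k-3)$, and your $k=1$ check agrees with the stated formula.
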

\begin{proof}
Let $T \equiv \Iprod{x_1}{x_2}$, with  density $w(t) = \frac{1}{B(\frac{1}{2}, \lambda+\frac{1}{2})} (1-t^2)^{\lambda-\frac{1}{2}}$ and $B(a,b) = \frac{\Gamma(a)\Gamma(b)}{\Gamma(a+b)}$ is the beta function.
The Gegenbauer polynomials $C_k^{\lambda}(t)$ are orthogonal under this weight $w$.
Thus
$\alpha_k = \frac{\Expect[\kappa(T) C_k^\lambda(T)] }{\Expect[C_k^\lambda(T)^2] }$,
where $\Expect[C_k^\lambda(T)^2] = 
(\frac{d-2}{d+2k-2})^2
\dim(\calH_k^d)
= 
\frac{d-2}{d+2k-2}
\binom{d-3+k}{k}$ as given by \prettyref{eq:Gegenbauer-norm}.
The first upper bound follows from Cauchy-Schwarz.
(This is the same as using the trace formula $\Expect[\kappa(T)^2]=\tr(\kappa^2) = \sum_k
(\frac{\lambda
}{k+\lambda}\alpha_k)^2 \dim(\calH_{k}^d)$.)

For the second identity, recall the Rodrigues formula \cite[22.11.2]{AS}
\[
C_k^{\lambda}(t) = 
 \frac{\tau_k}{w(t)} \frac{d^k}{dt^k}[(1-t^2)^k w(t)], \quad
 \tau_k \triangleq (-1)^k \frac{1}{2^k k!} \frac{\Gamma(\lambda+\frac{1}{2})  \Gamma(k+2\lambda)}{\Gamma(\lambda+k+\frac{1}{2})  \Gamma(2\lambda)} 
\]
Applying this formula,
\begin{align*}
    \Expect[\kappa(T) C_k^\lambda(T)] 
= & \int_{-1}^1 w(t) \kappa(t) C_k^\lambda(t) dt \\
= & \tau_k
\int_{-1}^1 \kappa(t) \frac{d^k}{dt^k}[(1-t^2)^k w(t)] dt \\
= & \tau_k (-1)^k 
\int_{-1}^1 \kappa^{(k)}(t) (1-t^2)^k w(t) dt = 
\tau_k (-1)^k \Expect[\kappa^{(k)}(T) (1-T^2)^k],
\end{align*}
where the penultimate step applies integration by parts and the fact that all derivatives of $(1-t^2)^k w(t)$ up to order $k-1$ vanishes at the boundary $\pm 1$.
The final result follows after some simplification.
\end{proof}

\section{Heuristics on the spectral conjecture}
\label{app:heuristic}
Recall that the conjectured critical threshold for detection in \prettyref{conj:trace}
is given by $n^3\tr^2(\kappa^3) \asymp 1$.



\paragraph{Positive direction}
The conjectured threshold arises from a heuristic analysis of the signed triangle count. By~\prettyref{thm:ub}, the expected signed triangle count is zero  under the \ER model and of order $n^3\tr(\kappa^3)$ under the RGG model. Suppose further that the variance of the signed triangle count under the RGG model is $O(n^3)$, the same as under the \ER model. Then the mean difference dominates the standard deviation provided that $n^{3} \tr^2(\kappa^3) \gg 1$.


\paragraph{Negative direction}
The same threshold also emerges in a heuristic calculation of the KL divergence. Recall that the KL expansion in \prettyref{lem:kl-expansion} yields the generic upper bound: For any kernel,
$\KL(P_A\|Q_A)\le \sum_{k=2}^{n-1}\binom{n}{k+1} g(k)$. If this sum is dominated by the leading term of $k=2$, then the KL divergence vanishes whenever $n^3 g(2) =o(1)$.

Recall from \prettyref{eq:def-g(k)} that 
$$
g(2) = \expect{\left( \expect{\kappa(x_{n+1},x_1) \kappa(x_{n+1},x_2) \mid A } \right)^2}=\expect{\left( \expect{\eta(\iprod{x_1}{x_2})
 \mid A }\right)^2},
$$
where $\eta(\iprod{y}{z})\triangleq\Expect_x[\kappa(\Iprod{x}{y})\kappa(\Iprod{x}{z})]$.
As outlined in \prettyref{sec:outline_proof}, the key of the proof is to show that 
the information contained in the entire graph $A$ beyond the single edge $A_{12}$ is negligible for estimating $\langle x_1,x_2\rangle$; in other words, the variance reduction thanks to $A$ is on par with that thanks to $A_{12}$, i.e., 
$g(2) \asymp 
\Expect[\left( \expect{\eta(\iprod{x_1}{x_2})| A_{12} }\right)^2]$.
 As we show next, this latter quantity is precisely $\tr^2(\kappa^3)$.
Substituting $g(2)=O(\tr^2(\kappa^3))$ into $n^3 g(2) =o(1)$ yields the conjectured impossibility condition $n^3 \tr^2(\kappa^3) \ll 1$. 

To compute $\Expect[\left( \expect{\eta(\iprod{x_1}{x_2})| A_{12} }\right)^2]$, 
note that 
\begin{align*}
\expect{\eta(\iprod{x_1}{x_2}) \mid A_{12} =1
} 
& = \frac{ \expect{\eta(\iprod{x_1}{x_2}) \indc{A_{12} =1}
} }{\prob{A_{12}=1}}  \\
& = \frac{1}{p} 
\expect{\eta(\iprod{x_1}{x_2}) \left( 
\sqrt{p(1-p)}\kappa (\iprod{x_1}{x_2} ) +p \right)
} \\
& = \sqrt{\frac{1-p}{p}} \expect{\eta(\iprod{x_1}{x_2}) \kappa (\iprod{x_1}{x_2} )}
= \sqrt{\frac{1-p}{p}}  \tr(\kappa^3).
\end{align*}
Similarly, we can get  
$\expect{\eta(\iprod{x_1}{x_2}) \mid A_{12} =0
} = - \sqrt{\frac{p}{1-p}}  \tr(\kappa^3) $ and therefore 
$$
\expect{\left( \expect{\eta(\iprod{x_1}{x_2}) \mid A_{12}
} \right)^2} = p \times \frac{1-p}{p} 
{\mathop{\rm tr}}^2(\kappa^3) + (1-p) \times \frac{p}{1-p} {\mathop{\rm tr}}^2(\kappa^3) = {\mathop{\rm tr}}^2(\kappa^3).
$$

\section{Proof of \prettyref{eq:dn3} and 
\prettyref{thm:dn3-universal}
for arbitrary kernel}
\label{app:dn3}

In this appendix  
we prove the KL bound \prettyref{eq:dn3}
which implies non-detection at $d \gg n^3$ for any kernel with spherical points. Then, the proof is adapted to show universality for latent points with iid coordinates (\prettyref{thm:dn3-universal}).

\begin{proof}[Proof of \prettyref{eq:dn3}]    
We continue from \prettyref{eq:KLexp3}. 
Note that $x_{n+1} \sim \Unif(S^{d-1})$ is independent of $x_1,\ldots,x_n$ and $A$. 
Denote
\[
f(x_1,\ldots,x_k)\eqdef
\Expect_{x_{n+1}}
\qth{\prod_{i=1}^k
 \kappa(\Iprod{x_i}{x_{n+1}})
\Bigg| x_1,\ldots,x_k}.
\]
Then \prettyref{eq:KLexp3} is the same as 
\begin{align}
\KL(P_A\|Q_A) 
\leq  & \sum_{k=2}^{n-1}
\binom{n}{k+1} 
\Expect_{A}
\qth{
\left(\Expect_{x_1,\ldots,x_k|A}
[f(x_1,\ldots,x_k)] \right)^2} 
\nonumber \\
\leq & \sum_{k=2}^{n-1}
\binom{n}{k+1} 
\Expect_{x_1,\ldots,x_k}
[f(x_1,\ldots,x_k)^2] 
    \label{eq:KLexp32}
\end{align}
where the second line applies Jensen's inequality.
Write 
\[
f(x_1,\ldots,x_k)^2
=\Expect_{x_{n+1}\indep\tilde x_{n+1}}
\qth{\prod_{i=1}^k
 \kappa(\Iprod{x_i}{x_{n+1}})
 \kappa(\Iprod{x_i}{\tilde x_{n+1}})
\Bigg| x_1,\ldots,x_k}.
\]
We have
\begin{align*}
\Expect_{x_1,\ldots,x_k}
[f(x_1,\ldots,x_k)^2] 
=  &  
\Expect_{x_1,\ldots,x_k,x_{n+1},\tilde x_{n+1}}
\qth{\prod_{i=1}^k
 \kappa(\Iprod{x_i}{x_{n+1}})
 \kappa(\Iprod{x_i}{\tilde x_{n+1}})
} \nonumber \\
=  &  
\Expect
\qth{\eta(x_{n+1},\tilde x_{n+1})^k},
    \label{eq:KLexp33}
\end{align*}
where 
\[
\eta(y,z) = \eta(\Iprod{y}{z}) 
\triangleq \Expect_{x\sim \Unif(S^{d-1})}[\kappa(\Iprod{x}{y})
 \kappa(\Iprod{x}{z})].
\]
By \prettyref{lmm:maxcorr-sphere} below,  we have the deterministic bound
\begin{equation}
|\eta(\rho)| \leq C\pth{|\rho| + \frac{1}{d}}
\label{eq:claim-gyz}
\end{equation}
which holds for some constant $C>0$ depending only on $p$.
Thus 
$$
\Expect
\qth{\eta(x_{n+1},\tilde x_{n+1})^k} 
\le \Expect
\qth{C^k (|\Iprod{x_{n+1}}{\tilde x_{n+1}}| + 1/d)^k}
\le \bigg( \frac{C' \sqrt{k}}{\sqrt{d}} \bigg)^k 
$$
using the fact that $\Iprod{x_{n+1}}{\tilde x_{n+1}}$ is $O(1/d)$-sub-Gaussian. 
Moreover, $\Expect
\qth{\eta(x_{n+1},\tilde x_{n+1})} = 0$. 
It follows that 
\begin{align}
\KL(P_A\|Q_A) 
&\leq 
 \sum_{k=2}^{n-1}
\binom{n}{k+1} 
\Expect
[\eta(x_{n+1},\tilde x_{n+1})^k] \label{eq:KLdn3}\\
& \le \sum_{k=2}^{n-1}
\binom{n}{k+1} \bigg( \frac{C' \sqrt{k}}{\sqrt{d}} \bigg)^k 
\le \sum_{k=2}^{n-1} \frac{(C'' n)^{k+1}}{d^{k/2}} = O\bigg( \frac{n^3}{d} \bigg) \notag
\end{align}
provided that $d = \Omega(n^3)$. 
\end{proof}


\begin{lemma}
\label{lmm:maxcorr-sphere}
Let $d\geq 6$. Fix $y,z\in S^{d-1}$ and let $x \sim \Unif(S^{d-1})$.
There is a universal constant $C$ such that for any bounded $f$,
\[
\Cov(f(\Iprod{x}{y}), f(\Iprod{x}{z}))
\leq  C \|f\|_\infty^2 \pth{|\Iprod{y}{z}| + \frac{1}{d-5}}.
\]
\end{lemma}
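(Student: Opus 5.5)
The plan is to reduce the covariance to a statement about the joint law of the pair $(U,V) = (\Iprod{x}{y}, \Iprod{x}{z})$ compared with the product of its marginals. Write $\rho = \Iprod{y}{z}$ and $g = f - \Expect f(U)$; note that $U$ and $V$ have the same marginal law $\mu$, the law of $\Iprod{x_1}{x_2}$. Then
\[
\Cov(f(U),f(V)) = \iint g(u)g(v)\,\bigl(p_\rho(u,v) - w(u)w(v)\bigr)\,du\,dv,
\]
where $p_\rho$ is the joint density of $(U,V)$ and $w$ is the marginal density from the proof of \prettyref{lmm:rodrigues}. Since $|g|\le 2\|f\|_\infty$, this gives
\[
|\Cov| \le 8\|f\|_\infty^2\,\TV\bigl(P_{(U,V)},\,\mu\otimes\mu\bigr).
\]
It therefore suffices to show $\TV(P_{(U,V)},\mu\otimes\mu) \lesssim |\rho| + 1/(d-5)$.

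To get the joint density, rotate coordinates so that $y = e_1$ and $z = \rho e_1 + \sqrt{1-\rho^2}\,e_2$. Then $(U,V)$ is a linear image of $(x_1,x_2)$, the first two coordinates of $x$, whose density is proportional to $(1-s^2-t^2)_+^{(d-4)/2}$ on the unit disk.

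There are two approximations, combined by the triangle inequality.
\begin{itemize}
\item The pair $(x_1,x_2)$ is close to $\mu\otimes\mu$, with $\TV = O(1/d)$. Compare $(1-s^2-t^2)^{(d-4)/2}$ with $(1-s^2)^{(d-3)/2}(1-t^2)^{(d-3)/2}$ after normalization. The log-ratio of the two is $O(d\, s^2t^2 + s^2 + t^2)$, and this has expectation $O(1/d)$ on the bulk $|s|,|t|\lesssim \sqrt{\log d/d}$. The tails outside the bulk are negligible.
\item The map $(s,t)\mapsto(s,\rho s + \sqrt{1-\rho^2}\,t)$ moves a product of two approximately $N(0,1/d)$ variables by $O(|\rho|)$ in total variation. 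This is the standard Gaussian fact $\TV(N(0,\Sigma_\rho), N(0,I)) \lesssim |\rho|$ for a correlation-$\rho$ covariance $\Sigma_\rho$, and it transfers to the nearly-Gaussian case via the previous step.
\end{itemize}
For $|\rho|$ bounded away from $0$ the claim is trivial, because the covariance is at most $\|f\|_\infty^2$.

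The main obstacle is making both approximation bounds explicit and uniform. Two points need care: the exponent $(d-4)/2$ is why $d\ge 6$ is assumed and why $1/(d-5)$ appears, and the second step must be done without passing through Gaussians, to avoid losing logarithmic factors. An alternative that sidesteps density estimates is the Gegenbauer expansion of $g$ combined with the addition formula \prettyref{eq:additive_formula}. This gives the covariance as a series in the harmonic coefficients of $g$ weighted by $C_k^\lambda(\rho)/C_k^\lambda(1)$, and one then uses $|C_k^\lambda(\rho)/C_k^\lambda(1)| \le \max\{|\rho|,\, O(1/d)\}$ for $k\ge1$ together with Parseval. I would try this second route first, since Parseval gives
\[
\sum_{k\ge1}\|g_k\|^2 \le \|f\|_\infty^2
\]
directly, and the only remaining work is the uniform bound on the normalized Gegenbauer polynomials. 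That bound is itself the delicate point: for $k\ge2$ one needs $|C_k^\lambda(\rho)|/C_k^\lambda(1) \lesssim \rho^2 + 1/d \le |\rho| + 1/d$.
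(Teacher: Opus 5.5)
\paragraph{Route 1 (total variation).} This route is correct and is essentially the paper's proof.

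The paper uses a single Gaussian comparison: Diaconis--Freedman, $\TV\bigl(\mathrm{Law}(x_1,x_2),\calN(0,I_2/d)\bigr)\le 5/(d-5)$. That bound is the actual source of $d\ge 6$ and of $1/(d-5)$. By data processing it controls $P_{(U,V)}$ against $\calN(0,\Sigma_\rho/d)$ (push both forward under your linear map), and $\mu$ against $\calN(0,1/d)$ (project to one coordinate).

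For the Gaussian pair, the paper then uses Gebelein's maximal-correlation inequality $|\Cov(f(U'),f(V'))|\le|\rho|\,\|f\|_\infty^2$. You use $\TV(\calN(0,\Sigma_\rho),\calN(0,I_2))\lesssim|\rho|$ instead, and either suffices.

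Two adjustments:
\begin{itemize}
\item Your density comparison of $(x_1,x_2)$ with $\mu\otimes\mu$ is unnecessary. It also does not by itself supply what your transfer step needs, namely $\mu$ close to $\calN(0,1/d)$; Diaconis--Freedman gives that.
\item Passing through Gaussians costs no logarithmic factors, so there is no reason to avoid it.
\end{itemize}

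\paragraph{Route 2 (Gegenbauer).} This route is genuinely different, but as written it stops exactly at its one nontrivial estimate. The reduction is right. By \eqref{eq:additive_formula}, $\Expect_x[C_k^\lambda(\Iprod{x}{y})C_m^\lambda(\Iprod{x}{z})]=\indc{k=m}\frac{\lambda}{k+\lambda}C_k^\lambda(\rho)$, so $\Cov(f(U),f(V))=\sum_{k\ge1}\|g_k\|^2R_k(\rho)$ with $R_k\eqdef C_k^\lambda/C_k^\lambda(1)$.

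What you need is a bound on $R_k(\rho)$ uniform in $k$, including $k\gtrsim d$, where fixed-$k$ asymptotics say nothing. The Laplace integral gives it.
\begin{itemize}
\item Take $w$ uniform on the unit sphere of $y^\perp$. The complex vector $y+iw$ satisfies $\sum_j (y+iw)_j^2=0$, so $\Iprod{y+iw}{\cdot}^k$ is a harmonic polynomial.
\item Its average over $w$ is a zonal harmonic of degree $k$ equal to $1$ at $y$, hence equals $R_k(\Iprod{y}{\cdot})$.
\item Writing $z=\rho y+\sqrt{1-\rho^2}\,z_\perp$, this gives
\[
R_k(\rho)=\Expect\bigl[(\rho+i\sqrt{1-\rho^2}\,s)^k\bigr],\qquad s\eqdef\Iprod{w}{z_\perp},
\]
where $s$ is a coordinate of a uniform point on $S^{d-2}$, so $\Expect s^2=1/(d-1)$.
\item Since $|\rho+i\sqrt{1-\rho^2}\,s|^2=\rho^2+(1-\rho^2)s^2\le 1$, for every $k\ge2$ you get $|R_k(\rho)|\le \rho^2+\frac{1-\rho^2}{d-1}$. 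Also $R_1(\rho)=\rho$.
\end{itemize}
Hence $|\Cov(f(U),f(V))|\le\bigl(|\rho|+\frac{1}{d-1}\bigr)\Var(f(U))$.

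\paragraph{What each route buys.} The completed Route 2 gives a true maximal-correlation bound: variance in place of $\|f\|_\infty^2$, any $f\in L^2(\mu)$, any $d\ge 3$, and constant $1$. In the paper's application to $\eta$ it yields $|\eta(\rho)|\le\bigl(|\rho|+\frac{1}{d-1}\bigr)\Expect[\kappa(\Iprod{x_1}{x_2})^2]\le|\rho|+\frac{1}{d-1}$, with no dependence on $p$, because $0\le K\le 1$ forces $\Var K\le p(1-p)$.

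The paper's softer argument buys portability instead. The same Gaussian-approximation-plus-Gebelein scheme is reused in \prettyref{lmm:maxcorr-CLT} for latent vectors with i.i.d.\ coordinates, with Berry--Esseen in place of Diaconis--Freedman. There, no spherical-harmonic diagonalization is available.
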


\begin{proof}
Without loss of generality, let $y=e_1$ and $z =\rho e_1 + \sqrt{1-\rho^2} e_2$. Let $U=\Iprod{x}{y}=x_1,V=\Iprod{x}{z}=\rho x_1 + \sqrt{1-\rho^2} x_2$. 
Set 
$U'=x_1', V'=\rho x_1' + \sqrt{1-\rho^2} x_2'$, where $x_1',x_2'$ are i.i.d.\  $N(0,1/d)$. 
Then
\[
\TV\pth{\Law(U,V),\Law(U',V')} 
\leq 
\TV\pth{\Law(x_1,x_2),\Law(x_1',x_2')} 
\leq \frac{5}{d-5},
\]
the second step following from \cite{diaconis1987dozen}.

Replacing $f(U)$ by $f(U) - \Expect[f(U)]$, whose sup norm is at most $2\|f\|_\infty$, we may assume  $\Expect[f(U)]=\Expect[f(V)]=0$ so that $\Cov(f(U),f(V))=\Expect[f(U)f(V)]$.
Applying the preceding total variation bound, 
\[
|\Expect[f(U)f(V)]-\Expect[f(U')f(V')]|\leq \frac{10}{d-5} \|f\|_\infty^2.
\]
Furthermore, 
$|\Expect[f(U')| \leq \frac{10}{d-5} \|f\|_\infty $.
Finally, by maximal correlation of bivariate normal (see \cite[Theorem 33.12 and Example 33.7]{PW-it}), we have
\[
\Cov(f(U'),f(V')) \leq 
|\rho|\sqrt{\Var(f(U')) \Var(f(V'))} \leq |\rho| \|f\|_\infty^2.
\]
Combining these estimates completes the proof.
\end{proof}

Next we prove \prettyref{thm:dn3-universal} in \prettyref{sec:discussion}. 
The proof of this result is similar to the proceeding proof, with \prettyref{lmm:maxcorr-sphere} replaced by \prettyref{lmm:maxcorr-CLT}.
The key step in both lemmas is to approximate the two projections by a bivariate Gaussian and apply the Gaussian maximal correlation. For spherical, this Gaussian approximation was justified using Diaconis-Freedman. Here for iid we will do the same by a suitable application of CLT.


\begin{lemma}
    \label{lmm:maxcorr-CLT}
    Let $x=(x_1,\ldots,x_d)$ be a random vector in $\reals^d$ with i.i.d.\ components from a fixed distribution on $\reals$ with zero mean, unit variance, and $\Expect[|x_i|^3]<\infty$. Fix $y,z\in\reals^d$.
There is a constant $C>0$ such that for any CDF $f$ on $\reals$, we have 
\[
\Cov(f(\Iprod{x}{y}), f(\Iprod{x}{z}))
\leq  C \pth{|\rho| + (1-\rho^2)^{-3/2} 
\pth{\frac{\|y\|^2+\|z\|^2}{
\|y\|^2 \|z\|^2}}^{3/2} (\|y\|_3^3+\|z\|_3^3)
}
\]
where $\rho \equiv \frac{\Iprod{y}{z}}{\|y\|\|z\|}$.
\end{lemma}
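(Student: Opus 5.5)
The plan follows the template of \prettyref{lmm:maxcorr-sphere}. First approximate the pair $(U,V)\eqdef(\Iprod{x}{y},\Iprod{x}{z})$ by a bivariate Gaussian $(U',V')$ with the same covariance. Then bound the Gaussian covariance by maximal correlation. Finally, pay for the approximation error through a distance adapted to CDF test functions.

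\textbf{Reduction.} By scale invariance of the statement in $\rho$, I normalize by writing $f(\Iprod{x}{y})=f_y(\Iprod{x}{y}/\|y\|)$ with $f_y(s)=f(\|y\|s)$, which is again a CDF. So I may take $\|y\|=\|z\|=1$ after rescaling, and then the $\ell_3$ terms become $\|y\|_3^3/\|y\|^3$. The factor $((\|y\|^2+\|z\|^2)/(\|y\|^2\|z\|^2))^{3/2}$ in the claim is a crude upper bound for the normalized quantities, so nothing is lost.

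\textbf{Gaussian part.} The vector $(U,V)$ has identity marginals and correlation $\rho$. Let $(U',V')$ be centered Gaussian with the same covariance. By the maximal correlation of the bivariate normal (as in the proof of \prettyref{lmm:maxcorr-sphere}), and since $0\le f\le 1$,
\[
\Cov(f(U'),f(V'))\le|\rho|.
\]

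\textbf{Approximation error.} I need to compare $\Expect[f(U)f(V)]$ with $\Expect[f(U')f(V')]$, and each marginal mean $\Expect f(U)$ with $\Expect f(U')$. Since $f$ is a CDF, it is a mixture of step functions:
\[
f(u)=\int\indc{u\ge t}\,\mu(dt)
\]
for a probability measure $\mu$. Hence
\[
\Expect[f(U)f(V)]=\iint\prob{U\ge s,V\ge t}\,\mu(ds)\mu(dt).
\]
The error is therefore controlled by the multivariate Kolmogorov distance between $(U,V)$ and $(U',V')$ over lower-left orthants. The marginal terms are handled the same way by the one-dimensional Berry--Esseen theorem.

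For the orthant distance, I would use the multivariate Berry--Esseen theorem for sums of independent vectors $\xi_i=x_i(y_i,z_i)$, applied to the standardized sum $\Sigma^{-1/2}\sum_i\xi_i$ with
\[
\Sigma=\begin{pmatrix}1&\rho\\ \rho&1\end{pmatrix}.
\]
Orthants map to convex sets under the linear map $\Sigma^{-1/2}$, and Bentkus's bound over convex sets in dimension $2$ gives an error at most
\[
C\sum_i\Expect\|\Sigma^{-1/2}\xi_i\|^3\le C\|\Sigma^{-1/2}\|^3\,\Expect|x_1|^3\sum_i(y_i^2+z_i^2)^{3/2}.
\]
Here $\|\Sigma^{-1/2}\|^3=(1-|\rho|)^{-3/2}\le 2^{3/2}(1-\rho^2)^{-3/2}$, and $\sum_i(y_i^2+z_i^2)^{3/2}\lesssim\|y\|_3^3+\|z\|_3^3$. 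Undoing the normalization gives exactly the second term of the claimed bound. Combining the three estimates yields the lemma.

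The main obstacle is this approximation step. A total-variation coupling as in Diaconis--Freedman is unavailable, because $x$ may be discrete, e.g.\ uniform on the Hamming cube. It is precisely the CDF assumption on $f$ (monotone, bounded, hence a mixture of orthant indicators) that lets the weaker Kolmogorov-type convex-set Berry--Esseen bound suffice, at the cost of the $(1-\rho^2)^{-3/2}$ factor from decorrelating.
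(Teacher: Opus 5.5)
Your proposal is correct and follows the paper's proof. Both arguments write $f$ as a mixture of orthant indicators, control the covariance error by the convex-set (Kolmogorov) distance, bound that distance by a multivariate Berry--Esseen theorem for $\Sigma^{-1/2}\sum_i x_i(y_i,z_i)$, and finish with the Gaussian maximal correlation. The only difference is that you normalize $y,z$ first, so that $\lambda_{\min}(\Sigma)=1-|\rho|$, whereas the paper keeps the unnormalized $\Sigma$ and uses $\lambda_{\min}(\Sigma)\ge\det\Sigma/\tr\Sigma$. After your rescaling, note that $f(\|y\|\cdot)$ and $f(\|z\|\cdot)$ are two different CDFs, so the double integral should be taken against $\mu_y\otimes\mu_z$; this changes nothing in the argument.
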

\begin{proof}
Without loss of generality, assume that $f(u) = \mu((-\infty,u))$ for some probability measure $\mu$. Then
$\Expect[f(U)f(V)] = \iint 
\prob{U\geq s, V\geq t} \mu(ds)\mu(dt)$.
Thus
\begin{align*}
   |\Expect[f(U)f(V)]-\Expect[f(U')f(V')]|
&\leq \iint 
\left|\prob{U\geq s, V\geq t}-\prob{U'\geq s, V'\geq t}\right| \mu(ds)\mu(dt)\\
&\leq \sup_{A \text{ convex}} \left|\prob{(U, V)\in A}-\prob{(U', V')\in A}\right|  =: \dKS(P_{UV},P_{U'V'}) .
\end{align*}
Applying the same argument to the means, we conclude that 
\begin{align*}
   |\Cov(f(U),f(V))-\Cov(f(U'),f(V'))|
&\leq 3\dKS(P_{UV},P_{U'V'}).
\end{align*}

Next, we apply this to the joint distribution  of $(\Iprod{x}{y},\Iprod{x}{z})$ and its Gaussian approximation $\calN(0,\Sigma)$, 
where 
\[\Sigma = \begin{bmatrix}
    \|y\|^2 & \Iprod{y}{z}\\
    \Iprod{y}{z} & \|z\|^2\\
\end{bmatrix}.
\]
Identical to the proof of the preceding \prettyref{lmm:maxcorr-sphere}, the covariance under the bivariate Gaussian follows from the maximal correlation $\frac{|\Iprod{y}{z}|}{\|y\|\|z\|}$.
Thus, it remains to bound the KS distance.

Note that $(\begin{smallmatrix}
   \Iprod{x}{y}\\\Iprod{x}{z}
\end{smallmatrix}) = \sum_{i=1}^d X_i$, where $X_i = x_i (\begin{smallmatrix}
    y_i\\z_i
\end{smallmatrix})$ satisfies 
$\sum_{i=1}^d \Cov(X_i) = \Sigma$. 
Set $\tilde X_i = \Sigma^{-1/2} X_i$. We have
\[
\dKS(\Law(\Iprod{x}{y},\Iprod{x}{z}),\calN(0,\Sigma))
= 
\dKS\pth{\Law\pth{\sum_{i=1}^d \tilde X_i },\calN(0,I_2)}
\leq C \sum_{i=1}^d \Expect[\|\tilde X_i\|^3],
\]
for some universal constant $C>0$, by applying the multivariate Berry--Esseen theorem
\cite[Theorem 1.1]{raivc2019multivariate}.
Finally, we apply
$\Expect[\|\tilde X_i\|^3] \leq 
\expect{|x_i|^3} \lambda_{\min}(\Sigma)^{-3/2} (y_i^2+z_i^2)^{3/2}$,
and
\[
\lambda_{\min}(\Sigma) \geq \frac{\det(\Sigma)}{\tr(\Sigma)} = 
\frac{ \|y\|^2  \|z\|^2 - \Iprod{y}{z}^2}{\|y\|^2  + \|z\|^2}
\]
to conclude the proof.
\end{proof}

\begin{proof}[Proof of \prettyref{thm:dn3-universal}]
By \cite[Theorem 3.29]{folland1999real}, the function $K$, assumed to be right-continuous and have bounded variation, equals up to an additive constant the CDF of a finite signed measure on $\reals$. 
Since the goal is to compute the moments in \prettyref{eq:KLexp32}, we may assume without loss of generality that $K$ is the CDF of a probability measure.

The proof is identical to that of \prettyref{eq:dn3}
up to \prettyref{eq:claim-gyz}, which, by applying \prettyref{lmm:maxcorr-CLT}, is replaced by 
\begin{equation}
|\eta(y,z)| \leq C \pth{|\rho| + (1-\rho^2)^{-3/2} 
\pth{\frac{\|y\|^2+\|z\|^2}{
\|y\|^2 \|z\|^2}}^{3/2} \left(\|y\|_3^3+\|z\|_3^3\right)
}
\label{eq:claim-gyz1}
\end{equation}
where $\rho \equiv \rho(y,z) = \frac{\Iprod{y}{z}}{\|y\|\|z\|}$.

To bound the moment 
$\Expect
\qth{\eta(x_{n+1},\tilde x_{n+1})^k}$, define the good event
$$\calH = \{\|x_{n+1}\|^2 \geq c d, \|\tilde x_{n+1}\|^2 \geq c d, 
\|x_{n+1}\|_3^3 \leq C d, \|\tilde x_{n+1}\|_3^3 \leq C d, 
|\iprod{x_{n+1}}{\tilde x_{n+1}}| \leq c^2 d
\}$$
for an appropriately large constant $C>0$ and a small constant $c>0$. 
In particular, $|\rho(x_{n+1},\tilde x_{n+1})| \leq 0.1$  on the event $\calH$. 
Thus, using the boundedness of $\eta$,
\[
\eta(x_{n+1},\tilde x_{n+1})
\leq 
C\left(
|\rho(x_{n+1},\tilde x_{n+1})| + d^{-1/2}
+ 
\indc{\calH^c} \right)
\]
and hence 
\[
\Expect[|\eta(x_{n+1},\tilde x_{n+1})|^k]
\leq 
C_1^k \left(
\Expect[|\rho(x_{n+1},\tilde x_{n+1})|^k] + d^{-k/2}
+ 
\prob{\calH^c}\right)
\]

To bound $\prob{\calH^c}$, 
since $x_{n+1}$ and $\tilde x_{n+1}$ are independent with i.i.d.\ subgaussian coordinates. 
Using the fact that products of subgaussian random variables are 
subexponential, Bernstein's inequality yields
$\prob{\|x_{n+1}\|^2 \geq c d} \leq \exp(-c_0 d)$, 
$\prob{|\iprod{x_{n+1}}{\tilde x_{n+1}}| \geq c^2 d} \leq \exp(-c_0 d)$, and 
$\prob{\|x_{n+1}\|_3^3 \geq C d} \leq \exp(-c_0 d^{2/3})$, the last one applying \cite[Theorem 3.1]{kuchibhotla2022moving} (with $\alpha=2/3$).
Overall, we get 
$\prob{\calH^c} \leq \exp(-c_0 d^{2/3})$.

To bound the moment of the  correlation $\rho(x_{n+1},\tilde x_{n+1})$,
first we get from applying Bernstein's inequality 
$\Expect[|\Iprod{x_{n+1}}{\tilde x_{n+1}}|^k] \leq (Cdk)^{k/2} + (Ck)^k$.
Thus
$\Expect[|\rho(x_{n+1},\tilde x_{n+1})|^k]
\leq (Ck/\sqrt{d})^k + \exp(-c_0 d)$.
Overall, we get 
\[
\Expect[|\eta(x_{n+1},\tilde x_{n+1})|^k]
\leq 
(Ck/\sqrt{d})^k + \exp(-c_0 d^{2/3}).
\]
Plugging this into \prettyref{eq:KLdn3}, we conclude that $\KL(P_A\|Q_A) = O(\frac{n^3}{d})$, 
provided that $d = \Omega(n^3)$.    
\end{proof}

\section{Proof of \prettyref{lmm:poly-highsnr}}
\label{app:poly-highsnr}

Let $K(t) = \sum_{\ell\geq 0} a_\ell t^\ell$ denote the Taylor expansion of $K$, which is the CDF of the density $f$.
Then
\[
a_\ell = \frac{1}{\ell!} K^{(\ell)}(0)=  \frac{1}{\ell!} f^{(\ell-1)}(0).
\]

By assumption, the characteristic function 
$\phi(\omega) = \Expect_{X\sim f}[e^{i \omega X}]$
of $f$ satisfies
\begin{equation}
|\phi(\omega)| \leq C \exp(- c \omega^2/2).
    \label{eq:cf-assumption}
\end{equation}
Applying Fourier inversion 
\[
f(x) = \frac{1}{2\pi} \int_{-\infty}^{\infty} \phi(t) e^{-itx} dt
\]
and the dominated convergence theorem, we get
\[
f^{(k)}(x) = \frac{1}{2\pi} \int_{-\infty}^{\infty} (-i\omega)^k \phi(\omega) e^{-ix \omega} d\omega ,
\]
so
$|f^{(k)}(x)| \leq  \frac{C}{2\pi} \int_{-\infty}^{\infty} |\omega|^k e^{-c\omega^2/2} d\omega 
= \frac{C}{\sqrt{c}} \Expect[|N(0,1/c)|^k] \le Cc^{-(k+1)/2} (k-1)!! \le Cc^{-(k+1)/2} \sqrt{k!}$.
This leads to the desired bound 
\[
a_\ell \le 
Cc^{-\ell/2} \frac{1}{\sqrt{\ell!}}.
\]
In particular, the Taylor expansion $K(t) = \sum_{\ell\geq 0} a_\ell t^\ell$ converges everywhere.

Next, let $\tilde K(t) = \sum_{\ell\geq 0}^{L-1} a_\ell t^\ell$.
For any $t$, we have
$\tilde K(t)-K(t) = \frac{K^{(L)}(\xi)}{L!} \xi^L = \frac{f^{(L-1)}(\xi)}{L!} \xi^L
$ for some $\xi$ between $0$ and $t$.
Using the above derivative bound, we get for all $t$,
\begin{equation}
|\tilde K(t)-K(t)|
\leq  \frac{C c^{-L/2}}{\sqrt{L!}} |t|^L.
\label{eq:taylor-pointwise}
\end{equation}
In particular,
\begin{equation}
\sup_{t \in [-r,r]}|\tilde K(t)-K(t)|
\leq \frac{C c^{-L/2}}{\sqrt{L!}} r^L.
    \label{eq:taylor-uniform}
\end{equation}
By assumption, $c \exp(-Ct^2) \leq K(t) \leq 1-c \exp(-Ct^2)$.
Thus, $\tilde K(t) \in (0,1)$ for all $t \in (-r,r)$ as long as $L \geq C_0 r^2$ for a suitable constant $C_0$.

Using the fact that $\Iprod{x_1}{x_2}$ is $O(1/d)$-subgaussian so that
$\Expect[\Iprod{x_1}{x_2}^{2L}] \leq d^{-L} (C_1L)^L$ for some absolute constant $C_1$,
applying \prettyref{eq:taylor-pointwise} yields
\[
\Expect[
|\tilde K(r\Iprod{x_1}{x_2})-K(r\Iprod{x_1}{x_2})|^2]
\leq  \frac{C^2 c^{-L}}{L!} r^{2L} d^{-L} (C_1L)^L
\leq \pth{\frac{C_2 r^2}{d}}^L.
\]

\section{Non-universality of detection threshold for distance kernel} \label{app:non-universality}




In this appendix, we provide an example of a kernel for which the detection threshold is \textit{not universal} with respect to the latent point distribution.
Somewhat surprisingly, for distance kernels,
the detection threshold $d_* = n^{3/4}$ no longer holds if the latent points are isotropic Gaussian as opposed to spherical uniform. 
Correspondingly, the signed triangle count may not be the optimal test statistic. 

To this end, let us consider i.i.d.\ Gaussian latent points $x_1, \dots, x_n \sim \calN(0, \frac 1d I_d)$. 
Define the kernel $\tilde K : \reals^d \times \reals^d \to [0,1]$ by $\tilde K(x_i, x_j) \eqdef \gamma \exp(-\frac{\beta}{2}\|x_i-x_j\|^2)$ for constants $\gamma \in (0,1)$ and $\beta > 0$. This type of distance kernels arises in modeling communication networks motivated by the path loss in wireless communication \cite{waxman2002routing,dettmann2016random}.

Note that in the spherical counterpart with 
i.i.d.\ $x_1, \dots, x_n \sim \Unif(S^{d-1})$, 
we have $\tilde K(x_i, x_j)
= K(\Iprod{x_i}{x_j})$,
where $K(t) = \gamma \exp(-\beta(1-t))$.
This is a smooth inner product kernel, and \prettyref{thm:fixed} shows that the detection threshold is given by $d_* = n^{3/4}$, achieved by counting signed triangles.
However, Theorem~\ref{thm:signed-wedge-count-for-gaussian-points} next shows that for Gaussian latent points, the detection threshold is much higher, as counting signed wedges (paths of length two) succeeds whenever $d \ll n^{3/2}$.~\footnote{The phenomenon that counting signed wedges may outperform counting signed triangles has also been observed in \cite{bangachev2024fourier} for the hard RGG with Gaussian latent points. 
More specifically, in the regime $p \ll n^{-3/4}$, the detection threshold for low-degree polynomial test statistics (including, in particular, the signed wedge and signed triangle counts) is given by $d = n^{3/2+o(1)} p$. 
The signed wedge count succeeds under the nearly-optimal condition $d \le n^{3/2} p /(\log n)^5$, whereas the signed triangle count is only known to succeed when $d \le n^3p^3/(\log n)^5$.} 

Let $K_n$ denote the complete graph on $n$ vertices. For a subgraph $H \subset K_n$, we identify $H$ with its own edge set $E(H)$ when there is no ambiguity, and let $|H|$ denote the number of edges in $H$. 

\begin{theorem}
\label{thm:signed-wedge-count-for-gaussian-points} 
Let $P$ denote the model of the random geometric graph with conditionally independent edges $A_{ij} \sim \tilde K(x_i, x_j)$ given the latent points $x_1, \dots, x_n \sim \calN(0, \frac 1d I_d)$. Let $p \eqdef \E_P[A_{ij}]$ and $\bar A_{ij} = A_{ij} - p$. 
Let $Q = G(n,p)$. 
Define the signed wedge count to be 
$$
W(A) \eqdef \sum_{H \subset K_n, \, H \cong \mathrm{wedge}} \prod_{e \in H} \bar A_e.
$$
If $1 \ll d \ll n^{3/2}$, then $\TV(P,Q) = 1-o(1)$, and thresholding the signed wedge count $W(A)$ is a consistent test.
\end{theorem}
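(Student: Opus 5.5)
The plan is a first/second moment comparison for $W(A)$, in the spirit of Theorem~\ref{thm:ub}. Under $Q$, $\Expect_Q[W]=0$ and the wedge products are pairwise uncorrelated, so $\Var_Q(W)\asymp n^3$. Under $P$, the key point is that the Gaussian distance kernel is not an inner-product kernel. We have $\|x_i-x_j\|^2=\|x_i\|^2+\|x_j\|^2-2\Iprod{x_i}{x_j}$, so
\[
\tilde K(x_i,x_j)=\gamma e^{-\frac\beta2\|x_i\|^2}e^{-\frac\beta2\|x_j\|^2}e^{\beta\Iprod{x_i}{x_j}}.
\]
The norm fluctuations $\|x_i\|^2-1=O_P(d^{-1/2})$ act like a rank-one ``degree'' effect. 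Define $h(x)\eqdef\Expect_{y}[\tilde K(x,y)]-p$. For a wedge $i\!-\!j\!-\!k$, conditional independence and independence of $x_i,x_k$ given $x_j$ give
\[
\Expect_P[\bar A_{ij}\bar A_{jk}]=\Expect[h(x_j)^2].
\]

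The first step is to compute $h$. Gaussian integration gives $\Expect_y e^{-\frac\beta2\|x-y\|^2}=(1+\beta/d)^{-d/2}\exp\!\big(-\frac{\beta\|x\|^2}{2(1+\beta/d)}\big)$. Hence $h(x)=c_d\big(e^{-\beta'\|x\|^2/2}-\Expect e^{-\beta'\|x\|^2/2}\big)$ with constants of order one. Since $\|x\|^2$ has variance $2/d$, we get $\Expect[h(x)^2]\asymp 1/d$. Therefore $\Expect_P[W]=3\binom n3\Expect[h^2]\asymp n^3/d$. For comparison, spherical points give $h\equiv 0$, which is exactly why wedges are useless there.

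The second step bounds $\Var_P(W)=\sum\Cov(w_H,w_{H'})$ over pairs of wedges, organized by the overlap pattern of the union graph $H\cup H'$. Each term is controlled after conditioning on the $x$'s. Write $\bar A_e=(A_e-\tilde K_e)+(\tilde K_e-p)$. The centered signal $\tilde K(x_i,x_j)-p$ decomposes as $h(x_i)+h(x_j)+$ a degenerate remainder $r(x_i,x_j)$ with $\Expect[r(x_i,\cdot)\mid x_i]=0$ and $r=O_P(d^{-1/2})$, the remainder coming mainly from the $\beta\Iprod{x_i}{x_j}$ term. The plan is to show that each overlap pattern with $v$ vertices contributes at most $n^{v}d^{-m}$, where $m$ counts the factors of $d^{-1/2}$ forced by the centering.
\begin{itemize}
\item Disjoint wedges ($v=6$) contribute zero.
\item Wedges sharing one vertex ($v=5$): the covariance is driven by $\Var$ of products of $h$ at the shared vertex, and should be $O(d^{-2})$ after using $\Expect h=0$ and independence of the other vertices. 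This totals $n^5/d^2$.
\item $v=4$ patterns give $O(n^4/d)$ or better.
\item $v=3$ patterns give $O(n^3)$.
\end{itemize}
Then $\Var_P(W)\lesssim n^3+n^4/d+n^5/d^2$, while $(\Expect_P W)^2\asymp n^6/d^2$. The ratio tends to infinity exactly when $n^6/d^2\gg n^3$, i.e. $d\ll n^{3/2}$, and when $n^6/d^2\gg n^5/d^2$, which always holds. Chebyshev then gives a consistent threshold test at $\tfrac12\Expect_P W$, so $\TV(P,Q)=1-o(1)$.

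The main obstacle is the $v=5$ covariance bound. We need it to be $O(d^{-2})$ and not $O(d^{-1})$: the latter would give $n^5/d$, which kills the test once $d\gtrsim n$. I would handle it by a Hoeffding-type decomposition of $\tilde K-p$ into $h(x_i)+h(x_j)+r(x_i,x_j)$, expanding the product over the four edges. The aim is to check that every surviving term either pairs two $h$'s at distinct vertices, costing $d^{-1}$ each, or involves $r$ with a degenerate factor, which averages out at cost $d^{-1}$ per pair. Explicit Gaussian moments of $e^{-\beta'\|x\|^2/2}$ and $\Iprod{x_i}{x_j}$ then finish the computation.
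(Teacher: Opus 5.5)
Your proposal is correct, but it reaches the result by a different route than the paper.

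\textbf{The paper's route.} The paper never decomposes the kernel. It computes $\E\prod_{e\in F}A_e=\gamma^{|F|}\prod_i(1+\beta\lambda_i(L_F)/d)^{-d/2}$ through the Laplacian $L_F$ and expands to second order in $1/d$. After dividing by $p^{|F|}$, the first-order correction is $\frac{\beta^2}{2d}$ times the number of wedges in $F$. Inclusion--exclusion over $F\subset H$ then gives $\E_P\prod_{e\in H}\bar A_e=p^2\beta^2/(2d)+O(d^{-2})$ when $H$ is a wedge, and $O(d^{-2})$ for every other shape. The variance is casework on $H\cup H'$, with double edges handled via $\E[\bar A_e^2\mid x]=p-p^2+(1-2p)(\tilde K_e-p)$.

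\textbf{Your route.} Your Hoeffding decomposition around $h(x)=\E_y\tilde K(x,y)-p$ gives the same constant $p^2\beta^2/(2d)$ directly from $\Var(\|x\|^2)=2/d$. It also explains \emph{why} wedges detect geometry here but not on the sphere, where $h\equiv 0$; the paper's algebra leaves this implicit. In exchange, the paper's lemma treats all subgraph shapes uniformly, while you need separate estimates for each overlap pattern.

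\textbf{The step you flag is easy.} The step you call the main obstacle is in fact immediate. Two wedges sharing exactly one vertex share no edge, so $\E_P\prod_{e\in H\cup H'}\bar A_e=\E\prod_e(\tilde K_e-p)$ over four distinct edges. The kernel is Lipschitz in $\|x_1-x_2\|^2\sim\frac2d\chi^2_d$, so $\|\tilde K(x_1,x_2)-p\|_4=O(d^{-1/2})$, and H\"older gives $O(d^{-2})$. The subtracted term $(\E_P w_H)^2$ is also $O(d^{-2})$. The same argument gives $O(d^{-1})$ for each overlap with one double edge.

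\textbf{Your $v=4$ bound is sharper than the paper's claim.} Your $O(n^4/d)$ for $v=4$ is the correct order. The paper claims $O(n^4/d^2)$ for all overlaps with one double edge, but this fails for the star $H=\{21,23\}$, $H'=\{21,24\}$, where
$\E_P[\bar A_{21}^2\bar A_{23}\bar A_{24}]=(p-p^2)\E_P[\bar A_{23}\bar A_{24}]+(1-2p)\E_P[\bar A_{21}\bar A_{23}\bar A_{24}]\asymp 1/d$.
The discrepancy is harmless: $n^4/d\ll n^6/d^2$ whenever $d\ll n^2$. Both arguments therefore yield a consistent test exactly when $d\ll n^{3/2}$.
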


\begin{lemma}\label{lmm:subgraph_count_exp}
    Let $H\subset K_n$ be a subgraph.
    Let $L_H$ be its Laplacian, with eigenvalues $\lambda_i\geq 0$. 
    Then 
    \[
\E\qth{\prod_{e \in H}A_e}
= \gamma^{|H|} \prod_{i=1}^{v(H)}
\left(1 + \frac{\beta \lambda_i}{d}\right)^{-d/2} .
     \]
\end{lemma}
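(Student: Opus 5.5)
Conditionally on the latent points the edges are independent with $\E[A_e\mid x]=\tilde K(x_i,x_j)$. Hence
\[
\E\Bigl[\prod_{e\in H}A_e\Bigr]=\gamma^{|H|}\,\E\Bigl[\exp\Bigl(-\tfrac{\beta}{2}\sum_{(i,j)\in H}\|x_i-x_j\|^2\Bigr)\Bigr].
\]
The exponent is a quadratic form in the latent points. Stack them as $Z=[x_1,\dots,x_n]^\top\in\reals^{n\times d}$ with columns $z^{(1)},\dots,z^{(d)}\in\reals^n$ (one per coordinate). Then
\[
\sum_{(i,j)\in H}\|x_i-x_j\|^2=\sum_{k=1}^d (z^{(k)})^\top L_H z^{(k)}=\tr(Z^\top L_H Z),
\]
where $L_H$ is the graph Laplacian of $H$, viewed on the vertex set $V(H)$. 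Isolated vertices, or vertices outside $V(H)$, contribute zero rows and columns and drop out.

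Since $x_i\sim\calN(0,\frac1d I_d)$ are i.i.d., the $d$ coordinate vectors $z^{(k)}$ restricted to $V(H)$ are i.i.d.\ $\calN(0,\frac1d I_{v(H)})$. The expectation therefore factorizes:
\[
\E\Bigl[\exp\Bigl(-\tfrac{\beta}{2}\tr(Z^\top L_H Z)\Bigr)\Bigr]=\Bigl(\E\bigl[e^{-\frac{\beta}{2}g^\top L_H g}\bigr]\Bigr)^d,\qquad g\sim\calN(0,\tfrac1d I_{v(H)}).
\]
This leaves a single Gaussian quadratic-form moment generating function.

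To evaluate it, diagonalize $L_H=U\Lambda U^\top$ with $U$ orthogonal. Then $U^\top g$ has the same law as $g$, so the form splits into independent terms $\sum_i\lambda_i w_i^2$ with $w_i\sim\calN(0,1/d)$. For each term,
\[
\E\bigl[e^{-\frac{\beta}{2}\lambda_i w_i^2}\bigr]=(1+\beta\lambda_i/d)^{-1/2},
\]
which is the standard $\chi^2_1$ Laplace transform and is valid because $\lambda_i\ge0$, as $L_H$ is positive semidefinite. Taking the product over $i$ and raising to the power $d$ gives
\[
\E\Bigl[\prod_{e\in H}A_e\Bigr]=\gamma^{|H|}\prod_{i=1}^{v(H)}\Bigl(1+\frac{\beta\lambda_i}{d}\Bigr)^{-d/2}.
\]

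There is no serious obstacle; the step needing care is the bookkeeping that the Laplacian quadratic form acts coordinatewise and that restricting to $V(H)$ loses nothing. The zero eigenvalues, one per connected component, contribute a factor of $1$, consistent with the statement.
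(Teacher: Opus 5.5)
Your proof is correct and follows essentially the same route as the paper. You write the exponent as the coordinatewise Laplacian quadratic form, factor over the $d$ i.i.d.\ coordinates, diagonalize $L_H$, and apply the Laplace transform $\E[e^{-\frac{\lambda}{2}N(0,1)^2}]=(1+\lambda)^{-1/2}$; the only cosmetic difference is that you keep the $\calN(0,\frac1d I)$ scaling inside $g$, whereas the paper uses standard Gaussians and carries $\beta/(2d)$ in the exponent.
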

\begin{proof}
By definition,    $\sum_{(i,j) \in H} (x_i-x_j)^2 = x^\top L_H x$, so
    \begin{align*}
        \gamma^{-|H|} \E\qth{\prod_{e \in H}A_e}
= & \E\qth{\prod_{e \in H}\exp(-\frac{\beta}{2}\|x_i-x_j\|^2)}\\
= & \sth{\E_{x \sim N(0,I)} \qth{\exp(-\frac{\beta}{2d}x^\top L_H x)}}^d \\
= & \sth{\prod_{i=1}^{v(H)} \E\exp(- \frac{\beta\lambda_i}{2d} x_i^2)}^d \\
= & \prod_{i=1}^{v(H)} 
\left(1 + \frac{\beta \lambda_i}{d}\right)^{-d/2},
    \end{align*}
    applying the identity 
    that $\E[\exp(-\frac{\lambda}{2} N(0,1)^2)] = 1/\sqrt{1+\lambda}$.
\end{proof}


\begin{lemma}\label{lmm:signed_subgraph_count_exp}
    Let $H\subset K_n$ be a subgraph. 
    Then 
    \[
\E\qth{\prod_{e \in H} \bar{A}_e}
= p^{|H|}
\sum_{F \subset H}
\left(-1\right)^{|H \setminus F|}
\exp\left( 
\frac{\beta^2}{4d} \sum_{i=1}^{v(F)} (d_i^2(F) - d_i(F)) + O(d^{-2}) \right).
     \]
\end{lemma}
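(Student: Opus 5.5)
We expand the product of centered entries by inclusion–exclusion and then evaluate each uncentered term with \prettyref{lmm:subgraph_count_exp}. Since $\bar A_e = A_e - p$, we have
\[
\prod_{e\in H}\bar A_e=\sum_{F\subset H}(-p)^{|H\setminus F|}\prod_{e\in F}A_e .
\]
Taking expectations and applying \prettyref{lmm:subgraph_count_exp} to each $F$ (viewed as a graph on the vertices it touches; isolated vertices contribute Laplacian eigenvalue $0$, hence a factor $1$) gives
\[
\E\Big[\prod_{e\in H}\bar A_e\Big]=\sum_{F\subset H}(-1)^{|H\setminus F|}p^{|H\setminus F|}\gamma^{|F|}\prod_i\Big(1+\frac{\beta\lambda_i(F)}{d}\Big)^{-d/2}.
\]

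Next we expand the log-determinant to second order. For fixed $H$ the eigenvalues are $O(1)$, so
\[
-\frac d2\sum_i\log\Big(1+\frac{\beta\lambda_i}{d}\Big)=-\frac{\beta}{2}\tr(L_F)+\frac{\beta^2}{4d}\tr(L_F^2)+O(d^{-2}).
\]
The two traces are computed from $L_F=D_F-A_F$: $\tr(L_F)=\sum_i d_i(F)=2|F|$ and $\tr(L_F^2)=\sum_i d_i^2+\sum_i d_i=\sum_i(d_i^2+d_i)$. The single-edge case $F=\{e\}$ then gives $p=\gamma(1+2\beta/d)^{-d/2}$, and so $\log p = \log\gamma-\beta+\frac{\beta^2}{d}+O(d^{-2})$.

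The substantive step is to match each term to $p^{|F|}$. We write
\[
\gamma^{|F|}e^{-\beta|F|}=p^{|F|}\exp\Big(-\frac{\beta^2}{d}|F|+O(|F|d^{-2})\Big),
\]
and use $|F|=\tfrac12\sum_i d_i$. The exponent for $F$ becomes
\[
\frac{\beta^2}{4d}\sum_i(d_i^2+d_i)-\frac{\beta^2}{4d}\sum_i 2d_i=\frac{\beta^2}{4d}\sum_i(d_i^2-d_i).
\]
Combining the factors $p^{|H\setminus F|}p^{|F|}=p^{|H|}$ then yields the stated identity.

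The main obstacle is getting the second-order bookkeeping exactly right, in particular the $\tr(L_F^2)$ diagonal term and the $\beta^2/d$ correction in $p$. This is where the precise combination $d_i^2-d_i$ arises: it is exactly what makes single edges, and more generally matchings, contribute nothing at order $1/d$. The error terms are uniform because $H$ has bounded size.
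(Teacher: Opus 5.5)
Your proposal is correct and follows essentially the same route as the paper's proof. Like the paper, you expand $\prod_{e\in H}(A_e-p)$ over $F\subset H$ and evaluate each term with the Laplacian formula of \prettyref{lmm:subgraph_count_exp}. You then expand the logarithm to second order using $\sum_i\lambda_i(F)=2|F|$ and $\sum_i\lambda_i^2(F)=\sum_i d_i^2(F)+\sum_i d_i(F)$, and absorb $\gamma^{|F|}e^{-\beta|F|}$ into $p^{|F|}$ via $p=\gamma(1+2\beta/d)^{-d/2}$, including the $\beta^2|F|/d$ correction that turns $d_i^2+d_i$ into $d_i^2-d_i$.
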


\begin{proof}
Note that 
\begin{align*}
\E\qth{\prod_{e \in H} \bar{A}_e}
&=\E\qth{\prod_{e \in H} (A_e -p) } \\
&=\E\qth{\sum_{F \subset H} \prod_{e \in E(F)} A_e (-p)^{|H\setminus F|} }\\
&=\sum_{F \subset H} \E\qth{\prod_{e \in E(F)} A_e} (-p)^{|H\setminus F|} \\
&= \sum_{F \subset H} \prod_{i=1}^{v(F)}
\left(1 + \frac{\beta \lambda_i(F)}{d}\right)^{-d/2} \gamma^{|F|} 
(-p)^{|H\setminus F|} ,
\end{align*}
where the last equality follows from Lemma~\ref{lmm:subgraph_count_exp}. 

Now, using the Taylor expansion of $\log(1+x)=x-x^2/2+O(x^3)$, we have
\begin{align*}
\prod_{i=1}^{v(F)}
\left(1 + \frac{\beta \lambda_i(F)}{d}\right)^{-d/2}
&=\exp \left( 
-\frac{d}{2} \left( \frac{\beta}{d} \sum_{i=1}^{v(F)} 
\lambda_i(F) - \frac{\beta^2}{2d^2 } \sum_{i=1}^{v(F)} 
\lambda_i^2(F) + O(d^{-3})\right)
\right) \\
& = 
\exp \left( 
 -\frac{\beta}{2} \sum_{i=1}^{v(F)} 
\lambda_i(F) + \frac{\beta^2}{4d } \sum_{i=1}^{v(F)} 
\lambda_i^2(F) + O(d^{-2})\right) .
\end{align*}
Furthermore, 
\begin{equation*}
p^{|F|}
= \gamma^{|F|} 
\left( 1 + \frac{2\beta}{d} \right)^{-d|F|/2} 
= \gamma^{|F|} 
\exp \left( 
 -\beta |F| + \frac{\beta^2}{d } |F| + O(d^{-2})\right) .
\end{equation*}

Combining the last three displayed equations yields that
\begin{align*}
&\E\qth{\prod_{e \in H} \bar{A}_e}\\
&=p^{|H|} \sum_{F \subset H}
(-1)^{|H\setminus F|}
\exp \left( 
 -\frac{\beta}{2} (\sum_{i=1}^{v(F)} 
\lambda_i(F) -2|F|) + \frac{\beta^2}{4d } (\sum_{i=1}^{v(F)} 
\lambda_i^2(F) -4|F|) + O(d^{-2})\right)\\
&=p^{|H|} \sum_{F \subset H}
(-1)^{|H\setminus F|}
\exp \left(  \frac{\beta^2}{4d} \sum_{i=1}^{v(F)} (d_i^2(F) - d_i(F)) + O(d^{-2})\right),
\end{align*}
where the last equality follows from the fact that $\sum_i \lambda_i(F) = \sum_i d_i(F) = 2|F|$ and that
\begin{align*}
\sum_{i} \lambda_i^2(F) =
\fnorm{L_F}^2
=\sum_{i} d_i^2(F) + \sum_i d_i(F) . 
\end{align*}
\end{proof}

\begin{lemma} \label{lem:expected-signed-wedge-or-not}
Let $H\subset K_n$ be a subgraph. 
If $H$ is a wedge, then
\[
\E\qth{\prod_{e \in H} \bar{A}_e}
= p^2 \frac{\beta^2}{2d} + O(d^{-2}) ,
\]
and otherwise, $\E\qth{\prod_{e \in H} \bar{A}_e} = O(d^{-2})$, where the hidden constant may depend on $H$, $\beta$, and $\gamma$. 
\end{lemma}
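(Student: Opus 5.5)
The plan is to start from \prettyref{lmm:signed_subgraph_count_exp} and expand each exponential to first order in $1/d$. Write $w(F) \eqdef \sum_{i} (d_i^2(F) - d_i(F)) = 2\sum_i \binom{d_i(F)}{2}$, which is twice the number of wedges (pairs of edges sharing a vertex) contained in $F$. Since $H$ is fixed, every $w(F)$ is bounded, so
\[
\exp\Big(\tfrac{\beta^2}{4d} w(F) + O(d^{-2})\Big) = 1 + \tfrac{\beta^2}{4d} w(F) + O(d^{-2}) .
\]
Substituting this into the lemma gives
\[
\E\qth{\prod_{e \in H} \bar A_e} = p^{|H|}\Big[\sum_{F\subset H}(-1)^{|H\setminus F|} + \tfrac{\beta^2}{4d}\sum_{F\subset H}(-1)^{|H\setminus F|} w(F)\Big] + O(d^{-2}) .
\]

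The first sum vanishes by inclusion--exclusion whenever $|H|\ge 1$, i.e.\ $\sum_{F\subset H}(-1)^{|H\setminus F|}=(1-1)^{|H|}=0$.

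For the second sum, write $w(F)=2\sum_{W} \indc{W\subset F}$, where $W$ ranges over the wedges (unordered pairs of adjacent edges) of $H$. Exchanging the sums gives $\sum_{F\subset H}(-1)^{|H\setminus F|}\indc{W\subset F}=\sum_{F'\subset H\setminus W}(-1)^{|H\setminus W|-|F'|}$. This equals $1$ if $H=W$ and $0$ otherwise. So the second sum is $2$ if $H$ is itself a wedge and $0$ otherwise.

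If $H$ is a wedge, then $|H|=2$ and the expectation is $p^2\cdot \frac{\beta^2}{4d}\cdot 2 + O(d^{-2})= p^2\frac{\beta^2}{2d}+O(d^{-2})$. Otherwise it is $O(d^{-2})$, with constants depending on $H,\beta,\gamma$ through the number of subsets and the bounds on $w(F)$. Also $p=\gamma(1+2\beta/d)^{-d/2}$ is bounded.

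The only delicate point is that the $O(d^{-2})$ remainder in \prettyref{lmm:signed_subgraph_count_exp} must be uniform over the finitely many $F\subset H$. This holds because each remainder comes from a Taylor expansion whose arguments $\beta\lambda_i(F)/d$ are bounded by $2\beta v(H)/d$. The edge case $|H|=0$ is excluded, since a subgraph here has at least one edge.
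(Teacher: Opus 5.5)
Your proposal is correct and follows the paper's proof essentially step for step: you expand each exponential to first order, kill the constant term via $\sum_{F\subset H}(-1)^{|H\setminus F|}=0$, identify $\frac12\sum_i(d_i^2(F)-d_i(F))$ with the number of wedges in $F$, and swap sums so that the alternating sum over $W\subset F\subset H$ vanishes unless $W=H$. Your remarks on the uniformity of the $O(d^{-2})$ remainder over the finitely many $F\subset H$, and on excluding the empty graph (for which the expectation equals $1$), make explicit two points the paper leaves implicit.
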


\begin{proof}
%
We use $\mathcal{W}(F)$ to denote the set of subgraphs of $F$ that are isomorphic to a wedge. 
Note that 
$$
\frac 12 \sum_{i=1}^{v(F)} (d_i^2(F) - d_i(F)) = \sum_{i=1}^{v(F)} \binom{d_i(F)}{2} = |\mathcal{W}(F)| .
$$ 
As a result, 
\begin{equation*}
p^{-|H|} \E\qth{\prod_{e \in H} \bar{A}_e} 
=\sum_{F \subset H}
\left(-1\right)^{|H \setminus F|}
\left(1 + 
\frac{\beta^2}{2d} |\mathcal{W}(F)| + O(d^{-2}) \right).
\end{equation*}
It is clear that $\sum_{F \subset H}
\left(-1\right)^{|H \setminus F|} = 0$. 
Moreover, we can write
\begin{align*}
\sum_{F \subset H}
\left(-1\right)^{|H \setminus F|} |\mathcal{W}(F)|
= \sum_{F \subset H} 
\left(-1\right)^{|H \setminus F|} \sum_{W \in \mathcal{W}(H)} \mathbf{1}\{W \subset F\} 
= \sum_{W \in \mathcal{W}(H)} \sum_{F: W \subset F \subset H}
\left(-1\right)^{|H \setminus F|} .
\end{align*}
The inner sum $\sum_{F: W \subset F \subset H}
\left(-1\right)^{|H \setminus F|}$ is zero unless $W = H$. 
Therefore, if $H$ is not a wedge, then
$$
\E\qth{\prod_{e \in H} \bar{A}_e} = p^{|H|} \sum_{F \subset H}
\left(-1\right)^{|H \setminus F|}  O(d^{-2}) = O(d^{-2}) .
$$
If $H$ is a wedge, then 
\begin{equation*}
\E\qth{\prod_{e \in H} \bar{A}_e} 
= p^{|H|} \frac{\beta^2}{2d} \sum_{F \subset H}
\left(-1\right)^{|H \setminus F|} |\mathcal{W}(F)| + O(d^{-2}) 
= p^2 \frac{\beta^2}{2d} + O(d^{-2}) .
\end{equation*}
\end{proof}

\begin{proof}[Proof of Theorem~\ref{thm:signed-wedge-count-for-gaussian-points}]
For the mean $\Expect_P [W(A)]$, 
it follows from \prettyref{lem:expected-signed-wedge-or-not} that
$$
\Expect_P [W(A)] = 3 \binom{n}{3} \left( p^2 \frac{\beta^2}{2d} + O(d^{-2}) \right) \asymp \frac{n^3}{d} .
$$
Next, we consider
\begin{align*}
\Var_P (W(A)) &= \sum_{H, H'} \Cov_P\bigg( \prod_{e \in H} \bar A_e , \prod_{e \in H'} \bar A_e \bigg) \\
&= \sum_{H, H'} \Bigg( \Expect_P \prod_{e \in H \cup H'} \bar A_e - \bigg( \Expect \prod_{e \in H} \bar A_e \bigg)^2 \Bigg) \\
&\le \sum_{H, H' : V(H) \cap V(H') \ne \varnothing} \Expect_P \prod_{e \in H \cup H'} \bar A_e ,
\end{align*}
where the sum is over two wedges $H$ and $H'$ in $K_n$, and $H \cup H'$ denotes the \emph{multigraph} obtained from taking the union. 
In what follows, we bound the contributions of all the summands according to the possible shape of $H \cup H'$. 
\begin{enumerate}
\item
$H \cup H'$ does not have any double edge: 
By \prettyref{lem:expected-signed-wedge-or-not}, we have
$\Expect_P \prod_{e \in H \cup H'} \bar A_e = O(d^{-2})$.
Since $V(H) \cap V(H') \ne \varnothing$, there are at most $O(n^5)$ choices of $(H,H')$.
Therefore, the contribution of all these terms is at most $O(n^5/d^2)$. 






\item 
$H \cup H'$ has one double edge: In this case, $H \cup H'$ may be a path, a star, or a cycle with exactly one double edge. 
The proof is similar for the three subcases, so we only consider the following scenario.
Let the vertices of $H \cup H'$ be $1,2,3,4$, with $(1,2)$ and $(3,4)$ being single edges, and $(2,3)$ being a double edge. 
Write $x = (x_1, x_2, x_3, x_4)$. 
Let $\tilde K(x_i, x_j) = \gamma \exp( - \frac{\beta}{2} \|x_i - x_j\|_2^2 )$. 
Then we have 
\begin{equation*}
\E_P[\bar A_{ij}^2 \mid x] = \tilde K(x_i, x_j) (1-p)^2 + (1 - \tilde K(x_i, x_j)) (-p)^2 = - p^2 + p + (1-2p) (\tilde K(x_i, x_j) - p) .
\label{eq:expected-signed-double-edge}
\end{equation*}
We obtain
\begin{align*}
\Expect_P \prod_{e \in H \cup H'} \bar A_e
&= \Expect\left[ \E_P[\bar A_{12} \mid x] \cdot \E_P[\bar A_{23}^2 \mid x] \cdot \E_P[\bar A_{34} \mid x] \right]  \\
&= \Expect[(\tilde K(x_1, x_2) - p) (- p^2 + p + (1-2p) (\tilde K(x_2, x_3) - p)) (\tilde K(x_3, x_4) - p)] \\
&= (-p^2+p) \Expect_P[ \bar A_{12} \bar A_{34} ] + (1-2p) \Expect_P[\bar A_{12} \bar A_{23} \bar A_{34}] 
= O(d^{-2}) 
\end{align*}
by \prettyref{lem:expected-signed-wedge-or-not}. 
There are at most $O(n^4)$ such choices of $(H,H')$, so the contribution of these terms is at most $O(n^4/d^2)$. 



\item 
$H \cup H'$ contains two double edges: In this case, $H \cup H'$ must be a wedge with two double edges. These terms contribute at most $O(n^3)$. 
\end{enumerate}
To conclude, we get that the variance under soft RGG is $O(n^5/d^2)$. Moreover, $\Expect_Q[W(A)] = 0$ and $\Var_Q(W(A)) = O(n^3)$. 

For the signed wedge count to succeed in distinguishing $P$ from $Q$, it suffices to have
$$
\Expect_P[W(A)] - \Expect_Q[W(A)] \gg \sqrt{\Var_P(W(A)) \lor \Var_Q(W(A))} ,
$$
which holds if 
$$
\frac{n^3}{d} \gg \max\left\{ \sqrt{\frac{n^5}{d^2}} , \sqrt{n^3} \right\} .
$$
This reduces to $d \ll n^{1.5}$. 
\end{proof}

\section*{Acknowledgment}
C.~Mao is supported in part by an NSF CAREER award DMS-2338062. J.~Xu is supported in part by an NSF CAREER award CCF-2144593.
This research is based in part upon work supported by the National Science Foundation under Grant No.~DMS-1928930, while Y.~Wu and J.~Xu were in residence at the Simons Laufer Mathematical Sciences Institute in Berkeley, California, during the Spring 2025 semester.
Part of this research was performed while 
C.~Mao and J.~Xu were visiting the Institute for Mathematical and Statistical Innovation (IMSI), which is supported by the National Science Foundation under Grant No. DMS-2425650.

\bibliographystyle{alpha}
\bibliography{ref}
	
\end{document}